\documentclass[12pt]{amsart}
\usepackage[left=1in,top=1in,right=1in,bottom=1in,letterpaper]{geometry}
\usepackage[all]{xy}
\usepackage{epsfig}
\usepackage{amsmath}
\usepackage{amssymb}
\usepackage{amscd}
\usepackage{bm}
\usepackage{bbm}
\def\bbone{{\mathbbm{1}}}
\usepackage{url}
\usepackage{verbatim} 
\usepackage{color}
\usepackage{epsfig}
\usepackage{stmaryrd}
\usepackage{amsthm}
\usepackage{pifont}
\usepackage{mathrsfs}
\usepackage{wasysym}
\usepackage{hyperref}
\usepackage{graphicx}

\title[]{Renormalization of Polygon Exchange Maps arising from Corner Percolation}
\author[]{W. Patrick Hooper}
\thanks{Support was provided by N.S.F. Postdoctoral Fellowship DMS-0803013, N.S.F. Grant DMS-1101233 and a PSC-CUNY Award (funded by The Professional Staff Congress and The City University of New York).}
\address{
The City College of New York\\
New York, NY, USA 10031}
\email{whooper@ccny.cuny.edu}

\ifdefined\theorem
\else
\newtheorem{theorem}{Theorem}
\newtheorem{proposition}[theorem]{Proposition}
\newtheorem{lemma}[theorem]{Lemma}
\newtheorem{remark}[theorem]{Remark}
\newtheorem{corollary}[theorem]{Corollary}

\theoremstyle{definition}
\newtheorem{definition}[theorem]{Definition}

\fi
\ifdefined\openquestion
\else

\fi

\ifdefined\question
\else

\fi

\newlength{\savearraycolsep}
\newenvironment{narrowarray}[2]%
	{\setlength{\savearraycolsep}{\arraycolsep}%
	\setlength{\arraycolsep}{#1}%
	\begin{array}{#2}}%
	{\end{array}\setlength{\arraycolsep}{\savearraycolsep}}

\newenvironment{citemize}{
\begin{itemize}
  \setlength{\itemsep}{0em}
  \setlength{\parskip}{0pt}
  \setlength{\parsep}{0pt}
  \setlength{\parindent}{1.5em}}{\end{itemize}
}


\newcommand{\boldred}[1]{{\bf \textcolor{red}{#1}}}
\newcommand{\red}[1]{{\textcolor{red}{#1}}}

\newcommand{\set}[2]{{\{#1~:~\textrm{#2}\}}}

%
%
\def\N{\mathbb{N}}%
\def\Q{\mathbb{Q}}%
\def\R{\mathbb{R}}%
\def\Z{\mathbb{Z}}%
%




\def\til{\widetilde}
\def\wh{\widehat}
\def\sm{\smallsetminus}


\def\0{{\mathbf{0}}}
\def\1{{\mathbf{1}}}
\def\a{{\mathbf{a}}}
\def\b{{\mathbf{b}}}
\def\ba{{\mathbf{a}}}
\def\bb{{\mathbf{b}}}
\def\bc{{\mathbf{c}}}
\def\bd{{\mathbf{d}}}
\def\be{{\mathbf{e}}}
\def\f{{\mathbf{f}}}
\def\g{{\mathbf{g}}}
\def\h{{\mathbf{h}}}

\def\m{{\mathbf{m}}}
\def\bn{{\mathbf{n}}}

\def\bp{{\mathbf{p}}}
\def\bq{{\mathbf{q}}}
\def\s{{\mathbf{s}}}

\def\v{{\mathbf{v}}} %
\def\bw{{\mathbf{w}}}

\def\bx{{\mathbf{x}}}

\def\by{{\mathbf{y}}}
\def\z{{\mathbf{z}}}
\def\bz{{\mathbf{z}}}

\def\bpi{{\mathbf{\pi}}}


\def\sB{{\mathcal{B}}}
\def\sC{{\mathcal{C}}}

\def\sG{{\mathcal{G}}}

\def\sM{{\mathcal{M}}}

\def\sO{{\mathcal{O}}}

\def\sR{{\mathcal{R}}}
\def\sS{{\mathcal{S}}}
\def\sT{{\mathcal{T}}}

\def\half{{\frac{1}{2}}}
\def\thalf{{\textstyle \frac{1}{2}}}



%
\newcommand{\floor}[1]{\ensuremath{{\lfloor #1 \rfloor}}}%
%
%


%
%
%


\makeatletter
\def\imod#1{\allowbreak\mkern10mu({\operator@font mod}\,\,#1)}
\makeatother

%

\def\and{{\quad \textrm{and} \quad}}




\def\omegaalt{\omega^{\textrm{alt}}}
\def\id{{\textrm{id}}}
\def\barK{{\overline{K}}}
\newcommand{\ret}[2]{{R}_{#1}(#2)}
\def\NUC{{\mathit NUC}}
\def\NS{{\mathit NS}}
\def\Step{{\mathcal S}}
\def\tPsi{{\widetilde \Psi}}
\def\meas{{m}}
\def\hsC{{\wh \sC}}
\def\aux{{a}}

\newif\ifdraft\drafttrue
\draftfalse

\newcommand{\name}[1]{\label{#1}{\ifdraft{\textcolor{blue}{\{\textrm{#1}\}}}\else\ignorespaces\fi}}

\newcommand{\tiling}[1]{{[#1]}}
\newcommand{\cyl}{{\mathit{cyl}}}

\begin{document}
\begin{abstract}
We describe a family $\{\Psi_{\alpha, \beta}\}$ of polygon exchange transformations
parameterized by points $(\alpha, \beta)$ in the square $[0, \half] \times [0, \half]$.
Whenever $\alpha$ and $\beta$ are irrational, $\Psi_{\alpha, \beta}$ has periodic orbits
of arbitrarily large period.
We show that for almost all parameters, the polygon exchange map has the property that almost every point is periodic. However, there is a dense set of irrational parameters for which this fails.
By choosing parameters carefully,
the measure of non-periodic points can be made arbitrarily close to full measure.
These results are powered by a notion of renormalization which holds in a more general setting.
Namely, we consider a renormalization of tilings arising from the Corner Percolation Model.
\end{abstract}
\maketitle

\section{Introduction}
\name{sect:introduction}
Let $X$ be a finite disjoint union of polygons in the plane. A {\em polygon exchange map} of $X$, $T:X \to X$, cuts $X$ into finitely many polygonal pieces, and applies a translation to each piece
so that the image $T(X)$ has full area in $X$. There is some ambiguity of definition on the boundaries of the pieces.

Polygon exchange maps are natural generalizations of interval exchange maps, and yet comparatively little is understood about the dynamics of a generic polygon exchange map. However, some polygon exchange maps are well understood using the idea of renormalization.
As a simple example of renormalization, a first return map of $T$ to a union of polygonal subsets might be affinely conjugate to the original map $T$. Once a renormalization procedure is found, we can hope to exploit it to deduce detailed information about the dynamical system. Papers on polygon exchange maps following this philosophy include \cite{AKT01}, \cite{LKV04}, \cite{L07} and \cite{Schwartz10}. 

In this paper, we give the first example of a two dimensional parameter space of polygon exchange maps which is 
invariant under a renormalization operation. In our case, this means that each map in the family admits a return map which is affinely conjugate
to a map in the family. (This family will be called $\{\til \Psi_{\alpha, \beta}\}$.)
We then exploit this renormalization operation to understand the dynamical behavior of these maps.

The polygon exchange maps we describe are in fact {\em rectangle exchange maps}. That is, all the polygons used define the map are rectangles with horizontal and vertical sides.
To define these maps, consider the planar lattice $\Lambda \subset \R^2$ generated by the vectors $(\half, \half)$ and $(-\frac{1}{2}, \half)$. This lattice contains $\Z^2$ as an index two subgroup. Let $Y$ be the torus $\R^2/\Lambda$. 
A fundamental domain for the action of $\Lambda$ by translation on $\R^2$ is given by the union of the two squares
$$A_1=[0,\thalf) \times [0, \thalf) \and A_{-1}=[0,\thalf) \times [\thalf,1).$$ 
Let $N$ be the finite set of four elements, 
\begin{equation}
\name{eq:Nset}
N=\{(1,0), (-1,0), (0,1), (0,-1)\} \subset \R^2.
\end{equation}
We think of $Y \times N$ as a disjoint union of four copies of the torus $Y$. 
Fix two parameters $\alpha, \beta \in [0, \half]$. We define the rectangle exchange map $\Psi_{\alpha, \beta}:Y \times N \to Y \times N$ according to the following rule. If $(x,y) \in A_s \pmod{\Lambda}$ with $s \in \{\pm 1\}$ and $\v=(a,b) \in N$, then
\begin{equation}
\name{eq:psi intro}
\Psi_{\alpha, \beta}\big((x,y),\v\big)=\big((x+b s\alpha,y+a s \beta) \pmod{\Lambda},(bs,as)\big).
\end{equation}
Note that $(bs,as) \in N$. So fixing this data, only one coordinate changes in moving from $(x,y)$ to $(x+b s\alpha,y+a s \beta)$. Figure \ref{fig:polygon exchange} illustrates a map in this family.

\begin{figure}[h]
\includegraphics[width=4in]{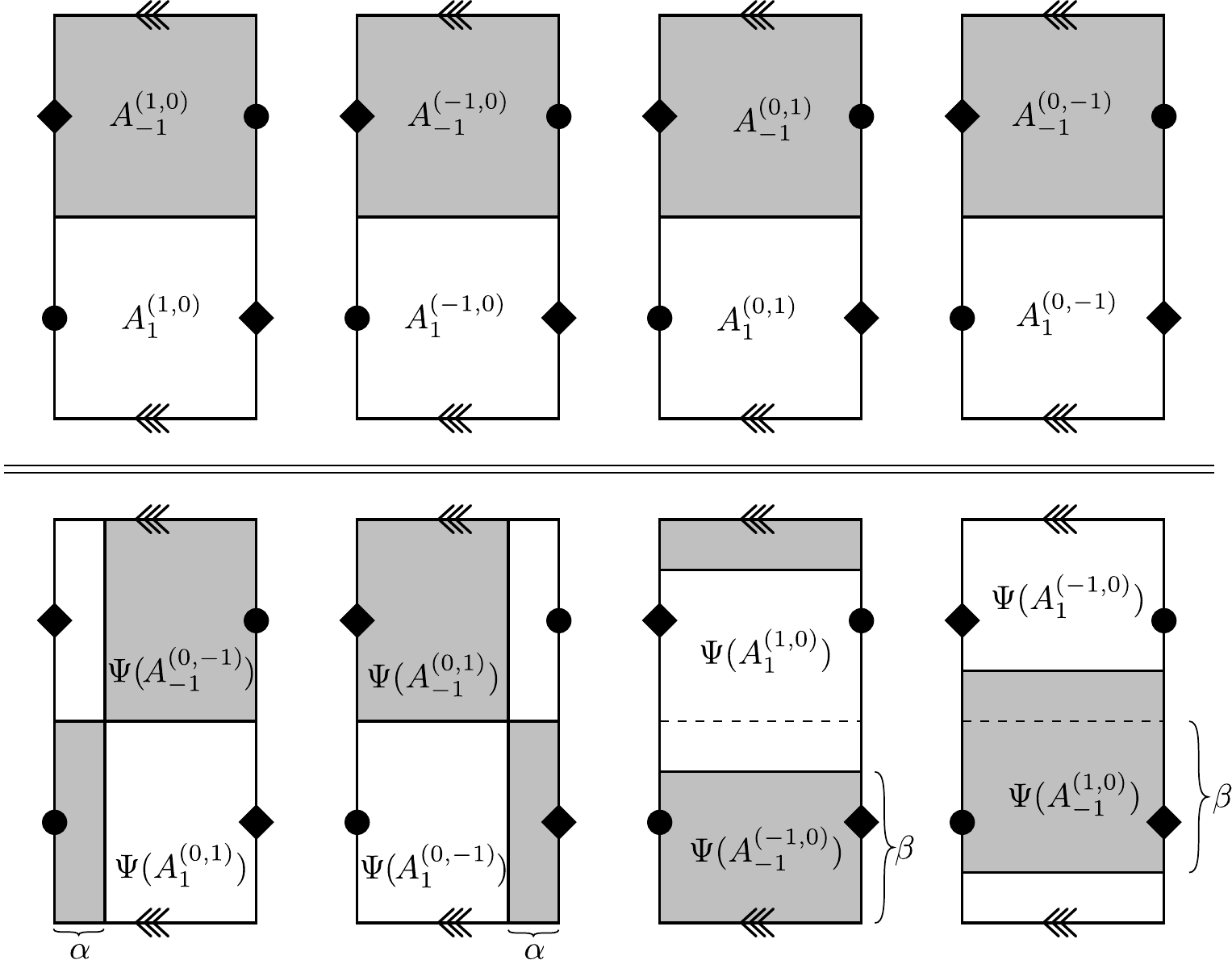}
\caption{This illustrates the map $\Psi=\Psi_{\alpha, \beta}$ defined in equation \ref{eq:psi intro}. Above the line indicates
the sets $A_s^{(a,b)}=A_s \times \{(a,b)\}$, and below illustrates their images under $\Psi$.
In both cases, the tori are drawn $Y \times \{(1,0)\}$, $Y \times \{(-1,0)\}$, $Y \times \{(0,1)\}$ and $Y \times \{(0,-1)\}$, from left to right.}
\name{fig:polygon exchange}
\end{figure}

These maps have many periodic trajectories. In fact,
\begin{theorem}
\name{thm:large periods}
Whenever $\alpha$ and $\beta$ are irrational, there are points in $Y \times N$ 
which are periodic under $\Psi_{\alpha, \beta}$ of arbitrary large period.
\end{theorem}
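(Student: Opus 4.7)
My plan is to exploit the renormalization structure announced in the introduction: there should be a map $R:[0,\half]^2 \to [0,\half]^2$ and a distinguished return domain $U(\alpha,\beta) \subset Y \times N$ such that the first return map of $\til \Psi_{\alpha,\beta}$ to $U(\alpha,\beta)$ is affinely conjugate to $\til \Psi_{R(\alpha,\beta)}$. The theorem will then follow by producing at least one periodic rectangle at each level of the renormalization tower and pulling it back to $\Psi_{\alpha,\beta}$, so that the period is multiplied by the return times accumulated along the chain of affine conjugacies.

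The first concrete step is to exhibit an explicit periodic rectangle of positive area for every member of the family. Inspecting equation~\eqref{eq:psi intro}, the $N$-coordinate is permuted deterministically while the position is translated by a signed $\alpha$ or signed $\beta$, with the signs dictated by which cell $A_{\pm 1}$ the orbit visits. Tracking a four-step orbit with prescribed sign pattern $(s_0,s_1,s_2,s_3) \in \{\pm 1\}^4$, the map $\Psi_{\alpha,\beta}^4$ restricted to the subset of $Y\times N$ on which that sign sequence is actually realized acts as a pure translation by a signed sum of two $\alpha$'s and two $\beta$'s; choosing the signs so that this translation is trivial modulo $\Lambda$ produces a period-$4$ rectangle. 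The second step is iteration: writing $(\alpha_n,\beta_n) = R^n(\alpha,\beta)$, assume (as should follow from the structure of $R$) that irrationality in both coordinates persists under $R$, so that the tower is infinite whenever $\alpha,\beta$ are both irrational. At level $n$ the period-$4$ rectangle of $\til \Psi_{\alpha_n,\beta_n}$ pulls back through the chain of affine conjugacies to a periodic rectangle of $\Psi_{\alpha,\beta}$ whose period equals $4$ times the product of the $n$ return times. Provided these return times exceed $1$, the period grows without bound as $n \to \infty$, giving periodic points of arbitrarily large period.

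The main obstacle I anticipate is twofold: first, producing the base periodic rectangle without case analysis, since the sign sequence visited by a given orbit is dictated by $(\alpha,\beta)$ rather than chosen freely, so one must locate patterns that are actually dynamically realized for the parameters at hand; and second, verifying that the singular set of $R$ meets the parameter square exactly in points with a rational coordinate, and that the return times to $U$ are uniformly at least $2$ (equivalently, that the conjugating affine map is a strict contraction of factor bounded away from $1$). Both points depend on a detailed description of $R$ that is not available from the introduction alone; once they are established, together with the base periodic rectangle, taking $n$ sufficiently large delivers a periodic orbit of $\Psi_{\alpha,\beta}$ exceeding any prescribed period, and the theorem follows.
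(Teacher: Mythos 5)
Your overall strategy --- a period-$4$ rectangle for every irrational pair, then pull it back through the infinite tower of return maps provided by Theorem \ref{thm:ren} --- is the same route the paper takes (stable period-$4$ orbits correspond to length-four loops of the Truchet tiling, and the renormalization of \S\ref{sect:renormalizing tilings}--\ref{sect:rectangle exchanges} transports them up the tower). The base step you worry about is in fact easy: since $\alpha,\beta$ are irrational the rotations are minimal, so the word $-+$ occurs in both coding sequences, which is exactly the local configuration producing a length-four loop; this yields a period-$4$ rectangle (of measure $4\alpha\beta$ on the double cover $\til Y\times N$) for every irrational pair, and your computation that triviality of the four-step translation forces the alternating sign pattern is consistent with this.

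The genuine gap is in the growth mechanism. You propose to verify that the return times to the renormalization domain are ``uniformly at least $2$ (equivalently, that the conjugating affine map is a strict contraction of factor bounded away from $1$).'' Neither half of this is correct. By Lemma \ref{lem:returns} (or Theorem \ref{thm:return time}) the return time of $\til\Psi_{\alpha,\beta}$ to $Z\times N$ equals $1$ whenever the image of a point of $Z\times N$ lands again in $Z\times N$, which happens on a set of positive measure whenever $\alpha<\frac13$ or $\beta<\frac13$; and the expansion factor $\frac{1}{(1-2\alpha)(1-2\beta)}$ of $\phi$ tells you nothing about a lower bound on return times. With only ``return times $\geq 1$'' the period of the pulled-back orbit (which is the \emph{sum} of the return times along the return-map orbit, not ``$4$ times the product'') need not increase at all, so your periods could a priori stay bounded. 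What is actually needed, and what the paper supplies, is the orbit-wise statement: no periodic orbit lies entirely inside the return domain --- equivalently, every closed curve of the tiling leaves the kept set $\barK$ (statement (4) of Theorem \ref{thm:tiling renormalization}, proved by showing that a block of the tiling whose rows and columns carry no $-+$ pattern contains no closed curve), equivalently renormalization strictly decreases the period of every stable periodic orbit (statement (4) of Theorem \ref{thm:renormalization}). Granting that, the pullback of the level-$n$ period-$4$ rectangle has period strictly increasing at each of the $n$ stages (indeed the level-$n$ loop of length four never meets the return domain, so its preimage at level $n-1$ cannot itself have period $4$), whence the period at level $0$ is at least $4+n$ and the theorem follows; without it, your argument does not close.
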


\begin{remark}
It follows that $\Psi_{\alpha, \beta}$ is not conjugate to a product of interval exchange maps.
\end{remark}

Every periodic point has an open neighborhood of points which are periodic and have the same period. It is natural to ask ``what is the total area of periodic points?''
Let $\lambda$ be Lebesgue measure
on $Y \times N$, rescaled so that $\lambda(Y \times N)=1$. Let $M(\alpha, \beta)$ denote the $\lambda$-measure
of the periodic points, i.e.,
$$M(\alpha, \beta)=\lambda \{p \in Y \times N~:~\Psi_{\alpha, \beta}^n(p)=p\quad \textrm{for some $n \geq 1$.}\}.$$
Our renormalization operation allows us to prove the following theorems.

\begin{theorem}[Periodicity almost everywhere]
\name{thm:periodicity}
$M(\alpha, \beta)=1$ for Lebesgue-almost every parameter $(\alpha, \beta) \in [0, \half] \times [0, \half]$.
\end{theorem}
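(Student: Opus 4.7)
The plan is to invoke the renormalization promised in the introduction. It should provide a partial map $R:[0,\thalf]^2 \to [0,\thalf]^2$ on parameters (defined outside a Lebesgue-null set) together with, for each typical $(\alpha,\beta)$, a subset $U_{\alpha,\beta} \subset Y\times N$ of positive measure such that the first return of $\Psi_{\alpha,\beta}$ to $U_{\alpha,\beta}$ is affinely conjugate to $\til\Psi_{R(\alpha,\beta)}$. Because the periodic set of this first return map is exactly the intersection of $U_{\alpha,\beta}$ with the periodic set of $\Psi_{\alpha,\beta}$ itself, the conjugacy implies that the relative measure of periodic points of $\Psi_{\alpha,\beta}$ inside $U_{\alpha,\beta}$ is equal to $M(R(\alpha,\beta))$.

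This yields the crucial implication $M(R(\alpha,\beta))=1 \Rightarrow M(\alpha,\beta)=1$. Indeed, the non-periodic set $NP$ of $\Psi_{\alpha,\beta}$ is $\Psi_{\alpha,\beta}$-invariant and has finite measure, so Poincar\'e recurrence forces almost every point of $NP$ to visit $U_{\alpha,\beta}$ under forward iteration; but $NP\cap U_{\alpha,\beta}$ has measure zero by hypothesis, so $\lambda(NP)=0$. It therefore suffices to prove that for Lebesgue-a.e.\ $(\alpha,\beta)\in[0,\thalf]^2$ some iterate $R^n(\alpha,\beta)$ lies in the set $\{M=1\}$.

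I would carry this out in two pieces. First, identify a Lebesgue-positive region $G\subset[0,\thalf]^2$ on which $M\equiv 1$ can be checked by hand, for instance a neighborhood of the locus $\alpha=0$ or $\beta=0$ where the factor structure of $\Psi$ forces almost every orbit to be periodic. Second, show that $R$ is conservative and ergodic with respect to an $R$-invariant probability measure $\mu$ absolutely continuous with respect to Lebesgue and of full support. Birkhoff's ergodic theorem applied to $\mathbbm{1}_G$ then ensures that $\mu$-almost every, and hence Lebesgue-almost every, parameter has its $R$-orbit enter $G$, at which point the implication above delivers $M(\alpha,\beta)=1$.

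The main obstacle is the analysis of $R$: producing the absolutely continuous invariant measure $\mu$ and proving ergodicity. In analogy with the Gauss map for continued fractions, one expects $R$ to be a countable-state Markov map with expanding branches indexed by the combinatorial data coming from the Corner Percolation tilings, so that a Folklore-theorem or Perron--Frobenius argument should deliver $\mu$, with bounded distortion and ergodicity following by standard methods. Once this is set up, the deduction of Theorem \ref{thm:periodicity} from the renormalization implication is routine.
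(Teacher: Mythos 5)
Your overall architecture (renormalize on parameters via $R=f\times f$, show the parameter orbit reaches ``good'' parameters, and pull periodicity back through the return map) matches the spirit of the paper, but two of your key steps fail as stated. First, the seed set $G$ does not exist in the form you need. By Corollary \ref{cor:5} the set $\{(\alpha,\beta): M(\alpha,\beta)\neq 1\}$ is dense among irrational parameters, so no neighborhood can satisfy $M\equiv 1$; and a neighborhood of $\{\alpha=0\}\cup\{\beta=0\}$ is precisely the dangerous region rather than an easy one --- the contraction factor in the paper's Scaling Lemma is $g(x,y)=1-\tfrac43 xy$, which tends to $1$ at that boundary, and the parameters of Theorem \ref{thm:4} with $M<1$ are built exactly by making the $f\times f$-orbit linger there. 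Any positive-measure set on which you could ``check $M\equiv1$ by hand'' is essentially the theorem itself, so this step is circular. The paper needs no seed at all: it proves a quantitative decay, $\nu(\sO_{k+1})\le g\big(f^{k}(\alpha),f^{k}(\beta)\big)\,\nu(\sO_k)$, for the nested sets $\sO_k$ whose intersection is the non-(stably-)periodic set $\NS$, using the return-time cocycle; recurrence of $f\times f$ then supplies infinitely many contractions by a factor bounded away from $1$, forcing $\nu(\NS)=0$. Your proposal has no substitute for this cocycle estimate. Second, the absolutely continuous invariant \emph{probability} measure you want for $R$ does not exist: $f(x)=x/(1-2x)\pmod G$ has an indifferent fixed point at $0$ (like the Farey map), and its natural invariant density $\tfrac1x+\tfrac1{1-x}$ is infinite near $0$. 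The paper therefore proves only conservativity of $f\times f$ with respect to this infinite measure (Plug Lemma plus a Hopf-type wandering-set argument --- note that conservativity of a product of infinite-measure-preserving maps is not automatic), and never needs ergodicity or Birkhoff. Even granting ergodicity in the infinite-measure setting, an ergodic-theorem argument only sends orbits into sets already known to have positive measure, which returns you to the missing seed.

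A smaller but genuine flaw: your justification of the implication $M(R(\alpha,\beta))=1\Rightarrow M(\alpha,\beta)=1$ via Poincar\'e recurrence is incorrect. Poincar\'e recurrence says points of $Z\times N$ return to $Z\times N$; it does not say that almost every non-periodic point ever enters $Z\times N$, and indeed there is a positive-measure set of points (the stable period-$4$ orbits, of measure $4\alpha\beta$ in the symbolic model) that never visit the return region. The correct statement is the structural one in Theorem \ref{thm:renormalization}(3) (equivalently Theorem \ref{thm:tiling renormalization}(3)): up to a null set, an orbit that never meets the renormalization region is periodic of period four. With that replacement your implication is fine, but it still cannot carry the proof without the decay estimate and the recurrence result described above.
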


However, this result does not hold for all irrational pairs $(\alpha, \beta)$.

\begin{theorem}
\name{thm:4}
For any $\epsilon>0$, there are irrationals $\alpha$ and $\beta$ so that
$M(\alpha, \beta)<\epsilon$. 
\end{theorem}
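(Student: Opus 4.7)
The plan is to leverage the renormalization operation to construct special parameters at which the measure of non-periodic points is nearly full. Let $R : \Omega \to [0,\half]^2$ denote the renormalization map on parameter space and let $r(\alpha,\beta) \in (0,1]$ denote the relative Lebesgue measure of the return region, so that the first return of $\Psi_{\alpha,\beta}$ to this region is affinely conjugate to $\Psi_{R(\alpha,\beta)}$. The conjugacy identifies periodic points in the return region with those of $\Psi_{R(\alpha,\beta)}$, and since non-periodic points outside the return region contribute at most $1 - r(\alpha,\beta)$ to the total mass, one obtains
\[
1 - M(\alpha,\beta) \;\geq\; r(\alpha,\beta)\cdot\bigl(1 - M(R(\alpha,\beta))\bigr),
\]
and therefore, by iteration,
\[
1 - M(\alpha,\beta) \;\geq\; \Bigl(\prod_{i=0}^{n-1} r(R^i(\alpha,\beta))\Bigr)\cdot\bigl(1 - M(R^n(\alpha,\beta))\bigr).
\]

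To exploit this, I would first identify target parameters $(\alpha^\star,\beta^\star)$ at which $1 - M$ is close to $1$. A natural place to look is the boundary of $[0,\half]^2$, where the map degenerates: when $\alpha = 0$, the $x$-coordinate is preserved and on each invariant horizontal fiber the dynamics reduces to a one-dimensional rotation by $\beta$, giving $M(0,\beta) = 0$ for $\beta$ irrational; an analogous statement holds when $\beta = 0$. Alternatively, one can hunt for a periodic orbit of $R$ in the interior whose combinatorial type forces, via a self-similar argument, the complement of each level of the return region to be fully non-periodic, so that $M = 0$ on the orbit.

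Having pinned down a target $(\alpha^\star,\beta^\star)$ with $M(\alpha^\star,\beta^\star)$ small, I would build $(\alpha,\beta)$ inductively by lifting through preimages of $R$. Starting from $(\alpha_0,\beta_0) = (\alpha^\star,\beta^\star)$, at step $k$ I select a preimage $(\alpha_{k+1},\beta_{k+1})$ with $R(\alpha_{k+1},\beta_{k+1}) = (\alpha_k,\beta_k)$ for which $r(\alpha_{k+1},\beta_{k+1})$ exceeds $1 - \delta_k$, where $\sum \delta_k < \epsilon/2$. This is where the explicit combinatorial description of $R$, analogous to a two-dimensional Gauss map, enters: preimages come in a discrete family indexed by a ``renormalization digit,'' and one must consistently choose the digit that keeps $r$ near $1$. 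Setting $(\alpha,\beta) = (\alpha_n,\beta_n)$ for large $n$, the telescoping product $\prod_{i=0}^{n-1} r(R^i(\alpha,\beta))$ exceeds $1 - \epsilon/2$, and combining with the chosen $(\alpha^\star,\beta^\star)$ yields $M(\alpha,\beta) < \epsilon$.

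The main obstacle is the simultaneous management of three constraints: keeping each $r(\alpha_{k+1},\beta_{k+1})$ close to $1$, guaranteeing that both limit coordinates $\alpha$ and $\beta$ are irrational, and, most delicately, arranging that the target parameter $(\alpha^\star,\beta^\star)$ genuinely has $1 - M$ close to $1$ rather than merely positive. Irrationality should be arrangeable by a Baire-category or diagonal argument, perturbing the preimage at each step within its discrete family; the construction is in direct analogy with how Liouville numbers are built by prescribing large continued-fraction partial quotients. The technical heart of the proof is therefore a careful combinatorial analysis of $R$ near its ``rational'' or degenerate fibres, producing the explicit sequence of digits required.
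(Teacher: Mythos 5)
There is a genuine gap, and it is twofold. First, your scheme needs a supply of target parameters $(\alpha^\star,\beta^\star)$ at which $1-M$ is \emph{already} close to $1$, and neither of your proposed sources exists. An interior periodic orbit of the renormalization $R=f\times f$ is exactly the situation covered by the recurrent case (Theorem \ref{thm:recurrent case}, statement (\ref{item:inequality}) of the outline): if the $R$-orbit accumulates at a point with both coordinates positive, then $M=1$; so interior $R$-periodic parameters give $M=1$, not $M=0$. The boundary $\alpha=0$ is unusable for a different reason: the renormalization structure is only defined on the open square, all $f$-preimages of $0$ are rational (so your finite pull-back $(\alpha_n,\beta_n)$ with $f^n(\alpha_n)=0$ would be rational), and since $M=1$ for a.e.\ parameter (Theorem \ref{thm:periodicity}), $M$ is highly discontinuous, so ``near the degenerate boundary'' gives no control on $M$ whatsoever. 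Thus the construction is circular: the targets you need are precisely the parameters the theorem is supposed to produce.

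Second, even granting targets, the scalar recursion $1-M(\alpha,\beta)\geq\prod_i r(R^i(\alpha,\beta))\cdot(1-M(R^n(\alpha,\beta)))$ is too lossy to ever give a bound near $1$. Here $r(\alpha,\beta)=(1-2\alpha)(1-2\beta)$, and every inverse branch of $f$ other than $x\mapsto x/(1+2x)$ takes values in $[\tfrac14,\tfrac12)$, so any step using another digit forces $r\leq\tfrac12$; keeping $r>1-\delta_k$ at every step forces the image to lie already near $(0,0)$, pushing the problem back onto the target. Moreover along any genuine orbit each coordinate must re-enter $[\tfrac14,\tfrac12)$ infinitely often (small values expand under $f$), so the telescoping product, which is exactly the paper's $D(\alpha,\beta,k)$, tends to $0$. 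Your inequality counts only the non-periodic points sitting inside the nested return regions and discards their sweep-out under the original map; the paper's proof keeps that information through the return-time cocycle, writing $\nu(\sO_{k+1})=d_k\,\bn_k\cdot N(\alpha,\beta,k)\1$ so that the decay of $d_k$ is compensated by the growth of $N(\alpha,\beta,k)\1$. And because the contribution of the earlier, uncontrolled part of the itinerary enters as an unknown nonnegative vector $\bz$, one cannot compare measures scalar-by-scalar: the paper must prove the \emph{entrywise} Decay Control Inequality (Lemma \ref{lemma:decay ineq}) along explicitly chosen alternating upward/rightward itinerary blocks of rapidly growing length, and irrationality comes for free from the coding theorem (Theorem \ref{thm:coding}) rather than a Baire-category perturbation. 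That cocycle, entrywise mechanism is the missing idea in your proposal.
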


From this together with basic observations about the action of renormalization on the parameter space, we obtain:
\begin{corollary}
\name{cor:5}
There is a dense set of irrational parameters
$(\alpha, \beta)$ so that $M(\alpha, \beta) \neq 1$. 
\end{corollary}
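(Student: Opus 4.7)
The plan is to combine the bad parameters furnished by Theorem~\ref{thm:4} with the renormalization invariance of the family. Let $\sR$ denote the induced map on the parameter space $[0,\half]\times[0,\half]$, characterized by the property that a first return map of $\Psi_{\alpha,\beta}$ (or of its extension $\til\Psi_{\alpha,\beta}$ referenced in the introduction) to an appropriate subdomain $S \subset Y \times N$ is affinely conjugate to $\Psi_{\sR(\alpha,\beta)}$. The first observation I would record is the monotonicity
\[
M\bigl(\sR(\alpha,\beta)\bigr) < 1 \quad \Longrightarrow \quad M(\alpha,\beta) < 1.
\]
Indeed, a positive-measure set $A$ of non-periodic points for $\Psi_{\sR(\alpha,\beta)}$ pulls back via the affine conjugacy to a positive-measure set $A' \subset S$ of points that are non-periodic for the return map; and if some $p \in A'$ were periodic for $\Psi_{\alpha,\beta}$ of period $n$, then its orbit would return to $S$ at some time $m \leq n$, forcing $p$ to be periodic under the return map --- a contradiction.

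Theorem~\ref{thm:4} applied with any $\epsilon<1$ produces a seed pair $(\alpha_*,\beta_*)$ of irrationals with $M(\alpha_*,\beta_*)<1$. I would then invoke what the statement of the corollary calls ``basic observations about the action of renormalization on the parameter space'': namely, that $\sR$ is a piecewise affine map on $[0,\half]\times[0,\half]$ with finitely many branches defined by rational linear maps, and that the backwards orbit $\bigcup_{n\geq 0}\sR^{-n}\{(\alpha_*,\beta_*)\}$ is dense in $[0,\half]\times[0,\half]$. This is the two-dimensional analogue of the classical statement that pre-images of any irrational under the Gauss map are dense in $[0,1]$, and its proof would consist of an explicit analysis of the branch structure of $\sR$ together with the observation that rational pre-images of irrational points remain irrational.

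Putting the two steps together, every open subset of $[0,\half]\times[0,\half]$ meets the backwards $\sR$-orbit of $(\alpha_*,\beta_*)$; each element of that orbit is irrational and, by iterated application of the monotonicity above, satisfies $M<1$. This produces the dense set of irrational parameters with $M(\alpha,\beta)\neq 1$ claimed in the corollary.

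The main difficulty is the density statement for backwards $\sR$-orbits. It requires an explicit description of $\sR$ on the parameter space, and in particular an expansion property ensuring that iterated inverse branches partition $[0,\half]\times[0,\half]$ into cells of diameter shrinking to zero. This is substantially less deep than the measure estimates behind Theorems~\ref{thm:periodicity} and~\ref{thm:4}, but it does presuppose the explicit renormalization formulas developed in the body of the paper.
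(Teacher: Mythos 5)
Your route is the same as the paper's: take the seed pair from Theorem~\ref{thm:4}, observe that $M\neq 1$ propagates backwards along $(f\times f)$-preimages because Theorem~\ref{thm:ren} makes a return map of $\til\Psi_{\alpha',\beta'}$ affinely conjugate to $\til\Psi_{f(\alpha'),f(\beta')}$, and then argue that the backward orbit of the seed is dense among irrational parameters. Your monotonicity step (a point of the return domain that is periodic for the full map is periodic for the first return map, so a positive-measure set of non-periodic points pulls back through the affine conjugacy) is correct and is exactly what the paper leaves implicit.

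The gap is in the density step, which you defer, and whose mechanism you misdescribe. The renormalization acts on parameters by $f\times f$ with $f(t)=\frac{t}{1-2t} \pmod{G}$; this is a Gauss-map-like transformation with \emph{countably} many branches $I_{n,r}$, indexed by the alphabet $A$ of Section~\ref{sect:param space}, and each inverse branch $g_{n,r}(x)=\frac{rx+n}{1+2(rx+n)}$ is M\"obius, not affine. So ``piecewise affine with finitely many branches'' is not available, and the uniform expansion you invoke fails near $0$, where the branch $I_{0,1}$ has derivative tending to $1$; shrinking of iterated inverse-branch cells is therefore not automatic. The paper supplies precisely the missing lemma as Theorem~\ref{thm:coding}: every itinerary codes a unique point, depending continuously on the itinerary (uniqueness is proved by noting that an irrational orbit visits $(\frac{1}{4},\frac{1}{2})$ infinitely often, where $f$ expands by a definite factor), and a point is rational iff its itinerary is eventually $(0,1)$. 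Density of the $(f\times f)$-preimages of the seed then follows from density of shift-preimages in the symbolic space, and irrationality of those preimages is immediate (from the itinerary criterion, or, as you note, because $f$ maps rationals to rationals). So your plan coincides with the paper's, but the ``main difficulty'' you flag is exactly where the content lies, and your proposed justification for it would need to be replaced by the coding argument of Section~\ref{sect:param space}.
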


Questions involving the measure-theoretic prevalence of periodic orbits for piecewise isometries are common in the literature.
Probably the first questions of this form appear in \cite{Ashwin97} and \cite[\S6]{Goetz00}. The above theorems highlight the subtlety of this question.
For the family $\{\Psi_{\alpha, \beta}\}$, we utilize a renormalization procedure to analyze $M(\alpha,\beta)$. We were able to understand the value of this function for almost every every pair $(\alpha, \beta)$ in Theorem \ref{thm:periodicity}, and for very specific pairs in
Theorem \ref{thm:4} and Corollary \ref{cor:5}. But, it is reasonable to ask if there is a nice characterization of the set 
$$\{(\alpha, \beta)~:~M(\alpha, \beta) \neq 1\}.$$
Or for instance, what is this set's Hausdorff dimension? These finer questions remain unanswered and appear difficult. 

\subsection{Renormalizing the polygon exchange maps}
\name{sect:intro renormalization}
A {\em renormalization} of a polygon exchange map $T:X \to X$, is the choice of a finite union $Y$ of polygonal subsets of $X$ with disjoint interiors such that the first return map $T_Y:Y \to Y$ is also a polygon exchange map. 

For the maps $\Psi_{\alpha, \beta}$, we actually renormalize on a double cover. Let $\til Y=\R^2/\Z^2$, and note that 
the natural projection $\pi:\til Y \to Y$ is a double cover. We define $\til A_s=\pi^{-1}(A_s)$ for $s \in \{\pm 1\}$. 
Then we define the lift of the map $\Psi_{\alpha, \beta}$ to be the map $\til \Psi_{\alpha, \beta}:\til Y \times N \to \til Y \times N$ given by 
\begin{equation}
\name{eq:psi til}
\til \Psi_{\alpha, \beta}\big((x,y),\v\big)=\big((x+b s\alpha,y+a s \beta) \pmod{\Z^2},(bs,as)\big),
\end{equation}
where $s \in \{\pm 1\}$ is chosen so that $(x,y) \in \til A_s$.

The maps $\til \Psi_{\alpha, \beta}$ are parameterized by a choice of $(\alpha, \beta)$ from the square $[0,\half] \times [0, \half]$.
We will show when $(\alpha, \beta)$ is taken from the open square $(0,\half) \times (0, \half)$,
a certain return map of $\til \Psi_{\alpha, \beta}$ is affinely conjugate to a map of the form $\til \Psi_{f(\alpha), f(\beta)}$.
Here $f$ is the map 
\begin{equation}
\name{eq:f}
f:[0, \thalf) \to [0, \thalf]\quad \textrm{is given by} \quad f(t)=\frac{t}{1-2 t} \pmod{G},
\end{equation}
where $G$ is the group of isometries of $\R$ preserving $\Z$. This group is generated by $t \mapsto -t$
and $t \mapsto 1-t$, so the interval $[0, \half]$ represents a fundamental domain for the group action.
We use $t \pmod{G}$ to denote the unique $g(t) \in [0, \half]$ with $g \in G$. To define the return map under consideration 
we define the rectangle
\begin{equation}
\name{eq:Z}
Z=[\alpha, 1-\alpha) \times [\beta,1-\beta) \subset \til Y.
\end{equation}
We define $\wh \Psi$ be the first return map of $\til \Psi_{\alpha,\beta}$ to $Z \times N$.
This map is affinely conjugate to the map $\til \Psi_{f(\alpha), f(\beta)}$ via a conjugating map of the form
\begin{equation}
\name{eq:phi}
\phi:Z \times N \to \til Y \times N; \quad \phi(x,y,\v)=\big(\psi_\alpha(x), \psi_\beta(y), \v\big).
\end{equation}
Here, we have used $\psi_t$ with $t \in \{\alpha, \beta\}$ to denote the maps
\begin{equation}
\name{eq:psi}
\psi_t:[t, 1-t) \to \R /\Z; \quad \psi_t(x)=\begin{cases}
\frac{x-\frac{1}{2}}{1-2 t}+\frac{1}{2} & \textit{if $\exists n \in \Z$ s.t. $n \leq \frac{t}{1-2 t}<n+\half$,}\\
\frac{\frac{1}{2}-x}{1-2 t} & \textit{otherwise.}
\end{cases}
\end{equation}
The two cases correspond to the possibility that we use an orientation preserving or reversing element of $G$ to move 
$\frac{t}{1-2 t}$ into $[0, \half]$. 

We now formally state our renormalization theorem. 
\begin{theorem}\name{thm:ren}
Fix parameters $\alpha, \beta \in (0, \half)$. Define $f$, $Z$, and $\phi$ as above. 
The first return map $\wh \Psi$ of $\til \Psi_{\alpha,\beta}$ to $Z \times N$ satisfies
$$\phi \circ \wh \Psi=\til \Psi_{f(\alpha),f(\beta)} \circ \phi.$$
\end{theorem}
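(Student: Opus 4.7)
The plan is to verify the conjugacy directly, decomposing $Z \times N$ into pieces on which $\wh \Psi$ is a translation and matching each piece to the corresponding piece of $\til\Psi_{f(\alpha), f(\beta)} \circ \phi$. The crucial structural observation is that one iteration of $\til \Psi_{\alpha, \beta}$ changes exactly one of $x$ or $y$: if $\v = (a, b) \in N$ with $b = 0$, only $y$ is incremented (by $\pm\beta$) and $\v$ becomes vertical; if $a = 0$, only $x$ is incremented (by $\pm\alpha$) and $\v$ becomes horizontal. Thus iterations alternately update the two coordinates. Moreover, the sign controlling each update is the checkerboard function $s(x,y) = \sigma(x)\sigma(y)$, where $\sigma(t) = 1$ if $t \in [0, \half) \pmod 1$ and $-1$ otherwise. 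These two facts reduce the analysis to an essentially decoupled pair of one-dimensional return problems, coupled only through the sign.

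The one-dimensional subproblem is: starting at $x \in [\alpha, 1-\alpha)$, when does a trajectory that adds $\pm \alpha$ modulo $1$ at each step first return to $[\alpha, 1-\alpha)$? This is essentially the first-return map of the circle rotation $x \mapsto x + \alpha$ to an interval, a piece of classical Rauzy--Veech induction. A direct computation shows that after rescaling $[\alpha, 1-\alpha)$ (length $1-2\alpha$) to the unit circle via $\psi_\alpha$, the induced return becomes rotation by $\alpha/(1-2\alpha) \pmod G = f(\alpha)$. The two branches in the definition of $\psi_\alpha$ correspond precisely to whether $\alpha/(1-2\alpha)$ is reduced into $[0, \half]$ by an orientation-preserving or orientation-reversing element of $G$, and the correct choice of branch ensures the induced rotation lies in the normalized range. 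A symmetric analysis applies to $y$ and $f(\beta)$.

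Combining these, the next step is to decompose $Z \times N$ into finitely many pieces indexed by (a) the starting direction $\v$, (b) the shifted band of width $\alpha$ containing $x$, (c) the analogous band for $y$, and (d) the checkerboard sign $s(x,y)$. On each piece, the alternating structure of $\til \Psi$ implies that the first return to $Z \times N$ happens precisely when both the $x$- and $y$-subsystems have simultaneously returned to their intervals, at which point $\wh \Psi$ acts as a pure translation. For each piece one checks $\phi \circ \wh \Psi = \til \Psi_{f(\alpha), f(\beta)} \circ \phi$ directly: the $x$-translation matches $\pm f(\alpha)$ and the $y$-translation matches $\pm f(\beta)$ by the one-dimensional computation, and the direction $\v$ transforms correctly because $s$ propagates correctly through the alternating updates.

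The main obstacle is the combinatorial complexity of the case analysis: the number of cases grows with $n = \floor{\alpha/(1-2\alpha)}$ and its analog for $\beta$, and verifying the direction transformation requires carefully tracking $s$ through multiple iterations. The verification is ultimately mechanical, but the bookkeeping must be set up carefully to cover all parameter regimes---in particular to match the two branches of $\psi_\alpha$ and $\psi_\beta$ with the corresponding sub-cases of the reduction modulo $G$.
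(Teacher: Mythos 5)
Your preliminary observations are correct (each step of $\til\Psi_{\alpha,\beta}$ updates exactly one coordinate and transposes the direction, the sign is the checkerboard sign of $\til A_{\pm 1}$, and the one-dimensional fact you invoke --- that the first return of $T_\alpha$ to $[\alpha,1-\alpha)$ is rescaled by $\psi_\alpha$ to $T_{f(\alpha)}$, with the two branches of $\psi_\alpha$ matching the reduction mod $G$ --- is exactly the paper's Theorem \ref{thm:rotren}). The gap is that your reduction of the two-dimensional return map to these one-dimensional return maps is asserted rather than proved, and it is the actual content of the theorem. Note first that each application of $\wh\Psi$ translates exactly one coordinate and leaves the other fixed (just as each application of $\til\Psi_{f(\alpha),f(\beta)}$ does), so the picture of ``both subsystems simultaneously returning'' is not the mechanism; read literally it even suggests both coordinates move under one application of $\wh\Psi$, which is false. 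What has to be established is an excursion lemma: if the updated coordinate, say $x$, leaves $[\alpha,1-\alpha)$, then along the subsequent orbit (i) every further $x$-increment has the \emph{same} sign, so that $x$ runs along a genuine orbit of $T_{\pm\alpha}$ rather than an arbitrary $\pm\alpha$ walk --- this requires the checkerboard signs to conspire, which in turn uses that the two $y$-values visited during the excursion lie in the same half of the circle; (ii) the $y$-coordinate merely oscillates between its original value and a single neighbor $y\pm\beta$, and is back at its original value at exactly the step at which $x$ re-enters $[\alpha,1-\alpha)$; and (iii) the direction vector at that step is the expected one. Only with (i)--(iii) is the $x$-component of $\wh\Psi$ equal to the rotation return map $\wh T_\alpha^{\pm 1}$, so that $\psi_\alpha$ converts it into one step of $T_{f(\alpha)}$; your phrases ``coupled only through the sign'' and ``the verification is ultimately mechanical'' pass over precisely this point, and the final piece-by-piece check has nothing to rest on without it.

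For comparison, this missing step is exactly what the paper proves, in tiling language, as the horizontal/vertical box analysis (Lemma \ref{lem:xor}, Proposition \ref{prop:leaving boxes}, and Theorems \ref{thm:tiling renormalization} and \ref{thm:return time}); Theorem \ref{thm:ren} is then obtained by pulling the symbolic renormalization of $\Phi$ back through the coding map $\pi$ and combining it with Theorem \ref{thm:rotren} (this is Theorem \ref{thm:rectren}). A direct torus-level argument along your lines can certainly be completed, and would even have the merit of not relying on injectivity of the coding (hence no irrationality hypothesis); but the excursion lemma must then be proved by an induction along the excursion with the sign pattern as the inductive invariant --- excursion lengths are unbounded as $(\alpha,\beta)$ ranges over $(0,\half)^2$, so this is not a fixed finite case table, and it constitutes the real work rather than bookkeeping.
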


The primary case of interest is when $\alpha$ and $\beta$ are irrational. Then, $f(\alpha)$ and $f(\beta)$ are also irrational. Therefore we can apply the above renormalization infinitely many times.

\subsection{Corner Percolation and Truchet tilings}
\name{sect:intro corner}
We will understand the family of rectangle exchange maps $\{\Psi_{\alpha, \beta}\}$ using a combinatorial tool we call the {\em arithmetic graph}, following Schwartz. (See \cite{S07}, for instance). In our case, this fundamental tool is connected to the corner percolation model introduced by B\'alint T\'oth, and studied in depth by G\'abor Pete \cite{Pete08}. (We give a different treatment of the topic in this paper.) 

The {\em corner percolation tiles} are the four $1 \times 1$ square tiles decorated by arcs as below. 
\begin{center}
\includegraphics[height=0.5in]{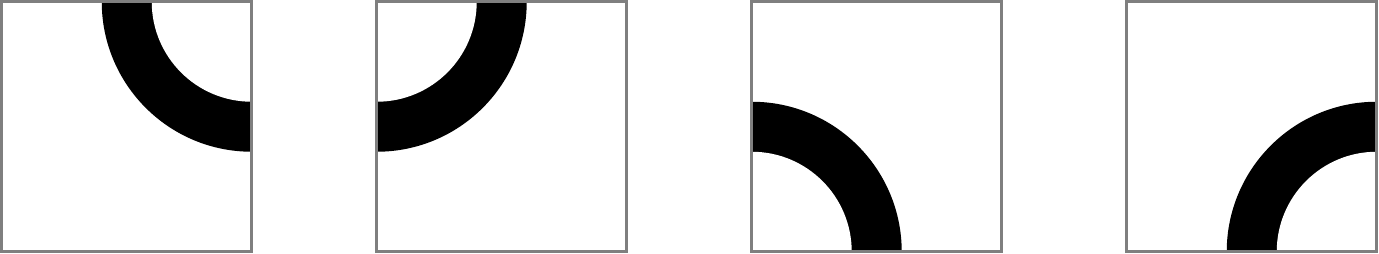}
\end{center}
Consider tilings of the plane by corner percolation tiles centered at the points in $\Z^2$. 
Any two adjacent tiles meet along a common edge.
We will say that such a tiling is a {\em corner percolation tiling} if for each pair of adjacent tiles meeting along a common edge $e$, either both the arcs of the tiles touch $e$, or
neither of the arcs touch $e$. So, in a corner percolation tiling, the arcs of the tiles join to form a family of simple curves in the plane. These are the {\em curves} of the tiling.

The {\em Truchet tiles} are the two $1 \times 1$ squares decorated by arcs as below.
\begin{center}
\includegraphics[height=0.5in]{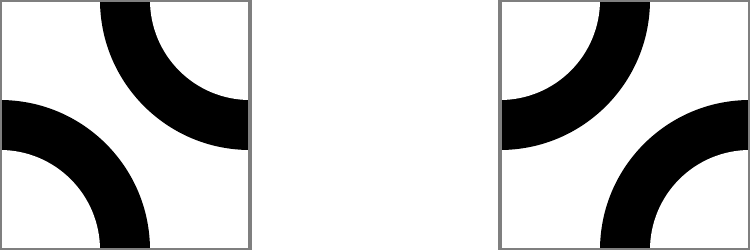}
\end{center}
We call the left tile $T_{-1}$ and the right tile $T_{1}$. The subscripts were chosen to indicate the slope of segments formed
by straightening the arcs to segments.

Given a function $\tau:\Z^2 \to \{\pm 1\}$, the {\em Truchet tiling determined by $\tau$} is the tiling of the plane formed by placing
a copy of the tile $T_{\tau(m,n)}$ centered at the point $(m,n)$ for each $(m,n) \in \Z^2$. 
We denote this tiling by $\tiling{\tau}$. 
Variations of these tilings were first studied for aesthetic reasons by S\'ebastien Truchet in the early 1700s \cite{Truchet}, and this version of tiles were first described by Smith and Boucher
 \cite{Smith87}. An example of a Truchet tiling relevant to this paper is given in figure
\ref{fig:arithmetic graph}.

\begin{figure}[b]
\includegraphics[width=6.5in]{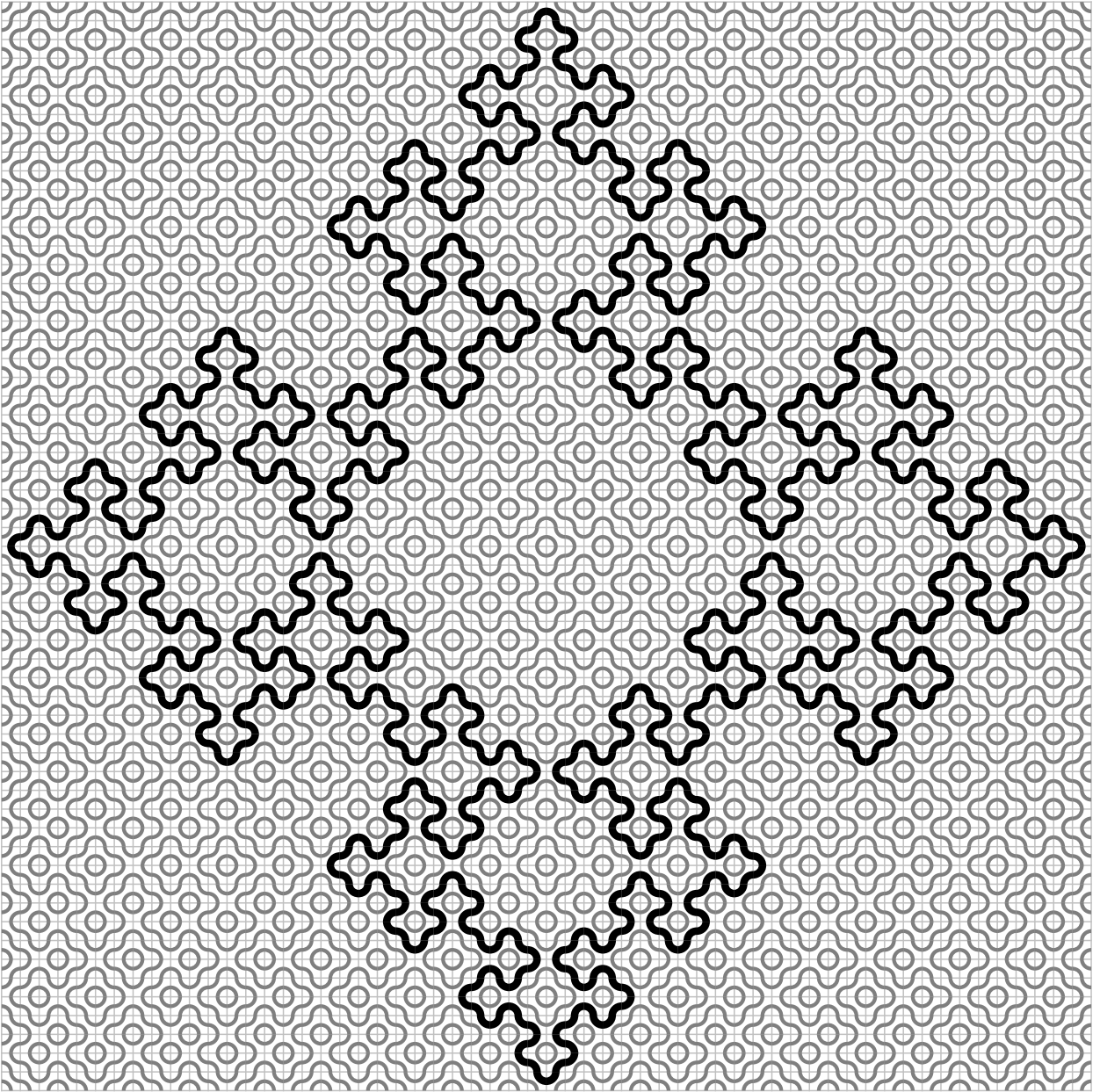}
\caption{This is the tiling $[\tau_{\alpha, \beta, x,y}]$ in the notation of \S\ref{sect:arithmetic graph} with $\alpha=\beta=\frac{2-\sqrt{2}}{2}$, $x=\frac{\sqrt{2}}{4}$ and
$y=\frac{2+\sqrt{2}}{4}$. This tiling is renormalization invariant in the sense of \S\ref{sect:renormalizing tilings}, explaining the apparent self-similarities.}
\name{fig:arithmetic graph}
\end{figure}

There is a two-to-one map from the corner percolation tiles to the Truchet tiles given by taking the union of the decorations of a corner percolation tile and its rotation by 180 degrees.
By applying this map to each tile in a corner percolation tiling, we obtain a {\em corner percolation induced Truchet tiling}. 
\begin{proposition}
\name{prop:corner perc induced}
Let $\tau:\Z^2 \to \{\pm 1\}$. The following statements are equivalent.
\begin{enumerate}
\item The Truchet tiling $[\tau]$ is induced by a corner percolation tiling.
\item There are maps $\Z \to \{\pm 1\}$ given by $m \mapsto \omega_m$ and $n \mapsto \eta_n$ so that $\tau(m,n)=\omega_m \eta_n$. 
\item For each $m,n \in \Z$, we have the following identity involving a product of values of $\tau$: 
$$\tau(m,n)  \tau(m+1,n)  \tau(m,n+1)  \tau(m+1,n+1)=1.$$
\end{enumerate}
\end{proposition}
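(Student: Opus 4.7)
The plan is to prove $(2)\Leftrightarrow(3)$ by a routine algebraic argument and $(1)\Leftrightarrow(2)$ by analyzing the combinatorics of arc endpoints in a corner percolation tiling.

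For $(2)\Rightarrow(3)$, if $\tau(m,n)=\omega_m\eta_n$ then the product in (3) telescopes to $\omega_m^2\omega_{m+1}^2\eta_n^2\eta_{n+1}^2 = 1$. For $(3)\Rightarrow(2)$, I use that $\tau$ takes values in $\{\pm 1\}$ to rewrite (3) as the recursion $\tau(m+1,n+1)=\tau(m,n)\tau(m+1,n)\tau(m,n+1)$, which determines $\tau$ from its values on the two coordinate axes. An induction on $|m|+|n|$ then gives
\[ \tau(m,n) = \frac{\tau(m,0)\,\tau(0,n)}{\tau(0,0)} \quad \textrm{for all $(m,n) \in \Z^2$,} \]
and (2) follows with $\omega_m = \tau(m,0)$ and $\eta_n = \tau(0,n)/\tau(0,0)$.

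For $(1)\Leftrightarrow(2)$, I would encode a corner percolation tile by a pair $\bigl(h(m,n),v(m,n)\bigr)\in\{\pm 1\}^2$, where $h(m,n)=+1$ means the arc of the tile at $(m,n)$ touches its right edge (and not its left), $h(m,n)=-1$ means it touches the left edge, and the sign $v(m,n)$ analogously selects top versus bottom. The corner percolation condition at the vertical edge shared by tiles $(m,n)$ and $(m+1,n)$ is then the assertion that the arc of $(m,n)$ touches this edge iff the arc of $(m+1,n)$ does, i.e., $h(m+1,n)=-h(m,n)$; likewise $v(m,n+1)=-v(m,n)$. These alternation conditions force the separable form
\[ h(m,n)=(-1)^m h(0,n), \qquad v(m,n)=(-1)^n v(m,0). \]
The induced Truchet tile at $(m,n)$ depends only on which diagonal of the unit square contains the arc corner, i.e., on the product $h(m,n)\,v(m,n)$. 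Under the sign convention that identifies the two Truchet types $T_{\pm 1}$ with the two diagonals, this gives
\[ \tau(m,n) = \pm\, h(m,n)\,v(m,n) = \pm(-1)^{m+n}\,v(m,0)\,h(0,n), \]
which has the product form (2) with $\omega_m = \pm(-1)^m v(m,0)$ and $\eta_n = (-1)^n h(0,n)$. Conversely, given (2), defining $v(m,0)$ and $h(0,n)$ from $\omega_m$ and $\eta_n$ and extending by the alternation rules produces a corner percolation tiling whose induced Truchet tiling is $\tiling{\tau}$.

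The main obstacle is careful bookkeeping of sign conventions: pinning down the identification of the Truchet subscript with the diagonal on which the arc corner of its corner percolation preimage sits, and keeping the orientations consistent across the tiling. Once those conventions are fixed, the separable structure of $h$ and $v$ forced by the percolation condition makes the factorization (2) essentially automatic.
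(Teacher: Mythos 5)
Your proof is correct. Note that the paper itself gives no argument here: it explicitly leaves the proof to the reader, with the hint that the easiest route is to show that statements (1) and (2) are each equivalent to (3), whereas you prove $(2)\Leftrightarrow(3)$ and $(1)\Leftrightarrow(2)$; this is only an organizational difference. Your two halves check out: the telescoping computation gives $(2)\Rightarrow(3)$, and for $(3)\Rightarrow(2)$ the recursion $\tau(m+1,n+1)=\tau(m,n)\tau(m+1,n)\tau(m,n+1)$ (together with its three rearrangements for the other quadrants) does determine $\tau$ from its values on the axes by induction on $|m|+|n|$, and the separable candidate $\tau(m,0)\tau(0,n)/\tau(0,0)$ satisfies the same recursion and boundary values. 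For $(1)\Leftrightarrow(2)$, your $(h,v)$ encoding is exactly right: since each corner-percolation arc meets one vertical and one horizontal edge, the matching condition along vertical edges constrains only $h$ (forcing $h(m+1,n)=-h(m,n)$) and along horizontal edges only $v$, giving the separable alternating form, and the $180^\circ$-symmetrized Truchet type depends only on the diagonal, i.e.\ on $h\,v$. With the paper's convention that $T_{+1}$ straightens to slope $+1$ (arcs in the top-left and bottom-right corners, where $hv=-1$), the fixed global sign is $\tau=-hv$, but as you say this constant can be absorbed into $\omega$, so the sign bookkeeping you flag is genuinely harmless; the converse direction, reconstructing a corner percolation tiling from $\omega,\eta$ by the alternation rules, is equally routine.
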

The easiest way to prove this statement is to prove that the first and second statements are equivalent to the third. We leave the proof to the reader.

\subsection{Dynamics on Truchet tilings}
\name{sect:intro dynamics on truchet}
We will explain how to think of the space of Truchet tilings as a dynamical system.

Consider the unit square with horizontal and vertical sides centered at the origin. We let $N$ be the collection of four inward pointed unit normal vectors based at the midpoints of the edges of this square, as in equation \ref{eq:Nset}.
\begin{center}
\includegraphics[height=1in]{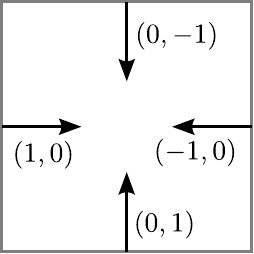}
\end{center}

Let $\sT$ denote the collection of all maps $\Z^2 \to \{\pm 1\}$. 
The collection of maps $\sT$ should be given the product topology (or equivalently, the topology of pointwise convergence on compact sets). 

We will define a dynamical system on $\sT \times N$. First we give an informal definition. Choose $(\tau, \v) \in \sT \times N$. The inward normal $\v \in N$ is a vector based at a midpoint of an edge of the square centered at the origin. The Truchet tiling determined by $\tau$ places the tile $T_{\tau(0,0)}$ at the origin.
We drag the vector inward along an arc of this tile keeping the vector tangent to the arc. After a quarter turn, we end up as a vector pointed out of the square centered at the origin.
So, the vector points into a square adjacent to the square at the origin. We translate the tiling and this vector
so that this adjacent square becomes centered at the origin.

Formally, this is the dynamical system $\Phi_0:\sT \times N \to \sT \times N$ given by 
\begin{equation}
\name{eq:Phi}
\Phi_0 \big(\tau, (a,b)\big)=\big(\tau \circ S_{s(b,a)}, s(b,a)\big),
\end{equation}
where $s=\tau(0,0) \in \{\pm 1\}$ and $S_{s(b,a)}$ is the translation of $\Z^2$ by the vector
$s(b,a)$.

It is important to note that because the corner percolation induced Truchet tilings are translation invariant, 
they are also $\Phi_0$ invariant. So, $\Phi_0$ restricts to an action on corner percolation induced Truchet tilings.
 
Let $\Omega_{\pm}$ to denote the collection of all maps $\Z \to \{\pm 1\}$. The set $\Omega_\pm$ is a shift space. We define the {\em shift map} $\sigma: \Omega_\pm \to \Omega_\pm$ by 
\begin{equation}
\name{eq:shift}
[\sigma(\omega)]_n=\omega_{n+1}.
\end{equation}
When $\Omega_\pm$ is equipped with its natural topology, $\sigma$ is a homeomorphism of $\Omega_\pm$. 

Consider the map 
$\Omega_{\pm} \times \Omega_{\pm} \to \sT$ given by $(\omega, \eta) \mapsto \tau_{\omega, \eta}$, where
$\tau_{\omega,\eta}$ denotes the map 
\begin{equation}
\name{eq:tau omega}
\tau_{\omega, \eta}: \Z^2 \to \{\pm 1\}; \quad \tau_{\omega, \eta}(m,n)=\omega_m \eta_n
\end{equation}
as in statement 2 of Proposition \ref{prop:corner perc induced}. This map is two-to-one, and the image is the collection of corner percolation induced Truchet tilings. 
There is a natural lift of the action of $\Phi_0$ on the image to the space $X=\Omega_\pm  \times \Omega_\pm \times N$. This lift is the map
 $\Phi:X \to X$ given by
\begin{equation}
\name{eq:Phi2}
\Phi\big(\omega, \eta, (a,b)\big)=\big(\sigma^{sb} (\omega), \sigma^{sa} (\eta), s(b,a)\big), \quad
\textrm{with $s=\omega_0 \eta_0 \in \{\pm 1\}$.}
\end{equation}

\subsection{Overview} \name{sect:overview} We have now introduced enough of the mathematical objects appearing in the paper, so we can give an overview of the ideas of this paper. 

The rectangle exchange maps $\til \Psi_{\alpha, \beta}$ are factors of the map $\Phi$ in the sense
that for all irrational $\alpha$ and $\beta$, there is an embedding 
\begin{equation}
\name{eq:factor}
\pi:\til Y \times N \to X=\Omega_\pm \times \Omega_\pm \times N \quad \text{so that} \quad \pi \circ \til \Psi_{\alpha, \beta}=\Phi \circ \pi.
\end{equation}
The map $\pi$ can be extended to a continuous embedding from a coding space as is often done
for interval exchange maps. See the coding construction for interval exchange maps in \cite{KH95}, for instance.

We will describe a renormalization operation for the map $\Phi$. Using the map $\pi$,
we are able to restrict this renormalization operation to a renormalization of the rectangle exchange
maps $\til \Psi_{\alpha, \beta}$. This enables us to prove Theorem \ref{thm:ren}. 

We are able to prove our measure theoretic results using a detailed analysis of the renormalization of these rectangle exchange maps. Of particular importance is a finite dimensional cocycle defined over
the renormalization dynamics of the parameter space. We call this the {\em return time cocycle}, because the cocycle conveys information about return times of the rectangle exchange maps to the subsets we use to define the return maps for our renormalization. 

We will now outline the proof of Periodicity Almost Everywhere (Theorem \ref{thm:periodicity}):
\begin{enumerate}
\item For each $\alpha, \beta \in [0,\half]$, we define the measure $\nu_{\alpha, \beta}$ on $X$ to be $\lambda \circ \pi^{-1}$,
where $\lambda$ is the Lebesgue probability measure on $\til Y \times N$ and $\pi$ is the embedding 
which was mentioned
in equation \ref{eq:factor} and
depends on $\alpha$ and $\beta$ . These measures are $\Phi$-invariant.
\name{item:factor}
\item We define the notion of a stable periodic orbit of $\Phi$ and let $\NS \subset X$ be the collection of points without a stable periodic orbit. If $\alpha$ and $\beta$ are irrational, then any point $z$ which is periodic
under $\tPsi_{\alpha, \beta}$ satisfies $\pi(z) \not \in \NS$. \name{item:stable}
\item
We define a nested sequence of Borel subsets $\sO_k \subset X$
so that $\NS = \bigcap_{k=0}^\infty \sO_k$ up to sets of $\nu_{\alpha, \beta}$-measure zero.
Then, $\nu_{\alpha, \beta}(\NS)$ is the limit of a decreasing sequence, $\lim_{k \to \infty} \nu_{\alpha, \beta}(\sO_k)$. \name{item:limit}
\item \name{item:inequality} Using the return time cocycle, we are able to find an expression for 
$\nu_{\alpha, \beta}(\sO_k)$. We then show there is a continuous function $g:(0,\half] \times (0, \half] \to \R$ which is strictly less than one on its domain so that for all $k \geq 1$,
$$\nu_{\alpha, \beta}(\sO_{k}) \leq g\big(f^{k-1}(\alpha),f^{k-1}(\beta)\big) \nu_{\alpha, \beta}(\sO_{k-1}),$$
where $f$ is defined as in equation \ref{eq:f}. 
It follows that if the orbit $\{(f \times f)^k(\alpha, \beta)~:~k \geq 0\}$ has an accumulation 
point in $(0,\half] \times (0, \half]$, then $\nu_{\alpha, \beta}(\NS)=0$ as desired. 
(We remark that $g(\alpha, \beta)$ tends to one if either $\alpha$ or $\beta$ tends to zero.)
\item 
We show that Lebesgue-a.e. pair $(\alpha, \beta)$ recurs under $f \times f$.
\name{item:zero}
\end{enumerate}
 
We will now say a few words about the proof of Theorem \ref{thm:4}, which says that irrational parameters $(\alpha, \beta)$ exist so that the total measure of periodic points of $\Psi_{\alpha, \beta}$ is as close to zero as we like.
By the above argument, if the measure of the non-periodic points of $\Psi_{\alpha, \beta}$ is to be positive, then 
the orbit of $(\alpha, \beta)$ under $f \times f$ must diverge in the sense that
$$\limsup_{k \to \infty} \min \big(f^k(\alpha), f^k(\beta_k)\big)=0.$$
To find such an $(\alpha, \beta)$, we observe that $f$ is semi-conjugate to the shift map on the full one-sided shift space defined over a countable alphabet. 
So, we can describe a pair $(\alpha, \beta)$ in terms of a symbolic coding of its $f \times f$-orbit. We understand the cocycle mentioned above in terms of this symbolic coding, and show that for appropriate choices $\nu_{\alpha,\beta}(\NS)$ can be made as close to one as we like. 

\subsection{Background on polygon exchange transformations}
\name{sect:background}
Few general results about rectangle and polygon exchange transformations are known. Our lack of understanding is highlighted by a question of Gowers \cite{G00}: are all rectangle exchanges recurrent? It is known that (vast generalizations of) polygon exchange transformations have zero entropy \cite{GH97}. And in \cite{H81}, a criterion is provided for a rectangle exchange to be minimal. 

A {\em piecewise rotation} is a collection of polygons $X$ in $\R^2$ together with a map $T:X \to X$. The map $T$ cuts $X$ into finitely many polygonal pieces and applies an orientation preserving Euclidean isometry to each piece. 
The image $T(X)$ must have full area in $X$. 

If on each polygonal piece, $T$ performs either a translation or a rotation by a rational multiple of $\pi$, then there is a natural construction of a polygon exchange map $S:Y \to Y$ together with a covering map $c:Y \to X$ so that $c \circ S=T \circ c$. Thus, studying such a {\em rational piecewise rotation} is closely related to studying a polygon exchange map. 

There are several examples of renormalizable piecewise rotations.
In \cite{AKT01}, a family of piecewise rotations is studied. Renormalization is used to understand a few of the maps in this family whose pieces are rotated by rational multiplies of $\pi$. Another example of a renormalizable piecewise rotation is provided in \cite{LKV04}. And in \cite{L07}, a general theory of renormalization of piecewise rotations is developed.  In all these cases, periodic points are shown to be of full measure in the dynamical system. 

Another topic of the papers \cite{AKT01}, \cite{LKV04} and \cite{L07} is to understand the dynamics on the set of points whose orbits are not periodic. (E.g., we would like to know if the dynamics are minimal or uniquely ergodic on this set.) These questions could be asked about maps in the family $\{\Psi_{\alpha, \beta}\}$, but we postpone investigating these questions until a subsequent paper.

In \cite{GP04}, renormalization arguments are used to explain that natural return maps of piecewise rotations may be piecewise rotations with a countable collection of polygons of continuity. By an observation of Hubert, this holds even for rectangle exchange maps and products of interval exchange maps \cite[\S 6.1]{GP04}.

Outer billiards also gives rise to polygon exchange maps. Fix a convex polygon $P$ in $\R^2$. There are two continuous choices of maps $\phi$ from $\R^2 \sm P$ to the space of tangent lines of $P$ so that each point $Q \in \R^2 \sm P$ is sent to a tangent line containing $Q$. Choose such a $\phi$. For a typical point $Q$, $\phi(Q)$ intersects $P$ in exactly one point $Z$, which is a vertex of $P$. We define $T(Q)$ to be the point obtained applying the central reflection through $Z$ to the point $Q$. This defines the {\em outer (or dual) billiards map}, a map $T:\R^2 \sm P \to \R^2 \sm P$. ($T$ is well defined and invertible off a finite number of rays.) We refer the reader to \cite{Tab} for an introduction to the subject. 

The square of the outer billiards map is a piecewise translation of $\R^2 \sm P$. Return maps of $T^2$ to polygonal sets give possible sources of polygon exchange maps. Maps of these forms are studied \cite{Tab95}, \cite{BC09}, and \cite{Schwartz10}.

A {\em polytope exchange transformation} is the 3-dimensional analog of a polygon exchange. 
Recently, Schwartz described a renormalization scheme for a polytope exchange map arising from a compactification of a first return map of outer billiards map in the Penrose kite \cite{Schwartz11}. Due to the complexity of this polytope exchange, Schwartz's renormalization result is proved with the aid of a computer. It is believed that other outer billiards systems should exhibit similar phenomena. 

The polygon exchange transformations that arise in this paper were concocted to share properties with the polytope and polygon exchange maps studied in \cite{Schwartz11}. In particular, the Truchet tilings we study share many properties with the arithmetic graph studied in \cite{S07}, \cite{S09} and \cite{Schwartz11}. Namely, both give decorations of the plane by simple curves which may be closed or bi-infinite. 

\subsection{Outline}
In section \ref{sect:curve following}, we formally define a curve following map for a Truchet tiling. Roughly, this map is the same as the definition of $\Phi_0$ only we do not translate the tiling. 

In section \ref{sect:arithmetic graph}, we describe the construction of the arithmetic graph, which connects the curve following map to the dynamics of our polygon exchange maps. We use this construction to define the map $\pi$ as in equation \ref{eq:factor}.

In section \ref{sect:renormalizing tilings}, we explain how to renormalize the curve following map for Truchet tilings arising from corner percolation. For most such tilings, we find a subset of tiles such that the return map of the curve following map to this subset is conjugate to the curve following map of a different tiling. This is the most important observation of the paper.

Subsection \ref{sect:renormalization2} takes the renormalization of the curve following map and promotes it to a renormalization of the map $\Phi$ defined in equation \ref{eq:Phi2}. Earlier subsections 
of section \ref{sect:corner} explain necessary background and definitions necessary to describe this version of renormalization. We define the set $\NS \subset X$ of non-stable periodic orbits in section \ref{sect:periodic orbits}. Proposition \ref{prop:stable} implies that periodic points $z$ of $\til \Psi_{\alpha, \beta}$ satisfy $\pi(z) \not \in \NS$ when $\alpha$ and $\beta$ are irrational. These observations were part of statement (\ref{item:stable}) of the outline of the proof of Theorem \ref{thm:periodicity} (Periodicity almost everywhere). 

In section \ref{sect:rectangle exchanges}, we explain how the renormalization of $\Phi$ induces the renormalization of the polygon exchange maps $\til \Psi_{\alpha, \beta}$ described by Theorem \ref{thm:ren}.

The next two sections of the paper deal with our measure theoretic results. We define the return time cocycle and state a theorem which describes the cocycle's relevance in subsection \ref{sect:limit formula}. This relevance includes a connection to the decreasing sequence of sets
$\sO_k$ mentioned in statement (\ref{item:limit}) of the outline of the proof of Theorem \ref{thm:periodicity}. The later subsections are concerned with 
explaining the construction of the cocycle and proving the main theorem of the section.

In section \ref{sect:calc}, we utilize the cocycle to prove our measure theoretic results. Subsection \ref{sect:recurrent case}, proves statement (\ref{item:inequality}) of the outline of the Periodicity Almost Everywhere theorem. Subsection \ref{sect:non-recurrent case} proves Theorem \ref{thm:4} and Corollary \ref{cor:5} of the introduction, which guarantee the existence of parameters for which the map $\Psi_{\alpha,\beta}$ is not periodic almost everywhere.

Finally, section \ref{sect:param space} concerns the dynamical behavior of the maps $f$ and $f \times f$. The map $f$ was defined in equation \ref{eq:f} and $f \times f$ is the action of renormalization on the parameter space. Many of our results are predicated on the understanding of these maps, e.g. statement (\ref{item:zero}) of the outline of the proof of Theorem \ref{thm:periodicity}. Our analysis of these maps is fairly standard, so we have postponed this discussion to the end of the paper. 

\subsection{Acknowledgements}
The author would like to thank the referee, whose helpful comments vastly improved the exposition of this paper.

\section{Following curves in Truchet tilings}
\name{sect:curve following}

We now present a useful concept for understanding Truchet tilings and the dynamics of the map $\Phi_0:\sT \times N \to \sT \times N$ defined in equation \ref{eq:Phi}. Recall that $\Phi_0(\tau, \v)$ moved the vector $\v \in N$ along the curve of the tile of the tiling $[\tau]$ centered at the origin, and then translated to keep the vector pointed into the square at the origin. 

We will now consider what happens if we forget the translation. In this case, the tiling remains fixed while the vector has moved away from the origin. Formally, we fix a Truchet tiling $[\tau]$ determined by a map $\tau:\Z^2 \to \{\pm 1\}$ and define the {\em curve following map} to be
\begin{equation}
\name{eq:C}
\sC:\Z^2 \times N \to \Z^2 \times N; \quad 
\big((m,n),(a,b)\big) \mapsto \big((m+sb,n+sa),s(b,a)\big),
\end{equation}
where $s=\tau(m,n)$. This map considers the inward pointing unit normal in direction $(a,b)$ based at a midpoint of an edge of the unit square centered at $(m,n)$. It moves the vector forward along the curve of the tile centered at $(m,n)$, keeping the vector tangent to the curve, and stops as soon as the vector leaves the tile. The new resulting vector points into the square centered
at $(m+sb,n+sa)$ and points in direction $s(b,a)$. 

We can recover the behavior of powers of the the map $\Phi_0$ applied to pairs of the form $(\tau, \v)$ from the curve following map for $\tau$. To do this, define the map
\begin{equation}
\name{eq:S0}
\sS_0:\Z^2 \times N \to \sT \times N; \quad 
(m,n, \v) \mapsto (\tau \circ S_{m,n}, \v),
\end{equation}
where $S_{m,n}:\Z^2 \to \Z^2$ is the translation $(x,y) \mapsto (x,y)+(m,n)$. 
Either by inspection or induction, it can be shown that for all $\tau \in \sT$ and all $k \in \Z$,
\begin{equation}
\name{eq:curve following relation1}
\Phi_0^k(\tau,\v)=\sS_0 \circ \sC^k\big(0,0,\v\big).
\end{equation}
Informally, the right hand side just waits to translate until we have moved $k$ steps forward, but we translate by the composition of the translations used when evaluating $\Phi_0^k(\tau,\v)$. 

Similarly, we can recover the behavior of the map $\Phi$. Fix $\omega$ and $\eta$ and define $\tau$ by $\tau(m,n)=\omega_m \eta_n$ as in \ref{eq:tau omega}.
Then we define an analog of $\sS_0$ and see that it satisfies a similar identity involving the curve following map of $\tau$. 
\begin{equation}
\name{eq:S}
\sS:\Z^2 \times N \to X; \quad (m,n,\v) \mapsto \big(\sigma^m(\omega),\sigma^n(\eta),\v).
\end{equation}
\begin{equation}
\name{eq:curve following relation}
\Phi^k(\omega, \eta,\v)=\sS \circ \sC^k\big(0,0,\v\big).
\end{equation}
Here $X=\Omega_\pm \times \Omega_\pm \times N$ is the domain of $\Phi$ as in the introduction.

\section{Construction of the arithmetic graph }
\name{sect:arithmetic graph}
In this section, we fully explain the connection between the family of polygon exchange maps $\{\Psi_{\alpha, \beta}\}$ and Truchet tilings which arise from corner percolation.

Consider the polygon exchange map $\til \Psi_{\alpha, \beta}:\til Y \times N \to \til Y \times N$ defined in Equation \ref{eq:psi intro}. Let
$(x_0, y_0) \in \til Y=\R^2 / \Z^2$ and choose a $\v \in N$. Then let $\big((x_1, y_1), \bw\big)=\til \Psi_{\alpha, \beta} \big((x_0,y_0),\v\big)$.
Observe that (modulo $\Z^2$) we have
$$(x_1,y_1)-(x_0,y_0) \in \{(\pm \alpha, 0), (0, \pm \beta)\}.$$
Fixing $(x,y) \in \til Y$, we define the map 
\begin{equation}
\name{eq:M}
M:\Z^2 \times N \to \til Y \times N; \quad M(m,n, \v)=(x+m \alpha, y+n \beta, \v).
\end{equation}
The argument above shows that $M(\Z^2 \times N)$ is $\til \Psi_{\alpha, \beta}$-invariant. Note also
that so long as $\alpha$ and $\beta$ are irrational, the map $M$ is injective. 

\begin{definition}[Arithmetic Graph]
The {\em arithmetic graph} associated to the irrational parameters $(\alpha, \beta) \in (0,\half) \times (0, \half)$
and the point $(x,y) \in \til Y$ is the directed graph whose vertices are points in $\Z^2 \times N$ with an edge
running from $(m_0, n_0, \v)$ to $(m_1, n_1, \bw)$ if and only if 
$$\til \Psi_{\alpha, \beta} \circ M(m_0, n_0, \v)=M(m_1, n_1, \bw).$$
\end{definition}

We will show that the arithmetic graph associated to $(\alpha, \beta)$ and $(x,y) \in \til Y$ 
is closely related to a Truchet tiling. Define
\begin{equation}
\name{eq:omega from x}
\omega_m=\begin{cases}
1 & \textrm{if $x+m\alpha \in [0, \half)$}\\
-1 & \textrm{if $x+m\alpha \in [\half,1)$}
\end{cases}
\and 
\eta_n=\begin{cases}
1 & \textrm{if $y+n \beta \in [0, \half)$}\\
-1 & \textrm{if $y+n \beta \in [\half,1)$.}
\end{cases}
\end{equation}
In these definitions, $x+m \alpha$ and $y+n \beta$ are taken to lie in $\R/\Z$.
We then define $\tau$ according to the rule $\tau(m,n)=\omega_m \eta_n$.

\begin{proposition}
\name{prop:graph iso}
Fix irrationals $\alpha, \beta \in (0, \half)$ and fix any $(x,y) \in \til Y$. Let $\omega$, $\eta$ and $\tau$ be as above. 
Then there is an edge joining $(m_0, n_0, \v)$ to $(m_1, n_1, \bw)$ in the arithmetic graph if and only if the curve following map of $\tau$ satisfies
$$\sC \big((m_0, n_0), \v\big)=\big((m_1, n_1), \bw\big).$$
\end{proposition}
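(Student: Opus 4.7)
The plan is to prove the proposition by direct computation, unpacking both sides and checking they agree formula-by-formula. There is essentially one non-trivial identification needed: the sign $s \in \{\pm 1\}$ that governs the behavior of $\til\Psi_{\alpha,\beta}$ on $\til A_s$ must be matched with $\tau(m,n) = \omega_m \eta_n$.

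First I would compute $\til\Psi_{\alpha,\beta} \circ M(m,n,\v)$ explicitly. Writing $\v = (a,b)$, the definition of $M$ gives $M(m,n,\v) = (x+m\alpha, y+n\beta, \v)$ in $\til Y \times N$, and then the formula \ref{eq:psi til} yields
\begin{equation*}
\til\Psi_{\alpha,\beta} \circ M(m,n,\v) = \big(x+(m+sb)\alpha,\; y+(n+sa)\beta,\; (sb,sa)\big) = M\big(m+sb,\, n+sa,\, s(b,a)\big),
\end{equation*}
where $s \in \{\pm 1\}$ is chosen so that $(x+m\alpha, y+n\beta) \in \til A_s$. Since $\alpha, \beta$ are irrational, $M$ is injective, so an edge in the arithmetic graph from $(m,n,\v)$ lands at a unique vertex, namely $(m+sb, n+sa, s(b,a))$.

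Next I would identify the sign $s$ with $\tau(m,n)$. Unpacking $\til A_s = \pi^{-1}(A_s)$ inside the fundamental domain $[0,1)^2$ for $\Z^2$, I find
\begin{equation*}
\til A_1 = \big([0,\thalf) \times [0,\thalf)\big) \cup \big([\thalf,1) \times [\thalf,1)\big), \qquad \til A_{-1} = \big([0,\thalf) \times [\thalf,1)\big) \cup \big([\thalf,1) \times [0,\thalf)\big).
\end{equation*}
That is, $(u,v) \in \til A_s$ exactly when the two coordinates lie in halves whose product of associated signs equals $s$. Comparing with the definitions \ref{eq:omega from x} of $\omega_m$ (from $x+m\alpha \pmod 1$) and $\eta_n$ (from $y+n\beta \pmod 1$), this means $(x+m\alpha, y+n\beta) \in \til A_s$ iff $\omega_m \eta_n = s$. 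By definition $\tau(m,n) = \omega_m \eta_n$, so $s = \tau(m,n)$.

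Finally I would compare with the curve following map. Substituting $s = \tau(m,n)$ into the formula above, the unique arithmetic-graph successor of $(m,n,(a,b))$ is $\big(m+\tau(m,n) b,\, n+\tau(m,n) a,\, \tau(m,n)(b,a)\big)$, which is precisely the right-hand side of the definition \ref{eq:C} of $\sC\big((m,n),(a,b)\big)$. The main (modest) obstacle is simply the bookkeeping of the two-sheeted cover, i.e., identifying $\til A_s$ correctly in terms of halves of $[0,1)$ so that the parity $s$ factors as $\omega_m \eta_n$; once that is done the proposition follows immediately.
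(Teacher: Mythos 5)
Your proposal is correct and follows essentially the same route as the paper: compute $\til \Psi_{\alpha,\beta}\circ M$ and $M\circ\sC$ and match them via the identification $s=\tau(m,n)$, with injectivity of $M$ giving the ``if and only if.'' The only difference is expository: you spell out the double-cover description of $\til A_s$ and the use of injectivity, which the paper states only briefly or leaves implicit.
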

\begin{proof}
We must show that for each $(m,n,\v) \in \Z^2 \times N$, we have 
$$\til \Psi_{\alpha, \beta} \circ M(m,n,\v)=M \circ \sC(m,n,\v).$$
Let $\v=(a,b)$ and $s=\tau(m, n)$. Then by the definitions of $\sC$ and $M$, we have 
$$\begin{narrowarray}{0pt}{rcl}
M \circ \sC\big(m,n, \v) & = & M\big((m+sb,n+sa),s(b,a)\big) \\
& = & 
\Big(\big(x+(m+sb)\alpha,y+(n+sa)\beta\big),s(b,a)\Big).
\end{narrowarray}$$
Observe that by definition of $\tau$ and $s$,
we have $(x+m\alpha,y+n\beta)\in \til A_s$. It follows that 
$$\begin{narrowarray}{0pt}{rcl}
\til \Psi_{\alpha, \beta} \circ M(m,n,\v) & = &
\til \Psi_{\alpha, \beta}(x+m\alpha, y+n\beta, \v) \\
& = & 
\big(x+m\alpha+bs\alpha, y+n\beta+as\beta, (bs,as)\big).
\end{narrowarray}$$
\end{proof}

We define the embedding map $\pi$ which appeared in section \ref{sect:overview} of the introduction by
\begin{equation}
\name{eq:pi}
\pi:\til Y \times N \to X; \quad (x,y,\v) \mapsto (\omega, \eta, \v)
\end{equation}
with $\omega$ and $\eta$ defined in terms of $\alpha$, $\beta$, $x$ and $y$ as in equation \ref{eq:omega from x}. We show this map satisfies equation \ref{eq:factor}:

\begin{proposition}
\name{prop:ag conj}
If $\alpha$ and $\beta$ are irrational, then $\pi \circ \tPsi_{\alpha, \beta}=\Phi \circ \pi$.
\end{proposition}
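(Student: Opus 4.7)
The plan is a direct coordinate-by-coordinate verification that the formula for $\tPsi_{\alpha,\beta}$ in equation \ref{eq:psi til} is carried by $\pi$ to the formula for $\Phi$ in equation \ref{eq:Phi2}. Fix $(x,y,\v) \in \til Y \times N$ with $\v = (a,b)$, and let $(\omega,\eta) = \pi(x,y,\v)$ via equation \ref{eq:omega from x}.

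The first step is to reconcile the two \emph{a priori} different definitions of the sign $s$. In the definition of $\tPsi_{\alpha,\beta}$, $s$ is determined by $(x,y) \in \til A_s = \pi^{-1}(A_s)$; in the definition of $\Phi$, it is set to $\omega_0\eta_0$. Since $\Lambda$ is generated by $(\half,\half)$ and $(-\half,\half)$ and contains $\Z^2$ with index two, a short computation shows that $\til A_1 = [0,\half)^2 \cup [\half,1)^2$ and $\til A_{-1} = [0,\half)\times[\half,1) \cup [\half,1)\times[0,\half)$ inside $\til Y = \R^2/\Z^2$. Comparing with equation \ref{eq:omega from x} at $m = n = 0$, the condition $(x,y)\in \til A_s$ is exactly $\omega_0\eta_0 = s$. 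So the two sign conventions agree.

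Next I would dispatch the third coordinate trivially: $\tPsi_{\alpha,\beta}$ sends $\v=(a,b)$ to $(bs,as)$, while $\Phi$ sends $\v$ to $s(b,a)$, and these are literally equal. For the first two coordinates, set $(x',y',\bw) := \tPsi_{\alpha,\beta}(x,y,\v)$ so that $x' = x + bs\alpha$ and $y' = y + as\beta$ in $\R/\Z$. Let $(\omega',\eta',\bw) := \pi(x',y',\bw)$. By equation \ref{eq:omega from x},
$$\omega'_m = 1 \iff x' + m\alpha = x + (m+sb)\alpha \in [0,\thalf) \iff \omega_{m+sb} = 1,$$
so $\omega'_m = \omega_{m+sb} = [\sigma^{sb}(\omega)]_m$ by equation \ref{eq:shift}, and symmetrically $\eta'_n = [\sigma^{sa}(\eta)]_n$. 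This matches the shifts appearing in equation \ref{eq:Phi2} and completes the identification $\pi\circ \tPsi_{\alpha,\beta}(x,y,\v) = \Phi(\omega,\eta,\v) = \Phi\circ\pi(x,y,\v)$.

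No genuine obstacle is anticipated: the argument is a bookkeeping verification of three coordinates. The only place that requires care is the first step, where the interplay between the two lattices $\Z^2 \subset \Lambda$ must be used to show that the region $\til A_s$ is correctly captured by the product $\omega_0\eta_0$; everything else then follows from unwinding definitions. The irrationality hypothesis on $\alpha,\beta$ is not needed for the pointwise identity itself (only for the injectivity of $M$ and hence the sense in which $\pi$ plays the role of a conjugacy), so it can either be retained for consistency with the statement or noted as superfluous for this particular step.
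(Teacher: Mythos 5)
Your proof is correct, but it takes a genuinely different route from the paper. The paper does not verify the identity coordinate-by-coordinate at this point; instead it leverages machinery already in place: it writes $\Phi\circ\pi(x,y,\v)=\sS\circ\sC(0,0,\v)$ using equation \ref{eq:curve following relation}, replaces $\sC$ by $M^{-1}\circ\til\Psi_{\alpha,\beta}\circ M$ via Proposition \ref{prop:graph iso}, and then checks the identity $\sS=\pi\circ M$, which is a short shift-of-coding computation; inverting $M$ is where the irrationality hypothesis enters. Your argument instead unwinds equations \ref{eq:psi til}, \ref{eq:omega from x} and \ref{eq:Phi2} directly, and its one nontrivial step --- identifying $\til A_1=[0,\half)^2\cup[\half,1)^2$ and $\til A_{-1}$ as the mixed squares, so that $(x,y)\in\til A_s$ iff $\omega_0\eta_0=s$ --- is exactly the fact the paper uses but leaves as an unproved observation inside the proof of Proposition \ref{prop:graph iso} (``by definition of $\tau$ and $s$, we have $(x+m\alpha,y+n\beta)\in\til A_s$''), and your lattice computation checking it is right. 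What each approach buys: yours is more elementary and self-contained, makes the $\Lambda$-versus-$\Z^2$ bookkeeping explicit, and correctly observes that the pointwise identity needs no irrationality (that hypothesis only makes $\pi$ injective, and in the paper's route it is needed only to invert $M$); the paper's route avoids redoing the coordinate computation already embedded in Proposition \ref{prop:graph iso} and keeps the conjugacy visibly tied to the arithmetic-graph/curve-following picture that drives the rest of the paper.
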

\begin{proof}
Fix $x$, $y$ and $\v$. Define $\omega$ and $\eta$ so that $\pi(x,y,\v)=(\omega, \eta, \v)$. 
Then, we have
$$\Phi \circ \pi(x,y,\v)=\Phi(\omega, \eta,\v)=\sS \circ \sC(0,0,\v),$$
by equation \ref{eq:curve following relation}. By Proposition \ref{prop:graph iso}, we continue:
$$\Phi \circ \pi(x,y,\v)=\sS \circ M^{-1} \circ \til \Psi_{\alpha, \beta} \circ M(0,0,\v).$$
Here we can invert $M$ because irrationality of $\alpha$ and $\beta$ implies the map $M$ is injective. We claim that the map
$$\sS \circ M^{-1}:M(\Z^2 \times N) \to X
\quad \text{is given by} \quad \sS \circ M^{-1}=\pi.$$
This will conclude the proof since $M(0,0,\v)=(x,y,\v)$. 
To prove this claim, we demonstrate that $\sS=\pi \circ M$. Fix any $(i,j,\bw) \in \Z^2 \times N$. Then 
$$\pi \circ M(i,j,\bw)=\pi(x+i \alpha, y+j\beta, \bw)=(\omega', \eta', \bw).$$
Here, $\omega'$ and $\eta'$ are defined as in equation \ref{eq:omega from x}, but with $x$ replaced by $x+i\alpha$
and $y$ by $y+j\beta$. By definition of $\omega$, $\eta$, $\omega'$ and $\eta'$, we have the desired identity
$$(\omega', \eta' ,\bw)=\big( \sigma^{i}(\omega), \sigma^{j}(\eta), \bw\big)=\sS(i,j,\bw).$$
\end{proof}

\section{Renormalization of the Truchet Tilings}
\name{sect:renormalizing tilings}
In this section, we will explain how the Truchet tilings induced by corner percolation tilings exhibit a ``renormalization operation.'' We call this operation a renormalization, because when interpreted dynamically the operation corresponds to a renormalization of the map 
$\Phi:X \to X$ defined in equation \ref{eq:Phi2} of the introduction.


\subsection{Renormalization}
For any $\omega \in \Omega_{\pm}$, we define the subset $K(\omega) \subset \Z$ to be
\begin{equation}
\name{eq:K1}
K(\omega)=\set{n \in \Z}{$\omega_n \neq -1$ or $\omega_{n+1}\neq 1$} \cap
\set{n \in \Z}{$\omega_{n-1}\neq-1$ or $\omega_{n} \neq 1$}.
%
\end{equation}
That is, $K(\omega)$ is the collection $n$ so that $\omega_n$ is not part of a subword of the form $-+$. 

Throughout this section, we will fix $\omega, \eta \in \Omega_\pm$, and define $\tau=\tau_{\omega, \eta}$
as in equation \ref{eq:tau omega} (i.e., $\tau(m,n)=\omega_m \eta_n$).
By Proposition \ref{prop:corner perc induced}, all Truchet tilings induced by corner percolation
are of this form. To describe the renormalization of $[\tau]$, 
we construct the two sets $K(\omega)$ and $K(\eta)$. We make the following assumption about these
sets:
\begin{equation}
\name{eq:assumption}
\text{The sets $K(\omega)$ and $K(\eta)$ have neither upper nor lower bounds.}
\end{equation}
We will assume that $\omega$ and $\eta$ satisfy this assumption throughout this section.
This condition guarantees that there exist increasing bijections
$$\kappa_1:\Z \to K(\omega) \and \kappa_2:\Z \to K(\eta).$$
Each of these bijections is unique up to precomposition with a translation of $\Z$. 

We are now ready to define the renormalization of the tiling $[\tau]$. The renormalized tiling is defined by removing rows and columns of tiles
from $[\tau]$ and then sliding the remaining tiles together. We define the {\em set of centers of the kept squares} to be
$$\barK=K(\omega) \times K(\eta).$$
We also define the bijection
$$\kappa=\kappa_1 \times \kappa_2:\Z^2 \to \barK.$$
This enables us to define the {\em renormalization of $\tau$} to be the map 
$\tau'=\tau \circ \kappa.$
We call $[\tau']$ the renormalization of $[\tau]$. It is uniquely defined up to translation.

\begin{figure}[htb]
\begin{center}
\includegraphics[scale=0.5]{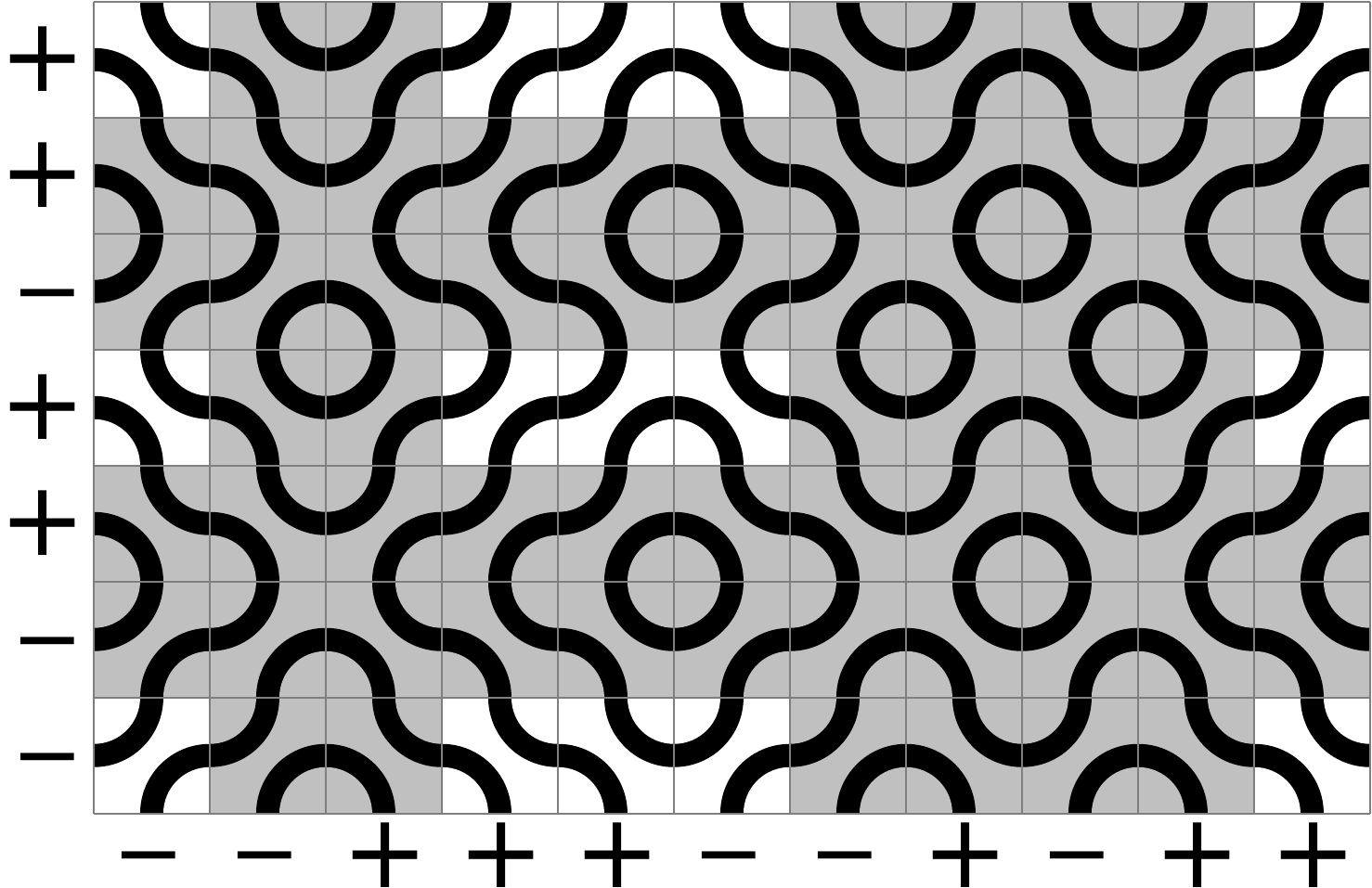}
\quad
\includegraphics[scale=0.5]{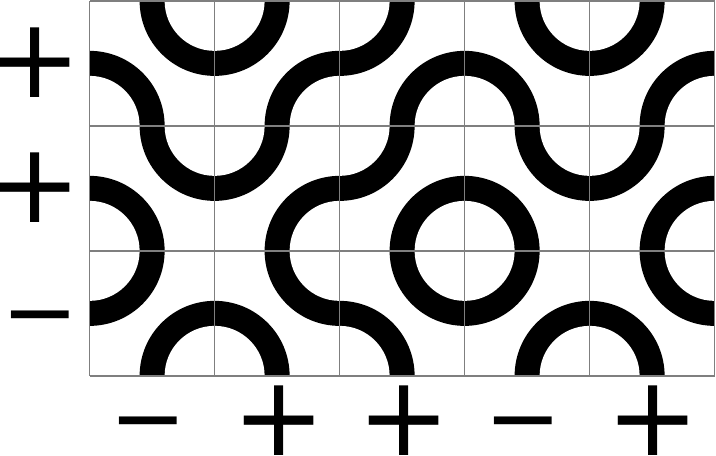}
\caption{The tiling $[\tau]=[\tau_{\omega, \eta}]$ is shown on the left. The sequence $\omega$ is shown below this tiling,
and $\eta$ is shown on the left. The set $\barK$ consists of the centers of white squares. The renormalized tiling $[\tau']$ is shown on the right.
}
\name{fig:even rectangles}
\end{center}
\end{figure}

The tiles whose centers lie in the set $\Z^2 \sm \barK$ are a union of rows and columns.
The tiling $[\tau']$ can be obtained from the tiling $[\tau]$ by collapsing all columns of tiles with centers in $\Z^2 \sm \barK$
to vertical lines, and collapsing all rows of tiles with centers in $\Z^2 \sm \barK$ to horizontal lines.
See figure \ref{fig:even rectangles}.

This paper exploits the relationship between the tiling $[\tau]$ and the renormalized tiling $[\tau']$, which we will informally state now and state formally in the theorem below.
First, whenever four tiles form a loop these four tiles are removed by the renormalization operation. Second, 
the renormalization operation preserves the identities of any curve in the tiling which is not a loop of length four. That is, some tiles making the curve
may be removed, but once the remaining tiles are slid together again, there is a new curve which visits the remaining tiles of the curve in the same order. 
Third, this process shortens all closed loops visiting more than four tiles. We then hope to apply this process repeatedly, shrinking long loops
until they eventually become loops of length four and disappear. This gives a mechanism to detect closed loops in the tiling.

In stating a theorem which makes this relationship between $[\tau]$ and $[\tau']$ rigorous, we will utilize the curve following map defined in section \ref{sect:curve following}.
We define 
$\sC$ and $\sC'$ to be the curve following maps defined in equation \ref{eq:C} with respect to the tilings $[\tau]$ and $[\tau']$, respectively. We also define $\hsC:\barK \times N \to \barK \times N$ to be the first return map of $\sC$ to $\barK \times N$. 
That is, when $(m,n,\v) \in \barK \times N$, we define
\begin{equation}
\name{eq:sR}
\hsC(m,n,\v)=\sC^k(m,n,\v) \quad \text{where} \quad k=\min~\{j>0~:~\sC^j(m,n,\v)\in \barK \times N\}.
\end{equation}
Informally, the map $\hsC$ takes a inward unit normal to a square whose center lies in $\barK$, then moves the vector along the curve of the tiling $[\tau]$ until the vector returns to a square whose center lies in $\barK$. 

\begin{theorem}[Tiling Renormalization]
\name{thm:tiling renormalization}
Assume $\omega, \eta \in \Omega_\pm$ satisfy the assumption given in equation \ref{eq:assumption}. In this case:
\begin{enumerate}
\item The first return map $\hsC$ of $\sC$ to $\barK \times N$ is well defined on all of $\barK \times N$.
\item Define $\til \kappa:\Z^2 \times N \to \barK \times N$ by $\til \kappa\big((m,n),\v\big)=\big(\kappa(m,n),\v)$. Then,
$$\hsC \circ \til \kappa=\til \kappa \circ \sC'.$$
\item The following statements are equivalent for any $(m,n, \v) \in \Z^2 \times N$. 
\begin{enumerate}
\item There is no $k>0$ so that $\sC^k(m,n,\v) \in \barK \times N$. 
\item There is no $k<0$ so that $\sC^k(m,n,\v) \in \barK \times N$. 
\item $\sC^4(m,n,\v)=(m,n,\v)$. 
\end{enumerate}
\item If there is a $p$ so that $\sC^p(m,n,\v)=(m,n,\v)$, then there is a $k>0$ so that $\sC^k(m,n,\v) \not \in \barK$. 
\end{enumerate}
\end{theorem}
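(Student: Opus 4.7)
The plan is a combinatorial case analysis of the local structure of $\sC$ near removed rows and columns, culminating in a topological argument for (4). Since the ``$-+$'' subwords of $\omega$ are disjoint, $\Z \setminus K(\omega)$ partitions into disjoint adjacent pairs $\{m, m+1\}$ with $\omega_m = -1, \omega_{m+1} = 1$, producing two-column ``removed strips'' in $[\tau]$; analogously for $\eta$ and row strips. For $(m_0, n_0, \v_0) \in \barK \times N$, either $\sC(m_0, n_0, \v_0) \in \barK \times N$ (trivial case), or its position enters a removed strip. In the latter case, I would trace the trajectory explicitly using $\tau(m, n) = \omega_m \eta_n$ and the formula for $\sC$, distinguishing three subcases: entry into a pure removed-column strip, a pure removed-row strip, or a $2 \times 2$ ``bad block'' at the intersection of both. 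Each subcase admits a finite trajectory that re-emerges in $\barK \times N$ within a bounded number of steps.

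Parts (1) and (2) follow directly. Well-definedness of $\hsC$ is immediate from the structural analysis. For part (2), in each subcase I would compute the re-entry point $(m_1, n_1, \bw_1) \in \barK \times N$ and compare against $\til\kappa \circ \sC' \circ \til\kappa^{-1}(m_0, n_0, \v_0)$, using the definition $\tau' = \tau \circ \kappa$ and the fact that $\kappa_1, \kappa_2$ identify consecutive elements of $K(\omega), K(\eta)$ with consecutive integers; the two computations agree.

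For part (3), a closed $4$-cycle of $\sC$ must traverse four tiles of a $2 \times 2$ block with arcs meeting at each ``inner'' corner. Working out the required tile types in terms of $\omega_m, \omega_{m+1}, \eta_n, \eta_{n+1}$ forces $\omega_m = -\omega_{m+1}$, $\eta_n = -\eta_{n+1}$, and $\omega_m \eta_n = -1$; the sign of $\omega_m$ then yields one of two cases: either $\omega_m = -1, \omega_{m+1} = 1$, so $\{m, m+1\} \subset \Z \setminus K(\omega)$, or $\omega_m = 1, \omega_{m+1} = -1$ (whence $\eta_n = -1, \eta_{n+1} = 1$, so $\{n, n+1\} \subset \Z \setminus K(\eta)$). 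Either way all four tiles lie in $\Z^2 \setminus \barK$, giving $(c) \Rightarrow (a), (b)$. For the converse, $(a)$ confines the forward orbit to $\Z^2 \setminus \barK$, and the case analysis of the first paragraph shows that the only forward-confined trajectories are the $4$-cycles just described; the time-symmetric argument applies to $(b)$.

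For part (4), the case of a $4$-cycle orbit follows directly from (3). For a periodic orbit with $p > 4$, the orbit traces a simple closed curve $C$ in the tiling $[\tau]$ bounding a bounded planar region $R$. The key observation, which I expect to be the main obstacle of the proof, is that $R$ must contain at least one $4$-cycle of $\sC$: the arcs of interior tiles (together with the inward-pointing ``second'' arcs of tiles on $C$) form closed curves inside $R$, and an infinite-descent argument on the enclosed area shows that the innermost such curve must be a $4$-cycle. By (3), this interior $4$-cycle lies in an infinite removed row or column strip; since the strip cannot be contained in the bounded region $R$, it must cross $C$, producing a tile of $C$ (and hence of the orbit) lying in $\Z^2 \setminus \barK$.
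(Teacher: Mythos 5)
Your treatment of parts (1)--(3) is essentially the paper's: a local case analysis of what the curve does when it leaves $\barK$, followed by matching the re-entry point against $\til\kappa\circ\sC'$ using the order-preserving bijections $\kappa_1,\kappa_2$. But one structural claim you base it on is wrong as stated: although the $-+$ subwords of $\omega$ are disjoint, consecutive pairs can abut (e.g.\ $-+-+$), so $\Z\sm K(\omega)$ decomposes into maximal blocks of \emph{arbitrary even} width $2\ell$, not into isolated two-column strips; correspondingly an excursion through such a block takes $4\ell+1$ steps, which is finite (this is exactly where the hypothesis of equation \ref{eq:assumption} is used) but not bounded. The paper organizes this via horizontal and vertical boxes with length parameter $\ell$ and their maximality, and its Lemma \ref{lem:xor} also shows that your third subcase is not exhaustive: a position lying in a removed column \emph{and} a removed row need not be on a $4$-cycle, it can instead lie on the central curve of a (length-one) vertical box. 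Your explicit tracing would have to absorb both points; as written, the two-column-strip picture does not suffice to prove well-definedness of $\hsC$ or statement (2).

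Part (4) is where you genuinely diverge from the paper, and where the gap lies. Your argument stands or falls on the claim that the bounded region $R$ enclosed by a closed curve of length $p>4$ contains a $4$-cycle of the tiling. The proposed ``infinite descent on enclosed area'' is circular: it presupposes that every closed curve which is not a $4$-cycle has another closed curve strictly inside it, which is precisely what needs to be proved, and you have not even shown that $R$ contains a single arc not belonging to $C$ (the ``second'' arc of a tile on $C$ may lie on the outer side). Establishing this nesting property is a real combinatorial statement about corner-percolation tilings, not a soft topological fact, and proving it would take at least as much work as the theorem itself. The paper's proof of (4) sidesteps all of this: if the periodic orbit never leaves $\barK$, then the columns and rows it visits form intervals contained in $K(\omega)$ and $K(\eta)$, so on those intervals $\omega$ and $\eta$ contain no $-+$ and hence have the form $+\cdots+-\cdots-$; for such data the tiling patch is explicitly seen to contain no closed curve, a contradiction. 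I recommend either adopting that direct argument or supplying a genuine proof of the nesting claim (e.g.\ by a turning-number/corner count showing that some lattice corner interior to $C$ carries an arc not on $C$), since without it statement (4) is unproved.
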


We will also need to understand the return times of $\sC$ to $\barK \times N$ in terms of the tiling $[\tau]$. This is relevant to 
our measure theoretic results. For $(m,n,\v) \in \barK \times N$, we define the return time function
$$R(m,n,\v)=\min~\{j>0~:~\sC^j(m,n,\v)\in \barK \times N\}.$$
For $k=R(m,n,\v)$, we have $\hsC(m,n,\v)=\sC^k(m,n,\v)$. See equation \ref{eq:sR}. 

We can describe the return time in terms of the number of nearby rows and columns excised to produce $[\tau']$. 
To explain this we define a new {\em excision} function 
$$E:\barK \times N \to \Z_+; \quad E(m,n,\v)=\min~\{j>0~:~(m,n)+j\v \in \barK\}.$$
This represents one more than the number of adjacent rows or columns that will be removed, starting with the square opposite the edge in direction $\v$.
This is always well defined so long as $\omega$ and $\eta$ satisfy \ref{eq:assumption}.

\begin{theorem}[Tiling Return Time]
\name{thm:return time}
Suppose $\omega, \eta \in \Omega_\pm$ satisfy the assumption given in equation \ref{eq:assumption}. Choose any $\v=(a,b) \in N$. Define $s=\omega_m \eta_n$ and $\bw=(sb,sa) \in N$. 
Then,
$$R(m,n,\v)=2E(m,n, \bw)-1.$$
\end{theorem}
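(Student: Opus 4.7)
My plan is to trace the $\sC$-trajectory from $(m,n,\v)$ step by step and show that it first returns to $\barK \times N$ after exactly $2E - 1$ steps, where $E = E(m,n,\bw)$. Without loss of generality (the remaining three choices of $\bw \in N$ being handled by identical calculations), I assume $\bw = (0,1)$; from $\bw = (sb,sa)$ with $s = \omega_m\eta_n$, this forces $\v = (\omega_m\eta_n,0)$, and one step of $\sC$ sends $(m,n,\v)$ to $(m,n+1,(0,1))$. If $E = 1$ this is already in $\barK \times N$ and the theorem is immediate; I treat the case $E > 1$ in what follows.

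Two structural observations drive the computation. First, for any sequence in $\Omega_\pm$, the complement of the associated $K$-set decomposes into maximal blocks, each being a concatenation of one or more non-overlapping $-+$ pairs (hence of even length, alternating between $-1$ and $+1$ starting with $-1$). This is because a position is bad exactly when it is covered by some $-+$ subword, and two distinct $-+$ subwords cannot overlap. Applied to $\eta$: since $n, n+E \in K(\eta)$ and $n+1,\ldots,n+E-1 \notin K(\eta)$, the number $E - 1$ is even (so $E$ is odd), and $\eta_{n+2k-1} = -1$, $\eta_{n+2k} = +1$ for each $1 \leq k \leq (E-1)/2$. Second, the hypothesis $m \in K(\omega)$ forces $\omega_{m-\omega_m} = \omega_m$: if $\omega_m = +1$ then the condition $(\omega_{m-1},\omega_m) \neq (-1,+1)$ gives $\omega_{m-1} = +1$, while if $\omega_m = -1$ then $(\omega_m,\omega_{m+1}) \neq (-1,+1)$ gives $\omega_{m+1} = -1$.

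With these facts I will prove by induction on $k$ that after step $1 + 4k$ of $\sC$ the state equals $(m, n+1+2k, (0,1))$, for each $0 \leq k \leq (E-1)/2$. The base case is the first step. For the inductive step, applying the formula in equation \ref{eq:C} four times in succession starting from $(m, n+1+2k, (0,1))$ and using the two observations to evaluate each of the four $s$-values produces, in order,
\[
(m-\omega_m,\, n+1+2k,\, (-\omega_m,0)) \;\longmapsto\; (m-\omega_m,\, n+2+2k,\, (0,1)) \;\longmapsto\; (m,\, n+2+2k,\, (\omega_m,0)) \;\longmapsto\; (m,\, n+3+2k,\, (0,1)).
\]
Setting $k = (E-1)/2$ places step $2E - 1$ at $(m, n+E, (0,1)) \in \barK \times N$, while all intermediate states have row index in $\{n+1,\ldots,n+E-1\} \subset \Z \setminus K(\eta)$ and so do not lie in $\barK \times N$. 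The only real obstacle is the sign bookkeeping across the inductive four-step block, but the two structural observations fix every sign that appears.
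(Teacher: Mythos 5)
Your argument is correct. Each step checks out: the decomposition of $\Z \sm K(\eta)$ into maximal blocks of non-overlapping $-+$ pairs is valid (two $-+$ subwords cannot overlap, every bad index is covered by one, so a maximal bad block has even length and reads $-+-+\cdots-+$); the fact $m \in K(\omega) \Rightarrow \omega_{m-\omega_m}=\omega_m$ is exactly what the definition of $K(\omega)$ gives; and the four-step inductive block, traced through the formula in equation \ref{eq:C}, does land at $(m,n+1+2(k+1),(0,1))$ with all intermediate second coordinates in $\{n+1,\ldots,n+E-1\}$, so the first return occurs at step $2E-1$, landing at $(m,n+E,(0,1)) \in \barK\times N$.

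The route differs from the paper's in apparatus, though the core count (one initial step, then four steps of $\sC$ per excised $-+$ pair) is the same. The paper proves the Return Time Theorem jointly with statement (2) of the Tiling Renormalization Theorem, relying on the horizontal/vertical box machinery: Lemma \ref{lem:xor} identifies the central curve of a box, Proposition \ref{prop:leaving boxes} shows the curve re-enters $\barK\times N$ upon leaving a maximal box, and the values $R=1+4\ell$, $E=2\ell+1$ are read off ``by inspection'' of the box pictures. You bypass all of that: your two combinatorial observations about $K$-sets are the symbolic content of what the paper encodes as a box (the alternation of $\eta$ on the excised block and the matching of $\omega$ at $m$), and your explicit induction replaces the picture-inspection and the maximality/leaving-the-box argument. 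The payoff of your version is a self-contained, fully symbolic proof of this one statement; the cost is that it proves only the return time, whereas the paper's box lemmas are reused to get the conjugacy $\hsC\circ\til\kappa=\til\kappa\circ\sC'$ and statement (3) of Theorem \ref{thm:tiling renormalization} at the same time. One small caveat: the other three choices of $\bw$ are handled by analogous rather than literally identical computations (for $\bw=(0,-1)$ the excised block is traversed in reverse, so the signs $+,-$ are met in the opposite order, and for horizontal $\bw$ the roles of $\omega$ and $\eta$ swap), but these are routine variants of the same induction.
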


\begin{remark} By equation \ref{eq:C}, $\bw=(sb,sa)$ is the directional component of $\sC(\omega, \eta, \v)$.
\end{remark} 

\subsection{Proofs}\name{sect:ren proofs}
In this section we prove the renormalization theorems of the previous subsection. As above, we fix $\omega$ and $\eta$.

First we investigate loops visiting four squares in the tiling.
\begin{proposition}
If $\sC^4(m,n,\v)=(m,n,\v)$, then for all $k$ we have $\sC^k(m,n,\v) \not \in \barK \times N$.
\end{proposition}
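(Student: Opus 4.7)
The plan is to show that a $4$-cycle of $\sC$ traces a closed loop around a single corner shared by four adjacent tiles, and then to exploit the definition of $K(\omega)$ and $K(\eta)$ to see that these four tiles lie outside $\barK$. From the formula $\sC((m,n),(a,b))=((m+sb,n+sa),s(b,a))$, the new direction $s(b,a)$ is perpendicular to the old direction $(a,b)$, so each step of $\sC$ turns the direction by $\pm 90^{\circ}$. For a period-$4$ orbit, both the direction and the net displacement must return to their initial values; a short computation shows this forces $\v_2=-\v_0$, $\v_3=-\v_1$, and $s_0=s_2=-s_1=-s_3$. Consequently the orbit visits the four tiles of a single $2\times 2$ block.

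Next I would trace $\sC$ explicitly through such a block. Labeling the block so that $(m,n)$ is its bottom-left tile, the computation forces the $\tau$-values to be
\[
\tau(m,n)=\tau(m+1,n+1)=-1,\qquad \tau(m+1,n)=\tau(m,n+1)=+1.
\]
Substituting $\tau(i,j)=\omega_i\eta_j$ then gives $\omega_m=-\omega_{m+1}$ and $\eta_n=-\eta_{n+1}$, and the sign constraint $\omega_m\eta_n=-1$ leaves exactly two cases: either $(\omega_m,\omega_{m+1})=(+,-)$ with $(\eta_n,\eta_{n+1})=(-,+)$, or $(\omega_m,\omega_{m+1})=(-,+)$ with $(\eta_n,\eta_{n+1})=(+,-)$.

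Finally, by the definition of $K(\omega)$ in (\ref{eq:K1}), an index $k$ fails to lie in $K(\omega)$ exactly when $k$ is contained in a $-+$ subword of $\omega$; the analogous statement holds for $K(\eta)$. In the first case above both $n$ and $n+1$ sit in a $-+$ subword of $\eta$, so $\{n,n+1\}\cap K(\eta)=\emptyset$; in the second case $\{m,m+1\}\cap K(\omega)=\emptyset$. Either way no tile of the block has both coordinates in $K(\omega)\times K(\eta)=\barK$, so $\sC^k(m,n,\v)\not\in\barK\times N$ for every $k\in\Z$.

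The main obstacle is the explicit $4$-step trace that pins down the signs in the alternating $\tau$-pattern on the block. The perpendicularity observation severely restricts the geometry of the orbit, and once one starting direction and orientation are checked the remaining cases follow by the translation and $90^{\circ}$-rotation symmetries of $\sC$.
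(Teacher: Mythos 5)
Your proposal is correct and follows essentially the same route as the paper: identify that a $4$-cycle forces the alternating $\tau$-pattern on a $2\times 2$ block, observe that $\tau=\omega\eta$ then leaves exactly two local sign choices, each containing a $-+$ subword in $\omega$ or in $\eta$, and conclude from the definition of $K$ that all four squares lie in $\Z^2\sm\barK$. The only difference is that you replace the paper's picture-based identification of the loop configuration with an explicit computation from the formula for $\sC$, which is a fine (and self-contained) way to fill in what the paper delegates to its figures.
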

\begin{proof}
Suppose $(m,n,\v)$ is tangent to a loop of length four. Four Truchet tiles coming together
to make a loop of length four come in exactly one configuration. Since the map $(\omega, \eta) \to \tau_{\omega, \eta}$ is two-to-one, there are exactly two local choices of $\omega$ and $\eta$ which give rise to a loop of length four. These choices are shown below:
\begin{center}
\includegraphics[scale=0.5]{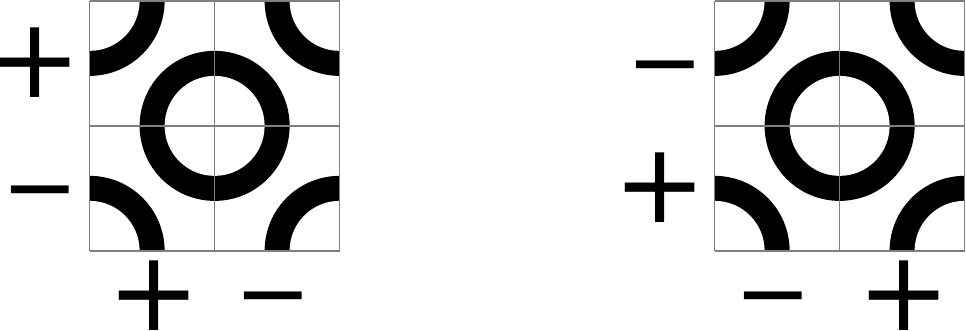}
\end{center}
All of the squares in either of these pictures lie in $\Z^2 \sm \barK$.
\end{proof}

We will now explain another possibility for what the curve through $(m,n,\v)$ looks like assuming $(m,n) \not \in \barK$.

\begin{definition}
A {\em horizontal box} is a subset of $\Z^2$ of the form
$$H=\big\{(i,j) \in \Z^2~:~\text{$i \in \{m+1, \ldots, m+2\ell\}$ and $j \in \{n, n+1\}$}\big\},$$ 
where $\ell, m, n \in \Z$ are constants with $\ell \geq 1$ so that 
$$\omega(m+i)=(-1)^i \quad \text{for $i=1, \ldots, 2\ell$}, \and \eta(n)=\eta(n+1).$$
A {\em vertical box} is a subset of $\Z^2$ of the form
$$V=\big\{(i,j) \in \Z^2~:~\text{$i \in \{m, m+1\}$ and $j \in \{n+1, \ldots, n+2\ell\}$}\big\},$$ 
where $\ell, m, n \in \Z$ are constants with $\ell \geq 1$ so that 
$$\omega(m)=\omega(m+1), \and \eta(n+i)=(-1)^i \quad \text{for $i=1, \ldots, 2\ell$}.$$
In both cases, we call $\ell$ the {\em length parameter} of the box.
\end{definition}

The tiles whose centers belong to a horizontal box must look like one of the following cases when $\ell=3$:
\begin{center}
\includegraphics[scale=0.5]{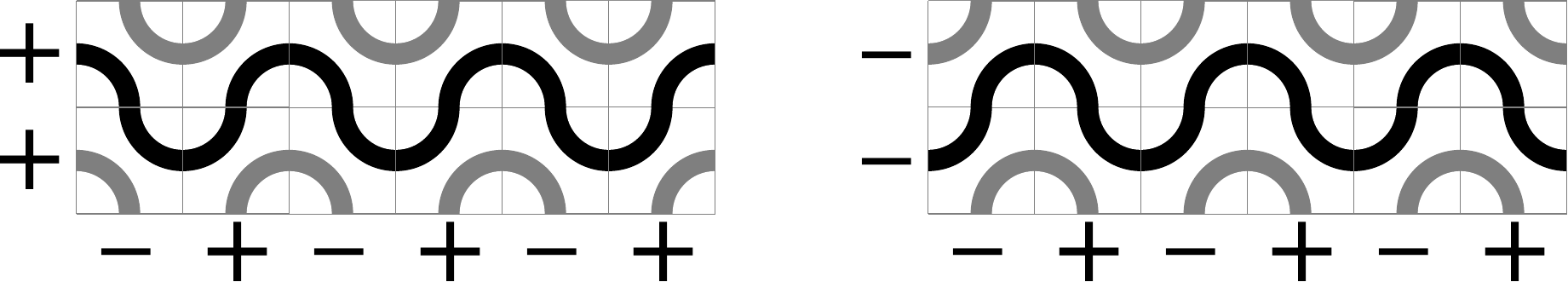}
\end{center}
Each horizontal box has a {\em central curve}, which visits all squares with centers in the horizontal box. 
This curve is depicted in black above.
The collection of tiles whose centers lie in a vertical box looks the same as the above pictures after applying a reflection in the line $x=y$. 

\begin{lemma}
\name{lem:xor}
Suppose $(m,n) \in \Z^2 \sm \barK$ and $\v \in N$. Then exactly one of the following statements holds.
\begin{enumerate}
\item $\sC^4(m,n,\v)=(m,n,\v).$
\item $(m,n,\v)$ is tangent to the central curve of a horizontal box.
\item $(m,n,\v)$ is tangent to the central curve of a vertical box.
\end{enumerate}
\end{lemma}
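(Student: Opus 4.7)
The plan is a case analysis on the local pattern of $\omega$ and $\eta$ around $(m, n)$. Since $(m, n) \notin \barK$, at least one of $m \notin K(\omega)$ or $n \notin K(\eta)$ holds, and by the $\omega \leftrightarrow \eta$ symmetry (which swaps horizontal and vertical boxes while preserving the set of $4$-loops) it suffices to handle cases in which $m \notin K(\omega)$.

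For the single-defect subcase $m \notin K(\omega), n \in K(\eta)$, the condition $n \in K(\eta)$ is equivalent to at least one of $\eta(n) = \eta(n+1)$ or $\eta(n-1) = \eta(n)$ holding, so $\eta$ has an adjacent equal pair containing $n$. Taking this pair as the rows $\{n, n+1\}$ or $\{n-1, n\}$, and using the maximal alternating $-+\cdots-+$ run of $\omega$ containing the $-+$ pair through $m$ as the columns, gives a horizontal box $H$ containing $(m, n)$. A direct trace of $\sC$ along each of the two arcs at $(m, n)$ verifies that each is tangent to the central curve of $H$, or of an immediately adjacent horizontal box extending $H$, so $(m, n, \v)$ is tangent to some horizontal box central curve for every $\v \in N$.

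The main obstacle is the double-defect subcase $m \notin K(\omega), n \notin K(\eta)$, in which both $\omega$ and $\eta$ have $-+$ pairs, contained in maximal alternating runs $[a, b]$ and $[c, d]$ respectively. Within the central block $[a, b] \times [c, d]$ of tiles, each arc is naturally associated to one of the four boundary directions (left, right, top, bottom), as determined by the tile type $\tau(m, n)$ and the choice of $\v$. For each boundary direction, the value of $\omega$ or $\eta$ immediately beyond the alternating run determines the structure containing that arc: if it continues to an equal pair (e.g.\ $\omega(a-1) = \omega(a)$), one obtains a vertical box containing $(m, n)$ formed by this equal pair together with the $-+$ alternation in $\eta$; if instead it forces a $+-$ transition (e.g.\ $\omega(a-1) \neq \omega(a)$), a short computation using the formula for $\sC$ shows that the arc is part of a $4$-loop around the corresponding edge-midpoint of the central block. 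I would organize the resulting sign-combinations in a table and verify each directly; the bookkeeping of which arc lies in which of the four boundary directions is the delicate step, but it is reduced to a finite and explicit verification.

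Uniqueness (``exactly one'') follows automatically from the deterministic nature of $\sC$ together with the distinct geometric character of the three structures: a $4$-loop is a closed orbit of period exactly $4$, while a box central curve is a non-closed finite arc passing through $4\ell$ tiles in two adjacent rows (horizontal) or two adjacent columns (vertical). Since a single orbit cannot simultaneously be closed of period $4$ and a non-closed arc, nor visit tiles in two adjacent rows and two adjacent columns at once, the three options are mutually exclusive.
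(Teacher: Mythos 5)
There is a genuine gap: the case dichotomy you propose assigns outcomes that are not correct, so the ``direct trace'' and ``finite verification'' you appeal to would not confirm what you claim. In your single-defect case ($m\notin K(\omega)$, $n\in K(\eta)$), take $\omega_m\omega_{m+1}=-+$ and $\eta_{n-1}\eta_n\eta_{n+1}=+--$. Then $n\in K(\eta)$ (a $+-$ pair in $\eta$ does not remove $n$ from $K(\eta)$), but the arc of the tile at $(m,n)$ through its bottom and right edges closes up: on columns $\{m,m+1\}$ and rows $\{n-1,n\}$ the $\omega$-pair $-+$ meets the $\eta$-pair $+-$, which is exactly the length-four loop configuration, so the two vectors $\v\in N$ tangent to that arc satisfy $\sC^4(m,n,\v)=(m,n,\v)$ and (by exclusivity) are tangent to no horizontal-box central curve. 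Hence your assertion that in this case every $\v$ is tangent to a horizontal-box central curve is false. Symmetrically, in your double-defect case horizontal boxes do occur: with $\omega_m\omega_{m+1}=-+$ and $\eta_{n-1}\eta_n\eta_{n+1}=--+$, the same arc of $(m,n)$ lies on the central curve of the horizontal box on rows $\{n-1,n\}$, an outcome missing from your list ``vertical box or $4$-loop.'' Moreover, the structure containing a given arc is determined by the signs immediately beyond the $2\times2$ block around $(m,n)$, not by $\omega(a-1)$, $\omega(b+1)$ at the ends of the maximal run: if the run is longer than two, the box formed at its far end need not contain $(m,n)$, its central curve does not pass through $(m,n,\v)$, and arcs at interior columns of the run are $4$-loops regardless of those end values. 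The paper's proof avoids all of this by fixing only the minimal $-+$ pair $m'\in\{m-1,m\}$, enumerating the eight $2\times 3$ local pictures around $(m,n)$ (where the curves through the two defective columns are already $4$-loops or central curves of horizontal boxes with $\ell=1$, or are ``gray''), and then extending the gray cases by one column on each side, where they become $4$-loops or central curves of vertical boxes with $\ell=1$.

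Your exclusivity argument also has a hole. ``A curve cannot visit tiles in two adjacent rows and two adjacent columns at once'' does not rule out the case in which both boxes have length parameter $1$, since then both central curves live in a single $2\times2$ block; what actually excludes the coincidence is the incompatible sign data ($\eta$ equal on the rows of a horizontal box versus alternating on the rows of a vertical box), or, as in the paper, the observation that if $(m,n)$ lies in boxes of both kinds then $\omega_{m-1}\omega_m\omega_{m+1}\in\{-++,\,--+\}$ and likewise for $\eta$, after which the four resulting $3\times3$ configurations show the two central curves through $(m,n)$ are disjoint. A smaller slip: ``$n\in K(\eta)$ is equivalent to an adjacent equal pair at $n$'' is only an implication (for $\eta_{n-1}\eta_n\eta_{n+1}=--+$ there is an equal pair but $n\notin K(\eta)$); you only use the true direction, so this one is harmless.
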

\begin{proof}
Suppose $(m,n) \in \Z^2 \sm \barK$. This means that either $m \not \in K(\omega)$ or $n \not \in K(\eta)$. By reflection in the line $y=x$, we may assume without loss of generality that $m \not \in K(\omega)$. This means that there is a choice of $m' \in \{m-1,m\}$ so that 
\begin{equation}
\name{eq:omp}
\omega_{m'}=-1 \and \omega_{m'+1}=1.
\end{equation}
Assuming this, we can draw all tiles with centers in the set $\{m', m'+1\} \times \{n-1,n,n+1\}$. There are eight possibilities:
\begin{center}
\includegraphics[scale=0.5]{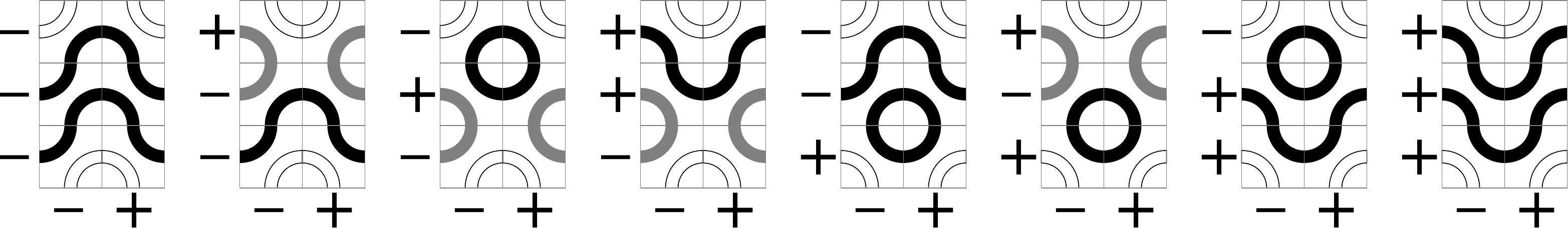}
\end{center}
We have colored the tilings by the following rules. All curves through $(m',n)$ and $(m'+1,n)$ have been colored black or gray. The black curves
are either closed loops of length four, or they are central curves of a horizontal box (with $\ell=1$). The gray curves are not yet part of a horizontal or vertical box and we need to do further analysis. The curves drawn in white and outlined are irrelevant to us because they do not (locally) pass through the tiles with centers $(m',n)$ or $(m'+1,n)$.

We further analyze the gray curves which come in pairs as above. Each gray curve visits two tiles of six in the above picture. 
For each gray curve, there is a choice of $n' \in \{n-1,n\}$ so that 
the curve visits only tiles with centers in the set $\{m',m'+1\} \times \{n',n'+1\}$. Furthermore we have 
$$\eta_{n'}=-1 \and \eta_{n'+1}=1.$$ 
Now we consider extending the tiling left and right. There are a total of four ways to extend depending on the choices of $\omega_{m'-1}$ and 
$\omega_{m'+2}$. The four possible collections of tiles with centers in the set $\{m'-1,m',m'+1, m'+2\} \times \{n',n'+1\}$ are show below:
\begin{center}
\includegraphics[scale=0.5]{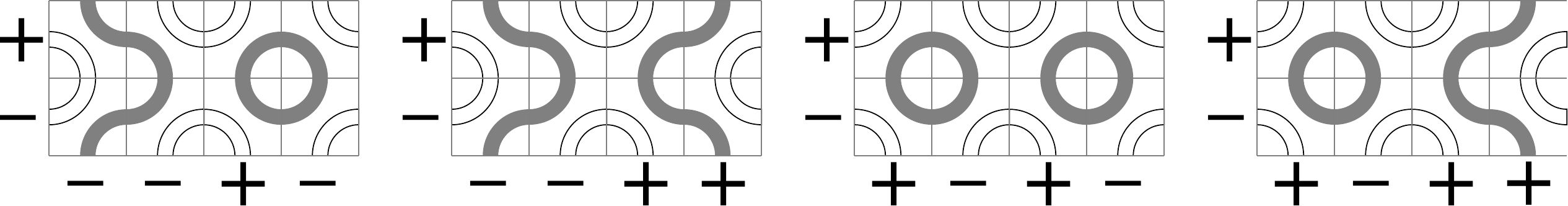}
\end{center}
Observe that in all cases, the gray curve is either a closed loop of length four, or is a central curve in a vertical box (with $\ell=1$). 

The above argument shows that each $(m,n,\v)$ satisfies one of the three statements in the lemma. We need to show the statements are mutually exclusive.
Clearly when $\sC^4(m,n,\v)=(m,n,\v)$, we can not have that $(m,n,\v)$ is tangent to a curve in a horizontal or vertical box. 
Now suppose that $(m,n,\v)$ was tangent to central curves of both horizontal and vertical boxes. 
Because of the $(m,n)$ lies in the horizontal box, there is an
$m' \in \{m-1,m\}$ so that equation \ref{eq:omp} holds. 
Because $(m,n)$ lies in a vertical box, there is an $m'' \in \{m-1,m\}$ so that 
$\omega_{m''}=\omega_{m''+1}$. This leaves two possibilities:
$$\omega_{m-1} \omega_m \omega_{m+1}=-++ \quad \text{or} \quad \omega_{m-1} \omega_m \omega_{m+1}=--+.$$
A similar argument shows that
$$\eta_{n-1} \eta_n \eta_{n+1}=-++ \quad \text{or} \quad \eta_{n-1} \eta_n \eta_{n+1}=--+.$$
Therefore, the tiles with centers in $\{m-1,m,m+1\} \times \{n-1,n,n+1\}$ have the following four possible configurations:
\begin{center}
\includegraphics[scale=0.5]{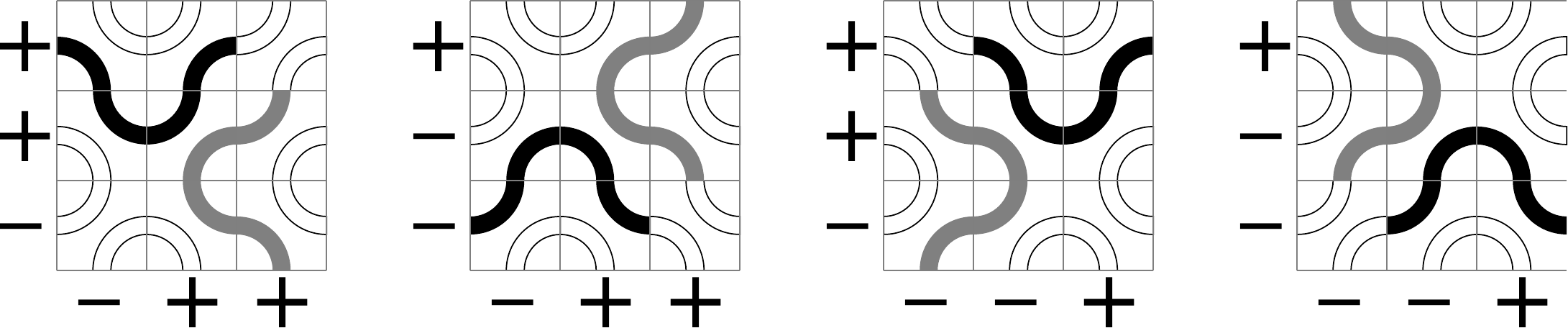}
\end{center}
In the above pictures, the central curve of the horizontal box containing $(m,n)$ is colored black, and the central curve of the vertical box containing
$(m,n)$ is colored gray. Observe that these curves are disjoint. This implies statements (2) and (3) are mutually exclusive.
\end{proof}

We call a horizontal (resp. vertical) box {\em maximal} if it is not contained in a larger horizontal (resp. vertical) box. 

\begin{proposition}
Assume $\omega, \eta \in \Omega_\pm$ satisfy the assumption given in equation \ref{eq:assumption}.
Then, every horizontal (resp. vertical) box is contained in a maximal horizontal (resp. vertical) box. 
\end{proposition}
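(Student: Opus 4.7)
The plan is to take an arbitrary horizontal box and enlarge it as far as possible in the two horizontal directions, then use the hypothesis that $K(\omega)$ is unbounded above and below to show the enlargement terminates. The vertical case is symmetric with $\omega$ and $\eta$ swapped.

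First, I would fix a horizontal box $B$ with parameters $(\ell, m, n)$, so its cell set is $\{m+1,\ldots,m+2\ell\}\times\{n,n+1\}$, $\omega(m+i)=(-1)^i$ for $i=1,\ldots,2\ell$, and $\eta(n)=\eta(n+1)$. Since the vertical coordinate range $\{n,n+1\}$ is forced by the definition (it cannot be extended without contradicting $\eta(n)=\eta(n+1)$), the only ways to strictly enlarge a horizontal box are: a \emph{right extension} to $(\ell+1,m,n)$, which works exactly when $\omega(m+2\ell+1)=-1$ and $\omega(m+2\ell+2)=+1$; and a \emph{left extension} to $(\ell+1,m-2,n)$, which works exactly when $\omega(m-1)=-1$ and $\omega(m)=+1$. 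In either case, the enlarged box contains $B$ as a subset.

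The heart of the argument is to show that right extensions cannot be iterated forever. If they could, then $\omega(k)=-1$ for every $k>m$ with $k-m$ odd, and $\omega(k)=+1$ for every $k>m$ with $k-m$ even. Checking the definition of $K(\omega)$ in equation \ref{eq:K1}: for each such $k$ one of the subwords $(\omega_k,\omega_{k+1})$ or $(\omega_{k-1},\omega_k)$ equals $(-1,+1)$, so $k\notin K(\omega)$. Hence $K(\omega)\subseteq\{\ldots,m-1,m\}$, contradicting the assumption \ref{eq:assumption} that $K(\omega)$ has no upper bound. The mirror-image argument, applied to the left, uses that $K(\omega)$ has no lower bound to show that left extensions also terminate after finitely many steps.

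Therefore there exist maximal $R,L\geq 0$ so that iterating right extensions $R$ times and left extensions $L$ times produces a horizontal box with parameters $(\ell+R+L,\;m-2L,\;n)$ which contains $B$ and admits no further horizontal enlargement; this is the sought maximal horizontal box. The vertical case is proved by the same plan after swapping the roles of $\omega$ and $\eta$, of horizontal and vertical, and invoking the unboundedness of $K(\eta)$. The main obstacle is the bookkeeping in the third paragraph: one must verify with care that infinite alternation of $\omega$ to one side really does place every sufficiently large (or small) index outside $K(\omega)$, but once the indices are tracked this reduces directly to the definition of $K(\omega)$.
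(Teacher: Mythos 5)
Your proof is correct and takes essentially the same route as the paper: in both arguments the key point is that a horizontal box that could be enlarged indefinitely would force $\omega$ to alternate on a half-line, so that $K(\omega)$ would be bounded on one side, contradicting assumption \ref{eq:assumption}. Your version merely makes the one-step extensions and the maximal counts $R,L$ explicit, where the paper argues more briefly by contradiction with arbitrarily large boxes.
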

\begin{proof}
Suppose a horizontal box was not contained in a largest maximal box. Then it would be contained in arbitrary large horizontal box.
Let $(m,n)$ be the point in the box with smallest coordinates. Then, we see that there are arbitrary long intervals $I$ containing $m$
so that $\omega$ alternates on $I$. But this is ruled out by the assumption given in equation \ref{eq:assumption}. A similar statement holds for vertical boxes.
\end{proof}

\begin{proposition}
\name{prop:leaving boxes}
Suppose $(m,n,\v)$ is tangent to the central curve in a maximal horizontal or vertical box $B$. Then, the smallest $k>0$ so that 
$\sC^k(m,n,\v)$ is no longer tangent to the central curve of $B$ satisfies $\sC^k(m,n,\v) \in \barK \times N$. Similarly, the largest $k<0$
so that $\sC^k(m,n,\v)$ is no longer tangent to the central curve of $B$ satisfies $\sC^k(m,n,\v) \in \barK \times N$.
\end{proposition}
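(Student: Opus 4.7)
The plan is to reduce by the $y = x$ reflection symmetry to the case of a maximal horizontal box $B$, trace the central curve through $B$ explicitly, identify the tiles entered at the moment the curve first exits $B$, and verify that their centers lie in $\barK = K(\omega) \times K(\eta)$. The claim for $k < 0$ then follows by applying the same argument at the opposite end of the central curve, and the vertical box case is obtained by swapping the roles of $\omega$ and $\eta$.

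So let $B$ have center set $\{m+1,\ldots,m+2\ell\} \times \{n, n+1\}$, with $\omega_{m+i} = (-1)^i$ for $i = 1,\ldots,2\ell$ and $\eta_n = \eta_{n+1}$. First I would verify, by a direct inspection of the $T_{\pm 1}$ arc patterns column by column, that the central curve snakes between the two rows of $B$ and that the exit row is uniquely determined by the sign of $\eta_n$: the curve exits into the row $y = n+1$ when $\eta_n = +1$ and into the row $y = n$ when $\eta_n = -1$. Consequently the two exit tiles are exactly $(m, y_{\mathrm{exit}})$ and $(m + 2\ell + 1, y_{\mathrm{exit}})$. It then remains to check that $y_{\mathrm{exit}} \in K(\eta)$ and that both $m$ and $m+2\ell+1$ lie in $K(\omega)$. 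The $y$-coordinate is handled by a quick case check against the definition of $K(\eta)$: for instance when $\eta_n = +1$, the equalities $\eta_n\eta_{n+1} = (+1,+1)$ and $\eta_{n+1} = +1$ rule out the forbidden subword $(-1,+1)$ starting at both positions $n$ and $n+1$, so $n+1 \in K(\eta)$; the case $\eta_n = -1$ is dual. The $x$-coordinate is where maximality of $B$ enters: if $\omega_{m-1} = -1$ and $\omega_m = +1$, then the alternation of $\omega$ would extend leftward to give a horizontal box with length parameter $\ell+1$, contradicting maximality; together with the fact that $\omega_{m+1} = -1$ already rules out the other forbidden subword at position $m$, this shows $m \in K(\omega)$, and the symmetric argument at the right end gives $m + 2\ell + 1 \in K(\omega)$.

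The main obstacle is to execute the tracing step cleanly without getting mired in case analysis. Schematic pictures analogous to the one accompanying the definition of a horizontal box make the argument transparent, but one still has to carefully handle each of the cases $\eta_n = \pm 1$ (and, before the symmetry reduction, horizontal vs.\ vertical), since these cases yield different exit rows. Once the exit tile is correctly identified, the verification that its center lies in $\barK$ is elementary and essentially mechanical.
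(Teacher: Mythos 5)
Your argument is correct, but it takes a more explicit route than the paper's. The paper proves the statement by contradiction through Lemma \ref{lem:xor}: writing $(m',n',\v')$ for the first state no longer tangent to the central curve and supposing $(m',n')\notin\barK$, the lemma forces $(m',n',\v')$ to be tangent to the central curve of a second box $B'$, which must be horizontal because the direction $\v'$ is horizontal and this is the initial entrance to $B'$; two horizontal boxes whose central curves connect can only do so if $B\cup B'$ is a larger horizontal box, contradicting maximality. You never invoke Lemma \ref{lem:xor}; instead you identify the exit tiles concretely as $(m,y_{\mathrm{exit}})$ and $(m+2\ell+1,y_{\mathrm{exit}})$ and verify membership in $K(\omega)\times K(\eta)$ directly from the definition, with maximality entering only to exclude the subword $-+$ at positions $m$ and $m+2\ell+1$ (which is precisely the condition that would extend the alternation and enlarge the box, so the two uses of maximality are the same fact in different clothing). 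Your tracing claims are consistent with the curve-following formula of equation \ref{eq:C}: when $\eta_n=\eta_{n+1}=+1$ the snake enters and leaves along the row $n+1$, and along the row $n$ when the common sign is $-1$, and both ends exit in the same row because the box has an even number of columns; the $K(\eta)$ and $K(\omega)$ checks and the extension argument are then correct as you state them. What the paper's softer argument buys is brevity and independence from the exact exit location; what yours buys is an explicit description of the exit state, which is essentially the same inspection the paper carries out anyway in the proof of Theorems \ref{thm:tiling renormalization} and \ref{thm:return time} (the identity $\hsC(m,n,\v)=\big(m+c(2\ell+1),n,\bw\big)$), so your route front-loads that computation. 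One caution: the correctness of your $K(\eta)$ check genuinely depends on identifying the exit row correctly --- if the rows were swapped, the check at row $n$ could fail when $\eta_{n-1}=-1$ --- so the column-by-column inspection you defer is a necessary, not cosmetic, step.
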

\begin{proof}
We prove the statement for $k>0$; the other statement has a similar proof.
Let $(m',n',\v')=\sC^k(m,n,\v)$. Then $\v'$ is horizontal if $B$ is horizontal, and $\v'$ is vertical if $B$ is vertical. 
Suppose without loss of generality that $B$ and $\v'$ are horizontal. If $(m',n') \not \in \barK$, then Lemma \ref{lem:xor} implies that
$(m',n')$ is tangent to the central curve of a new horizontal or vertical box $B'$ and that the central curves of $B$ and $B'$ are disjoint. 
Since $(m',n',\v')$ is the initial entrance to the horizontal box $B'$ and $v'$ is horizontal, we know that $B'$ is horizontal. 
Observe that horizontal boxes can be joined so that their central curves connect only if $B \cup B'$ is a larger horizontal box. This contradicts maximality of $B$. 
\end{proof}

We now can prove our renormalization theorems.

\begin{proof}[Proof of Theorems \ref{thm:tiling renormalization} and \ref{thm:return time}.]
Statement (1) of the theorem follows from statement (2). We will now simultaneously prove statement (2) of the Tiling Renormalization Theorem and the Return Time Theorem. Choose any $(m',n',\v) \in \Z^2 \times N$ and write $\v=(a,b) \in N$. Define
$$(m,n)=\kappa(m',n'), \quad s=\tau'(m',n')=\tau(m,n) \and \bw=(sb,sa).$$
Then we have
$$\sC'(m',n',\v)=\big((m',n')+\bw,\bw\big)
\and
\sC\big(m,n,\v)=\big((m,n)+\bw,\bw\big).$$
The two statements we wish to prove follow respectively from
$$\hsC(m, n, \v)=\til \kappa\big((m', n')+\bw, \bw) \and R(m,n, \v)=2E\big((m,n)+\bw,\bw\big)-1.$$
For these equations, we may assume without loss of generality that
$\v$ is vertical. This means that $\bw$ is horizontal and we can write $\bw=(c,0)$ taking $c=sb \in \{\pm 1\}$ and $a=0$. 

First the consider the case that $(m,n)+\bw \in \barK$. This is the center of the square containing 
$\sC(m,n,\v)$, which means 
$$R(m, n,\v)=1 \and E(m,n,\bw)=1,$$
proving this case of the Return Time Theorem. In addition, we have 
$$\hsC(m, n, \v)=\sC(m, n, \v)=(m+c,n,\bw).$$
Because both $m \in K(\omega)$ and $m+c\in K(\omega)$ with $c \in \{\pm 1\}$, we 
have 
$$\kappa_1^{-1}(m+c)=\kappa_1^{-1}(m)+c=m'+c,$$ 
because $\kappa_1:\Z \to K(\omega)$ is an order preserving bijection. We have therefore
shown a special case of statement (2) of the Tiling Renormalization theorem, 
$$\hsC(m, n, \v)=(m+c,n,\bw)=\til \kappa\big((m', n')+\bw, \bw).$$

Otherwise we have $(m,n)+\bw \not \in \barK$. Here, $\sC(m,n,\v)$ is tangent to the central curve of a maximal horizontal
or vertical box $B$. Observe that $(m,n) \in \barK$, so $\sC(m,n,\v)=(m+c,n,\bw)$ is the first time the curve enters this box. Since 
$\bw$ is horizontal, the box $B$ must be a horizontal box. Let $\ell$ denote the length parameter of the maximal horizontal box $B$. 
If $c=1$, this means that 
$$\omega_{m+k}=(-1)^k \quad \text{for $k=1, \ldots, 2\ell$}.$$
If $c=-1$, this means that 
$$\omega_{m+k-2\ell-1}=(-1)^k \quad \text{for $k=1, \ldots, 2 \ell$}.$$
Note that $\ell$ is the maximal number with this property. Therefore, $\kappa_1(m'+c)=m+c(2\ell+1)$. That is, $\kappa_1$ must skip over $2 \ell$ numbers to reach $\kappa_1(m'+c)$. The orbit $\sC^i(m,n,\v)$ follows the central curve of $B$ and then returns to
$\barK$ by Proposition \ref{prop:leaving boxes}. By inspection of horizontal boxes, we can then observe
\begin{enumerate}
\item[(a)] $R(m,n,\v)=1+4 \ell.$
\item[(b)] $\hsC(m,n,\v)=\sC^{1+4\ell}(m,n,\v)=\big(m+c(2\ell+1),n,\bw\big)$.
\item[(c)] $E(m,n,\bw)=2 \ell+1.$
\end{enumerate}
Statements (a) and (c) imply $R(m,n, \v)=2E(m,n,\bw)-1$. By (b) and observations above, 
$$\hsC(m, n, \v)=\big(m+c(2\ell+1),n,\bw\big)=\til \kappa(m+c,n,\bw).$$
This finishes the proof of statement (2) of the Tiling Renormalization Theorem and proof of the Return Time Theorem.

Statement (3) of Theorems \ref{thm:tiling renormalization} follows from Lemma \ref{lem:xor} and Proposition \ref{prop:leaving boxes}.
If $(m,n) \not \in \barK$ and $\sC^4(m,n,\v) \neq (m,n,\v)$ then $(m,n,\v)$ is tangent to a central curve of a maximal horizontal or vertical box. Under positive or negative iteration by $\sC$ it must leave the box, and when it does it enters the set $\barK \times N$. 

We now consider statement (4). Suppose $(m,n,\v)$ is periodic under $\sC$ and never visits the set $(\Z^2 \sm \barK) \times N$. Then this periodic orbit is confined to a region of the tiling consisting of tiles with centers in the set 
$$X=\{a, a+1, \ldots, b\} \times \{a', a'+1, \ldots, b'\},$$
where there are $c$ and $c'$ so that for all $(m,n) \in X$
$$\omega(m)=\begin{cases}
1 & \textrm{if $m < c$} \\
-1 & \textrm{if $m \geq c$}
\end{cases}
\and
\eta(n)=\begin{cases}
1 & \textrm{if $n < c'$} \\
-1 & \textrm{if $n \geq c'$.}
\end{cases}$$
But such a portion of a tiling can have no closed curves. See the example below.
\begin{center}
\includegraphics[scale=0.5]{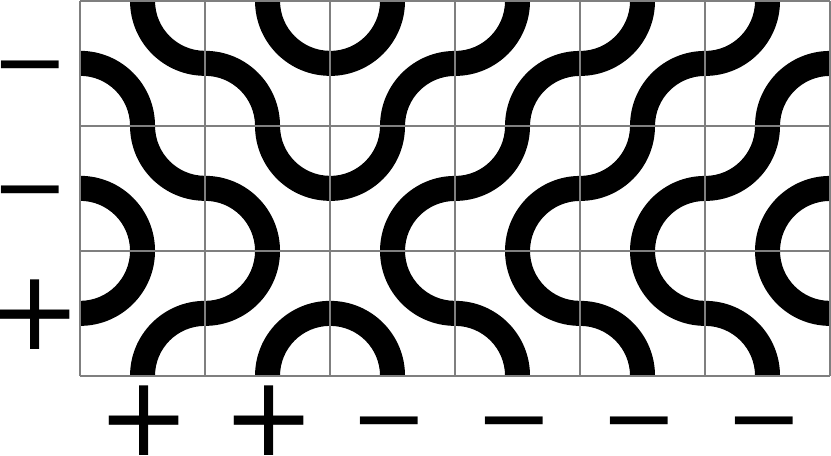}
\end{center}
\end{proof}

\section{Dynamical Renormalization}
\name{sect:corner}
This section culminates in a description of a renormalization
of the dynamical system $\Phi:X \to X$ defined in Equation \ref{eq:Phi2}. 

\subsection{Background on shift spaces}
\name{sect:topology}
Recall that $\Omega_\pm$ denotes the space of all bi-infinite sequences in the alphabet $\{\pm 1\}$. We will now describe some of the general
structure associated with shift spaces in this context. For further background on shift spaces see \cite{LindMarcus}, for instance.

A {\em word} in the alphabet $\{\pm 1\}$ is an element $w$ of a set $\{\pm 1\}^{\{1,\ldots,n\}}$ for some $n$, called the {\em length} of $w$. 
We write $w=w_1 \ldots w_n$ with $w_i \in \{\pm 1\}$ to denote a word.
To simplify notation of the elements in $\{\pm 1\}$, we use $+$ to denote $1$ and $-$ to denote $-1$. So the word 
$w$ where $w_1=1$ and $w_2=-1$ can be written $w=+-$. 
Adjacency indicates the {\em concatenation} of words; if $w$ and $w'$ are words of length $n$ and $n'$ respectively, then 
$$ww'=w_1 \ldots w_{n} w'_1 \ldots w'_{n'}.$$ 

The choice of a word $w=w_1 \ldots w_n$ and an integer $b$ determines a {\em cylinder set},
$$\cyl(w,b)=\set{\omega \in \Omega_{\pm}}{$\omega_{i-b}=w_{i}$ for all $i=1, \ldots, n$}.$$
Whenever $b \in \{1, \ldots, n\}$, we can also denote the cylinder set $\cyl(w,b)$ by
$$\cyl(w_1 \ldots \widehat w_b \ldots w_n),$$ 
with the hat indicating that $w_b$ represents the zeroth entry of the 
those $\omega$ in the cylinder set.
We equip $\Omega_\pm$ with the topology generated by the cylinder sets. The topological space $\Omega_\pm$ is homeomorphic to a Cantor set.

Recall that the shift map $\sigma:\Omega_\pm \to \Omega_\pm$ is defined by $\sigma(\omega)_n=\omega_{n+1}$ as in Equation \ref{eq:shift}.
A {\em shift-invariant measure} on $\Omega_\pm$ is a Borel measure $\mu$ satisfying 
$$\mu \circ \sigma^{-1}(A)=\mu(A) \quad \textrm{for all Borel subsets $A \subset \Omega_\pm$}.$$
Full shift spaces admit a plethora of shift-invariant probability measures.

\subsection{Invariant measures}
\name{sect:tilings from shift spaces}
Recall the definition of $\Phi:X \to X$ where  $X=\Omega_\pm \times \Omega_\pm \times N$ as in equation \ref{eq:Phi2},
\begin{equation}
\name{eq:Phi2b}
\Phi\big(\omega, \omega', (a,b)\big)=\big(\sigma^{sb} (\omega), \sigma^{sa} (\omega'), s(b,a)\big) \quad \textrm{with $s=\omega_0 \omega'_0 \in \{\pm 1\}$.}
\end{equation}

The following gives a natural construction of $\Phi$-invariant measures.

\begin{proposition}
\name{prop:product measures}
Suppose $\mu$ and $\mu'$ are shift invariant probability measures on $\Omega_\pm$. Let $\mu_N$ be the discrete probability measure
on $N$ so that $\mu_N(\{\v\})=\frac{1}{4}$ for each $\v \in N$. Then $\mu \times \mu' \times \mu_N$ is a $\Phi$-invariant probability measure on $X$.
\end{proposition}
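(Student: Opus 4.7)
The strategy is to check invariance on a generating family and extend by a standard monotone-class (or $\pi$-$\lambda$) argument. The Borel $\sigma$-algebra on $X = \Omega_\pm \times \Omega_\pm \times N$ is generated by the $\pi$-system of cylinder rectangles $A = C \times C' \times \{\bw\}$, where $C, C' \subset \Omega_\pm$ are cylinder sets and $\bw \in N$. Thus it suffices to establish $(\mu \times \mu' \times \mu_N)(\Phi^{-1}(A)) = (\mu \times \mu' \times \mu_N)(A)$ for every such rectangle $A$.

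The first step is to invert $\Phi$ explicitly. Writing $\bw = (c,d)$ and setting $(\omega_*, \omega'_*, \bw) = \Phi(\omega, \omega', \v)$, the defining relation \ref{eq:Phi2} can be solved to give
\[
\omega = \sigma^{-c}(\omega_*), \quad \omega' = \sigma^{-d}(\omega'_*), \quad \v = s(d,c),
\]
where $s = \omega_0 \omega'_0 = (\omega_*)_{-c}(\omega'_*)_{-d} \in \{\pm 1\}$. In particular $\Phi$ is a measurable bijection, so $A$ has a well-defined preimage, and I would describe it by partitioning according to the four possible values of $(\omega_0, \omega'_0) = (\epsilon, \epsilon') \in \{\pm 1\}^2$:
\[
\Phi^{-1}(A) = \bigsqcup_{\epsilon, \epsilon'} \bigl(\sigma^{-c}(C) \cap \{\omega_0 = \epsilon\}\bigr) \times \bigl(\sigma^{-d}(C') \cap \{\omega'_0 = \epsilon'\}\bigr) \times \{\epsilon\epsilon'(d,c)\}.
\]
The disjointness across $(\epsilon, \epsilon')$ is immediate from the distinct values of $(\omega_0, \omega'_0)$ on the different pieces.

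The final step is a direct calculation. Because $\mu_N(\{\bv\}) = \frac{1}{4}$ for every $\bv \in N$, summing the product-measure contributions yields
\[
(\mu \times \mu' \times \mu_N)(\Phi^{-1}(A)) = \frac{1}{4} \sum_{\epsilon,\epsilon'} \mu\bigl(\sigma^{-c}(C) \cap \{\omega_0 = \epsilon\}\bigr)\, \mu'\bigl(\sigma^{-d}(C') \cap \{\omega'_0 = \epsilon'\}\bigr),
\]
which factors as a product of two sums and telescopes to $\frac{1}{4}\mu(\sigma^{-c}(C)) \, \mu'(\sigma^{-d}(C'))$. Applying shift-invariance of $\mu$ and $\mu'$ then converts this to $\frac{1}{4}\mu(C) \, \mu'(C') = (\mu \times \mu' \times \mu_N)(A)$, as required. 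There is no deep obstacle here: the only subtle point is the inversion of $\Phi$, in particular noticing that the sign $s$ can be recovered from the output coordinates via $s = (\omega_*)_{-c}(\omega'_*)_{-d}$, which is what makes $\Phi$ a genuine bijection rather than merely two-to-one, and which ensures that the four pieces $D_{\epsilon,\epsilon'}$ reassemble cleanly to give the full preimage.
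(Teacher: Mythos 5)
Your proof is correct, and it is essentially the paper's argument: the paper likewise decomposes according to the value of $(\omega_0,\omega'_0)$ and the $N$-component, so that on each piece $\Phi$ is a product of shifts and a permutation of $N$, each of which preserves $\mu \times \mu' \times \mu_N$. Your version merely packages this as an explicit computation of $\Phi^{-1}$ on measurable rectangles followed by a standard $\pi$-$\lambda$ extension, details the paper leaves to the reader.
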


The proof is just to observe that each Borel set $A \subset X$ can be decomposed into pieces on which the action of $\Phi$ is
a power of a shift on each $\Omega_\pm$-coordinate and a permutation on $N$. The power and permutation are taken to be constant on each piece.

\subsection{Periodic orbits}
\name{sect:periodic orbits}
Suppose $(\omega, \eta,\v) \in X$ is periodic under $\Phi$. We say $(\omega, \eta,\v)$ has a {\em stable periodic orbit of period $n$} if $n$ is the smallest positive integer for which there are open neighborhoods $U$ and $V$ of $\omega$ and $\eta$ respectively for which 
$$\omega' \in U \and \eta'\in V \quad \text{implies} \quad \Phi^n(\omega', \eta',\v)=(\omega', \eta',\v).$$

\begin{remark}
Not all periodic orbits are stable. When $\omega_n=1$ and $\eta_n=1$ for all $n \in \Z$, we have
$\Phi^2(\omega, \eta,\v)=(\omega, \eta,\v)$ for all $\v$, but $(\omega, \eta,\v)$ is not a stable 
periodic orbit of any period.
\end{remark}

The following proposition characterizes the points with stable periodic orbits.

\begin{proposition}[Stability Proposition]
\name{prop:stable}
The following statements hold.
\begin{enumerate}
\item $(\omega, \eta,\v) \in X$ has a stable periodic orbit if and only if the curve of the tiling $[\tau_{\omega, \eta}]$ passing through the normal $\v$ to the square centered at the origin is closed. 
\item If $(\omega, \eta,\v) \in X$ has a periodic orbit but not a stable periodic orbit, then either $\omega$
or $\eta$ is periodic under the shift map $\sigma$. 
\end{enumerate}
\end{proposition}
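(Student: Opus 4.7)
The plan is to use equation \ref{eq:curve following relation}, which states
$$\Phi^k(\omega, \eta, \v) = \sS \circ \sC^k(0, 0, \v),$$
where $\sC$ is the curve-following map for the Truchet tiling $[\tau_{\omega, \eta}]$ and $\sS(m, n, \bw) = (\sigma^m \omega, \sigma^n \eta, \bw)$. Writing $\sC^k(0, 0, \v) = (m_k, n_k, \v_k)$, this becomes
$$\Phi^k(\omega, \eta, \v) = (\sigma^{m_k} \omega, \sigma^{n_k} \eta, \v_k).$$
Hence $\Phi^p(\omega, \eta, \v) = (\omega, \eta, \v)$ if and only if $\v_p = \v$, $\sigma^{m_p} \omega = \omega$, and $\sigma^{n_p} \eta = \eta$. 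This reformulation is the core of both parts.

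For the forward direction of (1), suppose the curve through $\v$ is closed, so $\sC^p(0, 0, \v) = (0, 0, \v)$ for some $p > 0$. This orbit visits a finite set of squares $F \subset \Z^2$. Since $\sC$ at a square $(m, n)$ depends only on $\tau(m, n) = \omega_m \eta_n$, the equality $\sC^p(0, 0, \v) = (0, 0, \v)$ depends only on the entries of $\omega$ and $\eta$ indexed by the projections of $F$. Taking $U$ and $V$ to be cylinder neighborhoods fixing precisely these entries, every $(\omega', \eta') \in U \times V$ produces the same first $p$ steps of the curve-following map, so $\Phi^p(\omega', \eta', \v) = (\omega', \eta', \v)$, giving a stable periodic orbit.

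For the converse of (1), suppose $(\omega, \eta, \v)$ has a stable periodic orbit of period $n$ in some neighborhood $U \times V$. Shrink $U$ and $V$ to cylinder sets large enough to include all entries indexed by the projections of the $n$-step orbit of $(0, 0, \v)$ under $\sC$ for $\tau_{\omega, \eta}$. Then for every $(\omega', \eta') \in U \times V$ the curve-following map produces the same triple $(m_n, n_n, \v_n)$ as for $(\omega, \eta)$. The stability condition therefore becomes $\sigma^{m_n} \omega' = \omega'$ and $\sigma^{n_n} \eta' = \eta'$ throughout $U \times V$. If $m_n \neq 0$ one may modify $\omega$ at a coordinate outside $U$'s constraint set whose $m_n$-shift is also unconstrained, producing an $\omega' \in U$ with $\sigma^{m_n}\omega' \neq \omega'$, a contradiction; hence $m_n = 0$ and symmetrically $n_n = 0$. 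Combined with $\v_n = \v$, this gives $\sC^n(0, 0, \v) = (0, 0, \v)$, closing the curve.

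Part (2) follows immediately: if $(\omega, \eta, \v)$ is $\Phi^p$-periodic but not stably periodic, then by (1) the curve is not closed, so $(m_p, n_p) \neq (0, 0)$, and the reformulation at the top forces either $\sigma^{m_p}\omega = \omega$ with $m_p \neq 0$ or $\sigma^{n_p}\eta = \eta$ with $n_p \neq 0$. The main obstacle is the converse direction of (1): one must carefully exhibit the local constancy of the finite portion of the curve-following orbit in the product topology and then use the openness of cylinder neighborhoods to break any putative nonzero shift-periodicity by a far-away perturbation. The remaining directions are essentially bookkeeping with the definitions of $\Phi$ and stable periodicity.
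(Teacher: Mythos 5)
Your proposal is correct and follows essentially the same route as the paper: it uses equation \ref{eq:curve following relation} to reduce $\Phi$-periodicity to the curve-following displacement plus shift relations, cylinder neighborhoods for the direction ``closed curve $\Rightarrow$ stable,'' and far-away perturbations inside cylinder sets to rule out stability when the displacement is nonzero. The only difference is organizational---you deduce part (2) from part (1) after establishing the converse directly, whereas the paper treats the ``periodic but curve not closed'' case in one stroke---but the underlying argument is the same.
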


%

\begin{proof}[Proof of Proposition \ref{prop:stable}.]
First suppose the curve of the tiling $[\tau_{\omega,\eta}]$ through the normal $\v$ to the square centered at the origin is closed. 
There are integers $m$ and $n$ so that all tiles visited by this closed curve have centers in the set $[-m,m] \times [-n,n]$. We define
$$U=\cyl(\omega_{-m} \omega_{-m+1} \ldots \wh \omega_0 \ldots \omega_m) \and 
V=\cyl(\eta_{-n} \eta_{-n+1} \ldots \wh \eta_0 \ldots \eta_n).$$
Observe that every tiling determined by $\omega' \in U$ and $\eta' \in V$ looks the same for the set of tiles with centers in $[-m,m] \times [-n,n]$.
In particular, every such tiling has the same closed curve through the normal $\v$ to the square centered at the origin. This always
gives a periodic orbit of the same period as $(\omega, \eta, \v)$.  

Now suppose $(\omega, \eta, \v)$ has period $k$ but the associated curve of the tiling $[\tau_{\omega,\eta}]$ is not closed. 
Recall the definition of the curve following map given in Section \ref{sect:curve following}. 
Define $m$ and $n$ so that the curve following map for $[\tau_{\omega, \eta}]$ satisfies
$\sC^k(0,0,\v)=(m,n,\v)$. Because the loop has not closed, $m \neq 0$ or $n \neq 0$. 
But because $(\omega, \eta,\v)$ has period $k$, we have
$\sigma^m(\omega)=\omega$ and $\sigma^n(\eta)=\eta$. 
See equation \ref{eq:curve following relation}. So, $\omega$ is periodic or $\eta$ is periodic. 
We can see that $(\omega, \eta, \v)$ does not have a stable periodic orbit, since we can always perturb $\omega$ and $\eta$
within any $U$ and $V$ to destroy periodicity but to ensure that the curve following map $\sC_0$ of the perturbed tiling satisfies $\sC^k_0(0,0,\v)=(m,n,\v)$.
\end{proof}

\begin{remark}[Closed curves in the arithmetric graph]
In polygonal billiards and polygonal outer billiards, a periodic orbit is called {\em stable} if periodic paths with the same combinatorial type do not disappear when sufficiently small changes are made to the polygon. 
The fact that closed curves in the arithmetic graph correspond to stable periodic orbits also holds true in the study of outer billiards in polygons. See \cite{S09} for the case when the polygon is a kite.
A periodic billiard path in a triangle gives rise to a so-called hexpath in the hexagonal tiling of the plane. This hexpath is always periodic up to a translation, and
the periodic billiard path is stable if and only if this translation is trivial, i.e. the hexpath closes up. See \cite{HooSch09}. Both these statements have generalizations to all polygons which can be obtained
by appropriately interpreting known combinatorial criteria for stability. See \cite{Tab} for these combinatorial criteria.
\end{remark}

\subsection{The collapsing map}
\name{sect:collapsing}
In the tiling renormalization procedure described in section \ref{sect:renormalizing tilings}, we took any $\omega$ and $\eta$ in $\Omega_\pm$
and removed all subwords of the form $-+$ to build new elements $\omega'$ and $\eta'$ in $\Omega_\pm$. The tiling $[\tau_{\omega',\eta'}]$ was shown to 
have a similar structure to the tiling $[\tau_{\omega,\eta}]$. The choice of $\omega'$ and $\eta'$ was only canonical up to a power of the shift map. In order to use this tiling renormalization procedure to understand the map $\Phi$ will will need to make the choice canonical. We do this via a map we call the {\em collapsing map}. 

The idea of the collapsing function $c$ mentioned at the beginning of this section is to remove any substrings of the form $-+$ and then slide the remaining
entries together toward the zeroth entry. For example,
$$c(\ldots \underline{-+}+--\underline{-+}\underline{-+}\widehat+-\underline{-+}++\ldots)=\ldots +--\widehat+-++\ldots,$$
where underlined entries have been removed. There are two potential reasons why $c(\omega)$ may not be well defined. 
First, the zeroth entry might be removed by this process, so we lose track of the indexing. Second, the remaining list may not be bi-infinite.

We will now build up to a formal definition of the collapsing map. We define the set $S \subset \Omega_\pm$ to be the union of two cylinder sets,
$$S=\cyl(\widehat{-}+) \cup \cyl(-\widehat{+}).$$ We can restate the definition of the set $K(\omega)$ given in equation \ref{eq:K1} as
\begin{equation}
\name{eq:K2}
K(\omega)=\{k \in \Z~:~\sigma^k(\omega) \not \in S\}.
\end{equation}
We call $\omega$ {\em unbounded-collapsible} if $K(\omega)$ has no upper nor lower bound. 
Our definition of $\omega'$ depended on an order preserving bijection $\Z \to K(\omega)$. 
Such a bijection is guaranteed to exist if $\omega$ is unbounded-collapsible, but
there are many possible choices. If $0 \in K(\omega)$, we call $\omega$ {\em zero-collapsible}
and define $i \mapsto k_i$ to be the unique order preserving bijection $\Z \to K(\omega)$ so that 
$k_0=0$. We call $\omega$ {\em collapsible} if it is both unbounded- and zero-collapsible. We use $C \in \Omega_\pm$ to denote
the set of collapsible $\omega$, and define the collapsing map to be 
$$c:C \to \Omega_\pm; \quad [c(\omega)]_i=\omega_{k_i}.$$

We briefly record some properties of the collapsing map.
\begin{theorem}[Properties of the collapsing map]\quad
\name{thm:collapsing}
\begin{enumerate}
\item The map $c:C \to \Omega_\pm$ is a continuous surjection.
\item \name{item:crenormalization}
If $\wh \sigma:C \to C$ is the first return map of $\sigma$ to $C$, then 
$$c \circ \wh \sigma(\omega)= \sigma \circ c(\omega) \quad \text{for all $\omega \in C$.}$$
\item \name{item:collapsing measures}
If $\mu$ is a shift-invariant measure on $\Omega_\pm$ then so is $\mu \circ c^{-1}$. 
\item \name{item:omegaalt2}
Define $\omegaalt \in \Omega_\pm$ by $\omegaalt_n=(-1)^n$. 
If $\mu$ is a finite shift-invariant measure on $\Omega_\pm$, then 
$$\mu(\set{\omega \in \Omega_\pm}{$\omega$ is not unbounded-collapsible})=2 \mu(\{\omegaalt\}).$$
\end{enumerate}
\end{theorem}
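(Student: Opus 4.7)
The plan is to address the four parts in order, with most of the work going into (4).

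For (1), I will first establish surjectivity by exhibiting an explicit lift: given any $\eta \in \Omega_\pm$, define $\omega$ by $\omega_{3n}=\eta_n$, $\omega_{3n+1}=-1$ and $\omega_{3n+2}=+1$. A direct check of the conditions defining $K(\omega)$ shows $K(\omega)=3\Z$, so $\omega \in C$ and $c(\omega)=\eta$. Continuity reduces to the fact that for $\omega \in C$ the finite block $(c(\omega)_{-i},\ldots,c(\omega)_i)$ is determined by a sufficiently large but finite window of $\omega$ around the origin, and this dependence is locally constant on cylinders inside $C$.

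Statement (2) is an unwinding of definitions. Write $K(\omega)=\{\ldots,k_{-1},0,k_1,k_2,\ldots\}$ for $\omega\in C$; since $K(\sigma^j\omega)=K(\omega)-j$ and unbounded-collapsibility is shift-invariant, one has $\sigma^j(\omega)\in C$ iff $j\in K(\omega)$, so $\wh\sigma(\omega)=\sigma^{k_1}(\omega)$. The order-preserving bijection $\Z\to K(\sigma^{k_1}\omega)$ sending $0$ to $0$ is $i\mapsto k_{i+1}-k_1$, and then $c(\sigma^{k_1}\omega)_i = \omega_{k_{i+1}} = c(\omega)_{i+1} = \sigma(c(\omega))_i$. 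Statement (3) follows from (2) and the standard fact that a first-return map preserves the restriction of an invariant measure: $\wh\sigma:C\to C$ is defined everywhere on $C$ by unbounded-collapsibility, and the identity $c^{-1}(\sigma^{-1}A)=\wh\sigma^{-1}(c^{-1}A)$ transports $\wh\sigma$-invariance of $\mu|_C$ into $\sigma$-invariance of $\mu\circ c^{-1}$.

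The bulk of the work is (4). A case analysis of when $n\notin K(\omega)$ shows that $\omega$ fails to be unbounded-collapsible iff at least one one-sided tail of $\omega$ is eventually alternating, equivalently eventually equals $\omegaalt$ or $-\omegaalt$. Let $N^+$ and $N^-$ denote the sets of $\omega$ whose positive and negative tails, respectively, are eventually alternating, and set $B_N=\{\omega:\omega_n=\omegaalt_n\text{ for all }n\geq N\}$ and $B^-_N=\{\omega:\omega_n=-\omegaalt_n\text{ for all }n\geq N\}$. Using shift invariance to compute $\sigma^{-1}B_N=B^-_{N+1}$ and $\sigma^{-1}B^-_N=B_{N+1}$ yields $\mu(B_N)=\mu(B^-_{N+1})=\mu(B_{N+2})$, so all of the $\mu(B_N)$ and $\mu(B^-_N)$ share a common value $c$. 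The key lemma is $c=\mu(\{\omegaalt\})$: the map $T=\sigma^2$ sends $B_0$ into itself with image $T(B_0)=\{\omega\in B_0:\omega_{-1}=-1,\omega_{-2}=+1\}$; by $\sigma^2$-invariance $\mu(\sigma^{-2}B_0\setminus B_0)=\mu(B_2\setminus B_0)=0$, so $T$ restricted to $B_0$ preserves $\mu|_{B_0}$. This forces $\mu(T^k(B_0))=\mu(B_0)=c$ for every $k\geq 0$, and since the sets $T^k(B_0)$ decrease to $\bigcap_k T^k(B_0)=\{\omegaalt\}$, downward continuity of the finite measure $\mu$ gives $\mu(\{\omegaalt\})=c$.

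From the lemma, the increasing unions $\bigcup_N B_N$ and $\bigcup_N B^-_N$ each have measure $c=\mu(\{\omegaalt\})$; since $N^+$ is their disjoint union, $\mu(N^+)=2\mu(\{\omegaalt\})$, and the symmetric argument applied to negative tails gives $\mu(N^-)=2\mu(\{\omegaalt\})$. For $N^+\cap N^-$, every $\omega$ differs from $\omegaalt$ or $-\omegaalt$ at only finitely many coordinates; any such $\omega\notin\{\omegaalt,-\omegaalt\}$ has infinite $\sigma$-orbit of constant mass, so finiteness of $\mu$ forces $\mu(\{\omega\})=0$, and countability of the remaining points gives $\mu(N^+\cap N^-)=\mu(\{\omegaalt,-\omegaalt\})=2\mu(\{\omegaalt\})$. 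Inclusion-exclusion then yields $\mu(N)=2\mu(\{\omegaalt\})$. I expect the principal obstacle to be the core lemma, where care is required in verifying that $T$ really preserves $\mu|_{B_0}$ as a self-map of $B_0$ (not just as a map of the ambient space) and that the iterates $T^k(B_0)$ intersect exactly in $\{\omegaalt\}$.
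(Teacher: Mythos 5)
Your treatment of parts (1)--(3) is essentially the paper's: the explicit lift inserting one copy of $-+$ between consecutive symbols of $\eta$ (so $K(\omega)=3\Z$) is a concrete instance of the paper's description of $c^{-1}(\eta)$, and the continuity, return-map, and measure-transport arguments are the same standard steps the paper sketches. For part (4), however, you take a genuinely different route. The paper disposes of it in two lines via the Poincar\'e Recurrence Theorem: if $\omega$ is not unbounded-collapsible then $\sigma^n(\omega)$ converges to the period-two orbit $\{\omegaalt,\sigma(\omegaalt)\}$ as $n\to+\infty$ or $n\to-\infty$, so recurrence forces the non-unbounded-collapsible points outside that orbit to be $\mu$-null, and the measure is $\mu(\{\omegaalt,\sigma(\omegaalt)\})=2\mu(\{\omegaalt\})$. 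You instead compute directly with the tail sets $B_N$, $B_N^-$, using shift-invariance and continuity of the finite measure along $\sigma^{2k}(B_0)=B_{-2k}\downarrow\{\omegaalt\}$ to identify the common value with $\mu(\{\omegaalt\})$. Both are correct; the paper's argument is shorter and conceptually cleaner, while yours is more elementary and self-contained, reproving exactly the instance of recurrence needed. (Your worry about $T=\sigma^2$ preserving $\mu|_{B_0}$ is unnecessary: since $\sigma$ is invertible and measure-preserving, $\mu(\sigma^{2k}(B_0))=\mu(B_0)$ outright.)

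One inaccuracy to repair: $N^+\cap N^-$ is \emph{not} the set of sequences differing from $\omegaalt$ or $-\omegaalt$ in finitely many coordinates; it also contains mixed-phase sequences whose negative tail eventually agrees with $-\omegaalt$ while the positive tail eventually agrees with $\omegaalt$ (or vice versa), and these differ from both $\pm\omegaalt$ in infinitely many places. The conclusion survives: $N^+\cap N^-$ is still countable (an element is determined by two phases, two cut-off indices, and finitely many middle entries), and any element other than $\pm\omegaalt$ is non-periodic, since periodicity would propagate a tail phase to all of $\Z$; hence it lies on an infinite orbit of equal-mass atoms and carries no mass under the finite invariant measure $\mu$. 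With that corrected characterization your inclusion--exclusion still yields $\mu(N^+\cap N^-)=\mu(\{\omegaalt,-\omegaalt\})=2\mu(\{\omegaalt\})$, and part (4) follows as you state.
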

\begin{proof}[Sketch of proof.]
Suppose $\eta \in \Omega_\pm$. Then the collection of preimages, $c^{-1}(\eta)$, is contained in the collection of all $\omega \in \Omega_\pm$ obtained by 
inserting a non-negative power of the word $-+$ between each of the symbols in $\eta$. The only restriction is that a positive power must be inserted between every pair of symbols of the form $-+$. In particular, $c$ is a surjection. This discussion can also be used to prove that
the preimage of a cylinder set is a union of cylinder sets intersected with $C$. So, $c$ is continuous.

To see statement (\ref{item:crenormalization}), observe that $\wh \sigma(\omega)=\sigma^n(\omega)$ where $n$ is the smallest positive entry in $K(\omega)$. The proof then follows from the definition of the collapsing map. 

Statement (\ref{item:collapsing measures}) follows from two observations. The restriction of a $\sigma$-invariant measure to $C$ is $\widehat \sigma$-invariant. The 
pullback of a $\widehat \sigma$-invariant measure under $c$ is $\sigma$-invariant
by (\ref{item:crenormalization}).

Statement (\ref{item:omegaalt2}) follows from the Poincar\'e Recurrence Theorem. If $\omega$ is not unbounded-collapsible, then 
$\sigma^n(\omega)$ converges to the periodic orbit $\{\omegaalt, \sigma(\omegaalt)\}$ either as $n \to +\infty$ or $n \to -\infty$. The Poincar\'e 
Recurrence Theorem implies that the set of $\omega$ which are not unbounded-collapsible
and do not belong to $\{\omegaalt, \sigma(\omegaalt)\}$ has $\mu$-measure zero. 
\end{proof}

We close with the definition of two functions which will be important in the next subsection.
These are the forward and backward return times of $\sigma$ to $C$.
\begin{equation}
\name{eq:r+}
\begin{array}{c}
\displaystyle r_+:C \to \Z_+; \quad r_+(\omega)=\min \{n>0~:~\sigma^n(\omega) \in C\}.\\
\displaystyle r_-:C \to \Z_+; \quad r_-(\omega)=\min \{n>0~:~\sigma^{-n}(\omega) \in C\}.
\end{array}
\end{equation}
Observe that these functions are well-defined for every $\omega \in C$.

\subsection{Renormalization Theorems}
\name{sect:renormalization2}
In this section, we describe general renormalization results for the map $\Phi:X \to X$, where $X=\Omega_\pm \times \Omega_\pm \times N$.

Define $\sR_1 \subset X$ to be the set of ``once renormalizable'' elements of $X$,
\begin{equation}
\name{eq:R1}
\sR_1=C \times C \times N.
\end{equation}
That is, $\sR_1$ is the collection of all $(\omega, \eta, \v)$ where $\omega$ and $\eta$ are both collapsible. The renormalization mentioned is the map
\begin{equation}
\name{eq:rho}
\rho:\sR_1 \to X; \quad (\omega, \eta, \v) \mapsto \big(c(\omega), c(\eta), \v\big).
\end{equation}
The manner in which $\rho$ renormalizes the map $\Phi$ is described by the theorem below.

Before stating the theorem, we define some important subsets of $X$:
$$P_4=\{\textrm{$x\in X$~:~$x$ has a stable periodic orbit of period $4$}\}.$$
$$\NUC=\set{(\omega, \eta, \v)\in X}{either $\omega$ or $\eta$ is not unbounded-collapsible}.$$
The points in $P_4$ correspond to loops in a tiling of smallest possible size. 
The points in $\NUC$ consist of all $(\omega, \eta,\v)$ so that $\omega$ and $\eta$ fail to satisfy the assumption \ref{eq:assumption}
necessary for the Tiling Renormalization Theorem of Section \ref{sect:renormalizing tilings} to hold. 
With this in mind, we restate that theorem in this context.

\begin{theorem}[Dynamical Renormalization]
\name{thm:renormalization} \hspace{1em} 
\begin{enumerate}
\item The first return map $\wh \Phi:\sR_1 \to \sR_1$ of $\Phi$ to $\sR_1$ is well defined and invertible.
\item If $x \in \sR_1$, we have $\rho \circ \wh \Phi(x)=\Phi \circ \rho(x).$
\item The following statements are equivalent for any $x \in X \sm \NUC$.
\begin{enumerate}
\item There is no $k>0$ so that $\Phi^k(x) \in \sR_1$. 
\item There is no $k<0$ so that $\Phi^k(x) \in \sR_1$. 
\item $x \in P_4$.
\end{enumerate}
\item A point $x \in \sR_1$ has a stable periodic orbit if and only if $\rho(x)$ has a stable periodic orbit. Moreover, $\rho(x)$
has strictly smaller period than $x$.
\end{enumerate}
\end{theorem}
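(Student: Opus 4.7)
The plan is to transport the Tiling Renormalization Theorem \ref{thm:tiling renormalization} over to the dynamics of $\Phi$ via the identity $\Phi^k(\omega,\eta,\v)=\sS\circ\sC^k(0,0,\v)$ of equation \ref{eq:curve following relation}, where $\sC$ is the curve-following map of the tiling $[\tau_{\omega,\eta}]$. Under this correspondence, $\sR_1=C\times C\times N$ is precisely the set of $(\omega,\eta,\v)$ such that $(0,0)\in\barK=K(\omega)\times K(\eta)$, and $\Phi^k(x)\in\sR_1$ translates into $\sC^k(0,0,\v)\in\barK\times N$. Part (1) is then immediate: if $x\in\sR_1$ then $(0,0)\in\barK$, so statement (1) of Theorem \ref{thm:tiling renormalization} gives a well-defined forward return of $\sC$ to $\barK\times N$; applying the same reasoning to $\Phi^{-1}$ yields a well-defined backward return, and hence invertibility of $\wh\Phi$.

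For part (2), the key is to fix the normalization $\kappa_1(0)=\kappa_2(0)=0$, which is possible because $\omega,\eta$ are zero-collapsible. Writing $\v=(a,b)$ and $s=\omega_0\eta_0=c(\omega)_0\,c(\eta)_0$, statement (2) of Theorem \ref{thm:tiling renormalization} combined with equation \ref{eq:curve following relation} gives
$$\wh\Phi(\omega,\eta,\v)=\bigl(\sigma^{\kappa_1(sb)}\omega,\,\sigma^{\kappa_2(sa)}\eta,\,s(b,a)\bigr),$$
while from the definition of $\Phi$ one has
$$\Phi(c\omega,c\eta,\v)=\bigl(\sigma^{sb}(c\omega),\,\sigma^{sa}(c\eta),\,s(b,a)\bigr).$$
Applying $c$ coordinate-wise to the first expression and invoking the identity $c\circ\wh\sigma^{sb}=\sigma^{sb}\circ c$, which is statement (\ref{item:crenormalization}) of Theorem \ref{thm:collapsing} together with the observation that $\sigma^{\kappa_1(sb)}$ agrees with $\wh\sigma^{sb}$ on $C$, then matches the two.

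For part (3), since $x\in X\sm\NUC$ both $K(\omega)$ and $K(\eta)$ are unbounded in both directions, so assumption \ref{eq:assumption} holds and the three equivalent statements of part (3) of Theorem \ref{thm:tiling renormalization} apply to the $\sC$-orbit of $(0,0,\v)$. The first two translate directly into the claims about $\Phi$-returns to $\sR_1$. The third condition, $\sC^4(0,0,\v)=(0,0,\v)$, forces $\Phi^4(x)=x$, and the fact that the period is exactly $4$ follows from inspection of the length-four loop configurations in the proof of Theorem \ref{thm:tiling renormalization}, in which the direction component of $\sC^k(0,0,\v)$ differs from $\v$ for $k=1,2,3$. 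Stability is then part (1) of the Stability Proposition \ref{prop:stable}, and conversely if $x\in P_4$ the stability argument in that proof forces $\sC^4(0,0,\v)=(0,0,\v)$ on the nose.

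For part (4), the Stability Proposition \ref{prop:stable} identifies stable periodic orbits with closed curves in $[\tau_{\omega,\eta}]$, and iterating statement (2) of Theorem \ref{thm:tiling renormalization} puts closed curves of $[\tau_{\omega,\eta}]$ in bijection with closed curves of $[\tau_{c\omega,c\eta}]$ via $\til\kappa$, giving the equivalence of stable periodicity for $x$ and $\rho(x)$. The renormalized loop visits only those tiles of the original loop whose centers lie in $\barK$, and statement (4) of Theorem \ref{thm:tiling renormalization} guarantees that any periodic orbit of $\sC$ visits at least one tile outside $\barK$; so the renormalized period is strictly smaller. The main obstacle in executing this plan is the bookkeeping in part (2): one must carefully use zero-collapsibility to pin down the canonical bijections $\kappa_i$ so that the basepoint of the curve is preserved by $\til\kappa$ and the formulas for $\wh\Phi$ and $\Phi\circ\rho$ align.
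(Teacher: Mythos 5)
Your proposal is correct and follows exactly the route the paper intends: the paper omits the proof, stating only that the theorem follows from Theorem \ref{thm:tiling renormalization} via the curve-following identity of equation \ref{eq:curve following relation}, and your argument fills in precisely those details (translating returns to $\sR_1$ into returns of $\sC$ to $\barK \times N$, using the normalization $\kappa_1(0)=\kappa_2(0)=0$ and Theorem \ref{thm:collapsing} for part (2), and the Stability Proposition together with statement (4) of the tiling theorem for parts (3) and (4)).
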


We omit the proof of this theorem. It follows from Theorem \ref{thm:tiling renormalization}
using the connection between curve following and the map $\Phi$ described in Section \ref{sect:curve following}. See equation \ref{eq:curve following relation}.

Statements (3) and (4) of the Renormalization Theorem are useful for detecting stable periodic orbits. A periodic orbit is shortened when applying $\rho$. 
If we can apply $\rho$ infinitely many times, then eventually the orbit becomes period four, and then the orbit vanishes under one more application of $\rho$. This is the basic observation enabling us to compute the total measures of periodic points for some measures.

For applications, we will need to compute the return time function $R_1:\sR_1 \to \Z_+$ of $\Phi$ to $\sR_1$. We do this in terms of the functions $r_+$ and $r_-$ defined in equation \ref{eq:r+} below.

\begin{lemma}[Dynamical Return Time]
\name{lem:returns}
Fix $(\omega, \eta, \v) \in \sR_1$. Let $(a,b)=\v$. Define $s=\omega_0 \eta_0$ and
$\bw=(sb,sa)$. Then,
$$R_1(\omega, \eta, \v)=\begin{cases}
2 r_+(\omega)-1 & \text{if $\bw=(1,0)$,} \\
2 r_-(\omega)-1 & \text{if $\bw=(-1,0)$,} \\
2 r_+(\eta)-1 & \text{if $\bw=(0,1)$,} \\
2 r_-(\eta)-1 & \text{if $\bw=(0,-1)$.}
\end{cases}$$
\end{lemma}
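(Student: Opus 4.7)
The plan is to translate the statement through the curve-following correspondence and then invoke the Tiling Return Time Theorem. By equation \ref{eq:curve following relation}, $\Phi^k(\omega, \eta, \v) = \sS \circ \sC^k(0,0,\v)$, where $\sC$ is the curve-following map of the tiling $[\tau_{\omega,\eta}]$ and $\sS$ is the map from equation \ref{eq:S}. Since $\sS(m,n,\v') = (\sigma^m(\omega), \sigma^n(\eta), \v')$, I will first identify when the image lies in $\sR_1 = C \times C \times N$.

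Recall $C$ is the set of collapsible sequences (both zero-collapsible and unbounded-collapsible). The hypothesis $(\omega,\eta,\v) \in \sR_1$ means both $\omega$ and $\eta$ are unbounded-collapsible, a property preserved by $\sigma$. Using the reformulation $K(\omega) = \{k \in \Z : \sigma^k(\omega) \notin S\}$ from equation \ref{eq:K2}, the condition $0 \in K(\sigma^m(\omega))$ is equivalent to $m \in K(\omega)$, so $\sigma^m(\omega) \in C$ iff $m \in K(\omega)$. Therefore
\[
\Phi^k(\omega,\eta,\v) \in \sR_1 \iff \sC^k(0,0,\v) \in \barK \times N,
\]
with $\barK = K(\omega)\times K(\eta)$. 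Consequently $R_1(\omega,\eta,\v) = R(0,0,\v)$, the return time of $\sC$ to $\barK \times N$ starting at $(0,0,\v)$.

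Now apply Theorem \ref{thm:return time} at $(m,n) = (0,0)$. With $s = \omega_0\eta_0$ and $\bw = (sb,sa)$, it yields
\[
R(0,0,\v) = 2E(0,0,\bw) - 1, \qquad E(0,0,\bw) = \min\{\,j>0 : j\bw \in \barK\,\}.
\]
It remains to evaluate $E(0,0,\bw)$ in each of the four cases for $\bw$. If $\bw = (1,0)$, then $j\bw = (j,0) \in \barK$ iff $j \in K(\omega)$ (using $0 \in K(\eta)$), so $E(0,0,\bw)$ is the least $j>0$ with $j \in K(\omega)$, which by the equivalence $j \in K(\omega) \iff \sigma^j(\omega) \in C$ equals $r_+(\omega)$. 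The case $\bw = (-1,0)$ analogously gives $r_-(\omega)$, while $\bw = (0,\pm 1)$ give $r_\pm(\eta)$. Substituting into the formula $R_1 = 2E - 1$ yields the four cases of the lemma.

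The only subtlety is the bookkeeping between the indexing set $K$ and the return-to-$C$ times $r_\pm$; once this dictionary is stated the result is essentially immediate from Theorem \ref{thm:return time}. I do not anticipate any real obstacle.
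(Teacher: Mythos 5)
Your proposal is correct and follows exactly the route the paper intends: the paper omits the proof, stating that the lemma "follows directly from Theorem \ref{thm:return time}," and your argument is precisely that translation — identifying $R_1$ with the tiling return time $R(0,0,\v)$ via equation \ref{eq:curve following relation} and the dictionary $\sigma^m(\omega)\in C \iff m\in K(\omega)$, then evaluating $E(0,0,\bw)$ as $r_\pm(\omega)$ or $r_\pm(\eta)$. The bookkeeping you flag (including the use of $0\in K(\eta)$, resp. $0\in K(\omega)$, from zero-collapsibility) is handled correctly.
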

This lemma follows directly from Theorem \ref{thm:return time}, so we omit the proof.

\section{Renormalization of the Rectangle Exchange Maps}
\name{sect:rectangle exchanges}

In this section, we explain how the renormalization of the map $\Phi$ described in section \ref{sect:renormalization2} induces a renormalization of the polygon exchange maps $\til \Psi_{\alpha, \beta}$ defined in the introduction. 
The first subsection provides necessary prerequisite details involving coding of rotations.

\subsection{Coding Rotations}
\name{sect:rot}
Let $\alpha \in \R$. The rotation by $\alpha$ is the map
$$T_\alpha:\R/\Z \to \R/\Z; \quad x \mapsto x+\alpha.$$ 
Given any $x \in \R/\Z$ we construct an element of $\Omega_\pm$ via coding,
$$\varsigma:\R/\Z \to \Omega_\pm; \quad
\varsigma(x)_n= 
\begin{cases} 1 & \textrm{if $T_\alpha^n(x) \in [0, \half)$}\\
-1 & \textrm{otherwise.}
\end{cases}$$
Observe that $\varsigma$ semiconjugates the rotation to the shift map on $\Omega_\pm$. That is,
\begin{equation}
\name{eq:semiconj}
\sigma \circ \varsigma(x)=\varsigma \circ T_\alpha(x) \quad \text{for all $x \in \R/\Z$.} 
\end{equation}
The map $\varsigma$ is an embedding so long as $\alpha$ is irrational.
Since Lebesgue measure $\lambda$ is invariant under $T_\alpha$, we can pull back Lebesgue measure to obtain a $\sigma$-invariant measure on $\Omega_\pm$, namely
\begin{equation}
\name{eq:mu alpha}
\mu_\alpha=\lambda \circ \varsigma^{-1}.
\end{equation}


Recall that a rotation is conjugate to its inverse via an orientation reversing isometry of the circle. Moreover, if we choose the particular orientation reversing isometry
$$\iota:t \mapsto \half-t \pmod{1},$$ 
we see that $\varsigma \circ \iota$ is the coding map of $T_{-\alpha}$ (modulo a set of Lebesgue measure zero consisting of the orbits of $0$ and $\half$). In particular, the two measures
$\mu_\alpha$ and $\mu_{-\alpha}$ are equal. Because we will be primarily interested in the measures which arise from this construction, it is natural for us to only consider rotations $T_\alpha$ with $\alpha \in [0,\half]$.

This observation explains the connection between rotations and the group $G$ of isometries of $\R$ preserving $\Z$. Explicitly, $G$ is the group of maps of the form 
$$g:\R \to \R; \quad t \mapsto rt+n \quad \text{with $n \in \Z$ and $r \in \{\pm 1\}$.}$$
For the following theorem, we will need to make more observations and definitions involving this group. The interval $[0,\half]$ is a fundamental domain for the $G$-action on $\R$. We define the map
$$o:\R \to \{\pm 1\}; \quad t \mapsto \begin{cases} 
1 & \text{if $\exists n \in \Z$ so that $t+n \in [0, \half]$} \\
-1 & \text{otherwise.}
\end{cases}
$$
This map records the orientation of the element $g \in G$ which caries $t$ into $[0,\half]$.
If there is ambiguity, the map chooses positive sign.

Recall the definition of the collapsible elements $C \subset \Omega_\pm$ and the collapsing map $c:C \to \Omega_\pm$. The shift map $\sigma$ on $\Omega_\pm$ was renormalized in a sense by the collapsing map, because the collapsing map semiconjugates the first return $\wh \sigma$ of the shift map to $C$ to the shift map. See statement (\ref{item:crenormalization}) of Theorem \ref{thm:collapsing}. 

The following theorem explains how the collapsing map interacts with the rotation via coding. The theorem observes the existence of a renormalization in the sense used in the theory of interval exchange maps. In this setting a {\em renormalization} is simply a return map to an interval which is conjugate up to a dilation to an interval exchange map on the same number of intervals. (A rotation is an interval exchange defined using two intervals.)

\begin{theorem}[Rotation Renormalization] \name{thm:rotren} Assume $\alpha \in [0, \half)$. 
\begin{enumerate}
\item \name{item:collapsing interval} The preimage of the collapsible sequences, $\varsigma^{-1}(C)$, is the interval $C_\alpha=[\alpha, 1-\alpha)$. 
\item \name{item:ret id} In particular, the first return map $\wh T_\alpha$ of the rotation $T_\alpha$ to $C_\alpha$ satisfies 
$$\varsigma \circ \wh T_\alpha(x)=\wh \sigma \circ \varsigma(x) \quad \text{for all $x \in C_\alpha$.}$$
\item \name{item:iet ret} The first return map $\wh T_\alpha:C_\alpha \to C_\alpha$ is the rotation by $\alpha$ modulo $1-2\alpha$.
\item \name{item:psi} Let $\gamma=f(\alpha)$, where $f(\alpha)$ denotes the element 
of $[0, \half]$ which is $G$-equivalent to $\frac{\alpha}{1-2\alpha}$ as in equation \ref{eq:f} of the introduction. As in equation \ref{eq:psi}, define the dilation
$$\psi=\psi_\alpha:[\alpha, 1-\alpha) \to \R/\Z; \quad \psi(x)=
\begin{cases}
\frac{x-\frac{1}{2}}{1-2\alpha}+\frac{1}{2} & \textrm{if $o(\frac{\alpha}{1-2\alpha})=1$,} \\
\frac{\frac{1}{2}-x}{1-2\alpha} & \textrm{if $o(\frac{\alpha}{1-2\alpha})=-1$.} 
\end{cases}$$
This dilation has the following properties:
\begin{enumerate}
\item $\psi \circ \wh T_\alpha(x)=T_\gamma \circ \psi(x)$ for all $x \in C_\alpha$. 
\item If $\varsigma'$ is the coding map for $T_\gamma$, then $c \circ \varsigma(x)=\varsigma' \circ \psi(x)$ for $\lambda$-almost every $x \in C_\alpha$. 
\end{enumerate}
\end{enumerate}
\end{theorem}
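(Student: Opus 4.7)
\medskip\noindent\textbf{Proof proposal.} My plan is to prove the four parts in sequence, each building on the previous. The main idea is that every coordinate-level assertion reduces to a direct computation via the semiconjugacy $\sigma \circ \varsigma = \varsigma \circ T_\alpha$, together with the fact that the three maps $\sigma$, $c$, and $\psi_\alpha$ were designed to match up under $\varsigma$.

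For \emph{statement (\ref{item:collapsing interval})}, I would first compute the preimages of the two cylinder sets making up $S$. Using $\alpha \in [0,\tfrac{1}{2})$, a direct inspection yields
$\varsigma^{-1}(\cyl(\widehat{-}+)) = [1-\alpha, 1)$ and $\varsigma^{-1}(\cyl(-\widehat{+})) = [0,\alpha)$, so $\varsigma^{-1}(S) = [0,\alpha) \cup [1-\alpha, 1)$ and the complement is exactly $C_\alpha$. This shows $x \in C_\alpha$ iff $\varsigma(x)$ is zero-collapsible. From $\sigma^n \circ \varsigma = \varsigma \circ T_\alpha^n$, one reads off $K(\varsigma(x)) = \{n \in \Z : T_\alpha^n(x) \in C_\alpha\}$. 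For $x \in C_\alpha$, this set contains $0$ and is unbounded above and below, using minimality when $\alpha$ is irrational and periodicity of the orbit when $\alpha$ is rational. Then \emph{statement (\ref{item:ret id})} is immediate: the first $n > 0$ with $\sigma^n(\varsigma(x)) \in C$ is the same as the first $n > 0$ with $T_\alpha^n(x) \in C_\alpha$.

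For \emph{statement (\ref{item:iet ret})}, I would argue that the return map of a rotation to a symmetric arc is a rotation. Concretely, for $x \in [\alpha, 1-2\alpha)$ the return time is $1$ and $\wh T_\alpha(x) = x + \alpha$, while for $x \in [1-2\alpha, 1-\alpha)$ the orbit must traverse the complementary arc $[1-\alpha, 1) \cup [0,\alpha)$ of length $2\alpha$ before re-entering $C_\alpha$, adding a total shift of $\alpha - (1-2\alpha)$ relative to the continuous rotation. Identifying $C_\alpha$ with $\R/(1-2\alpha)\Z$ in the natural way, $\wh T_\alpha$ becomes rotation by $\alpha \bmod (1-2\alpha)$. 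For \emph{statement (\ref{item:psi})(a)}, the map $\psi_\alpha$ rescales $C_\alpha$ onto $\R/\Z$ and applies the unique element of $G$ that moves $\alpha/(1-2\alpha)$ into $[0,\tfrac{1}{2}]$; the two cases of the definition correspond to whether this element is orientation-preserving ($o=1$) or orientation-reversing ($o=-1$). Since rotations are equivariant under dilation and under $G$, conjugating rotation by $\alpha \bmod (1-2\alpha)$ by $\psi_\alpha$ produces rotation by $\gamma = f(\alpha)$.

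For \emph{statement (\ref{item:psi})(b)}, I would note that by parts (\ref{item:ret id}) and (\ref{item:psi})(a), together with statement (\ref{item:crenormalization}) of Theorem \ref{thm:collapsing}, both $c \circ \varsigma$ and $\varsigma' \circ \psi_\alpha$ are semiconjugacies from $(\wh T_\alpha, C_\alpha)$ to $(\sigma, \Omega_\pm)$:
\[
 c \circ \varsigma \circ \wh T_\alpha = \sigma \circ c \circ \varsigma \quad \text{and} \quad \varsigma' \circ \psi_\alpha \circ \wh T_\alpha = \sigma \circ \varsigma' \circ \psi_\alpha.
\]
Since the zeroth-coordinate function generates the Borel $\sigma$-algebra on $\Omega_\pm$ under the shift, it is enough to check equality of the zeroth symbols a.e.\ on $C_\alpha$. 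This reduces to a direct verification: in the $o=1$ case $\psi_\alpha$ preserves orientation and sends $[\alpha, \half)$ onto $[0,\half)$; in the $o=-1$ case $\psi_\alpha$ reverses orientation and sends $(\alpha, \half]$ onto $(0,\half]$, which agrees with $[\alpha, \half)$ up to the two endpoints. The main delicate point of the argument is bookkeeping in the $o=-1$ case, where the orientation-reversing dilation flips symbol assignments on a measure-zero set; the point is that the chosen formula for $\psi_\alpha$ is precisely the one whose action on the zeroth coding symbol cancels with the action of the $G$-element used to define $f$, so both semiconjugacies record the same sequence of visits of the orbit of $x$ to $[0,\half)$ versus $[\half, 1)$.
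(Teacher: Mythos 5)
Your handling of statements (1), (2) and (4) is essentially the paper's argument (your recurrence argument for unbounded-collapsibility and your zeroth-symbol/semiconjugacy formulation of (4b) are harmless variants of what the paper does). The problem is statement (3). Your computation assumes that a point of $[1-2\alpha,1-\alpha)$ makes exactly one two-step excursion through the complementary arc $[1-\alpha,1)\cup[0,\alpha)$ before re-entering $C_\alpha$, so that $\wh T_\alpha(x)=x+\alpha-(1-2\alpha)$ on that set. This is only true when $\alpha\le\frac13$. If $\alpha\in(\frac13,\frac12)$, then after one excursion the orbit lands at $x+3\alpha-1$, which lies back in $[1-\alpha,1)$ whenever $x\ge 2(1-2\alpha)$ (such $x\in C_\alpha$ exist precisely when $\alpha>\frac13$), and the orbit must make further excursions. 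Concretely, for $\alpha=\frac25$ and $x=\frac{11}{20}$ one has $C_\alpha=[0.4,0.6)$ and the orbit $0.55\mapsto 0.95\mapsto 0.35\mapsto 0.75\mapsto 0.15\mapsto 0.55$, so the return time is $5$ and $\wh T_\alpha(x)=x+\alpha-2(1-2\alpha)=x$, whereas your formula yields $0.75\notin C_\alpha$. This regime is not a corner case for the paper: the renormalization map $f$ routinely produces parameters in $(\frac13,\frac12)$ (see, e.g., Proposition \ref{prop:special itineraries}), so statement (3) must be proved there too.

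The gap is easily repaired, and the repair is exactly what the paper does: iterate the excursion argument. Each two-step excursion subtracts $1-2\alpha$, and one shows that the first return is $\wh T_\alpha(x)=x+\alpha-n(1-2\alpha)$ with $n=\lfloor x/(1-2\alpha)\rfloor$ (the paper phrases this as a small algorithm, which also yields the return time $2n+1$ used later in Lemma \ref{lem:rrt}); since the correction is a multiple of $1-2\alpha$, the return map is still the rotation by $\alpha$ modulo $1-2\alpha$, and with that in hand your statement (4a), and hence (4b), go through as you wrote them. As written, though, your justification of (3) fails on a set of parameters and points of positive measure, so it needs this fix rather than just a remark.
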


We make several comments about this theorem. First, it should be observed that the return map $\wh T_\alpha$ defines a renormalization in the interval exchange sense. Once we know that 
$\wh T_\alpha$ is a rotation by $\alpha$ modulo $1-2\alpha$, we know that any surjective dilation $C_\alpha \to \R/\Z$ 
will conjugate $\wh T_\alpha$ to either $T_\gamma$ or $T_{-\gamma}$, depending on orientation.
However, when $\alpha$ is irrational, there is a unique choice of a dilation that respects codings as in statement (4b). Second, when $\alpha$ is irrational, we can really think of this as a pullback of the renormalization happening on $\Omega_\pm$. This is because
$\varsigma$ is injective, and $\varsigma \circ T_\alpha=\sigma \circ \varsigma$. In this case, we could alternately define 
$$\wh T_\alpha=\varsigma^{-1} \circ \wh \sigma \circ \varsigma \and
\psi=(\varsigma')^{-1} \circ c \circ \varsigma.$$

The following describes the action of the renormalizing map $c$ on measures of the form $\mu_\alpha$
as defined in equation \ref{eq:mu alpha}.

\begin{corollary}[Action on Measures]
\name{cor:collapsing rotations}
Suppose $0 \leq \alpha < \frac{1}{2}$ and let $\gamma$ be as in statement \ref{item:psi} of Theorem \ref{thm:rotren}. Then, 
$$\mu_\alpha \circ c^{-1}=(1-2\alpha) \mu_\gamma.$$
\end{corollary}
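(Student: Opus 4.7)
The plan is to unpack the definition of the pushforward measure $\mu_\alpha \circ c^{-1}$ and use the compatibility $c \circ \varsigma = \varsigma' \circ \psi$ between coding and renormalization (statement \ref{item:psi}b of Theorem \ref{thm:rotren}) to move the computation back to Lebesgue measure on the circle, where the scaling factor $1-2\alpha$ will appear naturally from the dilation $\psi$.

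First I would fix a Borel set $A \subset \Omega_\pm$ and compute
\[
\mu_\alpha \circ c^{-1}(A) = \mu_\alpha\big(c^{-1}(A)\big) = \lambda\big(\varsigma^{-1}(c^{-1}(A))\big) = \lambda\big((c \circ \varsigma)^{-1}(A)\big),
\]
where the second equality is the definition $\mu_\alpha = \lambda \circ \varsigma^{-1}$ from equation \ref{eq:mu alpha}. Since $\varsigma^{-1}(C) = C_\alpha$ by statement (\ref{item:collapsing interval}) of Theorem \ref{thm:rotren}, the preimage $(c \circ \varsigma)^{-1}(A)$ is a subset of $C_\alpha$ up to the orbit of $0$ and $\thalf$, a $\lambda$-null set that can be discarded.

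Next, statement (\ref{item:psi}b) gives $c \circ \varsigma(x) = \varsigma' \circ \psi(x)$ for $\lambda$-almost every $x \in C_\alpha$, so
\[
\lambda\big((c \circ \varsigma)^{-1}(A)\big) = \lambda\big((\varsigma' \circ \psi)^{-1}(A)\big) = \lambda\Big(\psi^{-1}\big((\varsigma')^{-1}(A)\big)\Big).
\]
Both of the piecewise formulas defining $\psi$ in statement (\ref{item:psi}) are affine with $|\psi'| = \tfrac{1}{1-2\alpha}$, and $\psi$ carries $C_\alpha = [\alpha, 1-\alpha)$ bijectively onto $\R/\Z$ (up to a null set). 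Consequently, for any Borel $B \subset \R/\Z$,
\[
\lambda\big(\psi^{-1}(B)\big) = (1-2\alpha)\, \lambda(B).
\]

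Applying this with $B = (\varsigma')^{-1}(A)$ and using $\mu_\gamma = \lambda \circ (\varsigma')^{-1}$ yields
\[
\mu_\alpha \circ c^{-1}(A) = (1-2\alpha)\, \lambda\big((\varsigma')^{-1}(A)\big) = (1-2\alpha)\, \mu_\gamma(A),
\]
which is the desired identity. There is essentially no obstacle here: the whole proof is a bookkeeping exercise once statement (\ref{item:psi}b) of Theorem \ref{thm:rotren} is in hand. The only point requiring a bit of care is the almost-everywhere caveat on the intertwining identity and the reminder that the orbits of the endpoints $0, \thalf$ under $T_\alpha$ (where the coding $\varsigma$ is ambiguous) form a Lebesgue-null set, so they do not affect the equality of measures.
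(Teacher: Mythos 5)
Your proof is correct and is exactly the argument the paper intends: its one-line proof ("this follows from statement (4b) and the fact that the length of $C_\alpha$ is $1-2\alpha$") is precisely your computation, with the dilation factor of $\psi$ supplying the scalar $1-2\alpha$. You have simply written out the bookkeeping that the paper leaves implicit, including the harmless almost-everywhere caveats.
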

\begin{proof}
This follows from statement (\ref{item:psi}b) and the fact that the length of $C_\alpha$ is $1-2\alpha$.
\end{proof}

It will also be useful to record the values of the function $r_+$ and $r_-$ defined in equation \ref{eq:r+}. For $x \in C_\alpha$, the quantities $r_+\circ\varsigma(x)$
and $r_-\circ\varsigma(x)$ record the first return times of $T_{\alpha}$ and $T^{-1}_{\alpha}$ to $C_\alpha$, respectively.
\begin{lemma}[Rotation Return Times]
\name{lem:rrt}
For Lebesgue-almost every $x \in C_\alpha$, we have
$$r_+\circ\varsigma(x)=2 \left\lfloor \frac{x}{1-2\alpha} \right\rfloor+1 \and
r_-\circ\varsigma(x)=2 \left\lfloor \frac{1-x}{1-2\alpha} \right\rfloor+1,$$
where $\floor{t}$ denotes the greatest integer less than or equal to $t$. 
\end{lemma}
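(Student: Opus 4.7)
My plan is to derive both formulas directly from the explicit description of the first return map provided by statement (\ref{item:iet ret}) of Theorem \ref{thm:rotren}. By statement (\ref{item:ret id}), for any $x \in C_\alpha$ the first return time of $\sigma$ to $C$ at the point $\varsigma(x)$ coincides with the first return time of $T_\alpha$ to $C_\alpha$ at $x$, so it suffices to analyze the latter dynamically.

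First I will handle $r_+$. Let $L = 1-2\alpha$. Because $\wh T_\alpha$ is the rotation by $\alpha$ modulo $L$ on the interval $[\alpha, 1-\alpha)$, there is a unique integer $M(x)$ with $\wh T_\alpha(x) = x + \alpha - M(x)L$ and $\wh T_\alpha(x) \in [\alpha, 1-\alpha)$. A short computation shows $M(x)L \in (x-L, x]$, so $M(x) = \lfloor x/L \rfloor$ for every $x$ with $x/L \notin \Z$ — a full-measure condition on $C_\alpha$. Writing the first return both as $T_\alpha^{r_+}(x) = x + r_+ \alpha - N$ (for the unique integer $N$ placing the result in $[0,1)$) and as $\wh T_\alpha(x) = x + \alpha - M(x)L$, and substituting $L = 1-2\alpha$, I obtain
\[
\bigl(r_+ - 1 - 2M(x)\bigr)\alpha \;=\; N - M(x).
\]
For irrational $\alpha$ an integer on the right forces the integer coefficient on the left to vanish, giving $r_+ = 2M(x)+1 = 2\lfloor x/L\rfloor + 1$. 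For rational $\alpha$ the same conclusion holds for a.e.\ $x$ by minimality of $r_+$: the equation is satisfied by $n = 2M(x)+1$ and no smaller positive $n$ can place $T_\alpha^n(x)$ in $C_\alpha$ without contradicting the explicit formula for $\wh T_\alpha(x)$.

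For $r_-$ I will appeal to the symmetry $\iota(x) = 1-x$, which is an orientation-reversing isometry of $\R/\Z$ preserving $C_\alpha$ setwise and satisfying $\iota \circ T_\alpha = T_\alpha^{-1} \circ \iota$. Consequently the first return time of $T_\alpha^{-1}$ to $C_\alpha$ at $x$ equals the first return time of $T_\alpha$ to $C_\alpha$ at $\iota(x) = 1-x$, which by the case already established equals $2\lfloor (1-x)/L\rfloor + 1$. Since $r_-\circ \varsigma(x)$ records exactly this backward return time (the definition of $r_-$ in equation \ref{eq:r+} combined with the semiconjugacy), this yields the second formula.

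The main obstacle is simply verifying that the two identifications of $\wh T_\alpha(x)$ — once as an explicit rotation by $\alpha \bmod L$, and once as a power of $T_\alpha$ on the circle — produce a single integer relation whose only solution, modulo the irrationality of $\alpha$, is $r_+ = 2\lfloor x/L\rfloor+1$; the remainder of the argument is bookkeeping and the measure-zero exceptional set where $x/L$ or $(1-x)/L$ is an integer is harmless.
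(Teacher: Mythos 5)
Your derivation of the $r_+$ formula for irrational $\alpha$ is correct, but it is a genuinely different route from the paper's. The paper proves statement (\ref{item:iet ret}) of Theorem \ref{thm:rotren} and Lemma \ref{lem:rrt} simultaneously, by tracking the orbit explicitly: with $x_i=x-i(1-2\alpha)$ it shows $T_\alpha^{2i}(x)=x_i$ and $T_\alpha^{2i+1}(x)=x_i+\alpha$, that the odd iterates up to time $2n-1$ land in $[1-\alpha,1)$ and the even ones in $[0,\alpha)$, and reads off the first return at time $2n+1$ with $n=\floor{x/(1-2\alpha)}$. You instead take statement (\ref{item:iet ret}) as a black box, write the return point once as $x+\alpha-M(x)L$ with $M(x)=\floor{x/L}$ (forced by membership in $C_\alpha$, which has length exactly $L$) and once as $x+r_+\alpha-N$, and let irrationality of $\alpha$ collapse the resulting integer relation to $r_+=2M(x)+1$. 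That is legitimate (statement (\ref{item:iet ret}) does not rely on the lemma), and the algebra is right; note only that within the paper nothing is saved, since the proof of (\ref{item:iet ret}) already contains the orbit count. Your $r_-$ argument via $x\mapsto 1-x$ is exactly the paper's. A small citation point: identifying $r_\pm\circ\varsigma(x)$ with the return times of $T_\alpha^{\pm 1}$ to $C_\alpha$ rests on statement (\ref{item:collapsing interval}) (that $\varsigma^{-1}(C)=C_\alpha$) together with the semiconjugacy \ref{eq:semiconj}, rather than on statement (\ref{item:ret id}) itself.

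The rational case, however, is a genuine gap as written. Your purported contradiction is precisely the relation $(n-1-2M)\alpha=N-M$, and for rational $\alpha=p/q$ this has integer solutions with $n\neq 2M+1$: any $n\equiv 2M+1 \pmod q$ works, so ``no smaller positive $n$ can place $T_\alpha^n(x)$ in $C_\alpha$ without contradicting the explicit formula'' is not justified, and minimality of $r_+$ alone gives nothing. The lemma is stated (and later invoked, e.g.\ in Lemma \ref{lem:matrices rectangle case}) for all $\alpha\in[0,\half)$, so this case does need an argument. Two repairs: either run the paper's direct orbit computation, which is uniform in $\alpha$; or complete your divisibility argument by observing that $T_\alpha^q$ is the identity, hence $r_+\le q$, while $x<1-\alpha$ gives $2M(x)+1\le q$ (a short check writing $q=2p+d$ with $d=q-2p\ge 1$), so that $r_+\equiv 2M+1\pmod q$ together with $0<r_+\le 2M+1\le q$ forces $r_+=2M+1$.
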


We have an alternate formula for the return times, which will be useful later.

\begin{corollary}
\name{cor:rot ret}
Suppose $f(\alpha)=r(\frac{\alpha}{1-2\alpha}-n)$ for $n \in \Z$ and $r \in \{\pm 1\}.$ 
If $r=1$, then for $\mu_\alpha$-a.e. collapsible $\omega$, we have:
$$r_+(\omega)=\begin{cases} 
2n+3 & \text{if $c(\omega) \in \cyl(\wh -+)$} \\
2n+1 & \text{otherwise,}
\end{cases} \and 
r_-(\omega)=\begin{cases} 
2n+3 & \text{if $c(\omega) \in \cyl(-\wh +)$} \\
2n+1 & \text{otherwise.}
\end{cases}$$
If $r=-1$, then for $\mu_\alpha$-a.e. collapsible $\omega$, we have:
$$r_+(\omega)=\begin{cases} 
2n-1 & \text{if $c(\omega) \in \cyl(\wh +-)$} \\
2n+1 & \text{otherwise,}
\end{cases}
\and
r_-(\omega)=\begin{cases} 
2n-1 & \text{if $c(\omega) \in \cyl(+\wh -)$} \\
2n+1 & \text{otherwise.}
\end{cases}$$
\end{corollary}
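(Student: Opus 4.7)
The plan is to combine Lemma \ref{lem:rrt} with statement (\ref{item:psi}b) of Theorem \ref{thm:rotren}. Writing $\omega=\varsigma(x)$ for $x\in C_\alpha=[\alpha,1-\alpha)$, Lemma \ref{lem:rrt} gives $r_+(\omega)=2\lfloor x/(1-2\alpha)\rfloor+1$ and $r_-(\omega)=2\lfloor(1-x)/(1-2\alpha)\rfloor+1$. Because $C_\alpha$ has length exactly $1-2\alpha$, each of these floors takes exactly two consecutive integer values as $x$ ranges over $C_\alpha$, with a single transition point $x_\ast\in C_\alpha$ in each case. I will first identify these values and transition points in closed form using the hypothesis $f(\alpha)=r(\alpha/(1-2\alpha)-n)$. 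For instance, in the $r=1$ case the constraint $f(\alpha)\in[0,\thalf]$ forces $\alpha/(1-2\alpha)\in[n,n+\thalf]$, so $\lfloor x/(1-2\alpha)\rfloor$ takes the values $n$ and $n+1$ with transition at $x_\ast=(n+1)(1-2\alpha)$; Lemma \ref{lem:rrt} then returns $r_+(\omega)\in\{2n+1,2n+3\}$ according to which side of $x_\ast$ the point $x$ lies. The three other combinations of $r\in\{\pm 1\}$ and of $r_\pm$ produce analogous transition points of the form $n(1-2\alpha)$ or $1-n(1-2\alpha)$.

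The second step is to match the partition of $C_\alpha$ defined by $x_\ast$ with the partition defined by the cylinder condition on $c(\omega)$. By Theorem \ref{thm:rotren}(\ref{item:psi}b), $c(\omega)=\varsigma'\circ\psi(x)$ for $\mu_\alpha$-a.e.\ $\omega$, where $\varsigma'$ codes the rotation $T_\gamma$ with $\gamma=f(\alpha)$. For the $r_+$, $r=1$ branch, the cylinder $\cyl(\wh-+)$ unwraps as $\psi(x)\in[\thalf,1)\cap T_\gamma^{-1}[0,\thalf)=[1-\gamma,1)$. Substituting the explicit formula $\psi(x)=(x-\thalf)/(1-2\alpha)+\thalf$ from the $r=1$ case of (\ref{eq:psi}) and using the identity $(n+1)(1-2\alpha)=\alpha+(1-\gamma)(1-2\alpha)$, one computes $\psi^{-1}([1-\gamma,1))=[x_\ast,1-\alpha)$, which matches the set on which $r_+(\omega)=2n+3$ up to measure zero. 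The three remaining subcases are handled in exactly the same way, using the orientation-reversing formula $\psi(x)=(\thalf-x)/(1-2\alpha)$ in the $r=-1$ branches and the cylinders $\cyl(-\wh+)$, $\cyl(\wh+-)$, $\cyl(+\wh-)$ as appropriate for the four combinations.

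The only real subtlety is the orientation-reversing case: as $x$ increases over $C_\alpha$ the map $\psi$ decreases, wraps around $\R/\Z$ at $x=\thalf$ where $\psi(\thalf)=0$, and returns to $\thalf$ as $x\to(1-\alpha)^-$. Depending on which arc of $\R/\Z$ defines the cylinder condition, the preimage sits either in $[\alpha,\thalf]$ or in $(\thalf,1-\alpha)$, and must be assembled carefully as a half-open subinterval of $C_\alpha$; in every case the resulting interval borders the correct $x_\ast$. Measure-zero endpoint discrepancies (open versus closed conditions) are absorbed into the ``$\mu_\alpha$-a.e.'' qualifier since $\mu_\alpha=\lambda\circ\varsigma^{-1}$ is non-atomic whenever $\alpha$ is irrational. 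Beyond this bookkeeping the proof is a direct calculation matching two explicit partitions of $C_\alpha$, so I anticipate no serious obstacle.
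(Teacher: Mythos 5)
Your proposal is correct and follows essentially the same route as the paper's proof: use Lemma \ref{lem:rrt} to write $r_\pm$ as floor functions taking exactly two values on $C_\alpha$, locate the single transition point, push it through $\psi$ (getting $1-\gamma$ or $\half-\gamma$), and identify the corresponding half of $C_\alpha$ with the stated cylinder set via statement (\ref{item:psi}b) of Theorem \ref{thm:rotren}, absorbing endpoint issues into the almost-everywhere qualifier. The only inessential difference is that the paper dispatches the $r_-$ formulas by the involution $t \mapsto 1-t$, which conjugates $T_\alpha$ to $T_\alpha^{-1}$ and swaps the labels, while you propose to run the same direct computation in all four cases.
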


We now give proofs of the Rotation Renormalization Theorem and Rotation Return Time Lemma. We will conclude this subsection with a proof of the Corollary.

\begin{proof}[Proof of Theorem \ref{thm:rotren} and Lemma \ref{lem:rrt}]
We begin by proving statement (\ref{item:collapsing interval}) of the Theorem. 
If $x \in [1-\alpha,1)$, then $\varsigma(x) \in \cyl(\wh - +)$. And, if $x \in [0,\alpha)$, then $\varsigma(x) \in \cyl(- \wh +)$. In either case $\varsigma(x)$ is not zero-collapsible. If $x \in [\alpha, \half)$, then $\varsigma(x) \in \cyl(+ \wh +)$, and if $x \in [\half, 1-\alpha)$ then $\varsigma(x) \in \cyl(\wh - -)$. In these cases, $x$ is zero-collapsible. We also observe that $x \in [\alpha, 1-\alpha)$ is always unbounded-collapsible, because the only way an infinite sequence of alternating signs can appear from coding a $T_\alpha$ is when $\alpha=\half$. 

Statement (\ref{item:ret id}) follows from statement (\ref{item:collapsing interval}) and equation \ref{eq:semiconj}.

We now prove statement (\ref{item:iet ret}) of the Theorem and the formula for $r_+$ given in the Lemma. To do this, 
we provide an pseudo-code algorithm to produce the first return $\wh T_\alpha(x) \in [\alpha, 1-\alpha)$:
\begin{enumerate}
\item[(0)] Set $i=0$ and $x_0=x$.
\item[(1)] If $x_i+\alpha \in [\alpha, 1-\alpha)$ then $\wh T_\alpha(x)=x_i+\alpha$. Stop, because we have found $\wh T_\alpha(x)$.
\item[(2)] Set $x_{i+1}=x_i-1+2 \alpha$. 
\item[(3)] Iterate $i$. (Set $i$ to be $i+1$.) Return to step $1$. 
\end{enumerate}
For the moment assume this procedure terminates with $\wh T_\alpha(x)=x_n+\alpha$. (We prove this occurs for some $n$ below.) 
Observe that $x_i=x_0+i(2 \alpha-1)$ for all $i$, so that $x_n+\alpha$ is indeed equivalent to $x+\alpha$ modulo $1-2\alpha$. Therefore, $\wh T_\alpha$ is indeed a rotation by $\alpha$ modulo $1-2\alpha$. 

We now explain why the algorithm terminates. If it fails to terminate with $i=0$, then $x_0+\alpha \geq 1-\alpha$. Observe that $x_i+\alpha=x+i(2\alpha-1)+\alpha$ is a decreasing sequence and that $x_i-x_{i+1}=1-2\alpha$. Since the sequence $\{x_i+\alpha\}$ iteratively decreases by an amount equal to the length of $[\alpha, 1-\alpha)$, there is precisely one integer $n$ for which $x_n+\alpha \in [\alpha, 1-\alpha)$. This integer is given by $n=\lfloor \frac{x}{1-2\alpha}\rfloor$.

Now suppose $0 \leq i<n$. Then $T(x_i)=x_i+\alpha \in [1-\alpha, 1)$, and $T^2(x_i)=x_i+2\alpha-1=x_{i+1}$. 
So by induction, $T^{2i}(x)=x_i$ and $T^{2i+1}(x)=x_i+\alpha$ for $0 \leq i \leq n$. Moreover, we have $T^{2n+1}(x)=x_n+\alpha$ is the first return of $T$ to $[\alpha, 1-\alpha)$. Thus, $r_+\circ\varsigma(x)=2n+1$ as desired.

Now we verify the formula for $r_-\circ\varsigma(x)$ given in the Lemma. Observe that the map $x \mapsto 1-x$ conjugates $T_\alpha$ to $T^{-1}_\alpha$ and 
sends $C_\alpha$ to $C_\alpha$ almost-everywhere. Therefore, we have 
$$r_-\circ\varsigma(x)=r_+\big(\varsigma(1-x)\big)$$
almost everywhere. (This only fails at the point $\alpha \in C_\alpha$.)

Finally, we prove statement (\ref{item:psi}) of the Theorem. Observe that $\psi$ is a bijective dilation $C_\alpha \to \R/\Z$. By the remarks below the theorem, the dilation conjugates $\wh T_\alpha$ to a rotation by $\pm \gamma$. Since the orientation preserving nature of the element of $g \in G$ carrying $\frac{\alpha}{1-2\alpha}$ matches the orientation preserving nature of $\psi$, we know the dilation conjugates $\wh T_\alpha$ to $T_\gamma$. This proves statement (a). Statement (b) follows from the fact that the map $\psi$ respects the labeling of intervals by $\pm 1$ almost-everywhere. As in the definition
of $\varsigma$, we have labeled the interval $[0, \half)$ by $+1$ and $[\half, 1)$ by $-1$. Observe
$$\psi\big([0, {\textstyle \half}) \cap C_\alpha\big)=[0, {\textstyle \half}) \and \psi\big([{\textstyle \half},1) \cap C_\alpha\big)=[{\textstyle \half},1)$$
almost-everywhere (with ambiguities at endpoints). Since this is true almost-everywhere, statement (b) follows from statement (a). 
\end{proof}

\begin{proof}[Proof of Corollary \ref{cor:rot ret}]
Since we only need this statement $\mu_\alpha$-a.e., we can assume that $\omega=\varsigma(x)$. 
By the lemma, the formulas in the corollary for $r_+$ are equivalent to formulas for the function 
$$m(x)=\left\lfloor \frac{x}{1-2\alpha} \right\rfloor.$$
Note that $m(x)$ takes only two values on $[\alpha, 1-\alpha)$. Since $m$ is an increasing function,
these two values are $m(\alpha)$ and $m(\alpha)+1$. 

Consider the case when $r=1$. Then, 
$n \leq \frac{\alpha}{1-2\alpha} \leq n+\half$. 
The function $m(x)$ takes the two values $n$ and $n+1$, with the discontinuity happening at the point 
$y=(1-2\alpha)(n+1)$. We compute 
$$\psi(y)=n+1-\frac{1}{2(1-2\alpha)}+\half=1+n-\frac{\alpha}{1-2\alpha}=1-\gamma,$$
where $\gamma=f(\alpha)$. 
So $z=\psi(y)$ is also the first point (from left to right) for which $z \in[\half,1)$ but $T_\gamma(z) \in [0, \half)$.
The points $x \in [\alpha,1-\alpha)$ to the right of $y$ are characterized by the fact that $\psi(x) \geq z$. 
This is equivalent to the condition that 
$$\varsigma \circ \psi(x)=c \circ \varsigma(x) \in \cyl(\wh -+).$$ 
In other words, the larger value is taken if and only if $c(\omega) \in \cyl(\wh -+)$.

The proof in the case of $r=-1$ is similar. We have 
$n-\half < \frac{\alpha}{1-2\alpha} < n.$
This time the discontinuity occurs when $y=n(1-2\alpha)$. Then we have
$$\psi(y)=\frac{1}{2(1-2\alpha)}-n=\half+\frac{\alpha}{1-2\alpha}-n=\half-\gamma.$$
Let $z=\psi(y)$. Recall that $\psi$ is orientation reversing and sends the endpoints of $[\alpha, 1-\alpha)$ to $\half$. 
So, the points $x \in [\alpha, 1-\alpha)$ to the left of $y$ are characterized by the fact that $\psi(x) \in [\half-\gamma,\half)$. 
Equivalently, we have
$$\varsigma \circ \psi(x)=c \circ \varsigma(x) \in \cyl(\wh +-).$$ 

To see the equations for $r_-$, it suffices to use the orientation reversing involution $t \mapsto 1-t$, which conjugates
$T_\alpha$ to its inverse, nearly preserves $[\alpha, 1-\alpha)$, and switches the labeling of subintervals by $\pm 1$. 
\end{proof}

\subsection{Rectangle exchange transformations}
\name{sect:ret rot}
Fix $\alpha$ and $\beta$ in $[0,\half)$. Recall we defined an embedding 
$\pi:\til Y \times N \to X$ from the discussion of the arithmetic graph in equation \ref{eq:pi}. An alternate definition for this
embedding can be given using the coding maps $\varsigma$ and $\varsigma'$ of the rotations $T_\alpha$ and $T_\beta$, respectively. Namely, we have
\begin{equation}
\name{eq:pi2}
\pi:\til Y \times N \to X; \quad \pi(x,y,\v)=\big(\varsigma(x), \varsigma'(y), \v\big). 
\end{equation}
By Proposition \ref{prop:ag conj}, we have $\pi \circ \til \Psi_{\alpha, \beta}=\Phi \circ \pi,$ where
$\til \Psi_{\alpha, \beta}$ is the rectangle exchange map defined in equation \ref{eq:psi til} of the introduction.

So long as $\alpha$ and $\beta$ are irrational, the map $\pi$ is an embedding. We can use this embedding to pullback the renormalization of the map $\Phi$
defined in section \ref{sect:renormalization2} to a renormalization of these rectangle exchange maps. This yields the following theorem.

\begin{theorem}[Rectangle Exchange Renormalization] \name{thm:rectren} Assume $\alpha, \beta \in [0, \half)$. 
\begin{enumerate}
\item \name{item:once ren} Let $Z$ be the rectangle 
$Z=[\alpha,1-\alpha) \times [\beta, 1-\beta)$. The union of four rectangles $Z \times N$
is the preimage $\pi^{-1}(\sR_1)$ of the once renormalizable elements of $X$. 
\item \name{item:ret id2} The first return map $\wh \Psi$ of the rectangle exchange $\til \Psi_{\alpha, \beta}$ to $Z \times N$ satisfies 
$$\pi \circ \wh \Psi(x)=\wh \Phi \circ \pi(x) \quad \text{for all $x \in Z \times N$.}$$
\item \name{item:phi} The map $\phi:Z \times N \to \til Y \times N$ defined by $\phi=\psi_\alpha \times \psi_\beta \times \textit{id}$ as in 
equation \ref{eq:phi} satisfies the following statements:
\begin{enumerate}
\item $\phi \circ \wh \Psi(z)=\til \Psi_{f(\alpha),f(\beta)} \circ \phi(z)$ for all $z \in Z \times N$. 
\item Let $\pi'$ be the embedding $\til Y \times N \to X$ defined as in equation \ref{eq:pi2}, but
using the coding maps for $T_{f(\alpha)}$ and $T_{f(\beta)}$. Then,
$$\pi' \circ \phi(z)=\rho \circ \pi(z) \quad \text{for Lebesgue-almost every $z \in Z \times N$.}$$
\end{enumerate}
\end{enumerate}
\end{theorem}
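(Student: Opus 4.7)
I will establish the four sub-statements in the order (1), (2), (3b), (3a), the last derived from the preceding three by chasing semiconjugacies.

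Statement (1) follows from Theorem \ref{thm:rotren}(1) applied to each factor: since $\pi$ acts as $\varsigma \times \varsigma' \times \mathrm{id}_N$ and $\sR_1 = C \times C \times N$, the preimage splits as $\varsigma^{-1}(C) \times (\varsigma')^{-1}(C) \times N = [\alpha, 1-\alpha) \times [\beta, 1-\beta) \times N = Z \times N$. Statement (2) is then a general fact about semiconjugate dynamics: from Proposition \ref{prop:ag conj} we have $\pi \circ \til\Psi_{\alpha,\beta} = \Phi \circ \pi$, and combining this with (1), the minimal $k \geq 1$ returning a point $z \in Z \times N$ to $Z \times N$ under $\til\Psi_{\alpha,\beta}$ equals the minimal $k$ returning $\pi(z)$ to $\sR_1$ under $\Phi$, so $\pi \circ \wh\Psi(z) = \Phi^k(\pi(z)) = \wh\Phi \circ \pi(z)$.

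For (3b), observe $\pi' \circ \phi = (\varsigma_{f(\alpha)} \circ \psi_\alpha) \times (\varsigma_{f(\beta)} \circ \psi_\beta) \times \mathrm{id}_N$ and $\rho \circ \pi = (c \circ \varsigma) \times (c \circ \varsigma') \times \mathrm{id}_N$. Theorem \ref{thm:rotren}(4b) gives equality in each coordinate off a Lebesgue-null set, hence a.e. on $Z \times N$. For (3a), I would chain the identities established above: for $z$ in the full-measure subset where (3b) holds,
$$\pi' \circ \phi \circ \wh\Psi(z) = \rho \circ \pi \circ \wh\Psi(z) = \rho \circ \wh\Phi \circ \pi(z) = \Phi \circ \rho \circ \pi(z) = \Phi \circ \pi' \circ \phi(z) = \pi' \circ \til\Psi_{f(\alpha),f(\beta)} \circ \phi(z),$$
applying, in order, (3b), (2), Theorem \ref{thm:renormalization}(2), (3b) again, and Proposition \ref{prop:ag conj} for the renormalized parameters. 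When $\alpha$ and $\beta$ are irrational (so $f(\alpha), f(\beta)$ are as well), the embedding $\pi'$ may be cancelled on the left to obtain $\phi \circ \wh\Psi = \til\Psi_{f(\alpha),f(\beta)} \circ \phi$ almost everywhere on $Z \times N$.

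The main obstacle will be upgrading this a.e. identity to the pointwise statement and handling rational parameters. Both $\wh\Psi$ and $\phi^{-1} \circ \til\Psi_{f(\alpha),f(\beta)} \circ \phi$ are piecewise translations whose sub-rectangles and translation vectors are controlled by Lemma \ref{lem:returns} and Corollary \ref{cor:rot ret}, so a.e. agreement forces agreement on the interior of each piece, which is all a rectangle exchange can be expected to provide. For rational $\alpha$ or $\beta$, injectivity of $\pi'$ fails and the cancellation breaks down; I would then argue by continuity of both sides in $(\alpha, \beta)$ using the density of irrational parameters, or equivalently verify directly from Lemma \ref{lem:returns} and Corollary \ref{cor:rot ret} that the combinatorics and translation vectors of $\wh\Psi$ match those of $\til\Psi_{f(\alpha),f(\beta)}$ after conjugation by $\phi$.
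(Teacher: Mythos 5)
Your argument is essentially the paper's own: the paper offers no written proof of Theorem \ref{thm:rectren}, presenting it as the pullback of the renormalization of $\Phi$ through the embedding $\pi$ (it even records the identities $\wh\Psi=\pi^{-1}\circ\wh\Phi\circ\pi$ and $\phi=(\pi')^{-1}\circ\rho\circ\pi$ for irrational parameters), which is exactly your chain built from Theorem \ref{thm:rotren} applied coordinatewise, the semiconjugacy $\pi\circ\til\Psi_{\alpha,\beta}=\Phi\circ\pi$, and Theorem \ref{thm:renormalization}(2). Your closing remarks on upgrading the a.e.\ identity to a pointwise one and on rational parameters address points the paper itself leaves implicit, so no discrepancy with the paper's route arises.
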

These statements indicate that the first return map $\til \Psi$ of $\til \Psi_{\alpha, \beta}$ to the union $Z \times N$ of rectangles is 
affinely conjugate to $\til \Psi_{f(\alpha), f(\beta)}$. So, this describes a renormalization in the rectangle exchange sense. The theorem also indicates
compatibility with the renormalization of the map $\Phi:X \to X$. In fact, so long as $\alpha$ and $\beta$ are irrational, we have the alternate 
almost everywhere equivalent definitions,
$$\wh \Psi=\pi^{-1} \circ \wh \Phi \circ \pi 
\and
\phi=(\pi')^{-1} \circ \rho \circ \pi.$$

We also record the action on measures. The pushforward of Lebesgue measure under the embedding $\pi$
is the measure $\mu_\alpha \times \mu_\beta \times \mu_N$ on $X$. Here $\mu_\alpha$ and $\mu_\beta$ are defined as in the previous section and 
$\mu_N$ is the uniform measure on $N$. 
\begin{corollary}[Action of $\rho$ on Measures]
\name{cor:rho act}
Let $\nu=\mu_\alpha \times \mu_\beta \times \mu_N$ and $\nu'=\mu_{f(\alpha)} \times \mu_{f(\alpha)} \times \mu_N$. Then, 
$$\nu \circ \rho^{-1}=(1-2\alpha)(1-2\beta) \nu'.$$
\end{corollary}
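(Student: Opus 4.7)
The proof should be a short calculation that simply pushes the product measure through $\rho$ one coordinate at a time and then invokes Corollary \ref{cor:collapsing rotations} on each of the two nontrivial factors.

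First I would unpack $\rho$ as a product. By the definition in equation \ref{eq:rho}, $\rho$ is the restriction to $\sR_1 = C \times C \times N$ of the product map $c \times c \times \mathit{id}_N$. Consequently, for any product Borel set $A \times B \times S \subseteq X$,
\begin{equation*}
(\nu \circ \rho^{-1})(A \times B \times S) = \mu_\alpha\bigl(c^{-1}(A) \cap C\bigr)\,\mu_\beta\bigl(c^{-1}(B) \cap C\bigr)\,\mu_N(S).
\end{equation*}
Since $c$ is only defined on $C$, the intersections with $C$ are harmless and we have $\nu \circ \rho^{-1} = (\mu_\alpha \circ c^{-1}) \times (\mu_\beta \circ c^{-1}) \times \mu_N$ as Borel measures on $X$. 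Extending from the product $\pi$-system of cylinders to all Borel sets is standard.

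Next I would apply Corollary \ref{cor:collapsing rotations} to each of the two factor measures. That corollary gives
\begin{equation*}
\mu_\alpha \circ c^{-1} = (1-2\alpha)\,\mu_{f(\alpha)} \quad \text{and} \quad \mu_\beta \circ c^{-1} = (1-2\beta)\,\mu_{f(\beta)}.
\end{equation*}
Substituting into the previous display and pulling the scalars out of the product yields
\begin{equation*}
\nu \circ \rho^{-1} = (1-2\alpha)(1-2\beta)\,\mu_{f(\alpha)} \times \mu_{f(\beta)} \times \mu_N = (1-2\alpha)(1-2\beta)\,\nu',
\end{equation*}
which is the desired identity.

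There is essentially no obstacle here: the content of the result is already encoded in Corollary \ref{cor:collapsing rotations}, and the only step that requires even a moment of care is the product-measure identity $\nu \circ \rho^{-1} = (\mu_\alpha \circ c^{-1}) \times (\mu_\beta \circ c^{-1}) \times \mu_N$, which is immediate from Fubini once one observes that $\rho$ splits as a product. As a sanity check, both sides assign total mass $(1-2\alpha)(1-2\beta)$ to $X$, in agreement with $\nu(\sR_1) = \mu_\alpha(C)\,\mu_\beta(C) = (1-2\alpha)(1-2\beta)$ coming from statement (\ref{item:collapsing interval}) of Theorem \ref{thm:rotren}.
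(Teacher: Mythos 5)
Your proposal is correct and follows essentially the same route as the paper, whose proof is exactly the combination of the product structure of $\rho$ (via the renormalization theorem) with Corollary \ref{cor:collapsing rotations} applied to each shift factor; you have merely written out the standard product-measure factorization that the paper leaves implicit. Note also that your computation confirms the intended reading of the statement, namely $\nu'=\mu_{f(\alpha)}\times\mu_{f(\beta)}\times\mu_N$ (the second $f(\alpha)$ in the corollary as printed is a typo).
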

The proof follows from the above renormalization theorem and Corollary \ref{cor:collapsing rotations}.

\section{The Return Time Cocycle}
\name{sect:cocycle}
In this section, we state our main formula for computing the total measure of the set
$$NS=\{x\in X~:~ \text{$x$ does not have a stable periodic orbit under $\Phi$}\}$$
with respect to the measures coming from rectangle exchange maps.

\subsection{The Cocycle Limit Formula} \name{sect:limit formula}
Assume $\alpha$ and $\beta$ are irrationals in $(0,\half)$. Set $\nu=\mu_\alpha \times \mu_\beta \times \mu_N$. 
Our formula is given using the following data:
\begin{enumerate}
\item \name{item:O} We find a nested sequence of Borel sets, 
$$X=\sO_0 \supset \sO_1 \supset \sO_2\ldots \quad \text{so that} \quad \NS=\bigcap_{i=0}^\infty \sO_n$$
up to a set of $\nu$-measure zero. Thus we have 
$\nu(\NS)=\lim_{n \to \infty} \nu(\sO_n).$
\item We now define the {\em return time cocycle} $N(\alpha,\beta,k):\R^4 \to \R^4$ over the dynamics of $f \times f$. (The transformation $f$ acting on the irrationals in $(0,\half)$ was defined in equation \ref{eq:f}.) Using $\alpha$ and $\beta$, we define 
$m,n \in \Z$ and $r,n \in \{\pm 1\}$ according to the formula:
$$f(\alpha)=r(\frac{\alpha}{1-2 \alpha}-m) \and f(\beta)=s(\frac{\beta}{1-2\beta}-n),$$
We define $N(\alpha, \beta,0)$ to be the identity matrix and define 
\begin{equation}\name{eq:N}
N(\alpha,\beta,1)=\left[\begin{array}{rrrr}
2m+r & 1 & 0 & 2m+r \\
2m & 1 & 0 & 2m \\
0 & 2 n+s & 2 n+s & 1 \\
0 & 2 n & 2 n & 1 \\
\end{array}\right].
\end{equation}
This matrix has determinant $rs \in \{\pm 1\}$.
We extend inductively by defining
$$N(\alpha, \beta,k+1)=N\big(f^k(\alpha),f^k(\beta), 1\big) N(\alpha, \beta,k) \quad \text{for $k \geq 1$.}$$
\item We define a one-dimensional cocycle $D$ over the dynamics of $f \times f$. This cocycle is defined by setting
$D(\alpha, \beta,0)=1$ and
\begin{equation}\name{eq:D}
D(\alpha, \beta,k)=\prod_{j=0}^{k-1} \big(1-2f^j(\alpha)\big)\big(1-2f^j(\beta)\big) \quad \text{for $k \geq 1$.}
\end{equation}
\item We define the vector 
$$\bn_{\alpha, \beta}=\left(\alpha(1-2\beta), \frac{1-2\alpha}{2}, \beta(1-2\alpha), \frac{1-2\beta}{2}\right).$$
\end{enumerate}
\begin{theorem}[Cocyle formula]\name{thm:cocycle formula}
Let $\alpha, \beta \in (0,\half)$ be irrational and let $k>0$. Define
$$\nu=\mu_\alpha \times \mu_\beta \times \mu_N, \quad d_k=D(\alpha,\beta,k) \and \bn_k=\bn_{f^k(\alpha),f^k(\beta)}.$$
Letting $\1 \in \R^4$ denote the vector all of whose entries are one, we have
$$\nu(\sO_{k+1})=d_k \bn_k \cdot N(\alpha,\beta,k) \1.$$
\end{theorem}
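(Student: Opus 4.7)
The plan is to identify the sets $\sO_k$ with $\Phi$-saturations of iterated renormalization domains, and then compute their measures via a Kac-type return-time formula. Concretely, I would set $\sR_0 = X$ and define inductively $\sR_{k+1} = \sR_1 \cap \rho^{-1}(\sR_k')$, where $\sR_k'$ denotes the analogous $k$-times renormalizable set for parameters $\bigl(f(\alpha), f(\beta)\bigr)$, and then let $\sO_k$ be the $\Phi$-saturation $\bigcup_{n \in \Z} \Phi^n(\sR_k)$. The Dynamical Renormalization Theorem (Theorem \ref{thm:renormalization}) then implies $\NS = \bigcap_k \sO_k$ up to $\nu$-null sets: part (4) guarantees that a stable periodic orbit loses period strictly under $\rho$, and part (3) converts the base case into membership in $P_4$, which eliminates the orbit from $\sO_k$ for large $k$.

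Iterating Corollary \ref{cor:rho act} gives $\nu(\sR_{k+1}) = d_k \, \nu^{(k)}(\sR_1')$, where $\nu^{(k)} = \mu_{f^k(\alpha)} \times \mu_{f^k(\beta)} \times \mu_N$, which supplies the $d_k$ factor in the formula. The Kac identity (valid because the $\Phi$-saturation is a disjoint union of orbit segments from $\sR_{k+1}$ to the first return) gives
\begin{equation*}
\nu(\sO_{k+1}) = \int_{\sR_{k+1}} R_{k+1} \, d\nu,
\end{equation*}
where $R_{k+1}$ is the first-return time of $\Phi$ to $\sR_{k+1}$. Using Theorem \ref{thm:renormalization} part (2), the map $\rho$ semiconjugates the first return $\widehat \Phi$ of $\Phi$ to $\sR_1$ with $\Phi$ for the renormalized parameters, so iterating produces a telescoping expression: along the $\widehat{\Phi}$-orbit visiting $\sR_{k+1}$, the return time $R_{k+1}$ decomposes as a sum of contributions $R_1$ evaluated at the renormalized iterates.

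To package this sum into the matrix cocycle, I would partition $\sR_1$ according to the outgoing direction $\bw = s(b,a) \in N$ after one application of $\Phi$, obtaining four pieces. By Lemma \ref{lem:returns}, the direction $\bw$ determines which of $r_\pm(\omega), r_\pm(\eta)$ governs $R_1$. Corollary \ref{cor:rot ret} then splits each piece further according to which cylinder of the renormalized sequence is hit, producing return-time values $2m + r + 1$ or $2m + 1$ (resp.\ $2n+s+1$ or $2n+1$) on each part. Computing the $\mu_\alpha \times \mu_\beta \times \mu_N$ measures of these subsets yields the weights $\alpha(1-2\beta)$ (cylinder $\cyl(\wh{-}+)$ or $\cyl(\wh{+}-)$ intersected with $C_\beta$), $\beta(1-2\alpha)$ (symmetrically for $\eta$), and $\tfrac{1-2\alpha}{2}$, $\tfrac{1-2\beta}{2}$ (the complementary pieces), which is exactly the vector $\bn_{\alpha,\beta}$. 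Arranging the $4$ outgoing directions as rows and the $4$ incoming directions as columns, and recording the return-time values contributed by each combination, gives precisely the matrix $N(\alpha, \beta, 1)$ appearing in \eqref{eq:N}; applying $N(\alpha,\beta,1)$ to $\1$ and pairing with $\bn_{\alpha,\beta}$ under the relevant decomposition reproduces the Kac integral for $k = 0$.

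The main obstacle will be the bookkeeping of the base case, in particular matching the $r, s \in \{\pm 1\}$ case distinctions and the permutation $\v \mapsto s(b,a)$ to the rows and columns of $N(\alpha,\beta,1)$, and verifying that the measure of each of the sixteen pieces matches the corresponding entry of $\bn_{\alpha,\beta} \cdot e_i \cdot (N \1)_j$. Once this is in hand, the cocycle identity $N(\alpha,\beta,k+1) = N(f^k(\alpha),f^k(\beta),1) \, N(\alpha,\beta,k)$ follows inductively: applying $\rho^k$ turns the return-time computation for $\sR_{k+1}$ into the one-step computation for the renormalized parameters $(f^k(\alpha),f^k(\beta))$, with the vector $\bn_k$ and the accumulated factor $d_k$ appearing from the previous steps. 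The resulting expression $\nu(\sO_{k+1}) = d_k \, \bn_k \cdot N(\alpha,\beta,k) \1$ is then exactly the asserted formula.
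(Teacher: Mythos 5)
Your overall scaffolding matches the paper's: the sets $\sR_k$, their $\Phi$-saturations $\sO_k$, the Kac-type identity $\nu(\sO_{k+1})=\int_{\sR_{k+1}}R_{k+1}\,d\nu$ (equation \ref{eq:ri}), the scaling factor $d_k$ coming from iterating Corollary \ref{cor:rho act} (Proposition \ref{prop:measure scaling}), and the telescoping of return times through the semiconjugacy $\rho\circ\wh\Phi=\Phi\circ\rho$. The gap is in the decisive step, where you package the excursion data into the $4\times 4$ cocycle. The rows and columns of $N(\alpha,\beta,1)$ do \emph{not} correspond to incoming and outgoing directions in the set $N$ of normals, and its entries are not ``return-time values contributed by each combination.'' In the paper the relevant indexing is by the six \emph{step classes} (horizontal/vertical crossed with the sign pairs $-+$, $+-$, and matching), and the matrix entries are \emph{visit counts}: Theorem \ref{thm:returns2} records how many points of each class an orbit passes through during one excursion between consecutive visits to $\sR_1$; the return time is only the row sum. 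The $4$-dimensional matrix $N$ arises from the $6\times6$ matrix $M$ of Lemma \ref{lem:matrices rectangle case} by merging the $-+$ and $+-$ classes via the projection $\bpi$ and section $\s$ of equation \ref{eq:pi1}, and the weight vector is $\bn_{\alpha,\beta}=\bpi(\bq_{\alpha,\beta})$ with $\bq_i=\nu(\sO_1\cap\Step_i)$ (Proposition \ref{prop:q}), not a measure of direction-indexed pieces.

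Why this matters and is not just bookkeeping: when you iterate, each point visited during an excursion must be weighted by the length of \emph{its own} excursion at the next level of renormalization, and that length depends on the step class of the (renormalized) point — via Lemma \ref{lem:returns} and Corollary \ref{cor:rot ret} it is a.e.\ constant on each $\rho^{-1}(\Step_j)$ — not on its incoming/outgoing direction. So a direction-indexed table of return times neither reproduces $N(\alpha,\beta,1)$ nor closes up under iteration; you need the statement that the retraction operator $C(\nu,1)$ of equation \ref{eq:cocycle1} preserves the finite-dimensional space of step functions $\epsilon(\R^6)$ and acts there by $M(\alpha,\beta,1)$, together with the Integral Formula (Lemma \ref{lem:cocycle}) applied to the constant function $\bbone$ and the measurable invertibility of $\rho$ with respect to $\nu$ (Remark \ref{rem:invertibility}). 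Your proposal has no substitute for this step-class invariance, which is the actual content of the proof; the concluding induction ``applying $\rho^k$ turns the computation into the one-step computation at $(f^k(\alpha),f^k(\beta))$'' is only valid once that invariance is established. (A smaller slip: by Corollary \ref{cor:rot ret} the two possible values of $r_+$ are $2m+2r+1$ and $2m+1$, hence $R_1\in\{4m+4r+1,\,4m+1\}$, not $2m+r+1$ versus $2m+1$.)
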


We have the following consequence by statement (1) above.

\begin{corollary}[Limit formula]
\name{cor:limit formula} 
For irrationals $\alpha, \beta \in (0,\half)$ we have
$$\nu(\NS)=\lim_{k \to \infty} d_k \bn_k \cdot N(\alpha,\beta,k) \1.$$
\end{corollary}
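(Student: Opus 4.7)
The Limit Formula is essentially a direct consequence of the Cocycle Formula Theorem combined with item (\ref{item:O}) in the setup of Section \ref{sect:limit formula}, so my proof proposal would be brief.

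The plan is to invoke the Cocycle Formula Theorem to rewrite $\nu(\sO_{k+1})$ explicitly as $d_k \bn_k \cdot N(\alpha,\beta,k) \1$ for every $k \geq 0$, then pass to the limit. To justify taking the limit, I would appeal to the nested inclusion $\sO_0 \supset \sO_1 \supset \sO_2 \supset \ldots$ from item (\ref{item:O}), together with the fact that $\nu$ is a finite (in fact probability) measure on $X$, since $\nu = \mu_\alpha \times \mu_\beta \times \mu_N$ and each factor is a probability measure. Continuity of $\nu$ from above then gives
$$\nu\Big(\bigcap_{k=0}^\infty \sO_k\Big) = \lim_{k \to \infty} \nu(\sO_k).$$
Combined with the assertion from item (\ref{item:O}) that $\NS = \bigcap_{k=0}^\infty \sO_k$ up to a $\nu$-null set, this yields $\nu(\NS) = \lim_{k \to \infty} \nu(\sO_k)$, hence also $\nu(\NS) = \lim_{k \to \infty} \nu(\sO_{k+1})$ after a trivial index shift.

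Substituting the Cocycle Formula Theorem's expression for $\nu(\sO_{k+1})$ into this limit immediately gives the corollary. There is no real obstacle here because all of the genuine work is packaged into the Cocycle Formula Theorem and into the verification (promised in item (\ref{item:O}) and to be carried out in Section \ref{sect:cocycle}) that the $\sO_k$ really do decrease to $\NS$ up to a $\nu$-null set. If anything, the only point to watch is the distinction between $\sO_k$ and $\sO_{k+1}$ in the two statements, but this disappears under the limit. Thus the proof of the corollary is a one-line deduction from the theorem, and my write-up would simply state the two facts and combine them.
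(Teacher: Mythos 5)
Your proposal is correct and follows the same route as the paper: the corollary is stated there as an immediate consequence of Theorem \ref{thm:cocycle formula} together with the limit relation $\nu(\NS)=\lim_{k\to\infty}\nu(\sO_k)$ from item (\ref{item:O}), which the paper justifies (as you anticipate) via the nested sequence $\sO_k$ and Corollary \ref{cor:limit}. Nothing further is needed.
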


\subsection{The return time cocycle}
Our renormalization of $\Phi:X \to X$ described in section \ref{sect:renormalization2} is useful for measuring the prevalence of stable periodic trajectories
on $X=\Omega_\pm \times \Omega_\pm \times N$. To begin to understand this,
we recall some of the structure of the renormalization. We defined $\wh \Phi: \sR_1 \to \sR_1$ to be
the first return map to a Borel subset $\sR_1 \subset X$. We found a Borel measurable map
$\rho:\sR_1 \to X$ so that 
$$\rho \circ \wh \Phi(x)=\Phi \circ \rho(x) \quad \text{for each $x \in \sR_1$}.$$

We showed that the $\Phi$-orbit of an $x \in X$ always visits $\sR_1$ unless it belongs to the set $P_4$ of stable periodic orbits of period four, or if
it belongs to the set $\NUC$ of points $x=(\omega, \eta, \v)$ with $\omega$ or $\eta$ not unbounded collapsible. We view the case of $x \in \NUC$ as rare,
and justify this because $\NUC$ has zero measure with respect to many product measures
$\mu \times \mu' \times \mu_N$. (A criterion for this can be found in statement (\ref{item:omegaalt2}) of Theorem \ref{thm:collapsing}.) 
We make the following definition:
\begin{definition}
Let $\nu$ be a Borel measure on $X$. We say $\nu$ is {\em robustly renormalizable} if
for all integers $n \geq 0$ we have $\nu \circ \rho^{-n}(\NUC)=0$.
\end{definition}
\begin{remark}
So long as $\alpha$ and $\beta$ are irrational, the measures $\nu=\mu_\alpha \times \mu_\beta \times \mu_N$
are robustly renormalizable. Corollary \ref{cor:rho act} describes $\nu \circ \rho^{-n}$ in this case and 
statement (\ref{item:omegaalt2}) of Theorem \ref{thm:collapsing} implies $\nu \circ \rho^{-n}(\NUC)=0$.
\end{remark}

To understand iterations of $\rho$, for each $n \geq 1$ define the subsets
$$\sR_n=\rho^{-n}(X) \and \sO_n=\bigcup_{m \in \Z} \Phi^m(\sR_n).$$
We say that $x \in \sR_n$ is {\em $n$-times renormalizable.} The set $\sO_n$ is the smallest $\Phi$-invariant subset of $X$ containing $\sR_n$. When $x \in \sO_n$, we say that {\em the orbit of $x$ is $n$-times renormalizable}.

Recall that the renormalization $\rho$ has the property that $x \in \sR_1$ has a stable periodic orbit if and only if $\rho(x)$ has a stable periodic orbit,
and that $\rho(x)$ has a strictly smaller period. By the discussion above the definition, if $\nu$ is robustly renormalizable, then 
$$\rho^n(\sR_n \sm \sO_{n+1})=P_4, \qquad \text{$\nu \circ \rho^{-n}$-a.e..}$$
(If we can't apply $\rho$ once more at some point in the orbit of $x \in \rho^n(\sR_n \sm \sO_{n+1})$, it must be that either $x \in P_4$ or $x \in \NUC$.) 
In particular, almost every point in $\sO_n \sm \sO_{n+1}$ has a stable period orbit. Conversely, suppose $x$ has a stable periodic orbit of period larger than four. The fact that $\rho$ decreases periods guarantees that $x \in \sO_n \sm \sO_{n+1}$ for some $n$.

We can use the above argument to compute the measure of all points with a stable periodic orbit. The complement of this set is
$$NS=\{\textrm{$(\omega, \eta, \v)\in X$ without a stable periodic orbit}\}.$$

\begin{corollary}
\name{cor:limit}
If $\nu$ is robustly renormalizable, then
$$\nu(\NS)=\lim_{n \to \infty} \nu(\sO_n).$$
\end{corollary}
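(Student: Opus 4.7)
The plan is to establish the $\nu$-almost-everywhere set identity
$$\NS = \bigcap_{n=0}^\infty \sO_n$$
and then invoke continuity of measure from above for the decreasing sequence $\{\sO_n\}$ (applicable because the measures of interest are finite, and in any case $\nu(\sO_0) \leq \nu(X)$). The nesting $\sO_{n+1} \subseteq \sO_n$ is immediate: $\sR_{n+1} = \rho^{-n}(\sR_1) \subseteq \rho^{-n}(X) = \sR_n$, and the $\Phi$-saturation preserves inclusions.

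For the inclusion $\bigcap_n \sO_n \subseteq \NS$, I would argue contrapositively: suppose $x$ has a stable periodic orbit of period $p$. If $x \in \sO_n$, then the orbit of $x$ visits $\sR_n$, so $\rho^n$ is defined on some $\Phi^k(x)$. Statement (4) of the Dynamical Renormalization Theorem says each application of $\rho$ strictly decreases the period of the stable orbit, forcing the period of $\rho^n(\Phi^k(x))$ to be at most $p-n$, which is impossible once $n \geq p$. Hence no such $x$ lies in all $\sO_n$. For the reverse inclusion $\NS \subseteq \bigcap_n \sO_n$ modulo a $\nu$-null set, I would show that every $x \in \sO_n \sm \sO_{n+1}$ has a stable periodic orbit, up to a controlled exceptional set. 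Choose $k$ with $x' = \Phi^k(x) \in \sR_n$ and set $y = \rho^n(x')$. The commutation $\rho \circ \wh\Phi = \Phi \circ \rho$ of statement (2) lifts any visit of the $\Phi$-orbit of $y$ to $\sR_1$ back to a visit of the $\Phi$-orbit of $x$ to $\sR_{n+1}$; since $x \notin \sO_{n+1}$, the orbit of $y$ never enters $\sR_1$. Provided $y \notin \NUC$, statement (3) of the Dynamical Renormalization Theorem then forces $y \in P_4$, and inductively reversing statement (4) shows that $x$ itself has a stable periodic orbit.

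The main obstacle is bookkeeping of the $\NUC$ exceptional set: the case $y \in \NUC$ corresponds to $x'$ lying in $\rho^{-n}(\NUC)$, a set of $\nu$-measure zero precisely by robust renormalizability. Since $\sO_n \sm \sO_{n+1}$ decomposes into $\Phi$-translates of such $x'$ and $\Phi$ preserves $\nu$-nullity, the exceptional set within each $\sO_n \sm \sO_{n+1}$ is $\nu$-null, and summing over $n$ keeps the union $\nu$-null. Combining the two inclusions yields the decomposition
$$X \sm \NS = P_4 \sqcup \bigsqcup_{n \geq 1}\bigl(\sO_n \sm \sO_{n+1}\bigr) \pmod{\nu\text{-null}},$$
so $\NS$ and $\bigcap_n \sO_n$ agree $\nu$-a.e.\ and continuity of measure from above gives $\nu(\NS) = \lim_n \nu(\sO_n)$. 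Without robust renormalizability one could imagine a positive-measure family of orbits that are not stably periodic yet fail to remain renormalizable because the renormalization data degenerates into $\NUC$; the hypothesis is exactly what rules this out and lets the combinatorial dichotomy of statement (3) govern $\nu$-almost every orbit.
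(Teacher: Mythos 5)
Your proposal is correct and follows essentially the same route as the paper: identify $\NS$ with $\bigcap_n \sO_n$ up to $\nu$-null sets using statements (3) and (4) of the Dynamical Renormalization Theorem, with robust renormalizability disposing of the $\NUC$ exceptional sets, and then apply continuity of measure from above to the nested sequence. You merely spell out more explicitly than the paper the lifting of visits to $\sR_1$ through the conjugacy $\rho \circ \wh \Phi = \Phi \circ \rho$ and the null-set bookkeeping, which the paper leaves implicit.
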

\begin{proof}
The above argument shows that the following holds $\nu$-a.e., taking $\sO_0=X$. 
$$X \sm \NS=\bigcup_{n=0}^\infty (\sO_n \sm \sO_{n+1}) \and \NS=\bigcap_{n=0}^\infty \sO_n.$$
This is a nested intersection, so the conclusion follows.
\end{proof}

Because of this Corollary, we wish to iteratively compute the measures of the sets  $\sO_n$. For this, we need some understanding of the return times to $\sR_n$. For integers $n>0$, we define
$$R_n:\sR_n \to \Z_+; \quad R_n(x)=\min \{m>0~:~\Phi^m(x) \in \sR_n\}.$$
The existence of this number is provided by statement (1) of the Theorem \ref{thm:renormalization}.
Observe that if $\nu$ is $\Phi$-invariant then we have 
\begin{equation}
\name{eq:ri}
\nu(\sO_n)=\int_{\sR_n} \ret{n}{x}~d\nu(x).
\end{equation}
This demonstrates the importance of knowing the return times.

Let $\nu$ be a $\Phi$-invariant measure on $X$. 
We interpret $\rho$ as a measure preserving map from the measure space
$(\sR_1, \sB, \nu|_{\sR_1})$ to the space $(X,\sB, \nu \circ \rho^{-1})$ with $\sB$ denoting the Borel $\sigma$-algebra. 
Recall that $\rho$ is a {\em measurable isomorphism $(\textit{mod}~0)$} if there are subsets $Z_1 \subset \sR_1$ with
$\nu(Z_1)=0$ and $Z_2 \subset X$ with $\nu \circ \rho^{-1}(Z_2)=0$ so that the restriction of $\rho$ to $\sR_1 \sm Z_1$
is a bijection onto $X \sm Z_2$ with measurable inverse. In this case, there is an inverse map
$$\rho_{\nu}^{-1}:X \sm Z_2 \to \sR_1 \sm Z_1.$$
We call this map the {\em measurable inverse} of $\rho$ with respect to $\nu$. 
We abuse notation by considering $\rho_{\nu}^{-1}$ to be a map from $X$ to $\sR_1$,
but note that it is defined only $\nu$-almost everywhere.

\begin{remark}
\name{rem:invertibility}
So long as $\alpha$ and $\beta$ are irrational, $\rho$ has a measurable inverse with respect to
$\nu=\mu_\alpha \times \mu_\beta \times \mu_N$. This is because the coding map $\pi:\til Y \times N \to X$ 
given in equation \ref{eq:pi2} is a measurable isomorphism from $\til Y \times N$ equipped with Lebesgue measure to $X$ equipped with the measure 
$\nu$. This follows from the facts that $\pi$ is injective and $\nu$ is the pushforward of Lebesgue measure under $\pi$. 
Utilizing statement (\ref{item:phi}b) of Theorem \ref{thm:rectren}, we can explicitly describe the measurable inverse as
$$\rho_{\nu}^{-1}=\pi\circ \phi^{-1} \circ (\pi')^{-1}$$
\end{remark}

\begin{remark}
Measures for which $\rho$ is not measurably invertible can be analyzed as below utilizing conditional expectations.
See \cite{Htruchet1}.
\end{remark}

We now generalize the return time definition to a linear operator on the space of all Borel measurable functions on $X$. Suppose $f$ is a Borel measurable function on $X$. We define the {\em retraction of $f$ to $\sR_1$} to be the function 
$r_f:\sR_1 \to \R$ given by 
$$r_f(x)=\sum_{i=0}^{\ret{1}{x}-1} f \circ \Phi^i(x).$$
We think of this as a generalization of the return time, since for the constant function $\bbone$ we have $\ret{1}{x}=r_\bbone(x)$.

Now assume that $\rho_\nu^{-1}$ is a measurable inverse of $\rho$ with respect to $\nu$ as above.
Then for any $\Phi$-invariant set $A \subset \sO_1$ and any $\nu$-integrable $f:X \to \R$, we have 
$$\int_{A} f~d \nu=\int_{A \cap \sR_1} r_f(x)~d\nu=\int_{\rho(A \cap \sR_1)} r_f \circ \rho^{-1}_\nu(y)~d(\nu \circ \rho^{-1})(y).$$
This motivates the definition of a linear operator on functions $X \to \R$:
\begin{equation}
\name{eq:cocycle1}
C(\nu, 1): L^1(\nu) \to L^1(\nu \circ \rho^{-1}); \quad f \mapsto r_f \circ \rho^{-1}_\nu.
\end{equation}
From the above remarks, it has the property that 
\begin{equation}
\name{eq:cocycle i}
\int_{A} f~d\nu=\int_{\rho(A \cap \sR_1)} C(\nu,1)(f)~d(\nu \circ \rho^{-1}).
\end{equation}
We would like to apply this operation repeatedly, so we make the following definition.

\begin{definition}
Let $\nu$ be a robustly renormalizable measure, and define $\nu_n=\nu \circ \rho^{-n}$ for integers $n \geq 0$. 
We say $\nu$ is {\em robustly invertible} if for each $n \geq 1$, the renormalization $\rho$ 
thought of as a measurable map from $(X,\sB,\nu_{n-1})$ to $(X,\sB,\nu_{n})$ has a measurable inverse 
$\rho^{-1}_n:X \to \sR_1$. This means for $\nu_{n-1}$-a.e. $x \in X$ and $\nu_{n}$-a.e. $y \in X$ we have 
$$\rho_n^{-1} \circ \rho(x)=x \and \rho \circ \rho_n^{-1}(y)=y.$$
\end{definition}

Suppose that $\nu$ is robustly invertible, and define $\nu_n=\nu \circ \rho^{-n}$ and $\rho^{-1}_n$ as in the definition above
so that $\nu_0=\nu$. 
Observe that we can compose the operators $C(\nu_n,1)$ constructed as in equation \ref{eq:cocycle1}.
Each operator $C(\nu_n,1)$ sends $L^1(\nu_n)$ to $L^1(\nu_{n+1})$, so for integers $n \geq 0$ and $m \geq 1$ define
$$C(\nu_n,m):L^1(\nu_n) \to L^1(\nu_{n+m}); \quad C(\nu_n,m)= C(\nu_{n+m-1},1)\circ \ldots \circ C(\nu_{n+1},1) \circ C(\nu_n,1).$$
Taking $C(\nu,0)$ to be the identity operator on $L^1(\nu)$, these operators form cocycle over the renormalization dynamics of $\rho$ acting on the space of robustly invertible $\Phi$-invariant measures. That is they satisfy the identity
$$C(\nu,m+k)=C(\nu \circ \rho^{-m},k) \circ C(\nu,m) \quad \text{for all $m,k \geq 0$}.$$
We prove that this cocycle satisfies a generalization of equation \ref{eq:cocycle i}.
\begin{lemma}[Integral Formula]
\name{lem:cocycle}
Suppose $\nu$ is robustly invertible. Then for all integers $n \geq 1$, 
all Borel measurable $\Phi$-invariant sets $A \subset \sO_n$, and all $\nu$-integrable $g:A \to \R$,
we have
$$\int_{A} f~d\nu=\int_{\rho^n(A\cap \sR_n)} C(\nu, n)(f)(x)~d\nu \circ \rho^{-n}(x).$$
\end{lemma}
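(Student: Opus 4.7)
The plan is induction on $n$. For the base case $n=1$, the identity
$$\int_A f\,d\nu = \int_{\rho(A \cap \sR_1)} C(\nu,1)(f)\,d\nu_1$$
is precisely equation \ref{eq:cocycle i}, which was established just before the definition of $C(\nu,1)$: decomposing the orbit of $\nu$-a.e.\ $x \in A$ into $\sR_1$-excursions of length $R_1$ gives $\int_A f\,d\nu = \int_{A \cap \sR_1} r_f\,d\nu$, and then transferring to $(X,\nu_1)$ via the measurable inverse $\rho_{\nu}^{-1}$ yields the claim.

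For the inductive step, assuming the formula at level $n$, I would apply it to $A \subset \sO_{n+1} \subset \sO_n$ to obtain
$$\int_A f\,d\nu = \int_B C(\nu,n)(f)\,d\nu_n, \qquad B := \rho^n(A \cap \sR_n),$$
and then apply the base case at the new measure $\nu_n$ to the set $B$ with integrand $g := C(\nu,n)(f)$. Combining with the cocycle identity $C(\nu_n,1) \circ C(\nu,n) = C(\nu,n+1)$ (immediate from the definition of the composed operator) and the set-theoretic identity $\rho(B \cap \sR_1) = \rho^{n+1}(A \cap \sR_{n+1})$ mod null will complete the induction.

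What requires work is justifying that the base case applies at measure $\nu_n$ to the set $B$: one must verify that $B$ is $\Phi$-invariant and contained in $\sO_1$ modulo $\nu_n$-null sets. For invariance I would argue by a side induction: if $\rho^{n-1}(A \cap \sR_{n-1})$ is $\Phi$-invariant mod $\nu_{n-1}$, then its intersection with $\sR_1$ is $\wh \Phi$-invariant mod $\nu_{n-1}$, so applying the measurable isomorphism $\rho:(\sR_1, \wh \Phi, \nu_{n-1}) \to (X, \Phi, \nu_n)$ (supplied by robust invertibility together with the semiconjugacy $\rho \circ \wh \Phi = \Phi \circ \rho$) gives that $B = \rho\bigl(\rho^{n-1}(A \cap \sR_{n-1}) \cap \sR_1\bigr)$ is $\Phi$-invariant mod $\nu_n$. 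Containment $B \subset \sO_1$ mod $\nu_n$ uses the hypothesis $A \subset \sO_{n+1}$: the $\Phi$-orbit of every $x \in A$ visits $\sR_{n+1}$, which transfers under the conjugacy to the $\Phi$-orbit of $\rho^n(x)$ visiting $\rho^n(\sR_{n+1}) = \sR_1$ mod null.

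The main obstacle will be the careful mod-null bookkeeping across several levels of renormalization, in particular verifying the set identity $\rho^n(A \cap \sR_n) \cap \sR_1 = \rho^n(A \cap \sR_{n+1})$ (which follows from the chain $\sR_{n+1} = \sR_n \cap \rho^{-n}(\sR_1)$ and injectivity of $\rho^n$ on $\sR_n$ mod $\nu$) and tracking simultaneously the exceptional null sets where $\rho^k$ fails to be a genuine isomorphism. No step is analytically deep; the work lies entirely in assembling the semiconjugacy picture across $n$ levels so that the one-step identity may be legitimately applied to $(X, \Phi, \nu_n)$, after which the cocycle property finishes the argument.
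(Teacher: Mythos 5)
Your proposal is correct and follows essentially the same route as the paper: the paper proves the lemma by inductively applying equation \ref{eq:cocycle i}, noting at each stage that the pushed-forward set $\rho(A\cap\sR_1)$ is $\Phi$-invariant (via statement (2) of Theorem \ref{thm:renormalization}) and lies in the next $\sO_k$, exactly the structural facts you verify. The only difference is bookkeeping order (the paper peels off the first application of the one-step identity, you peel off the last), and you spell out the mod-null details the paper leaves implicit.
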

\begin{proof}
This formula follows by inductively applying equation \ref{eq:cocycle i}. We demonstrate how it works for the first iteration.
Let $B=\rho(A \cap \sR_1)$. The set $B$ is $\Phi$-invariant by statement (2) of Theorem \ref{thm:renormalization}. In addition, $B \subset \sO_{n-1}$. We let $g=C(\nu,1)(f)$. Then by equation 
\ref{eq:cocycle i}, we have
$$\int_{A} f~d\nu=\int_B g~d\nu \circ \rho^{-1}.$$
Assuming $n-1 \geq 1$, we can apply equation \ref{eq:cocycle i} again.
\end{proof}

\subsection{Step functions}
We are interested in the behavior of cocycle $C(\nu,n)$, when $\nu$ is taken from the space of measures coming from our rectangle exchange maps.
This space of measures is $\rho$ invariant up to scaling. The scaling constant is given by
the one-dimensional cocycle $D$ defined in equation \ref{eq:D}.

\begin{proposition}\name{prop:measure scaling}
Let $\nu=\mu_\alpha \times \mu_\beta \times \mu_N$. Then, for all $k \geq 0$ we have 
$$\nu \circ \rho^{-k}=D(\alpha, \beta, k)~\mu_{f^k(\alpha)} \times \mu_{f^k(\beta)} \times \mu_N.$$
\end{proposition}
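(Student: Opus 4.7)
My plan is to prove this by induction on $k$, with Corollary \ref{cor:rho act} serving as the engine of the inductive step. The base case $k=0$ is immediate: by the empty product convention, $D(\alpha,\beta,0)=1$, and $\rho^0$ is the identity so $\nu \circ \rho^{0} = \nu = \mu_\alpha \times \mu_\beta \times \mu_N$, which is exactly the right-hand side since $f^0(\alpha)=\alpha$ and $f^0(\beta)=\beta$.

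For the inductive step, I would assume the formula holds at level $k$ and rewrite
\[
\nu \circ \rho^{-(k+1)} = \big(\nu \circ \rho^{-k}\big) \circ \rho^{-1}
= D(\alpha,\beta,k)\, \big(\mu_{f^k(\alpha)} \times \mu_{f^k(\beta)} \times \mu_N\big) \circ \rho^{-1}.
\]
Now I would apply Corollary \ref{cor:rho act} with parameters $f^k(\alpha)$ and $f^k(\beta)$ in place of $\alpha$ and $\beta$: this corollary is phrased in terms of arbitrary irrationals in $(0,\tfrac{1}{2})$, and since the dynamics of $f$ preserves this set (see equation \ref{eq:f}), the parameters $f^k(\alpha)$ and $f^k(\beta)$ are legitimate inputs. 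The corollary yields
\[
\big(\mu_{f^k(\alpha)} \times \mu_{f^k(\beta)} \times \mu_N\big) \circ \rho^{-1}
= \big(1-2f^k(\alpha)\big)\big(1-2f^k(\beta)\big)\, \mu_{f^{k+1}(\alpha)} \times \mu_{f^{k+1}(\beta)} \times \mu_N.
\]
Combining this with the inductive hypothesis and unwinding the definition of $D(\alpha,\beta,k+1)$ from equation \ref{eq:D} (which picks up exactly the new factor $(1-2f^k(\alpha))(1-2f^k(\beta))$) finishes the induction.

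There is no real obstacle here; the only thing worth double-checking is that the iterated application of Corollary \ref{cor:rho act} is valid, i.e., that $f^k(\alpha)$ and $f^k(\beta)$ stay strictly inside $(0, \tfrac{1}{2})$ so that the measures $\mu_{f^k(\alpha)}$ and $\mu_{f^k(\beta)}$ are nontrivial and the hypotheses on irrationality carry over. This is automatic because $f$ maps irrationals in $(0,\tfrac{1}{2})$ to irrationals in $(0,\tfrac{1}{2}]$, and the endpoint $\tfrac{1}{2}$ can only be reached from rational input. Hence the proposition reduces to a one-line induction built on Corollary \ref{cor:rho act}.
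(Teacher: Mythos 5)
Your proof is correct and is exactly the paper's argument: the paper proves this proposition by "an inductive application of Corollary \ref{cor:rho act}," which is precisely your induction, with the base case and the bookkeeping of the factor $\big(1-2f^k(\alpha)\big)\big(1-2f^k(\beta)\big)$ into $D(\alpha,\beta,k+1)$ spelled out. The side check that $f^k(\alpha), f^k(\beta)$ remain irrational in $(0,\tfrac12)$ is a reasonable addition but does not change the route.
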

\begin{proof}
This follows from an inductive application of Corollary \ref{cor:rho act}.
\end{proof}

We will see that $C(\nu,k)$ preserves a finite dimensional subspace of step functions containing the constant function $\bbone$, so long as $\nu$ has the form above. This reduces the equation for integrating such a step function over $\sO_n$ given in Lemma \ref{lem:cocycle}
to working with a finite dimensional cocycle. In this subsection, we find a $6$-dimensional invariant subspace. In the following subsection,
we observe that $\bbone$ belongs to a four dimensional invariant subspace. This allows us to drop the dimension of the cocycle to four.

We partition the space $X$ into six non-empty pieces $\Step_1, \Step_2, \ldots, \Step_6$ and define the linear embedding into the space of of Borel measurable functions on $X$,
\begin{equation}
\name{eq:epsilon}
\epsilon:\R^6 \to \sM(X); \quad \text{$\epsilon(\bp)(x)=\bp_i$ if $x \in \Step_i$.}
\end{equation}
We say $x \in \Step_i$ has {\em step class $i$}. These sets have combinatorial definitions given below.

First we define two sets. The {\em set of directions} consists of the terms {\em horizontal} and {\em vertical}. We define the {\em set of sign pairs} to be $\{--,-+,+-,++\}$. This is the set of words of length $2$ in the alphabet $\{\pm 1\}$. To each element $x=(\omega, \eta, \v) \in X$ with $\v=(a,b)$, we assign a unique direction and 
sign pair. Recall the definition of $\Phi$, 
$$\Phi(x)=\big(\sigma^{sb}(\omega), \sigma^{sb}(\eta),(sb,sa)\big) \quad \text{with $s=\omega_0 \eta_0$}.$$
This assignment of direction and sign pair to $x$ is given by the following chart.

\begin{center}
\begin{tabular}{|c|c|c|}
\hline 
Value of $(sb,sa)$ & Direction & Sign pair \\ 
\hline 
$(1,0)$ & horizontal & $\omega_0 \omega_1$ \\ 
\hline 
$(-1,0)$ & horizontal & $\omega_{-1} \omega_0$ \\ 
\hline 
$(0,1)$ & vertical & $\eta_0 \eta_1$ \\ 
\hline 
$(0,-1)$ & vertical & $\eta_{-1} \eta_0$ \\ 
\hline 
\end{tabular} 
\end{center}
If $x \in X$ has horizontal direction and sign pair $-+$, we call $x$ a {\em $-+$-horizontal step}.
We use similar language to describe all combinations of directions with sign pairs.

We use these terms to define the six step classes. Each $x \in X$ belongs to exactly one class. 
\begin{citemize}
\item We say $x$ has {\em step class $1$} if $x$ is a $(-+)$-horizontal step.
\item We say $x$ has {\em step class $2$} if $x$ is a $(+-)$-horizontal step.
\item We say $x$ has {\em step class $3$} if $x$ is a $(++)$- or $(--)$-horizontal step. 
\item We say $x$ has {\em step class $4$} if $x$ is a $(-+)$-vertical step.
\item We say $x$ has {\em step class $5$} if $x$ is a $(+-)$-vertical step.
\item We say $x$ has {\em step class $6$} if $x$ is a $(++)$- or $(--)$-vertical step.
\end{citemize} 
This defines a partition of $X$ into the six sets $\Step_1, \ldots, \Step_6 \subset X$, and defines the function $\epsilon$ as in equation \ref{eq:epsilon}. 

We will need to integrate a step function $\epsilon(\bp)$ over $X$ with respect to the measure
$$\nu=\mu_\alpha \times \mu_\beta \times \mu_N.$$ To do this, we define the vector 
$\m_{\alpha,\beta} \in \R^6$ according to the rule
$$\m_{\alpha,\beta}=\big(\nu(\Step_1), \ldots, \nu(\Step_6)\big).$$
This choice guarantees that we have the formula 
\begin{equation}
\name{eq:meas vect}
\int_X \epsilon(\bp)~d\mu=\m_{\alpha,\beta} \cdot \bp.
\end{equation}
We have the following explicit formula for $\m_{\alpha,\beta}$:

\begin{proposition}\name{prop:meas vect}
We have $\m_{\alpha, \beta}=\thalf(\alpha, \alpha, 1-2\alpha, \beta, \beta, 1-2\beta).$
\end{proposition}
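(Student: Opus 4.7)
My plan is to compute each of the six entries of $\m_{\alpha,\beta}$ directly from the product structure $\nu = \mu_\alpha \times \mu_\beta \times \mu_N$, exploiting the fact that for a horizontal step (classes $1$--$3$) the sign pair depends only on $\omega$, while for a vertical step (classes $4$--$6$) it depends only on $\eta$. The symmetry $\alpha \leftrightarrow \beta$, $\omega \leftrightarrow \eta$ between horizontal and vertical classes means I only need to work out the horizontal half in detail.

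First I would focus on horizontal steps. A horizontal step requires $(sb,sa)$ to have zero second coordinate, so $\v \in \{(0,1),(0,-1)\}$. The crucial bookkeeping observation is that, given any $(\omega_0,\eta_0) \in \{\pm 1\}^2$, there is exactly one $\v \in N$ that makes the step head east (namely $\v = (0,\omega_0\eta_0)$) and exactly one that makes it head west. For the east step the sign pair is $\omega_0\omega_1$, while for the west step it is $\omega_{-1}\omega_0$; in neither case does the constraint involve $\eta$. Integrating $\eta$ over $\Omega_\pm$ (total mass $1$) and absorbing the $\v$-choice into the uniform factor $\mu_N(\{\v\}) = \tfrac14$, I get, for any subset $B$ of sign pairs,
\[ \nu\bigl(\{\text{horizontal east steps with sign pair in }B\}\bigr) \;=\; \tfrac14 \mu_\alpha\bigl(\{\omega : \omega_0\omega_1 \in B\}\bigr), \]
and analogously for west steps with $\omega_{-1}\omega_0$ in place of $\omega_0\omega_1$.

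Next I would compute the relevant cylinder measures under $\mu_\alpha$ by pulling back through the coding map $\varsigma$. For instance, $\varsigma^{-1}(\cyl(\wh{-}+))$ is the set of $x$ with $x \in [\thalf, 1)$ and $x+\alpha \in [0,\thalf) \pmod 1$, which is the interval $[1-\alpha,1)$; hence $\mu_\alpha(\cyl(\wh{-}+)) = \alpha$. Identical computations yield $\mu_\alpha(\cyl(-\wh{+})) = \mu_\alpha(\cyl(\wh{+}-)) = \mu_\alpha(\cyl(+\wh{-})) = \alpha$, and $\mu_\alpha(\cyl(\wh{+}+) \cup \cyl(\wh{-}-)) = \mu_\alpha(\cyl(+\wh{+}) \cup \cyl(-\wh{-})) = 1-2\alpha$. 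Combining these with the east/west decomposition gives $\nu(\Step_1) = \nu(\Step_2) = \tfrac{\alpha}{2}$ and $\nu(\Step_3) = \tfrac{1-2\alpha}{2}$.

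Finally, the vertical classes are handled by the evident swap $\alpha \leftrightarrow \beta$, $\omega \leftrightarrow \eta$, yielding $\nu(\Step_4) = \nu(\Step_5) = \tfrac{\beta}{2}$ and $\nu(\Step_6) = \tfrac{1-2\beta}{2}$. There is no real obstacle; the one point requiring care is the combinatorial claim that for each $(\omega_0,\eta_0)$ exactly one $\v \in N$ sends the step in each prescribed cardinal direction, as this is precisely what lets the $\mu_N$ factor collapse cleanly to $\tfrac14$ and reduces the computation to a single Lebesgue measurement of a cylinder set on one side of the product.
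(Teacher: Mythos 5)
Your proposal is correct and follows essentially the same route as the paper's proof: partition according to the cardinal direction of the step (the directional component of $\Phi(x)$), which carries total weight $\tfrac14$ for each direction, and then reduce each piece to a one-sided cylinder computation for $\mu_\alpha$ (resp. $\mu_\beta$) giving $\alpha,\alpha,1-2\alpha$ (resp. $\beta,\beta,1-2\beta$), followed by the same averaging. The only difference is cosmetic: you fix $(\omega,\eta)$ and note that exactly one $\v \in N$ produces each direction before applying Fubini, whereas the paper conditions on the direction occurring with probability $\tfrac14$; your explicit interval computations for the cylinder measures simply verify what the paper asserts directly.
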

\begin{proof}
Let $x=(\omega,\eta,\v)$ be taken at random from $X$ according to the measure $\nu$. 
Let $\v=(a,b)$ and $s=\omega_0 \eta_0$ so that the directional component of $\Phi(x)$ is $\bw=(sb,sa)$.
The probability that $\bw=(1,0)$ is $1/4$. Given this, $x$ is a $(-+)$ step if $\omega \in \cyl(\wh - +)$.
The $\mu_\alpha$ measure of $\cyl(\wh - +)$ is $\alpha$. Similarly, we see that the probability of $x \in \Step_i$
given that $\v=(1,0)$ is given by the $i$-th entry of the vector
$$(\alpha, \alpha, 1-2\alpha,0,0,0).$$
The same holds vector holds for the case $\v=(-1,0)$. Given that $\v=(0,1)$ or $\v=(0,-1)$, the probability of $x \in \Step_i$
is given by the entries of
$$(0,0,0,\beta,\beta,1-2\beta).$$
We get $\m_{\alpha, \beta}$ by averaging the two vectors above.
\end{proof}

For the following theorem, we define $\chi_i$ to be the characteristic function of $\Step_i$, and define $\be_i \in \R^6$ to be the 
standard basis vector with $1$ in position $i$. 

\begin{theorem}[Collapsed Steps]
\name{thm:returns2}
Suppose $x \in \sR_1$ has return time $\ret{1}{x}=4k+1$ and $\rho(x) \in \Step_j$. Then, for all $i \in \{1, \ldots, 6\}$ we have 
$r_1(\chi_i;x)=\be_j \cdot K \be_i$ with
$$K=\left[\begin{array}{rrrrrr}
k & k-1 & 2 & 0 & 0 & 2k \\
k & k+1 & 0 & 0 & 0 & 2k \\
k & k & 1 & 0 & 0 & 2k \\
0 & 0 & 2k & k & k-1 & 2 \\
0 & 0 & 2k & k & k+1 & 0 \\
0 & 0 & 2k & k & k & 1 \\
\end{array}\right].$$
\end{theorem}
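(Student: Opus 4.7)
The plan is to analyze the finite $\Phi$-orbit $x, \Phi(x), \ldots, \Phi^{4k}(x)$ by combining the box description from the Tiling Return Time Theorem with a direct computation of the directions $\v^{(m)}$ and the $\omega$-shift $p_m$. First I would reduce to one representative direction. The direction $\bw$ of $\Phi(x)$ lies in $\{\pm(1,0),\pm(0,1)\}$, and Lemma \ref{lem:returns} combined with $R_1(x)=4k+1$ forces $r_\epsilon(\omega)=2k+1$ when $\bw$ is horizontal, and $r_\epsilon(\eta)=2k+1$ when $\bw$ is vertical, for the appropriate $\epsilon\in\{+,-\}$. Swapping $\omega$ with $\eta$ and $(a,b)$ with $(b,a)$ commutes with $\Phi$ and exchanges the row/column blocks $\{1,2,3\}\leftrightarrow\{4,5,6\}$ of $K$; an analogous count with $p_m$ decreasing handles $\bw=(-1,0)$. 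So it suffices to treat $\bw=(1,0)$, which supplies rows $1$--$3$ of $K$.

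Second, I would extract the local structure of $\omega$ and $\eta$. Unrolling $r_+(\omega)=2k+1$ through the characterization \ref{eq:K2} of $K(\omega)$, a short induction (identical in spirit to the argument in the proof of Theorem \ref{thm:return time}) shows that $\omega_i=(-1)^i$ for $i=1,\ldots,2k$, with $\omega_0$ and $\omega_{2k+1}$ unconstrained up to collapsibility. This is precisely the pattern defining a maximal horizontal box of length parameter $\ell=k$, whose two rows $n, n+1$ satisfy $\eta_n=\eta_{n+1}$.

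Third, I would track the orbit explicitly. Writing $x^{(m)}=(\sigma^{p_m}\omega,\sigma^{q_m}\eta,\v^{(m)})$, a direct induction from the definition of $\Phi$ shows that $\v^{(m)}$ is vertical for even $m$ and horizontal for odd $m$; that $p_{2j}=j$ for $j=0,1,\ldots,2k$, with each horizontal step advancing one column to the right; and that $q_m\in\{n,n+1\}$ throughout the box traversal. Consequently $x^{(m)}$ contributes a horizontal step when $m$ is even and a vertical step when $m$ is odd, giving $2k+1$ horizontal and $2k$ vertical contributions.

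Finally I would tally the step classes. Every vertical step has sign pair $\eta_n\eta_{n+1}$ and is therefore matching, producing $(0,0,2k)$ in columns $4,5,6$ of rows $1,2,3$. For the horizontal steps at $m=2j$ the sign pair is $\omega_j\omega_{j+1}$: on the alternating interior $j=1,\ldots,2k-1$ this gives exactly $k$ copies of $-+$ and $k-1$ copies of $+-$, while the boundary steps at $j=0$ and $j=2k$ depend on $\omega_0$ and $\omega_{2k+1}$. Since $\rho(x)=(c(\omega),c(\eta),\v)$ with $c(\omega)_0=\omega_0$ and $c(\omega)_1=\omega_{2k+1}$, and $\Phi(\rho(x))$ carries the same direction $(1,0)$, the step class of $\rho(x)$ is determined entirely by the pair $(\omega_0,\omega_{2k+1})$: $j=1$ forces $(-,+)$, $j=2$ forces $(+,-)$, and $j=3$ forces $\omega_0=\omega_{2k+1}$. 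A three-way case analysis then produces the correct boundary increments, converting $(k,k-1,0)$ into the horizontal entries of rows $1,2,3$ of $K$. The main obstacle is this final matching, which requires carefully linking the step class of $\rho(x)$ to the boundary data $\omega_0,\omega_{2k+1}$ via the collapsing map; once this correspondence is in hand, everything else is routine from the box structure and the definition of $\Phi$.
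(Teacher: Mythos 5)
Your argument is essentially the paper's proof: reduce to the horizontal case by the $\omega\leftrightarrow\eta$ symmetry, identify the maximal horizontal box of length parameter $\ell=k$ traversed by the curve (your unstated claim that the two rows satisfy $\eta_n=\eta_{n+1}$ does hold --- it follows from zero-collapsibility of $\eta$, or from Lemma \ref{lem:xor}), count $2k$ matching vertical steps together with $k$ interior $(-+)$- and $k-1$ interior $(+-)$-horizontal steps, and determine the two boundary steps from the sign pair of $\rho(x)$ via $c(\omega)_0=\omega_0$ and $c(\omega)_1=\omega_{2k+1}$, exactly the paper's identification of the ends as an $(r-)$- and a $(+s)$-horizontal step. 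The one point you omit is the degenerate case $k=0$ (return time $1$), where there is no box and your two boundary indices $j=0$ and $j=2k$ coincide; the paper handles this separately, observing that $\rho(x)\in\Step_1$ is then impossible (otherwise $\omega$ would fail to be zero-collapsible), so only rows $2$ and $3$ of $K$ arise and they reduce to the corresponding rows of the identity --- adding this short remark completes your proof.
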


Observe that the $j$-th row of $K$ gives the number of each step type which appears in the set $\{x, \Phi(x), \ldots, \Phi^{4k}\}$
provided $\rho(x) \in \Step_j$ and $\ret{1}{x}=4k$. We prove this theorem at the end of this subsection.

The utility of the Lemma is the following. So long as the return time function is $\nu$-a.e. constant on each $\rho^{-1}(\Step_i)$, the cocycle
$C(\nu,1)$ will preserve the subspace $\epsilon(\R^6)$. Indeed, if $\ret{1}{x}=4k+1$ for $\nu$-a.e. $x \in \rho^{-1}(\Step_j)$, then 
\begin{equation}
\name{eq:cocycle obs}
C(\nu,1)\big(\epsilon(\be_i)\big)(y)=\epsilon(K \be_i)(y) \quad \text{for $\nu \circ \rho^{-1}$-a.e. $y \in \Step_j$,}
\end{equation}
with $K$ as in the above Theorem. 
In the case of measures of the form $\nu=\mu_\alpha \times \mu_\beta \times \mu_N$ the condition of being almost everywhere constant on $\rho^{-1}(\Step_j)$ is guaranteed (indirectly) by Corollary \ref{cor:rot ret}. In this case, we can extend linearly to understand the action of $C(\nu,1)$ on the subspace $L$. Details are in the proof of the following Lemma.

\begin{lemma}[Finite Dimensional Cocycle]
\name{lem:matrices rectangle case}
Let $\nu=\mu_\alpha \times \mu_\beta \times \mu_N$ with $\alpha, \beta \in [0,\half)$. The operator
$C(\nu,1)$ preserves the space of step functions $\epsilon(\R^6)$. Determine $r,s \in \{\pm 1\}$ and $m,n \in \Z$ according to the rule
$$f(\alpha)=r(\frac{\alpha}{1-2 \alpha}-m) \and f(\beta)=s(\frac{\beta}{1-2\beta}-n).$$
Then, the action of $C(\nu,1)$ on $\epsilon(\R^6)$ satisfies
$$C(\nu,1)\big(\epsilon(\bp)\big)=\epsilon (M \bp) \qquad \text{$\nu \circ \rho^{-1}$-a.e.,}$$ 
with the matrix $M=M(\alpha,\beta,1)$ given by 
$$M=\left[\begin{array}{rrrrrr}
m+\frac{r+1}{2} & m+\frac{r-1}{2} & 2 & 0 & 0 & 2m+1+r \\
m+\frac{r-1}{2} & m+\frac{r+1}{2} & 0 & 0 & 0 & 2m-1+r \\
m & m & 1 & 0 & 0 & 2m \\
0 & 0 & 2 n+1+s & n+\frac{s+1}{2} & n+\frac{s-1}{2} & 2 \\
0 & 0 & 2n-1+s & n+\frac{s-1}{2} & n+\frac{s+1}{2} & 0 \\
0 & 0 & 2n & n & n & 1 \\
\end{array}\right]. 
$$
\end{lemma}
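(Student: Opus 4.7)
My plan is to reduce the lemma to a case analysis over the six step classes, verifying in each case that the return time $R_1$ is $\nu$-almost everywhere constant on $\rho^{-1}(\Step_j)$, and then reading off the matrix $M$ directly from Theorem~\ref{thm:returns2}. The starting observation is equation~\ref{eq:cocycle obs}: provided $R_1(x)=4k_j+1$ for $\nu$-a.e.\ $x\in\rho^{-1}(\Step_j)$, Theorem~\ref{thm:returns2} gives
$$C(\nu,1)\bigl(\epsilon(\be_i)\bigr)(y)\;=\;\epsilon(K^{(j)}\be_i)(y) \qquad\text{for $\nu\circ\rho^{-1}$-a.e.\ $y\in\Step_j$,}$$
where $K^{(j)}$ denotes the matrix $K$ of Theorem~\ref{thm:returns2} with $k=k_j$. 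Linearity in $\bp$ then yields $C(\nu,1)(\epsilon(\bp))=\epsilon(M\bp)$ with the $j$-th row of $M$ equal to the $j$-th row of $K^{(j)}$, so the whole task reduces to producing the six integers $k_j$ and checking the substitution.

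To compute the $k_j$, fix $x=(\omega,\eta,\v)\in\sR_1$ with $\v=(a,b)$, $s=\omega_0\eta_0$, and $\bw=(sb,sa)$. Lemma~\ref{lem:returns} expresses $R_1(x)$ as $2r_\pm(\omega)-1$ or $2r_\pm(\eta)-1$ depending on $\bw$, and Corollary~\ref{cor:rot ret} evaluates these return times for $\mu_\alpha$-a.e.\ collapsible $\omega$ (respectively $\mu_\beta$-a.e.\ $\eta$) in terms of which of the cylinders $\cyl(\wh{-}+),\cyl(-\wh{+}),\cyl(\wh{+}-),\cyl(+\wh{-})$ the sequence $c(\omega)$ (respectively $c(\eta)$) belongs to. The matching principle is that the step class of $y=\rho(x)$ is characterized by exactly one such cylinder membership: for instance $y\in\Step_1$ with $\bw=(1,0)$ means $c(\omega)\in\cyl(\wh{-}+)$, while $y\in\Step_1$ with $\bw=(-1,0)$ means $c(\omega)\in\cyl(-\wh{+})$. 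Either way Corollary~\ref{cor:rot ret} yields $r_\pm(\omega)=2m+3$ when $r=1$ and $r_\pm(\omega)=2m+1$ when $r=-1$, so $k_1=m+\frac{r+1}{2}$. Parallel analysis gives $k_2=m+\frac{r-1}{2}$ (sign pair $+-$), $k_3=m$ (matching sign pair), and symmetrically $k_4=n+\frac{s+1}{2}$, $k_5=n+\frac{s-1}{2}$, $k_6=n$ for the three vertical step classes.

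Finally I will verify by direct substitution that plugging $k=k_j$ into the $j$-th row of $K$ reproduces exactly the $j$-th row of the claimed matrix $M$; the six resulting identities are routine arithmetic and exhaust the statement. The most delicate step will be the bookkeeping that distinguishes the step class of $y=\rho(x)$ from that of $x$: since $0\in K(\omega)\cap K(\eta)$ one has $c(\omega)_0=\omega_0$ and $c(\eta)_0=\eta_0$, so the direction $\bw$ of $\Phi$ agrees for $x$ and $y$; however, the sign pair of $y$ is read from $c(\omega)$ or $c(\eta)$ rather than $\omega$ or $\eta$, and it is precisely the former that Corollary~\ref{cor:rot ret} controls. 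Pinning down this correspondence is what collapses the twelve a priori sub-cases (six step classes times two orientations of $\bw$) onto the two regimes $r=\pm 1$ and $s=\pm 1$ appearing in the closed form for $M$.
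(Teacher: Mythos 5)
Your proposal matches the paper's own argument: the paper likewise reduces via linearity and equation \ref{eq:cocycle obs} to checking that $R_1$ is a.e.\ constant equal to $4k_j+1$ on $\rho^{-1}(\Step_j)$, identifies $k_j$ through Corollary \ref{cor:rot ret} by reading the cylinder membership of $c(\omega)$ or $c(\eta)$ off the step class of $\rho(x)$ (doing $j=1$ explicitly and leaving the other rows to the reader), and then observes the row-by-row agreement with $K$. Your use of Lemma \ref{lem:returns} in place of the paper's direct appeal to Theorem \ref{thm:return time} is an immaterial difference, and your handling of the $x$-versus-$\rho(x)$ bookkeeping is exactly the point the paper's proof turns on.
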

The matrix $M$ above has entries which are all non-negative integers, and has determinant $rs \in \{\pm 1\}$. 

We extend the definition of $M(\alpha, \beta, 1)$ to a cocycle. We define $M(\alpha, \beta,0)$ to be the identity matrix. We inductively define
$$M(\alpha, \beta,k+1)=M\big(f^k(\alpha),f^k(\beta),1\big) M(\alpha, \beta, k) \quad \text{for $k \geq 1$.}$$
We have the following.

\begin{corollary}\name{cor:cocycle integral 1}
Let $\alpha, \beta \in (0,\half)$ be irrational and set $\nu=\mu_\alpha \times \mu_\beta \times \mu_N$. Let $\bp \in \R^6$ and set $g=\epsilon(\bp)$. 
For $k \geq 0$, define 
$$\m_k=\m_{f^k(\alpha),f^k(\beta)}\in \R^6, \quad d_k=D(\alpha, \beta,k) \in \R, \and M_k=M(\alpha,\beta,k).$$ Then,
$$\int_{\sO_k} g~d\nu=d_k (\m_k \cdot M_k \bp).$$
\end{corollary}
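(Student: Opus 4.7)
The proof will be a direct chain of the four tools just developed: the Integral Formula (Lemma \ref{lem:cocycle}), the Finite Dimensional Cocycle Lemma (Lemma \ref{lem:matrices rectangle case}), the measure-scaling Proposition \ref{prop:measure scaling}, and the explicit formula \ref{eq:meas vect} for integrating step functions. First, I would verify the hypotheses needed to invoke Lemma \ref{lem:cocycle}. Since $\alpha$ and $\beta$ are irrational, $\nu$ is robustly renormalizable and, by Remark \ref{rem:invertibility}, robustly invertible. The set $\sO_k$ is Borel, $\Phi$-invariant by construction, and $g=\epsilon(\bp)$ is bounded hence $\nu$-integrable on $\sO_k$.

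Next, I would apply Lemma \ref{lem:cocycle} with $n=k$, $A=\sO_k$, $f=g$. Since $\sR_k \subset \sO_k$ by the definition $\sO_k=\bigcup_m \Phi^m(\sR_k)$, we have $A\cap\sR_k=\sR_k$. Robust invertibility also gives $\rho^k(\sR_k)=X$ up to a $\nu\circ\rho^{-k}$-null set. Therefore
\begin{equation*}
\int_{\sO_k} g\,d\nu \;=\; \int_{X} C(\nu,k)(g)\,d(\nu\circ\rho^{-k}).
\end{equation*}

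The key computational step is to identify $C(\nu,k)(\epsilon(\bp))$ as a step function associated to $M_k\bp$. I would prove this by induction on $k$: the base case $k=0$ is trivial since $C(\nu,0)$ is the identity and $M_0=I$. For the inductive step, Proposition \ref{prop:measure scaling} identifies $\nu\circ\rho^{-j}$ (up to the scalar $D(\alpha,\beta,j)$) with the measure $\mu_{f^j(\alpha)}\times\mu_{f^j(\beta)}\times\mu_N$, so Lemma \ref{lem:matrices rectangle case} applies at each intermediate parameter $(f^j(\alpha),f^j(\beta))$ and yields
\begin{equation*}
C(\nu\circ\rho^{-j},1)\bigl(\epsilon(\bq)\bigr) \;=\; \epsilon\bigl(M(f^j(\alpha),f^j(\beta),1)\,\bq\bigr) \quad\text{a.e.}
\end{equation*}
Composing and using the definition of the cocycle $M(\alpha,\beta,k+1)=M(f^k(\alpha),f^k(\beta),1)M(\alpha,\beta,k)$ gives $C(\nu,k)(\epsilon(\bp))=\epsilon(M_k\bp)$, $\nu\circ\rho^{-k}$-a.e.

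Finally, I would combine these facts. Using Proposition \ref{prop:measure scaling} to write $\nu\circ\rho^{-k}=d_k\cdot\mu_{f^k(\alpha)}\times\mu_{f^k(\beta)}\times\mu_N$ and then applying the integration formula \ref{eq:meas vect} at parameters $(f^k(\alpha),f^k(\beta))$ to the step function $\epsilon(M_k\bp)$, one obtains
\begin{equation*}
\int_{\sO_k} g\,d\nu \;=\; d_k\int_X \epsilon(M_k\bp)\,d\bigl(\mu_{f^k(\alpha)}\times\mu_{f^k(\beta)}\times\mu_N\bigr) \;=\; d_k\,(\m_k\cdot M_k\bp),
\end{equation*}
which is the desired formula. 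The main obstacle is purely bookkeeping: verifying that at each application the relevant almost-everywhere statements compose consistently, i.e. that the null sets introduced by Lemma \ref{lem:matrices rectangle case} and by the measurable inverses $\rho_n^{-1}$ are $\Phi$-invariant in the right sense so that no information is lost when pulled back under $\rho^j$. Once this is checked, the rest is an application of the tools already in hand.
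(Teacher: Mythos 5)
Your proposal is correct and follows essentially the same route as the paper's proof: an inductive application of Lemma \ref{lem:matrices rectangle case} (via Proposition \ref{prop:measure scaling}) to identify $C(\nu,k)(\epsilon(\bp))=\epsilon(M_k\bp)$, then equation \ref{eq:meas vect} to evaluate the integral, and finally Lemma \ref{lem:cocycle} with $A=\sO_k$ so that $\rho^k(A\cap\sR_k)=X$. The extra hypothesis-checking and bookkeeping you include is consistent with, and merely more explicit than, the paper's argument.
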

\begin{proof}
It follows by inductively applying Lemma \ref{lem:matrices rectangle case} that 
$$C(\nu,k)(g)=\epsilon(M_k \bp) \quad \text{$\nu \circ \rho^{-k}$-a.e..}$$
And therefore by Proposition \ref{prop:measure scaling} and equation \ref{eq:meas vect}, we have
$$\int_X C(\nu,k)(g)(x)~d\nu \circ \rho^{-k}(x)=d_k (\m_k \cdot M_k \bp).$$
Then the conclusion follows from Lemma \ref{lem:cocycle} with $A=\sO_k$ so that $\rho^k(A \cap \sR_k)=X$.
\end{proof}

The remainder of this section is devoted to proofs of Theorem \ref{thm:returns2} and Lemma \ref{lem:matrices rectangle case}.
\begin{proof}[Proof of Lemma \ref{lem:matrices rectangle case} assuming Theorem \ref{thm:returns2}]
Fix $\alpha$ and $\beta$ as in the Lemma. This determines the constants $m$, $n$, $r$ and $s$ as well as the matrix $M$. 
By linearity, it is sufficient to prove that for each $i, j \in \{1, \ldots, 6\}$, we have
$$
C(\nu,1)\big(\epsilon(\be_i)\big)(y)=\epsilon(M \be_i)(y) \quad \text{for $\nu \circ \rho^{-1}$-a.e. $y \in \Step_j$,}
$$
By definition of $\epsilon$, for all $y \in \Step_j$ we have
$$\epsilon(M \be_i)(y)=\be_j \cdot (M \be_i).$$
By equation \ref{eq:cocycle obs}, to prove the Lemma, it is sufficient to check the following statements:
\begin{enumerate}
\item There is a constant $k$ so that $\ret{1}{x}=4k+1$ for $\nu$-a.e. $x \in \rho^{-1}(\Step_j)$.
\item Defining $K$ using $k$ as in the Theorem, we have $\be_j \cdot (M \be_i)=\be_j \cdot (K \be_i)$.
\end{enumerate}

We will carry this argument out for one $j$, and leave the remaining cases to the reader. Suppose $j=1$. Then we are interested in the case
when $x \in \rho^{-1}(\Step_1)$. This means that $y=\rho(x)$ is a $(-+)$-horizontal step. Let $x=(\omega, \eta, \v)$.
Since $y=\big(c(\omega), c(\eta), \v\big)$ is a horizontal step, we know $\v=(0,b)$ for $b \in \{\pm 1\}$. Let $s=\omega_0 \eta_0$ and 
$\bw=(sb,0)$. By Theorem \ref{thm:return time}, we know that the return time of $x$ to $\sR_1$ is given by 
$$R_1(x)=2 E(0,0, \bw)-1,$$ 
with this quantity $e=E(0,0,\bw)$ indicating one more than the number of columns removed to the right by the operation $\rho$ 
if $sb=1$ and one more than the number of columns to the left removed if $sb=-1$. See above Theorem \ref{thm:return time}. 
We break into cases depending on $r$ and $sb$. If $sb=1$, since $y$ is a $(-+)$-horizontal step, we have $y \in \cyl(\wh - +)$ and therefore
$$E(0,0,\bw)=r_+(\omega)=\begin{cases}
2m+3 & \text{if $r=1$}\\
2m+1 & \text{if $r=-1$}
\end{cases}$$
by Corollary \ref{cor:rot ret}. Similarly, if $sb=-1$, we have $y \in \cyl(- \wh +)$ and 
$$E(0,0,\bw)=r_-(\omega)=\begin{cases}
2m+3 & \text{if $r=1$}\\
2m+1 & \text{if $r=-1$}
\end{cases}$$
Either way, the following choice of $k$ satisfies statement (1) above:
$$k=m+\frac{r+1}{2}$$
Statement (2) claims $\be_1 \cdot (M \be_i)=\be_1 \cdot (K \be_i)$ for all $i$. This is just an observation.
\end{proof}

\begin{proof}[Proof of Theorem \ref{thm:returns2}]
We will prove the theorem in the case that $\rho(x)$ is a horizontal step, so $j=1,2,3$. The vertical case follows from symmetry.

Suppose $\rho(x)$ is a $(r s)$-horizontal step with $r,s \in \{\pm 1\}$. 
Let $x=(\omega, \eta, \v)$. 
First observe that if the return time $R_1(x)=1$, then we know $(rs) \neq (-+)$. Otherwise, $\omega$ would fail to be zero-collapsible. 
In addition, if $R_1(x)=1$ then $\rho(x)$ is also an $(rs)$-horizontal step. 
When $R_1(x)=1$, we have $k=0$. In this case we can check that for $j=2,3$ and $i = 1, \ldots, 6$ we have
$\be_j \cdot K \be_i$ equals one if $i=j$ and zero otherwise. (This is the observation that the second and third rows of $K$ are the corresponding rows of the identity matrix when $k=0$.)

Now suppose that $R_1(x)=4k+1$ and $k \geq 1$. Then, $\Phi(x) \not \in \sR_1$. Recall the definition of horizontal box given in section \ref{sect:ren proofs}. 
The corresponding curve in the tiling associated to $(\omega, \eta)$ enters a maximal horizontal box $B$. 
Observe that the length parameter of $\ell$ can be computed using Theorem \ref{thm:return time}. (It is half of $E(0,0,\bw)-1$ if $\bw$ is the directional component of $\Phi(x)$.) Therefore, $\ell=k$. So, the curve of the tiling
follows the central curve of the horizontal box $B$. When it leaves the horizontal box it returns to $\sR_1$  
Possible pictures of this curve are shown below in the case that $\rho(x)$ is a $(-+)$-horizontal step, and the length parameter of the box is $\ell=2$:
\begin{center}
\includegraphics[scale=0.5]{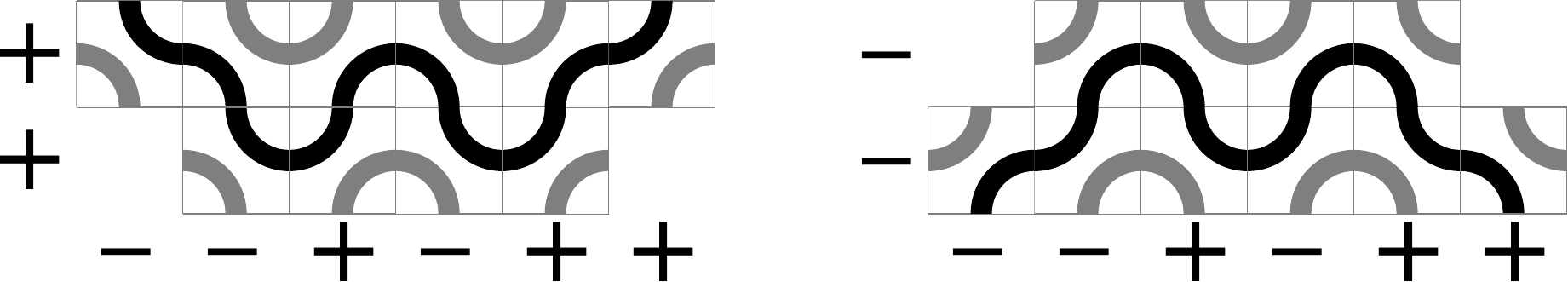}
\end{center}
The sequence of step classes associated to $\Phi^i(x)$ for $i=0, \ldots 4k$ can be determined by examining the adjacent pairs of tiles passed through
by the central curve extended into the two neighboring squares. 
The cases of $x$ and $\Phi^{4k}(x)$ correspond to the pairs of tiles at the two ends. The left one is an $(r-)$-horizontal step
and the right end is a $(+s)$-horizontal step. Along the central curve of the horizontal box, we pass through $\ell=2k$ $(++)$- and $(--)$-vertical steps,
$k$ $(-+)$-horizontal steps, and $k-1$ $(+-)$-horizontal steps. The total count of each type of step $\Step_i$ gives the values of $e_j \cdot K \be_i$
for $j\in \{1,2,3\}$ and $i\in \{1, \ldots, 6\}$ as desired. 
\end{proof}

\subsection{Simplifications}
\name{sect:simp}
In this subsection, we perform some minor optimizations to the formula given in Corollary \ref{cor:cocycle integral 1}. 

The first observation is that we can integrate $\epsilon(\bp)$ over $\sO_1$ without applying the cocycle. We define a new vector
$\bq_{\alpha, \beta}$ to be the vector whose $i$-th entry is $\nu(\sO_1 \cap \Step_i)$. Then by definition we have
\begin{equation}\name{eq:q int}
\int_{\sO_1} \epsilon(\bp)~d\nu=\bq_{\alpha,\beta} \cdot \bp.
\end{equation}

\begin{proposition}
\name{prop:q} For $\alpha, \beta \in [0,\half)$ and $\nu=\mu_\alpha \times \mu_\beta \times \mu_N$, we have
$$\bq_{\alpha,\beta}=\half \Big(\alpha(1-2\beta), \alpha(1-2\beta), 1-2\alpha, \beta(1-2\alpha), \beta(1-2\alpha), 1-2\beta\Big).$$
\end{proposition}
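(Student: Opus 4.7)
The plan is to reduce to a computation of $\nu(P_4 \cap \Step_i)$ via the identity $\sO_1 = X \setminus P_4$ (up to a $\nu$-null set). This identity follows from Theorem~\ref{thm:renormalization}(3), which says $x \in \sO_1 \iff x \notin P_4$ for $x \notin \NUC$, together with the remark after the definition of robust renormalizability showing $\nu(\NUC) = 0$ when $\alpha,\beta$ are irrational (via statement (\ref{item:omegaalt2}) of Theorem~\ref{thm:collapsing}). Thus $\nu(\sO_1 \cap \Step_i) = \nu(\Step_i) - \nu(P_4 \cap \Step_i)$, and since Proposition~\ref{prop:meas vect} supplies $\nu(\Step_i)$, it remains to compute $\nu(P_4 \cap \Step_i)$ for each $i \in \{1,\ldots,6\}$.

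From the proof of Lemma~\ref{lem:xor} in Section~\ref{sect:ren proofs}, a stable period-$4$ orbit corresponds to the inscribed circle of a $2 \times 2$ block of tiles, and there are exactly two local configurations of $(\omega,\eta)$ producing such a $4$-loop on the block $\{m,m{+}1\} \times \{n,n{+}1\}$: \emph{Case A}, with $\omega_m\omega_{m+1}=+-$ and $\eta_n\eta_{n+1}=-+$, and \emph{Case B}, with $\omega_m\omega_{m+1}=-+$ and $\eta_n\eta_{n+1}=+-$. For the origin tile to belong to the block we need $(m,n)\in\{-1,0\}^2$, giving $2 \times 4 = 8$ disjoint configurations in total; the constraints do not overlap because they always specify the value of $\omega_0$ or $\eta_0$ contradictorily across distinct cases. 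Each configuration pins down two consecutive entries of $\omega$ and two of $\eta$, so has joint $(\mu_\alpha\times\mu_\beta)$-measure $\alpha\beta$; and at the origin tile, exactly two of the four inward normals $\v \in N$ lie on the $4$-loop (the two tangent to the arc inside the origin tile).

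The core of the proof is a finite tabulation of the resulting $8 \times 2 = 16$ (configuration, normal)-pairs by step class. For each pair we evaluate $s = \omega_0\eta_0$, and then read off the direction and sign pair of the step from the chart preceding the definition of the $\Step_i$. Two features emerge: because $\omega$ and $\eta$ are forced to alternate on the block's support, every sign pair that arises is $+-$ or $-+$ (so neither $\Step_3$ nor $\Step_6$ is ever visited); and the $16$ pairs split evenly, four each, into $\Step_1, \Step_2, \Step_4, \Step_5$. Summing over configurations with their joint measure $\alpha\beta$ and weighting each normal by $\mu_N(\{\v\}) = 1/4$ yields
$$\nu(P_4 \cap \Step_i) = \begin{cases} \alpha\beta & i \in \{1,2,4,5\}, \\ 0 & i \in \{3,6\}. \end{cases}$$
Subtracting from the values given in Proposition~\ref{prop:meas vect} produces the six entries of $\bq_{\alpha,\beta}$.

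The main obstacle is not any single difficult step but the care required in the chart lookup for the $16$ combinations: the horizontal sign pair uses $\omega_0\omega_1$ or $\omega_{-1}\omega_0$ depending on the sign of $sb$ (and similarly for $\eta$), and one must track $s$ through each case. The $90^\circ$ rotational symmetry of $X$ interchanging the roles of $\omega$ and $\eta$ and swapping Case A with Case B halves this bookkeeping, after which the verification is a purely mechanical finite check.
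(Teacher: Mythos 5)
Your proof is correct, but it follows a genuinely different route from the paper's. The paper derives the formula from the cocycle machinery: by Corollary \ref{cor:cocycle integral 1} with $k=1$ one has $\bq_{\alpha,\beta}=(1-2\alpha)(1-2\beta)\,M(\alpha,\beta,1)^T\,\m_{f(\alpha),f(\beta)}$, and the stated vector then comes out of a direct matrix computation with the matrix of Lemma \ref{lem:matrices rectangle case} and the defining relations for $m,n,r,s$. You bypass the cocycle entirely: using that $\sO_1=X\sm P_4$ up to the $\nu$-null set $\NUC$, you reduce to computing $\nu(P_4\cap\Step_i)$ by enumerating the eight local $(\omega,\eta)$-configurations producing a period-four loop through the origin tile, each with its two tangent normals, and checking that the sixteen resulting triples fall four apiece into $\Step_1,\Step_2,\Step_4,\Step_5$, each configuration having measure $\alpha\beta$; subtracting from Proposition \ref{prop:meas vect} gives the claimed vector. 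I checked the tabulation, and your structural observation is the right one: the sign pair read from the chart is always the alternating pair pinned down by the loop (the direction of motion stays inside the block), so $\Step_3$ and $\Step_6$ never occur, and the split is even. Your argument is more elementary and geometrically transparent (it also recovers $\nu(P_4)=4\alpha\beta$ directly), while the paper's is lighter on case bookkeeping and reuses machinery it needs anyway for Theorem \ref{thm:cocycle formula}. Two small points, neither affecting correctness: you cite $\nu(\NUC)=0$ only for irrational $\alpha,\beta$, but the proposition is stated for all $\alpha,\beta\in[0,\half)$; the null-set claim holds there too, since $\mu_\alpha(\{\omegaalt\})=0$ whenever $\alpha\neq\half$ (as noted in the proof of Theorem \ref{thm:rotren}) together with statement (4) of Theorem \ref{thm:collapsing}. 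And when arguing $P_4\sm\NUC$ is disjoint from $\sO_1$ you should also rule out $x\in\sR_1$ itself, which follows because statement (1) of Theorem \ref{thm:renormalization} makes the return time finite on $\sR_1$, contradicting statement (3a).
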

\begin{proof}
Corollary \ref{cor:cocycle integral 1} gives an alternate version of the integral in equation
\ref{eq:q int}. Namely,
$$\int_{\sO_1} \epsilon(\bp)~d\nu=(1-2\alpha)(1-2\beta) \m_{f(\alpha),f(\beta)} \cdot M(\alpha,\beta,1) \bp.$$
Let $M=M(\alpha,\beta,1)$. We must have 
$$\bq_{\alpha,\beta}=(1-2\alpha)(1-2\beta) M^T \m_{f(\alpha),f(\beta)}.$$
Thus, we have reduced the problem to a calculation. The matrix $M$ is defined as in Lemma \ref{lem:matrices rectangle case}
using the constants $m, n\in \Z$ and $r,s \in \{\pm 1\}$ which satisfy
$$f(\alpha)=r(\frac{\alpha}{1-2\alpha}-m) \and f(\alpha)=s(\frac{\alpha}{1-2\alpha}-n).$$
Using these equations, we can show by direct computation that 
$$M^T \m_{f(\alpha),f(\beta)}=\half\left(\frac{\alpha}{1-2\alpha},\frac{\alpha}{1-2\alpha},\frac{1}{1-2\beta},\frac{\beta}{1-2\beta},
\frac{\beta}{1-2\beta},\frac{1}{1-2\alpha}\right).$$
The conclusion follows by multiplying through by $(1-2\alpha)(1-2\beta)$.
\end{proof}

It follows that we have the following slightly simpler formula:
\begin{corollary}\name{cor:cocycle integral 2}
Let $g=\epsilon(\bp)$. 
For $k \geq 0$, set 
$$\bq_k=\bq_{f^k(\alpha),f^k(\beta)}\in \R^6, \quad d_k=D(\alpha, \beta,k) \in \R, \and M_k=M(\alpha,\beta,k).$$ Then,
$$\int_{\sO_{k+1}} g~d\nu=d_k (\bq_k \cdot M_k \bp).$$
\end{corollary}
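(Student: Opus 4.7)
The plan is to derive Corollary \ref{cor:cocycle integral 2} from Corollary \ref{cor:cocycle integral 1}, applied at index $k+1$ rather than $k$, by combining the cocycle identities for $D$ and $M$ with the algebraic relation between $\bq_{\alpha,\beta}$ and $\m_{\alpha,\beta}$ that is already recorded inside the proof of Proposition \ref{prop:q}. Concretely, I will first invoke Corollary \ref{cor:cocycle integral 1} with $k$ replaced by $k+1$ to write
\[
\int_{\sO_{k+1}} g~d\nu = d_{k+1}(\m_{k+1} \cdot M_{k+1}\bp).
\]

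Next I would unfold the cocycle structure on the right-hand side. By the definition of $D$, we have $d_{k+1} = d_k \cdot (1-2 f^k(\alpha))(1-2 f^k(\beta))$, and by the inductive definition of $M(\alpha,\beta,\cdot)$ we have $M_{k+1} = M\big(f^k(\alpha), f^k(\beta), 1\big) \cdot M_k$. Substituting and using the elementary transpose identity $\bv \cdot A\bw = (A^T \bv)\cdot \bw$, the right-hand side becomes
\[
d_k \Big[(1-2 f^k(\alpha))(1-2 f^k(\beta))\, M\big(f^k(\alpha), f^k(\beta), 1\big)^T \m_{k+1}\Big]\cdot M_k \bp.
\]

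The crucial step is then to identify the bracketed vector as $\bq_k$. This is precisely the content of the algebraic relation established in the proof of Proposition \ref{prop:q}, where it was shown that for all parameters $\alpha', \beta' \in [0,\half)$,
\[
\bq_{\alpha',\beta'} = (1-2\alpha')(1-2\beta')\, M(\alpha',\beta',1)^T \m_{f(\alpha'), f(\beta')}.
\]
Applying this identity with $\alpha' = f^k(\alpha)$ and $\beta' = f^k(\beta)$ (noting that $\m_{f(\alpha'), f(\beta')} = \m_{k+1}$ and $\bq_{\alpha',\beta'} = \bq_k$) converts the bracketed expression into $\bq_k$ and yields the desired formula $\int_{\sO_{k+1}} g~d\nu = d_k(\bq_k \cdot M_k \bp)$.

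Since everything reduces to linear algebra plus a citation of Proposition \ref{prop:q}, there is no genuine obstacle. The only point requiring a little care is ensuring that the relation extracted from the proof of Proposition \ref{prop:q} is stated at arbitrary parameters and not just at the initial $(\alpha,\beta)$; once that is noted, substitution at the shifted parameters $f^k(\alpha), f^k(\beta)$ is immediate from the fact that $f^k(\alpha), f^k(\beta) \in [0,\half)$ whenever $\alpha,\beta$ are irrational in $(0,\half)$.
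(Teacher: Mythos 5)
Your argument is correct, and it is a genuinely different route from the paper's. The paper proves Corollary \ref{cor:cocycle integral 2} by re-running the machinery used for Corollary \ref{cor:cocycle integral 1}: it applies Lemma \ref{lem:cocycle} with $A=\sO_{k+1}$, for which $\rho^k(A\cap\sR_k)=\sO_1$, and then integrates $C(\nu,k)(g)=\epsilon(M_k\bp)$ over $\sO_1$ against $\nu\circ\rho^{-k}=d_k\,\mu_{f^k(\alpha)}\times\mu_{f^k(\beta)}\times\mu_N$ using equation \ref{eq:q int}; the vector $\bq_k$ thus enters with its geometric meaning as the vector of measures of $\sO_1\cap\Step_i$ at renormalization level $k$. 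You instead deduce the statement purely algebraically from Corollary \ref{cor:cocycle integral 1} applied at index $k+1$, the cocycle identities $d_{k+1}=d_k\big(1-2f^k(\alpha)\big)\big(1-2f^k(\beta)\big)$ and $M_{k+1}=M\big(f^k(\alpha),f^k(\beta),1\big)M_k$ (which also holds at $k=0$ since $M_0=\id$), the transpose identity, and the relation $\bq_{\gamma,\delta}=(1-2\gamma)(1-2\delta)\,M(\gamma,\delta,1)^T\,\m_{f(\gamma),f(\delta)}$ evaluated at $(\gamma,\delta)=\big(f^k(\alpha),f^k(\beta)\big)$, which are again irrational points of $(0,\half)$ so the relation applies. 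There is no circularity, since neither ingredient uses Corollary \ref{cor:cocycle integral 2}. What your approach buys is brevity and the absence of any further measure-theoretic input beyond Corollary \ref{cor:cocycle integral 1}; its one fragile point, which you flag, is that the key relation is only implicit in the proof of Proposition \ref{prop:q} (where it is itself extracted from Corollary \ref{cor:cocycle integral 1} at $k=1$ for a general parameter pair, and can alternatively be checked by a direct finite computation with the explicit $M$ and $\m$). What the paper's route buys is uniformity with the proof of Corollary \ref{cor:cocycle integral 1} and transparency about why $\bq_k$ appears, which is the interpretation later leaned on in the remark following Lemma \ref{lemma:decay ineq}.
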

\begin{proof}
This proof mirrors the proof of Corollary \ref{cor:cocycle integral 1}.
By Lemma \ref{lem:matrices rectangle case},
$$C(\nu,k)(g)=\epsilon(M_k \bp) \quad \text{$\nu \circ \rho^{-k}$-a.e..}$$
By Proposition \ref{prop:measure scaling} and equation \ref{eq:q int},
$$\int_{\sO_1} C(\nu,k)(g)(x)~d\nu \circ \rho^{-k}(x)=d_k (\bq_k \cdot M_k \bp).$$
We apply Lemma \ref{lem:cocycle} to the case of $A=\sO_{k+1}$ so that $\sO_1=\rho^k(A \cap \sR_k).$
\end{proof}

For our final trick, we reduce the dimension of the cocycle to four. We observe that the right multiplication by the cocycle $M(\alpha, \beta,n)$ leaves invariant a four-dimensional subspace. To explain this,
we introduce the following linear projection $\bpi:\R^6 \to \R^4$ and section $\s:\R^4 \to \R^6$ satisfying $\bpi \circ \s=\id$.
\begin{equation}
\name{eq:pi1}
\bpi(a,b,c,d,e,f)=(a+b,c,d+e,f) \and \s(a,c,d,f)=(\frac{a}{2}, \frac{a}{2}, c, \frac{d}{2},\frac{d}{2},f).
\end{equation}

\begin{proposition}
Multiplication by $M^T=M(\alpha,\beta,k)^T$ leaves invariant the subspace $\s(\R^4)$. Moreover, for all $\v \in \R^4$ we have
$$M(\alpha,\beta,k)^T \circ \s(\v)=\s \circ N(\alpha,\beta,k)^T(\v),$$
where $N$ is the cocycle defined in equation \ref{eq:N}.
\end{proposition}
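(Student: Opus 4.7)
My plan is to reduce to the case $k=1$ and then verify that case by direct computation.

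\textbf{Reduction to $k=1$.} The case $k=0$ is immediate since both $M(\alpha,\beta,0)$ and $N(\alpha,\beta,0)$ are identity matrices (of respective sizes $6$ and $4$). For the inductive step, suppose the identity $M(\alpha,\beta,k)^T \circ \s = \s \circ N(\alpha,\beta,k)^T$ holds (and in particular that $M(\alpha,\beta,k)^T$ preserves $\s(\R^4)$). Taking the transpose of the cocycle relations for $M$ and $N$ gives
\begin{equation*}
M(\alpha,\beta,k+1)^T = M(\alpha,\beta,k)^T\, M\bigl(f^k(\alpha),f^k(\beta),1\bigr)^T,
\end{equation*}
and similarly for $N$. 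Applying both sides to $\s(\v)$ and using the $k=1$ case at the parameters $(f^k(\alpha),f^k(\beta))$ followed by the inductive hypothesis yields the desired identity. So it is enough to treat $k=1$.

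\textbf{The case $k=1$.} Here $\s(\R^4)$ is the subspace of vectors of the form $(\tfrac{a}{2}, \tfrac{a}{2}, c, \tfrac{d}{2}, \tfrac{d}{2}, f)$, i.e.\ those with equal first two and equal fourth and fifth coordinates. Observe from the explicit form of $M=M(\alpha,\beta,1)$ that columns $1$ and $2$ differ only by exchanging the pair $(m+\tfrac{r+1}{2},m+\tfrac{r-1}{2})$ between rows $1$ and $2$, and similarly columns $4$ and $5$ swap the pair $(n+\tfrac{s+1}{2},n+\tfrac{s-1}{2})$ between rows $4$ and $5$. Hence for any input in $\s(\R^4)$, contributions from the paired columns combine to produce outputs whose first and second, and whose fourth and fifth, coordinates agree; thus $M^T$ preserves $\s(\R^4)$.

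\textbf{Matching the two cocycles.} Writing out $M^T\s(\v)$ row by row and using the identities $(m+\tfrac{r+1}{2})+(m+\tfrac{r-1}{2})=2m+r$ and $(n+\tfrac{s+1}{2})+(n+\tfrac{s-1}{2})=2n+s$, one obtains
\begin{equation*}
M^T \s(a,c,d,f)=\s\bigl((2m+r)a+2mc,\; a+c+(2n+s)d+2nf,\; (2n+s)d+2nf,\; (2m+r)a+2mc+d+f\bigr).
\end{equation*}
On the other hand, reading off the rows of $N(\alpha,\beta,1)^T$ from the definition of $N$ in equation \eqref{eq:N} shows that the argument of $\s$ on the right-hand side is exactly $N^T(a,c,d,f)$. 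This completes the verification at $k=1$ and, combined with the inductive step above, finishes the proof. The whole argument is a bookkeeping exercise; the only mildly delicate point is the pairing of columns of $M$ that forces invariance of $\s(\R^4)$, which is the reason the six-dimensional cocycle descends to a four-dimensional one.
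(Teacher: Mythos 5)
Your proposal is correct and follows essentially the same route as the paper: the paper's proof is exactly to verify the identity for $k=1$ by direct calculation and then deduce the general case from the cocycle identity, which is what you do (and your explicit formula $M^T\s(a,c,d,f)=\s\bigl((2m+r)a+2mc,\,a+c+(2n+s)d+2nf,\,(2n+s)d+2nf,\,(2m+r)a+2mc+d+f\bigr)$ checks out against the columns of $N(\alpha,\beta,1)$). Your column-pairing observation for invariance of $\s(\R^4)$ is also fine, though it is already implied by the explicit computation.
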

The proof is just a calculation to verify the equation in the Proposition in the case $n=1$. The general case follows from the cocycle identity.

We define the projection of the vector $\bq_{\alpha,\beta}$ defined in Proposition \ref{prop:q} to be the row vector
\begin{equation}
\name{eq:n}
\bn_{\alpha,\beta}=\bpi(\bq_{\alpha,\beta})=\left(\alpha(1-2\beta), \frac{1-2\alpha}{2}, \beta(1-2\alpha), \frac{1-2\beta}{2}\right).
\end{equation}
Also note that $\bq_{\alpha,\beta}=\s(\bn_{\alpha,\beta})$.

We apply Corollary \ref{cor:cocycle integral 2} to the special case when $\bp=\1$. 
To ease notation, for $k \geq 0$ make the following definitions:
$$M_k=M(\alpha, \beta,k). \qquad N_k=N(\alpha, \beta,k).$$
$$\bq_k=\bq_{f^k(\alpha),f^k(\beta)}. \qquad \bn_k=\bn_{f^k(\alpha),f^k(\beta)}.$$
We use $\1_6 \in \R^6$ and $\1_4 \in \R^4$ to denote vectors all of whose entries are one. 
We have
$$\nu(\sO_{k+1})=d_k \bq_k \cdot M_k \1_6=d_k \pi(M_k^T \bq_k)\cdot \1_4=d_k (N_k^T \bn_k)\cdot \1_4
.$$
This is our Cocycle Formula, proving Theorem \ref{thm:cocycle formula}.

\section{Cocycle Calculations}
\name{sect:calc}

\subsection{The recurrent case}
\name{sect:recurrent case}
The goal of this subsection is to prove the following theorem concerning measures of the set $\NS \subset X$ of points without a stable periodic $\Phi$-orbit. Recall that our renormalization action on parameters for rectangle exchange maps was closely related to the map $f:[0,\half) \to [0,\half]$ defined in equation \ref{eq:f}. 

\begin{theorem}[Recurrent Case]
\name{thm:recurrent case}
Let $\alpha, \beta \in (0,\half)$ be irrational and let $\nu=\mu_\alpha \times \mu_{\beta} \times \mu_N$. 
If the sequence of points 
$$\big\{\big(f^k(\alpha),f^k(\beta)\big)\big\}$$ has an accumulation point 
$(x,y)$ with $x \geq 0$ and $y \geq 0$, then $\nu(\NS)=0$. 
\end{theorem}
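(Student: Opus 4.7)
The plan is to combine the Cocycle Limit Formula (Corollary \ref{cor:limit formula}) with a one-step contraction estimate
\begin{equation*}
\nu(\sO_{k+1}) \leq g\big(f^{k-1}(\alpha), f^{k-1}(\beta)\big)\, \nu(\sO_k), \qquad k \geq 1,
\end{equation*}
for some continuous function $g : (0, \half] \times (0, \half] \to \R$ that is strictly less than one everywhere on its domain. Given such a $g$, iteration yields $\nu(\sO_{k+1}) \leq \nu(\sO_1) \prod_{j=0}^{k-1} g(f^j \alpha, f^j \beta)$, and whenever the orbit $\{(f^j \alpha, f^j \beta)\}$ accumulates at a point $(x, y)$ in the interior quadrant $(0, \half] \times (0, \half]$ (as in the outline of the proof), continuity produces a neighborhood $U$ of $(x,y)$ on which $g \leq c < 1$, visited by the orbit infinitely often; thus the product diverges to zero and $\nu(\NS) = \lim_k \nu(\sO_{k+1}) = 0$.

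To establish the one-step estimate, I would apply the cocycle identity $N(\alpha, \beta, k) = N\big(f^{k-1}\alpha, f^{k-1}\beta, 1\big)\, N(\alpha, \beta, k-1)$ and the telescoping $d_k / d_{k-1} = (1 - 2 f^{k-1}\alpha)(1 - 2 f^{k-1}\beta)$ to the formula of Theorem \ref{thm:cocycle formula}. This rewrites the ratio as
\begin{equation*}
\frac{\nu(\sO_{k+1})}{\nu(\sO_k)} = (1-2 f^{k-1}\alpha)(1-2 f^{k-1}\beta)\, \frac{\bn_{f^k\alpha, f^k\beta} \cdot N\big(f^{k-1}\alpha, f^{k-1}\beta, 1\big)\, w}{\bn_{f^{k-1}\alpha, f^{k-1}\beta} \cdot w},
\end{equation*}
where $w = N(\alpha, \beta, k-1)\, \1$ has nonnegative integer entries, since every single-step matrix in \ref{eq:N} has nonnegative integer entries (using $m \geq 0$ and $2m + r \geq 0$, and likewise for $n, s$, which follow from $\alpha, \beta \in (0, \half)$). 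Consequently it suffices to produce $g$ satisfying the componentwise vector inequality
\begin{equation*}
(1-2\alpha')(1-2\beta')\, N(\alpha', \beta', 1)^T\, \bn_{f\alpha', f\beta'} \;\leq\; g(\alpha', \beta')\, \bn_{\alpha', \beta'}
\end{equation*}
for every $(\alpha', \beta') \in (0, \half)^2$. I would then define $g(\alpha', \beta')$ as the pointwise maximum of the four coordinate ratios, verify continuity, and check $g < 1$ by direct algebraic calculation from the explicit formulas \ref{eq:N} and \ref{eq:n}, splitting into four cases according to the signs $r, s \in \{\pm 1\}$ that describe $f$ locally.

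The main obstacle is this final componentwise calculation. Each of the four coordinate ratios is a rational function of $\alpha', \beta', f\alpha', f\beta'$ with quadratic numerator and denominator. A sample computation in the simplest case $r = s = 1$, $m = n = 0$ yields ratios such as $(1-4\beta')/(1-2\beta')$ and $1 - 4\alpha'\beta'/(\half - \alpha')$, each strictly less than one but tending to $1$ as $\alpha' \to 0$ or $\beta' \to 0$. This boundary behavior is the source of the outline's warning that $g \to 1$ near the axes, and explains why the hypothesis must place the accumulation point away from the axes: when the orbit of $(\alpha, \beta)$ hugs the boundary, the one-step contractions degenerate and the present argument yields no information, consistent with the existence of the counterexamples constructed in Theorem \ref{thm:4}.
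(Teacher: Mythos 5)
Your overall strategy is the paper's: the limit formula, the cocycle identity $N(\alpha,\beta,k)=N(f^{k-1}\alpha,f^{k-1}\beta,1)N(\alpha,\beta,k-1)$, and a componentwise one-step inequality dotted against the nonnegative vector $w=N(\alpha,\beta,k-1)\1$ (this is exactly the reduction of Lemma \ref{lem:scaling} to Lemma \ref{lem:scaling2}). However, the componentwise inequality you propose to verify is false as formulated, and the failure sits precisely in the branches you did not compute. Subtracting the two sides and using the decomposition $\bn_{f(\gamma),f(\delta)}=\ba-\bb$ together with the identity $\bn_{\gamma,\delta}=d\,N^T\ba$ from the paper's proof of Lemma \ref{lem:scaling2}, the deficit is
\begin{equation*}
\bn_{\alpha',\beta'}-(1-2\alpha')(1-2\beta')\,N(\alpha',\beta',1)^T\bn_{f(\alpha'),f(\beta')}
=2f(\alpha')f(\beta')(1-2\alpha')(1-2\beta')\big(2m+r,\,2n+s+1,\,2n+s,\,2m+r+1\big),
\end{equation*}
so it is proportional to $f(\alpha')f(\beta')$, not to $\alpha'\beta'$. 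In a branch with $m\geq 1$, say $\alpha'\downarrow\tfrac13$ (so $m=1$, $r=1$, $f(\alpha')\downarrow 0$) with $\beta'$ fixed, the deficit tends to $0$ while $\bn_{\alpha',\beta'}$ stays bounded away from $0$, so all four coordinate ratios tend to $1$ at an \emph{interior} point of the parameter square. Hence no function $g(\alpha',\beta')$ of the current point that is continuous and strictly less than one on $(0,\half]^2$ can satisfy your inequality, and your max-of-ratios $g$ is not bounded away from $1$ on any neighborhood of such points; the step ``continuity produces a neighborhood $U$ of $(x,y)$ on which $g\leq c<1$'' then fails for accumulation points such as $(\tfrac13,y)$, which the theorem must cover. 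Your sample branch $m=n=0$, $r=s=1$ is misleading exactly because there $f(\alpha')\geq\alpha'$ and $f(\beta')\geq\beta'$.

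The repair is the paper's bookkeeping: prove the entrywise bound with the contraction measured at the \emph{image} point,
\begin{equation*}
D(\gamma,\delta,1)\,N(\gamma,\delta,1)^T\,\bn_{f(\gamma),f(\delta)}\;\leq\;\Big(1-\tfrac{4}{3}f(\gamma)f(\delta)\Big)\,\bn_{\gamma,\delta},
\end{equation*}
so that the factor controlling $\nu(\sO_k)\to\nu(\sO_{k+1})$ is $1-\tfrac43 f^k(\alpha)f^k(\beta)$, evaluated at exactly the point whose accumulation the hypothesis controls; when $(f^k\alpha,f^k\beta)$ lies near $(x,y)$ with $x,y>0$ this factor is at most $1-\tfrac{xy}{3}$, and since the $\sO_k$ are nested, infinitely many such steps force $\nu(\NS)=\lim_k\nu(\sO_k)=0$. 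Note also that the verification is not a four-case sign check: the one-step matrix \ref{eq:N} depends on the unbounded integers $m,n$ as well as on $r,s$, and one needs an estimate uniform in $m,n$; the paper obtains it from $\frac{\gamma}{1-2\gamma}\leq m+\half$ and a short case split ($m=0$ versus $m\geq 1$), which is the substantive computation your proposal defers.
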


We make use of statement (\ref{item:O}) of section \ref{sect:limit formula} which provides the limit formula 
$$\nu(\NS)=\lim_{k \to \infty} \nu(\sO_k).$$ Recall that the sequence of sets $\sO_k$ were nested,
so this sequence is decreasing. Therefore to show $\nu(\NS)=0$, it is sufficient to 
show that there is an $\epsilon>0$ so that for infinitely many $k$ we have $\nu(\sO_{k+1})<(1-\epsilon) \nu(\sO_{k})$. 
Thus the theorem is implied by the following lemma.

\begin{lemma}[Scaling Lemma]
\name{lem:scaling}
For all $\alpha, \beta \in (0,\half)$ and all $k \geq 1$, we have 
$$\nu(\sO_{k+1}) \leq g\big(f^{k+1}(\alpha), f^{k+1}(\beta)\big) \nu(\sO_{k})$$
where $g(x,y)=1-\frac{4}{3} xy$.
\end{lemma}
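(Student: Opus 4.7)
The plan is to use the Cocycle Formula (Theorem~\ref{thm:cocycle formula}) to rewrite both sides of the inequality and reduce to an elementary componentwise vector inequality in $\R^4$. Setting $\alpha_j = f^j(\alpha)$, $\beta_j = f^j(\beta)$, $\bw_j = N(\alpha,\beta,j)\1$, and $N^{(k-1)} = N(\alpha_{k-1},\beta_{k-1},1)$, the identities
\begin{equation*}
\nu(\sO_{k+1}) = d_k\,\bn_k\cdot\bw_k, \qquad \nu(\sO_k) = d_{k-1}\,\bn_{k-1}\cdot\bw_{k-1},
\end{equation*}
together with the cocycle relation $\bw_k = N^{(k-1)}\bw_{k-1}$ and the scaling $d_k/d_{k-1} = (1-2\alpha_{k-1})(1-2\beta_{k-1})$, reduce the target bound to the componentwise inequality
\begin{equation*}
(1-2\alpha_{k-1})(1-2\beta_{k-1})\,(N^{(k-1)})^T \bn_k \;\leq\; g(\alpha_k,\beta_k)\,\bn_{k-1} \qquad \text{in }\R^4,
\end{equation*}
since $\bw_{k-1}$ lies in the nonneg orthant of $\R^4$ (as an iterated product of entrywise nonneg matrices applied to $\1$). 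Dotting this vector inequality with $\bw_{k-1}$ then gives the desired scalar bound.

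Next, I would parameterize $N^{(k-1)}$ by integers $m,n\in\Z_{\geq 0}$ and signs $r,s\in\{\pm 1\}$ determined from $\alpha_k = r(\alpha_{k-1}/(1-2\alpha_{k-1})-m)$ and $\beta_k = s(\beta_{k-1}/(1-2\beta_{k-1})-n)$. Inverting gives the clean identities $1-2\alpha_{k-1} = 1/P$ with $P = 1+2m+2r\alpha_k$ and $\alpha_{k-1} = (r\alpha_k+m)/P$, together with analogous relations for $\beta_{k-1}$ using $Q = 1+2n+2s\beta_k$. Expanding $(N^{(k-1)})^T \bn_k$ via the explicit matrix formula for $N^{(k-1)}$ and $\bn_k = (\alpha_k(1-2\beta_k),\, (1-2\alpha_k)/2,\, \beta_k(1-2\alpha_k),\, (1-2\beta_k)/2)$ produces entries of the form $m(1-4\alpha_k\beta_k)+r\alpha_k(1-2\beta_k)$ and its natural variants. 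Clearing the factor $PQ$ in the componentwise inequality and simplifying, each of the four component differences collapses to one of
\begin{equation*}
\tfrac{2\alpha_k\beta_k}{3}\bigl[r(3-2\alpha_k)+4m\bigr] \geq 0 \quad \text{or} \quad \tfrac{2\alpha_k\beta_k}{3}\bigl[s(3-2\beta_k)+4n\bigr] \geq 0.
\end{equation*}
The constant $\tfrac{4}{3}$ appearing in $g(x,y) = 1-\tfrac{4}{3}xy$ is precisely what is needed for this cancellation to leave the nonneg integer remainders above.

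The final step is elementary case analysis on the signs $r,s$. When $r=+1$, the definition of $f$ forces $m\geq 0$, so $r(3-2\alpha_k)+4m \geq 3-2\alpha_k > 0$. When $r=-1$, the orientation-reversing branch of $f$ is used, which forces $m\geq 1$ (because $\alpha_{k-1}/(1-2\alpha_{k-1})>0$ must lie in $[m-\tfrac{1}{2},m]$), so $r(3-2\alpha_k)+4m \geq 1+2\alpha_k > 0$. Identical reasoning handles $s$ and $n$. The main obstacle is therefore the bookkeeping in the $4\times 4$ expansion of $(N^{(k-1)})^T \bn_k$ and recognizing that the common factor $\tfrac{2\alpha_k\beta_k}{3}$ in all four component differences matches $1-g$ exactly; once this algebraic identity is confirmed, the nonnegativity claims are transparent.
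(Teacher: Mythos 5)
Your proposal is correct and takes essentially the same route as the paper: the paper also reduces, via the cocycle formula and positivity of $N(\alpha,\beta,k-1)\1$, to the entrywise inequality $D(\gamma,\delta,1)N(\gamma,\delta,1)^T\bn_{f(\gamma),f(\delta)}\leq g\big(f(\gamma),f(\delta)\big)\bn_{\gamma,\delta}$ (its ``Scaling Lemma II''), and then verifies it by explicit computation with the matrix parameterized by $(m,r,n,s)$ and the same sign/case analysis (with $m\geq 1$ forced when $r=-1$). The only differences are cosmetic: the paper writes $\bn_{f(\gamma),f(\delta)}=\ba-\bb$, uses $dN^T\ba=\bn_{\gamma,\delta}$ and bounds ratios via $\frac{\gamma}{1-2\gamma}\leq m+\half$, whereas you expand directly --- note that in the second and fourth components the exact difference is $\frac{2\alpha_k\beta_k}{3}\big[r(3-2\alpha_k)+4m+2\big]$ (resp.\ the $\beta$ analogue), i.e.\ it carries an extra $+2$ beyond the form you state, which only strengthens the nonnegativity you need.
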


\begin{remark}[Slow divergence]
Observe that the scaling lemma actually implies that if the $f \times f$ orbit of $(\alpha, \beta)$ satisfies
$$\prod_{k=1}^\infty g\big(f^k(\alpha),f^k(\beta)\big)=0,$$
then almost every orbit is periodic, $\nu(\NS)=0$.
\end{remark}

For two vectors $\v$ and $\bw$ in $\R^n$, we say $\v \leq \bw$ {\em entrywise} if $\v_i \leq \bw_i$ for $i=1, \ldots n$. 
We will see that it is sufficient to show the following:

\begin{lemma}[Scaling Lemma II]
\name{lem:scaling2}
For all $\gamma, \delta \in (0,\half)$, we have the entrywise inequality
$$D(\gamma,\delta,1) N(\gamma,\delta,1)^T \bn_{f(\gamma),f(\delta)} \leq g\big(f(\gamma),f(\delta)\big) \bn_{\gamma, \delta},$$
where $g(x,y)$ is as defined in the previous lemma.
\end{lemma}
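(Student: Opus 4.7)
The plan is to prove the inequality by a direct coordinatewise computation, exploiting the fact that the definition of $f$ collapses cleanly in terms of the ``digit data'' $(m,r)$ and $(n,s)$. Let $\alpha' = f(\gamma)$ and $\beta' = f(\delta)$, and let $m,n \in \Z$ and $r,s \in \{\pm 1\}$ be the integers and signs determined by
$$\alpha' = r\Bigl(\tfrac{\gamma}{1-2\gamma} - m\Bigr), \qquad \beta' = s\Bigl(\tfrac{\delta}{1-2\delta} - n\Bigr),$$
as in the definition of $N(\gamma,\delta,1)$. Because $\gamma \in (0,\half)$ forces $\tfrac{\gamma}{1-2\gamma} > 0$, we have $m \geq 0$ when $r = 1$ and $m \geq 1$ when $r = -1$; symmetric bounds hold for $n$ and $s$.

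Solving for $\gamma$ yields the central identities $1 - 2\gamma = 1/A$ and $\gamma = (m + r\alpha')/A$ where $A := 1 + 2m + 2r\alpha'$, and analogously $1 - 2\delta = 1/B$ and $\delta = (n + s\beta')/B$ with $B := 1 + 2n + 2s\beta'$. Thus $D(\gamma,\delta,1) = 1/(AB)$ and
$$\bn_{\gamma,\delta} = \left(\tfrac{m+r\alpha'}{AB},\; \tfrac{1}{2A},\; \tfrac{n+s\beta'}{AB},\; \tfrac{1}{2B}\right).$$
After these substitutions every denominator in the claimed vector inequality cancels, and what remains is four scalar inequalities obtained by multiplying out $N(\gamma,\delta,1)^T \bn_{f(\gamma),f(\delta)}$ from equation \ref{eq:N}.

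For the first coordinate, clearing the common factor $1/(AB)$ yields the inequality
$$(2m+r)\alpha'(1-2\beta') + m(1-2\alpha') \;\leq\; \bigl(1 - \tfrac{4}{3}\alpha'\beta'\bigr)(m + r\alpha').$$
Expanding both sides, the constant and $\alpha'$-linear contributions combine to the common term $m + r\alpha'$; what remains after subtraction is $2(2m+r)\alpha'\beta' \geq \tfrac{4}{3}(m+r\alpha')\alpha'\beta'$, trivial when $\alpha'\beta' = 0$ and equivalent to $4m + 3r \geq 2r\alpha'$ otherwise. The latter is immediate when $r = 1$ (the LHS is at least $3$ while $2\alpha' < 1$), and when $r = -1$ it reads $4m + 2\alpha' \geq 3$, which follows from $m \geq 1$. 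The third coordinate is identical under the swap $(\gamma,m,r,\alpha') \leftrightarrow (\delta,n,s,\beta')$.

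For the second coordinate, multiplication through by $2AB$ produces a left side that simplifies, after the $\beta'$-linear terms telescope, to $B - 4(1+2n+s)\alpha'\beta'$, while the right side equals $B - \tfrac{4}{3}B\alpha'\beta'$. The inequality thus reduces to $3(1+2n+s) \geq B = 1+2n+2s\beta'$, i.e.\ $4n + 3s + 2 \geq 2s\beta'$, which is trivial in both cases $s = \pm 1$; the fourth coordinate is symmetric. The entire argument is an explicit computation, and the main ``obstacle'' is nothing more than bookkeeping across the four cases $(r,s) \in \{\pm 1\}^2$. The conceptual point that makes the algebra tractable is the identity $1 - 2\gamma = 1/A$, which forces each scalar inequality to linearize into a trivial bound on the integer parameters $m$ and $n$.
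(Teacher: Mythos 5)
Your proof is correct and follows essentially the same route as the paper: a direct entrywise verification in which the non-$\alpha'\beta'$ terms of $N(\gamma,\delta,1)^T\bn_{f(\gamma),f(\delta)}$ reproduce $\bn_{\gamma,\delta}$ (the paper packages this cancellation as the decomposition $\bn_{f(\gamma),f(\delta)}=\ba-\bb$ with $\bn_{\gamma,\delta}=d\,N^T\ba$), followed by the same case analysis on $(m,r)$ and $(n,s)$ with $m\geq 1$ when $r=-1$. The only real difference is cosmetic: you change variables to $\alpha'=f(\gamma)$, $\beta'=f(\delta)$ via $1-2\gamma=1/A$, so your final inequalities are integer bounds like $4m+3r\geq 2r\alpha'$, whereas the paper keeps $\gamma,\delta$ and uses $\frac{\gamma}{1-2\gamma}\leq m+\frac{1}{2}$.
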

\begin{proof}[Proof of Lemma \ref{lem:scaling} given Lemma \ref{lem:scaling2}]
We utilize the limit formula in Theorem \ref{thm:cocycle formula} to relate $\nu(\sO_k)$ to 
$\nu(\sO_{k+1})$. Define
$$\v=D(\alpha,\beta,k-1) N(\alpha,\beta,k-1) \1.$$
Set $\gamma=f^{k-1}(\alpha)$ and $\delta=f^{k-1}(\beta)$. Then by Theorem \ref{thm:cocycle formula}, we have
$$\nu(\sO_k)=\bn_{\gamma,\delta} \cdot \v \and \nu(\sO_{k+1})=\Big(D(\gamma,\delta,1) N(\gamma,\delta,1)^T \bn_{f(\gamma),f(\delta)}\big) \cdot \v.$$
So the entrywise inequality implies
$$\nu(\sO_{k+1}) \leq g\big(f(\gamma), f(\delta)\big) \nu(\sO_{k}).$$
\end{proof}

\begin{proof}[Proof of Lemma \ref{lem:scaling2}]
We use notation similar to that of section \ref{sect:limit formula} for 
defining $m$, $n$, $r$ and $s$. We define these constants so that 
$$r f(\gamma)+m=\frac{\gamma}{1-2\gamma} \and s f(\delta)+n=\frac{\delta}{1-2\delta}.$$
We break the vector $\bn_{f(\gamma),f(\delta)}$ into two pieces, writing
$\bn_{f(\gamma),f(\delta)}=\ba-\bb$ with 
$$\ba=\big(f(\gamma),\frac{1-2f(\gamma)}{2},f(\delta),\frac{1-2f(\delta)}{2}\big) \and
\bb=2f(\gamma)f(\delta)\big(1,0,1,0\big).$$
Define $d=D(\gamma,\delta,1)=(1-2\gamma)(1-2\delta)$ and $N=N(\gamma,\delta,1)$. The matrix $N$ is given exactly as in equation \ref{eq:N}. We have
$$\bn_{\gamma,\delta}= d N^T \ba.$$
This is not an accident; it comes from the fact that $\ba=\pi(\m_{\gamma,\delta})$ and the meaning of these quantities (which were defined in section \ref{sect:cocycle}). But the statement can also be verified by calculation. For instance, the first entry of $d N^T \ba$ is given by 
$$\begin{narrowarray}{3pt}{rcl}
(d N^T \ba)_1 & = & (1-2\gamma)(1-2\delta)\Big[(2m+r)f(\gamma)+(2m)(\frac{1-2f(\gamma)}{2})\Big] \\
& = & (1-2\gamma)(1-2\delta)[r f(\gamma)+m]=(1-2\delta)\gamma=(\bn_{\gamma,\delta})_1.
\end{narrowarray}$$

We have shown that
$$dN^T \bn_{f(\gamma),f(\delta)}=\bn_{\gamma,\delta}-dN^T \bb.$$
To simplify expressions below let $\bz=dN^T \bb$ and $\bn=\bn_{\gamma,\delta}.$ 
We will show that 
$$\bz_i/\bn_i \geq \frac{4}{3} f(\gamma) f(\delta) \quad \text{for $i \in \{1,2,3,4\}$.}$$
This will conclude the proof. We have
$$\bn=\left(\gamma(1-2\delta),\frac{1-2\gamma}{2},\delta(1-\gamma),\frac{1-2\delta}{2}\right).$$
$$\bz=(1-2\gamma)(1-2\delta) 2f(\gamma)f(\delta)\big(2m+r,2n+s+1,2n+s,2m+r+1\big).$$
To prove the theorem, we will provide lower bounds for the quantities $\bz_i/(f(\gamma)f(\delta)\bn_i).$ In the cases below, we use the observation
\begin{equation}
\name{eq:useful ineq}
\frac{\gamma}{1-2\gamma} \leq m+\half.
\end{equation}
We begin with $i=1$, and break into two cases. 
In case $m=0$, we have $r=1$ and therefore, 
$$\frac{\bz_1}{f(\gamma)f(\delta) \bn_1} =\frac{2(1-2\gamma)}{\gamma} \geq 4 > \frac{4}{3}.$$
In the remaining cases, we have $m \geq 1$ and we use the fact that $2m+r \geq 2m-1$. 
$$\frac{\bz_1}{f(\gamma)f(\delta) \bn_1} = \frac{(4m+2r)(1-2\gamma)}{\gamma} \geq \frac{2(2m-1)(1-2\gamma)}{\gamma} \geq  \frac{4m-2}{m+\half} \geq \frac{4}{3}.$$
The case of $i=4$ is given by:
$$\frac{\bz_4}{f(\gamma)f(\delta) \bn_4}=4(1-2\gamma)(2m+r+1)$$
In case $m=0$, we have $r=1$ and $\gamma<1/4$. Therefore, when $m=0$, we have
$$\frac{\bz_4}{f(\gamma)f(\delta) \bn_4}=8(1-2\gamma) > 2 > \frac{4}{3}.$$
Otherwise, we have $m \geq 1$ and $\gamma>1/4$. Using equation \ref{eq:useful ineq}, we see
$$\frac{\bz_4}{f(\gamma)f(\delta) \bn_4} \geq \frac{4\gamma(2m+r+1)}{m+\half}> \frac{2m+r+1}{m+\half} \geq \frac{2m}{m+\half} \geq \frac{4}{3}.$$
The remaining two indices follow by symmetry. (Observe that the action of switching $\gamma$ with $\delta$ has the effect of swapping
the first and third and second and fourth entries of all vectors involved.)
\end{proof}

\subsection{The non-recurrent case}
\name{sect:non-recurrent case}
Consider any four sequences of integers $m_i, n_i \geq 0$ and $r_i, s_i \in \{\pm 1\}$ defined for $i \geq 0$ so that 
$$(m_i,r_i) \neq (0,-1) \and (n_i,s_i) \neq (0,-1) \quad \text{for all $i \geq 0$.}$$
We call a collection of these four sequences an {\em itinerary}.
Theorem \ref{thm:coding} implies that for any itinerary, there is a unique pair $(\alpha, \beta)$ so that
\begin{equation}
\name{eq:forward}
f^{i+1}(\alpha)=r_i\left(\frac{f^i(\alpha)}{1-2f^i(\alpha)}-m_i\right) \and f^{i+1}(\beta)=s_i\left(\frac{f^i(\beta)}{1-2f^i(\beta)}-n_i\right).
\end{equation}
We call $(\alpha, \beta)$ the pair {\em determined} by the itinerary.
Theorem \ref{thm:coding} also gives a mild restriction on the itinerary which guarantees the irrationality of $\alpha$ and $\beta$. 

The itinerary is relevant for computing the cocycle $N(\alpha, \beta,k)$, which is the main ingredient in the formula 
$$\mu_\alpha \times \mu_\beta \times \mu_N(\sO_{k+1})=D(\alpha, \beta,k) \bn_{f^k(\alpha),f^k(\beta)} \cdot N(\alpha, \beta, k) \1.$$
The limit of these quantities as $n \to \infty$ gives the measure of all points without stable periodic orbits. See Section \ref{sect:limit formula}.

We will investigate itineraries of a particular form, and show that we can make choices which guarantee that 
the measure of $\sO_n$ decays as slow as we wish.

\begin{definition}[Upward and Downward Itineraries]
Consider an itinerary $I$ consisting of sequences $\{m_i\}$, $\{n_i\}$, $\{r_i\}$ and $\{s_i\}$ as above.
Let $k \geq 1$. 
\begin{itemize}
\item We say $I$ is a $k$-upward itinerary if 
$$(m_i,r_i,n_i,s_i)=\begin{cases}
(0,1,1,1) & \text{if $i=0$,} \\
(0,1,0,1) & \text{if $1 \leq i \leq k-1$,} \\
(1,1,0,1) & \text{if $i=k$.}
\end{cases}$$
\item We say $I$ has a $k$-rightward itinerary if  
$$(m_i,r_i,n_i,s_i)=\begin{cases}
(1,1,0,1) & \text{if $i=0$,} \\
(0,1,0,1) & \text{if $1 \leq i \leq k-1$,} \\
(0,1,1,1) & \text{if $i=k$.}
\end{cases}$$
\end{itemize}
\end{definition}
If $(\alpha, \beta)$ has a $k$-upward itinerary then $\beta \geq \frac{1}{3}$. And, if 
$(\alpha, \beta)$ has a $k$-rightward itinerary then $\alpha \geq \frac{1}{3}$. (See Proposition \ref{prop:special itineraries}.) This explains our choice of terminology.

We make use of a shift map on itineraries. If $I$ is an itinerary and $k \geq 1$ is an integer, we define $\sigma^k(I)$ to be the collection
of sequences formed by dropping the first $k$ values of each of the four sequences making up $I$, and re-indexing so that each sequence begins at zero.

\begin{definition}[Understandable Itineraries]
Let $\{k_j \geq 1\}$ be a sequence of integers defined for $j \geq 0$. Using $\{k_j\}$, we define the auxiliary sequence $\{\aux_j\}$ inductively by the rule
$$\aux_{0}=0, \and \aux_{j+1}=\aux_j+k_j+1 \quad \text{for $j \geq 0$}.$$
We say the {\em $\{k_j\}$-understandable itinerary} $I$ is the itinerary determined by the following rules.  
\begin{enumerate}
\item For any even $j \geq 0$, the itinerary $\sigma^{\aux_j}(I)$ is a $k_j$-upward itinerary.
\item For any odd $j \geq 1$, the itinerary $\sigma^{\aux_j}(I)$ is a $k_j$-rightward itinerary.
\end{enumerate}
\end{definition}

Because of Theorem \ref{thm:recurrent case}, we are interested in pairs $(\alpha, \beta)$ such that the collection of all limit points of the $f \times f$-orbit of $(\alpha, \beta)$ is contained in the set 
$$\set{(x,y) \in [0, \half] \times [0, \half]}{$x=0$ or $y=0$}.$$ 
The holds for the pair $(\alpha, \beta)$ determined by a $\{k_j\}$-understandable itinerary 
precisely when $\liminf k_j=\infty$. The following results will imply that we get a (large) positive measure set of non-periodic points
if $\{k_j\}$ grows sufficiently quickly.

\begin{proposition}[Decay Control]\name{prop:decay control}
There is a function $K_0:(0,1) \to \Z$ satisfying the following statement. For each $\epsilon_0>0$,
whenever $(\alpha, \beta)$ has a $k_0$-upward itinerary with $k_0>K_0(\epsilon_0)$ then 
$\nu(\sO_1)>1-\epsilon_0,$
where $\nu=\mu_\alpha \times \mu_\beta \times \mu_N.$
\end{proposition}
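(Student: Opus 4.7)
The plan is to reduce the proposition to a short direct calculation. First I would apply the Cocycle Formula (Theorem \ref{thm:cocycle formula}) at the base case $k=0$: since $D(\alpha,\beta,0) = 1$ (the empty product) and $N(\alpha,\beta,0)$ is the identity matrix, the formula yields $\nu(\sO_1) = \bn_{\alpha,\beta} \cdot \1$. Summing the four entries of $\bn_{\alpha,\beta}$ gives
\[
\nu(\sO_1) = \alpha(1-2\beta) + \tfrac{1-2\alpha}{2} + \beta(1-2\alpha) + \tfrac{1-2\beta}{2} = 1 - 4\alpha\beta.
\]
Thus the proposition reduces to showing that a $k_0$-upward itinerary forces $\alpha\beta < \epsilon_0/4$ once $k_0$ is large.

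Next I would unpack the itinerary hypothesis into an explicit upper bound on $\alpha$. By equation (\ref{eq:forward}), the conditions $(m_i,r_i) = (0,1)$ for $0 \leq i \leq k_0 - 1$ are precisely the statement that $f^{i+1}(\alpha) = f^i(\alpha)/(1 - 2f^i(\alpha))$ on this range, while $(m_{k_0}, r_{k_0}) = (1,1)$ forces $f^{k_0}(\alpha) \in [\tfrac{1}{3}, \tfrac{3}{8}]$. The M\"obius map $t \mapsto t/(1-2t)$ has the closed-form $k$-th iterate $t \mapsto t/(1-2kt)$, which I would verify by a one-line induction. The upper constraint $f^{k_0}(\alpha) \leq 3/8$ then rearranges to $\alpha \leq 3/(8+6k_0)$.

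Combining with the trivial bound $\beta < 1/2$ gives $\alpha\beta < 3/(16+12k_0)$, hence $\nu(\sO_1) > 1 - 3/(4+3k_0)$. I would finish by setting $K_0(\epsilon_0) = \lceil 1/\epsilon_0 \rceil$ (or any larger integer), which makes the right-hand side exceed $1-\epsilon_0$.

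There is no serious obstacle in this plan; the essential content is the observation that $\nu(\sO_1) = 1 - 4\alpha\beta$, after which the rest is a matter of computing iterates of an explicit M\"obius transformation. If a sharper decay rate were desired one could also exploit the vertical half of the itinerary to extract an analogous bound $\beta \leq (2k_0+3)/(6k_0+8)$, yielding $\alpha\beta = O(1/k_0^2)$, but this refinement is not needed for the statement as given.
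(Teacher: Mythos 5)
Your proposal is correct and follows essentially the same route as the paper: both rest on the cocycle formula giving $\nu(\sO_1)=1-4\alpha\beta$ and then on the bound $\alpha \leq \frac{3}{8+6k_0}$ forced by a $k_0$-upward itinerary — the paper quotes this from Proposition \ref{prop:special itineraries}, while you re-derive it directly from the closed-form iterate $t \mapsto t/(1-2k t)$, which is a fine self-contained substitute. One small correction to your closing aside: a $k_0$-upward itinerary keeps $\beta \geq \frac{1}{3}$, so invoking the bound $\beta \leq \frac{3+2k_0}{8+6k_0}$ only improves the estimate by a bounded factor and yields $\alpha\beta = O(1/k_0)$, not $O(1/k_0^2)$; this does not affect the validity of your argument.
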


\begin{theorem}[Decay Control] \name{thm:decay control}
There is a function $K:\Z \times (0,1) \to \Z$ satisfying the following statement. 
For any sequence $\{\epsilon_j\}_{j \geq 1}$ with $0<\epsilon_j<1$, if $\{k_j\}_{j \geq 0}$ is a sequence satisfying
$$k_{j} \geq K(k_{j-1},\epsilon_{j}) \quad \text{for all} \quad j \geq 1,$$
then the pair $(\alpha, \beta)$ determined by the $\{k_j\}$-understandable itinerary with auxiliary sequence $\{a_j\}$ satisfies
$$\nu(\sO_{\aux_{j}+1})>(1-\epsilon_{j}) \nu(\sO_{\aux_{j-1}+1}) \quad \text{for all $j \geq 1$,}$$
where $\nu=\mu_\alpha \times \mu_\beta \times \mu_N$.
\end{theorem}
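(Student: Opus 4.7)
The plan is to express the target ratio via the Cocycle Formula (Theorem \ref{thm:cocycle formula}), reduce it to entrywise estimates by exploiting the non-negativity of the cocycle entries, and then verify those estimates by an explicit computation that uses the block structure of the understandable itinerary.

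Setting $\alpha_{j-1} = f^{\aux_{j-1}}(\alpha)$, $\beta_{j-1} = f^{\aux_{j-1}}(\beta)$, and $\v = N(\alpha, \beta, \aux_{j-1}) \1 \in \R_{\geq 0}^4$, the multiplicative cocycle identity $N(\alpha, \beta, \aux_j) = N(\alpha_{j-1}, \beta_{j-1}, k_{j-1}+1)\, N(\alpha, \beta, \aux_{j-1})$ combined with Theorem \ref{thm:cocycle formula} yields
\[
\frac{\nu(\sO_{\aux_j+1})}{\nu(\sO_{\aux_{j-1}+1})} = \frac{D(\alpha_{j-1}, \beta_{j-1}, k_{j-1}+1)\, \bn_{\aux_j} \cdot N(\alpha_{j-1}, \beta_{j-1}, k_{j-1}+1)\,\v}{\bn_{\alpha_{j-1}, \beta_{j-1}} \cdot \v}.
\]
The itinerary restrictions $(m_i, r_i), (n_i, s_i) \neq (0,-1)$ force every entry of $N(\alpha, \beta, k)$ to be non-negative via equation \eqref{eq:N}, so $\v \geq 0$ entrywise; moreover $\bn_{\alpha_{j-1}, \beta_{j-1}}$ has strictly positive entries. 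A standard convex-combination argument then shows that the right-hand side is bounded below by the minimum over $i \in \{1,2,3,4\}$ of the corresponding entrywise ratio obtained by replacing $\v$ with the standard basis vector $\be_i$. Thus it suffices to verify four inequalities, each depending only on $k_{j-1}, \alpha_j, \beta_j$ and the types (upward versus rightward) of blocks $j-1$ and $j$.

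I next compute the block cocycle in closed form. For a $k_{j-1}$-upward block $j-1$, one has $N(\alpha_{j-1}, \beta_{j-1}, k_{j-1}+1) = N_{(1,1,0,1)} \cdot N_{(0,1,0,1)}^{k_{j-1}-1} \cdot N_{(0,1,1,1)}$, with subscripts recording the $(m,r,n,s)$ data at each step. The key algebraic observation is that $(N_{(0,1,0,1)} - I)^2 = 0$, so $N_{(0,1,0,1)}^{k_{j-1}-1} = I + (k_{j-1}-1)(N_{(0,1,0,1)} - I)$, which makes the block cocycle an explicit matrix whose entries are affine in $k_{j-1}$. Simultaneously, $D(\alpha_{j-1}, \beta_{j-1}, k_{j-1}+1)$ telescopes via the identity $1 - 2g^m(t) = (1 - 2(m+1)t)/(1 - 2mt)$ for $g(t) = t/(1-2t)$, giving a clean closed form in $\alpha_{j-1}, \beta_{j-1}, k_{j-1}$. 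Inverting the iterated coding formulas also expresses $\alpha_{j-1}$ and $\beta_{j-1}$ as rational functions of $(\alpha_j, \beta_j, k_{j-1})$; for instance, in the upward-rightward case, $\alpha_{j-1} = (\alpha_j + 1)/[(2\alpha_j + 3) + 2k_{j-1}(\alpha_j + 1)]$.

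Finally, using the constraints $\alpha_j \in [1/3, 3/8]$ and $\beta_j \in [1/(2k_j + 3),\, 3/(6k_j + 8)]$ forced by a $k_j$-rightward block $j$, direct substitution into the four entrywise inequalities produces expressions of the form $1 - O(k_{j-1}/k_j) - O(1/k_j)$ as $k_j \to \infty$ with $k_{j-1}$ fixed. Defining $K(k_{j-1}, \epsilon_j)$ as the smallest $k_j$ for which all four entrywise ratios exceed $1 - \epsilon_j$ completes the argument; the case where block $j-1$ is rightward and block $j$ is upward is handled identically via the $\alpha \leftrightarrow \beta$ symmetry of the setup. The main obstacle lies in organizing the four entrywise calculations uniformly so that the $O(k_{j-1}/k_j)$ decay rate is visible across all coordinates; this is essentially analogous to the calculation in Lemma \ref{lem:scaling2}, extended to a block of $k_{j-1}+1$ consecutive renormalization steps.
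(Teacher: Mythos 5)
Your proposal is correct and follows essentially the same route as the paper: reduce via the cocycle formula and the non-negativity of $N(\alpha,\beta,\aux_{j-1})\1$ to the entrywise inequality $D(\alpha',\beta',k_{j-1}+1)N(\alpha',\beta',k_{j-1}+1)^T\bn_{\gamma,\delta}>(1-\epsilon_j)\bn_{\alpha',\beta'}$, compute the block cocycle and the telescoping $D$ explicitly for an upward block, and use Proposition \ref{prop:special itineraries} to force $\delta=\beta_j$ small once $k_j$ is large, with the swapped case by symmetry. This is exactly the paper's argument (equations \ref{eq:decay1}--\ref{eq:entrywise ineq} and Lemma \ref{lemma:decay ineq}); your nilpotency observation $(N_{(0,1,0,1)}-I)^2=0$ and the explicit $1-O(k_{j-1}/k_j)-O(1/k_j)$ bound are just slightly different packagings of the paper's matrix product and its $\delta\to0$ limit argument.
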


The Decay Control Proposition and Theorem together imply Theorem \ref{thm:4} of the introduction, which can be restated as saying that for any $\eta>0$, there exists
a pair of irrationals $(\alpha, \beta)$ so that 
$\nu(\NS)>1-\eta.$
\begin{proof}[Proof of Theorem \ref{thm:4} given the Decay Control results]
Fix any $\eta>0$, and fix any sequence $\{\epsilon_j\}_{j \geq 0}$ of numbers in $(0,1)$ so that 
$$\prod_{j=0}^\infty (1-\epsilon_j)>1-\eta.$$
Choose a sequence $\{k_j\}_{j \geq 0}$ so that 
$k_0>K_0(\epsilon_0)$ and $k_j \geq K(k_{j-1},\epsilon_j)$ for all $j \geq 1$. 
Then the Decay Control Proposition implies  
$\nu(\sO_1)>1-\epsilon_0$. The theorem implies that  
$$\nu(\sO_{\aux_{j}+1})>(1-\epsilon_{j}) \nu(\sO_{\aux_{j-1}+1})$$
for all $j \geq 1$. By statement (\ref{item:O}) of section \ref{sect:limit formula}, we have 
$$\nu(\NS)=\lim_{j \to \infty} \nu(\sO_{\aux_{j}+1}) \geq \lim_{j \to \infty} \prod_{i=0}^j (1-\epsilon_i) > 1-\eta$$
as desired. Finally, we observe that $\alpha$ and $\beta$ are irrational, by Theorem \ref{thm:coding}. This is true  for any 
pair determined by an understandable itinerary.
\end{proof}

We will now give a proof of Corollary \ref{cor:5}, which states that the set 
$$P=\{(\alpha,\beta)~:~\mu_\alpha \times \mu_\beta \times \mu_N(\NS)>0\}$$
is dense.

\begin{proof}[Proof of Corollary \ref{cor:5} given the Decay Control results]
Theorem \ref{thm:4} gives a pair $(\alpha, \beta) \in P$.
Whenever $(\alpha', \beta')$ satisfies
$f^n(\alpha')=\alpha$ and $f^n(\beta')=\beta$, we also have 
$(\alpha',\beta') \in Y$ because Theorem \ref{thm:ren} gives a return map of $\tPsi_{\alpha', \beta'}$ which is affinely conjugate to $\tPsi_{\alpha, \beta}$. 
Theorem \ref{thm:coding} implies that the itinerary map gives a semiconjugacy from the shift map on a shift space to the action of $f \times f$. 
Since the collection of preimages of a point in a shift space is dense, the collection of preimages of $(\alpha,\beta)$ under $f \times f$ must be dense.
\end{proof}

We will build up to a proof of the Decay Control Proposition and Theorem Theorem. 
The following is a necessary calculation, which follows from an inductive argument using equation \ref{eq:forward}. (We carry out a similar calculation in the proof of Lemma \ref{lemma:decay ineq} below.)

\begin{proposition}[Starting Points of Itineraries]
\name{prop:special itineraries} If $(\alpha, \beta)$ has a
$k$-upward itinerary then 
$$\frac{1}{3+2k} \leq \alpha \leq \frac{3}{8+6k} \and
\frac{1}{3} \leq \beta \leq \frac{3+2k}{8+6k}.$$
If $(\alpha, \beta)$ has a
$k$-rightward itinerary then 
$$\frac{1}{3} \leq \alpha \leq \frac{3+2k}{8+6k} \and 
\frac{1}{3+2k} \leq \beta \leq \frac{3}{8+6k}.$$
\end{proposition}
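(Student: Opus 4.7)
My plan is to invert the recurrence (\ref{eq:forward}) and express $\alpha$ and $\beta$ explicitly in terms of $f^{k+1}(\alpha)$ and $f^{k+1}(\beta)$. Since the itinerary only constrains the first $k+1$ entries, the two quantities $f^{k+1}(\alpha)$ and $f^{k+1}(\beta)$ range freely over $[0,\half]$, and the bounds on $\alpha$ and $\beta$ will then fall out of monotonicity of the resulting M\"obius expressions.

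First I would identify the relevant inverse branches of $f$. Solving $f(x)=r\bigl(\frac{x}{1-2x}-m\bigr)$ for $x$ yields $x=(m+rf(x))/(1+2m+2rf(x))$. In every $k$-upward and $k$-rightward itinerary the only pairs that appear are $(m,r)=(0,1)$ and $(1,1)$ (and similarly for $(n,s)$), so only the two inverse branches
\[
g_0(t)=\frac{t}{1+2t}, \qquad g_1(t)=\frac{1+t}{3+2t}
\]
are needed; each is strictly increasing on $[0,\half]$, mapping this interval onto $[0,\tfrac{1}{4}]$ and $[\tfrac{1}{3},\tfrac{3}{8}]$ respectively.

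For the $k$-upward case and the parameter $\alpha$: reading off the $(m_i,r_i)$-sequence, which places $(1,1)$ at index $k$ and $(0,1)$ at indices $0,\dots,k-1$, the inverse recurrence gives $\alpha=g_0^{\,k}\bigl(g_1(f^{k+1}(\alpha))\bigr)$. The map $g_0$ linearizes under reciprocation because $1/g_0(t)=1/t+2$, hence $1/f^0(\alpha)=1/f^k(\alpha)+2k$. Substituting $f^k(\alpha)=g_1(z)$ with $z:=f^{k+1}(\alpha)$ produces the closed form
\[
\alpha=\frac{1+z}{(2+2k)z+3+2k},
\]
which is monotone in $z$, so as $z$ ranges over $[0,\half]$ the value of $\alpha$ sweeps monotonically from $\tfrac{1}{3+2k}$ to $\tfrac{3}{8+6k}$. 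The $\beta$ computation in the $k$-upward case is structurally symmetric, but with the $g_1$-step at index $0$ rather than index $k$, so $\beta=g_1\bigl(g_0^{\,k}(f^{k+1}(\beta))\bigr)$. The same reciprocal trick gives $f(\beta)=y/(1+2ky)$ with $y:=f^{k+1}(\beta)$, and then
\[
\beta=\frac{(2k+1)y+1}{(2+6k)y+3},
\]
taking values from $\tfrac{1}{3}$ at $y=0$ to $\tfrac{3+2k}{8+6k}$ at $y=\half$.

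The $k$-rightward case follows from the $k$-upward case by swapping the roles of $\alpha$ and $\beta$: interchanging the $(m_i,r_i)$- and $(n_i,s_i)$-sequences is exactly what converts a $k$-upward itinerary into a $k$-rightward one, so the same computations apply with the roles reversed. I do not expect any serious obstacle beyond careful bookkeeping of indices; the two structural observations --- that only $g_0$ and $g_1$ appear as inverse branches, and that $g_0$ linearizes under reciprocation --- reduce each iterate to an explicit rational function whose range on $[0,\half]$ is immediate by monotonicity.
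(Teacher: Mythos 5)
Your proposal is correct and follows essentially the same route as the paper: the paper treats this as an inductive computation with the inverse branches of $f$ determined by equation \ref{eq:forward}, and your closed forms $\alpha=\frac{1+z}{(2+2k)z+3+2k}$, $\beta=\frac{(2k+1)y+1}{(2+6k)y+3}$ agree with the explicit expressions for $\alpha_i,\beta_i$ that appear in the proof of Lemma \ref{lemma:decay ineq}. The reciprocal linearization of $g_0$ is just a tidier way of organizing the same iteration, so no substantive difference.
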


We will now prove the Decay Control Proposition.
This proof reveals some of the ideas appearing in the proof of the Decay Control Theorem.
\begin{proof}[Proof of the Decay Control Proposition]
It suffices to show that for any $\epsilon_0>0$ and for sufficiently large $k_0$ we have 
$\nu(\sO_{1})>1-\epsilon_{0}.$
By Theorem \ref{thm:cocycle formula}, we have
$$\mu_\alpha \times \mu_\beta \times \mu_N(\sO_{1})=1-4\alpha \beta.$$
We know that $(\alpha, \beta)$ will have a $k_0$-upward itinerary. Proposition \ref{prop:special itineraries} then confines $(\alpha, \beta)$ to a rectangle where
$$1-4\alpha\beta \geq 1-\frac{3(3+2k_0)}{(8+6k_0)^2}.$$
The quantity on the right tends to $1$ as $k_0 \to \infty$. So choosing a sufficiently large $k_0$ makes $1-4\alpha \beta$ larger than $1-\epsilon_0$.
\end{proof}

Now consider the Decay Control Theorem. We begin by interpreting the quantities under consideration using the cocycle. Fix $j \geq 1$ and set 
$$\alpha'=f^{\aux_{j-1}}(\alpha), \quad \beta'=f^{\aux_{j-1}}(\beta), \quad \gamma=f^{\aux_{j}}(\alpha), \and \delta=f^{\aux_{j}}(\beta).$$
Define the vector 
\begin{equation}
\name{eq:z2}
\bz=D(\alpha, \beta,\aux_{j-1}) N(\alpha, \beta, \aux_{j-1}) \1.
\end{equation}
By our cocycle formula (Theorem \ref{thm:cocycle formula}), we 
have the following two identities:
\begin{equation}
\name{eq:decay1}
\nu(\sO_{\aux_{j-1}+1})=\bn_{\alpha',\beta'} \cdot \bz.
\end{equation}
\begin{equation}
\name{eq:decay2}
\nu(\sO_{\aux_{j}+1})=\big(D(\alpha',\beta',k_{j-1}+1) N(\alpha',\beta',k_{j-1}+1)^T \bn_{\gamma,\delta} \big) \cdot \bz.
\end{equation}
To show that the value of the second equation is nearly as large as the first equation, it suffices to prove an entrywise inequality involving the vectors that show up in these equations. Specifically, we will show that there is a function $K$ as in the theorem so that whenever 
$k_{j}>K(k_{j-1},\epsilon_j)$ we have the entrywise inequality
\begin{equation}
\name{eq:entrywise ineq}
D(\alpha',\beta',k_{j-1}+1) N(\alpha',\beta',k_{j-1}+1)^T \bn_{\gamma,\delta} > (1-\epsilon_j) \bn_{\alpha',\beta'}
\end{equation}
This statement is proved in the following lemma.

\begin{lemma}[Decay Control Inequality]
\name{lemma:decay ineq}
Fix any $\epsilon>0$. Assume that $(\alpha, \beta)$ has $k$-upward itinerary. Define
$$\gamma=f^{k+1}(\alpha) \and \delta=f^{k+1}(\beta).$$
There is a $K=K(k,\epsilon)$ so that for any $k'>K$, if $(\gamma,\delta)$ has a $k'$-rightward itinerary,
then we have the entrywise inequality 
$$D(\alpha,\beta,k+1) N(\alpha,\beta,k+1)^T \bn_{\gamma,\delta} > (1-\epsilon) \bn_{\alpha,\beta}.$$
The same statement holds with the same function $K(k, \epsilon)$ when the notion of `upward' is swapped with `rightward.'
\end{lemma}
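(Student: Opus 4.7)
The approach is to derive an exact telescoping identity for the gap $\bn_{\alpha,\beta} - D(\alpha,\beta,k+1) N(\alpha,\beta,k+1)^T \bn_{\gamma,\delta}$ and then bound the resulting non-negative sum entrywise.

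First, I would sharpen the proof of Lemma~\ref{lem:scaling2} into the exact identity
$$\bn_{\gamma',\delta'} = D(\gamma',\delta',1) N(\gamma',\delta',1)^T \bn_{f(\gamma'),f(\delta')} + 2 f(\gamma') f(\delta')\, D(\gamma',\delta',1) N(\gamma',\delta',1)^T (1,0,1,0),$$
valid for all $\gamma',\delta' \in (0,\tfrac{1}{2})$ (this is precisely what the proof of that lemma establishes before passing to inequalities). Substituting $(\gamma',\delta')=(f^i(\alpha),f^i(\beta))$, multiplying by $D(\alpha,\beta,i) N(\alpha,\beta,i)^T$, and using the cocycle properties of $D$ and $N$, the contributions telescope to
$$\bn_{\alpha,\beta} - D(\alpha,\beta,k+1) N(\alpha,\beta,k+1)^T \bn_{\gamma,\delta} = \sum_{i=0}^{k} \u_i,$$
where $\u_i := 2 f^{i+1}(\alpha) f^{i+1}(\beta)\, D(\alpha,\beta,i+1) N(\alpha,\beta,i+1)^T (1,0,1,0)$. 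Each $\u_i$ is entrywise non-negative, so the lemma reduces to proving $\sum_{i=0}^{k}\u_i < \epsilon\,\bn_{\alpha,\beta}$ entrywise.

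Next I would exploit rigidity of the $k$-upward itinerary. Because the integer data $(m_j,r_j,n_j,s_j)$ for $0\le j\le k$ is completely determined by the upward assumption (independent of $\gamma,\delta$), each matrix $N(\alpha,\beta,i+1)$ is one of finitely many fixed non-negative integer matrices depending only on $k$; combined with $D(\alpha,\beta,i+1)\le 1$, this furnishes a constant $C_1(k)$ such that every entry of $D(\alpha,\beta,i+1) N(\alpha,\beta,i+1)^T(1,0,1,0)$ is at most $C_1(k)$ for $i=0,\ldots,k$. Inverting equation~\ref{eq:forward} backward from $\delta=f^{k+1}(\beta)$ along the upward itinerary yields the explicit formula $f^{j}(\beta)=\delta/(1+2(k+1-j)\delta)\le \delta$ for $j=1,\ldots,k+1$; combined with the trivial bound $f^{i+1}(\alpha)\le \tfrac{1}{2}$ this gives $\u_i \le C_1(k)\,\delta\cdot \1$ entrywise, hence $\sum_{i=0}^{k} \u_i \le (k+1) C_1(k)\,\delta\cdot\1$.

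Finally, Proposition~\ref{prop:special itineraries} yields a strictly positive lower bound $c(k)$ on every entry of $\bn_{\alpha,\beta}$ under the $k$-upward hypothesis: the worst-case entry $\alpha(1-2\beta)$ is at least $\tfrac{2(k+1)}{(2k+3)(8+6k)}$, and the other three entries are bounded below by $\Theta(1)$ quantities in $k$. The same proposition applied to the $k'$-rightward itinerary of $(\gamma,\delta)$ gives $\delta\le 3/(8+6k')$. Choosing
$$K(k,\epsilon) := \left\lceil \frac{3(k+1)C_1(k)}{6\,\epsilon\, c(k)} \right\rceil$$
(or any larger integer) ensures that $(k+1)C_1(k)\,\delta < \epsilon\, c(k)$ whenever $k'>K(k,\epsilon)$, giving $\sum_i \u_i < \epsilon\,\bn_{\alpha,\beta}$ entrywise as desired. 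The rightward-into-upward variant follows from the evident symmetry swapping $\alpha \leftrightarrow \beta$ together with the corresponding permutation of the four coordinate slots. The main obstacle is executing the first step cleanly---unwinding the cocycle identities to produce the telescoping formula---after which the remaining bounds are routine consequences of Proposition~\ref{prop:special itineraries} and finite matrix arithmetic.
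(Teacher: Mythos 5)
Your proposal is correct, and it takes a genuinely different route from the paper. The paper's proof is a brute-force closed-form computation: using the rigid $k$-upward itinerary it writes $\alpha_i=f^i(\alpha)$, $\beta_i=f^i(\beta)$, $d=D(\alpha,\beta,k+1)$, $\v=\frac{1}{d}\bn_{\alpha,\beta}$, the full matrix $N(\alpha,\beta,k+1)$ (as an explicit product), and $\bw=N^T\bn_{\gamma,\delta}$ all as explicit functions of $\gamma,\delta,k$, and then observes that $\v$ and $\bw$ have the same positive limit as $\delta\to 0$, uniformly in $\gamma$; this yields a (non-effective) constant $C>0$ with $\bw_i/\v_i>1-\epsilon$ once $\delta<C$, and Proposition \ref{prop:special itineraries} forces $\delta<C$ for $k'$ large. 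You instead extract from the proof of Lemma \ref{lem:scaling2} the exact one-step identity $\bn_{\gamma',\delta'}=D(\gamma',\delta',1)N(\gamma',\delta',1)^T\bn_{f(\gamma'),f(\delta')}+2f(\gamma')f(\delta')D(\gamma',\delta',1)N(\gamma',\delta',1)^T(1,0,1,0)$ and telescope it along the orbit, writing the gap $\bn_{\alpha,\beta}-D(\alpha,\beta,k+1)N(\alpha,\beta,k+1)^T\bn_{\gamma,\delta}$ as a sum of entrywise non-negative vectors $\u_i$, each bounded by $C_1(k)\,\delta\,\1$ because the matrices are fixed non-negative integer matrices depending only on $k$, $D\le 1$, $f^{i+1}(\alpha)\le\thalf$, and $f^{j}(\beta)=\delta/(1+2(k+1-j)\delta)\le\delta$ (which agrees with the closed forms in the paper); a lower bound $c(k)>0$ on the entries of $\bn_{\alpha,\beta}$ from Proposition \ref{prop:special itineraries} then finishes the argument. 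What your approach buys: it avoids the large matrix computation, makes the sign structure of the error transparent (the gap is a sum of non-negative terms, each small with $\delta$), and produces an explicit, effective $K(k,\epsilon)$, whereas the paper's uniform-convergence argument only asserts existence of $C$. What the paper's computation buys is the exact formulas for $\v$ and $\bw$ and the fact that their $\delta\to 0$ limits coincide exactly. One small point to flush out in a full write-up: the identity $\bn_{\gamma,\delta}=dN^T\ba$ underlying your telescoping is only verified in one coordinate in the paper, so you should check all four coordinates (a routine computation using $rf(\gamma)+m=\frac{\gamma}{1-2\gamma}$ and $sf(\delta)+n=\frac{\delta}{1-2\delta}$); with that done, your argument is complete, including the symmetry reduction for the rightward-to-upward case, which matches the paper's own use of the $\gamma\leftrightarrow\delta$ swap.
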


\begin{remark}[On the proof of the lemma]
We prove Lemma \ref{lemma:decay ineq} by calculation, which seems unfortunate. However, we can go back to the argument above the lemma to see why this is necessary. The argument we use only utilizes knowledge of the portion of the itinerary with indices $j$ satisfying $a_{j-1} \leq j < a_{j+1}$. Since the vector $\bz$ defined in equation \ref{eq:z2} depends on an earlier part of the itinerary, we have no a priori control over the value of $\bz$. 
So, to control the decay in moving from the value of equation \ref{eq:decay1} to equation \ref{eq:decay2}, we are forced to prove the entrywise inequality in equation \ref{eq:entrywise ineq}. 

For further commentary, let $\bx=\bn_{\alpha',\beta'}$ and
$$\by=D(\alpha,\beta,k+1) N(\alpha,\beta,k+1)^T \bn_{\gamma,\delta}.$$
These vectors have some geometric meaning. The vector $\bx$
represents the $\nu'=\mu_{\alpha'} \times \mu_{\beta'} \times \mu_N$ measures of the intersections of $\sO_1$ with four subsets of $X$. (These four sets are $\sS_1 \cup \sS_2$, $\sS_3$, $\sS_4 \cup \sS_5$ and $\sS_6$. See equation \ref{eq:n} and the definition of $\bq$ above equation \ref{eq:q int}.) By definition of the cocycle, the entries of the vector $\by$ represent the $\nu'$ measures of $\sO_{k+2}$  intersected with the same four sets. 
Since $\sO_{k+2} \subset \sO_1$, we see $\by < \bx$ entrywise. Moreover, we could prove directly that as $k_j \to \infty$ that we have 
$\nu'(\sO_{k+2}) \to 1$ and $\nu'(\sO_1) \to 1.$ (This explains the situation when $\bz=\1$.) 
However, as $k_j \to \infty$ some of the entries of $\bx$ tend to zero. If $\by$ is obtained from $\bx$ by disproportionately decreasing the values of small entries of $\bx$, then $\by$ could be arranged not to satisfy $\by > (1-\epsilon) \bx$ while still satisfying the conditions
$$\by \cdot \1 \approx \bx \cdot \1 \and \by<\bx.$$
We view this as forcing us to do a calculation to guarantee
$\by>(1-\epsilon) \bx$ as $k_j \to \infty$. 
\end{remark}

\begin{proof}We will only do the version of the lemma without swapping terms. This second case follows from the first by symmetry.

We will do all our calculations in terms of $\gamma$ and $\delta$. Let $\alpha_i=f^i(\alpha)$ and $\beta_i=f^i(\beta)$. Knowing that $(\alpha, \beta)$ has a $k$-downward itinerary allows us to give formulas for some of these values via equation \ref{eq:forward}:
$$\alpha_i=\begin{cases}
\frac{1+\gamma}{1+2(1+\gamma)(1+k-i)} & \text{if $0 \leq i \leq k$,}\\
\gamma & \text{if $i=k+1$.}
\end{cases}
\qquad
\beta_i=\begin{cases}
\frac{1+\delta(2k+1)}{3+2 \delta(3k+1)} & \text{if $i=0$,}\\
\frac{\delta}{1+2 \delta(k+1-i)} & \text{if $1 \leq i \leq k+1$,}
\end{cases}
$$
It is relevant to compute $1-2\alpha_i$ and $1-2\beta_i$. We have
$$1-2\alpha_i=\begin{cases}
\frac{1+2(\gamma+1)(k-i)}{1+2(\gamma+1)(k+1-i)} & \text{if $0 \leq i \leq k$,}\\
1-2\gamma & \text{if $i=k+1$.}
\end{cases}
\qquad
1-2\beta_i=\begin{cases}
\frac{1+2\delta k}{3+2\delta(1+3k)} & \text{if $i=0$,}\\
\frac{1+2 \delta(k-i)}{1+2 \delta(k+1-i)} & \text{if $1 \leq i \leq k+1$,}
\end{cases}
$$
To simplify notation, we define the following quantities:
$$\begin{array}{ccc}
d=D(\alpha,\beta,k+1). & \quad & N=N(\alpha, \beta, k+1). \\
\v=\frac{1}{d} \bn_{\alpha, \beta}. & \quad & \bw=N^T \bn_{\gamma, \delta}.
\end{array}$$
Fix $\epsilon>0$ as in the lemma. The goal of this proof is to show that for $k'$ sufficiently large, we have 
$\bw>(1-\epsilon)\v$ entrywise. We do this by calculation.

There is a lot of cancellation in the product for $d$:
$$d=\prod_{j=0}^k (1-2\alpha_j)(1-2\beta_j)=\frac{1}{\big(1+2(1+\gamma)(k+1)\big)\big(3+2\delta(3k+1)\big)}.$$
By a calculation, we observe that $\v$ has a relatively simple expression:
$$\begin{array}{cc}
\displaystyle \v_1=(1+\gamma)(1+2\delta k) &
\displaystyle \v_2=
\thalf\big(1+2(1+\gamma)k\big)\big(3+\delta(2+6k)\big) 
\\
\displaystyle \v_3=\big(1+2(1+\gamma)k\big)(1+\delta(1+2k)) &
\displaystyle \v_4=\thalf\big(1+2(1+\gamma)(1+k)\big)(1+2\delta k) \\
\end{array}$$
The value of $N$ can be computed from the knowledge that $(\alpha,\beta)$ has a $k$-upward itinerary:
$$\begin{narrowarray}{3pt}{rcl} N & = & \left[\begin{array}{rrrr} 
3 & 1 & 0 & 3 \\
2 & 1 & 0 & 2 \\
0 & 1 & 1 & 1 \\
0 & 0 & 0 & 1 \end{array}\right]
\left(\left[\begin{array}{rrrr} 
1 & 1 & 0 & 1 \\
0 & 1 & 0 & 0 \\
0 & 1 & 1 & 1 \\
0 & 0 & 0 & 1 \end{array}\right]^{k-1}\right)
\left[\begin{array}{rrrr} 
1 & 1 & 0 & 1 \\
0 & 1 & 0 & 0 \\
0 & 3 & 3 & 1 \\
0 & 2 & 2 & 1 \end{array}\right] \\
& = &
\left[\begin{array}{rrrr} 
3 & 1+9k & 6k & 3(1+k) \\
2 & 1+6k & 4k & 2(1+k) \\
0 & 3(1+k) & 3+2k & 1+k \\
0 & 2 & 2 & 1 \end{array}\right]
\end{narrowarray}
$$
By definition of $\bn_{\gamma,\delta}$ we have
$$\textstyle \bn_{\gamma,\delta}=\big(\gamma(1-2\delta), \frac{1-2\gamma}{2}, (1-2\gamma)\delta,\frac{1-2\delta}{2}\big).$$
This allows us to compute $\bw=N^T \bn_{\gamma,\delta}$:
$$\bw=\left[\begin{array}{r}
1+\gamma-6 \gamma \delta 
\\
\thalf\Big(3\big(1+2(1+\gamma)k\big)
-2\delta(8 \gamma -1)(1+3k)\Big) 
\\
1+2(1+\gamma)k-2 \delta(1-6 \gamma+2k-16 \gamma k) 
\\
\thalf\Big(1+2(1+\gamma)(1+k)
-2\delta(8 \gamma(k+1)-k\Big) \\
\end{array}\right]$$
Now observe that the limits of $\v$ and $\bw$ as $\delta \to 0$ are equal and positive:
$$\lim_{\delta \to 0} \v=\lim_{\delta \to 0} \bw=
\left[\begin{array}{r}
1+\gamma
\\
{\textstyle \frac{3}{2}} \big(1+2(1+\gamma)k\big)
\\
1+2(1+\gamma)k
\\
\thalf\Big(1+2(1+\gamma)(1+k)\big) \\
\end{array}\right]$$
By continuity of $\v$ and $\bw$ as functions of $\gamma$ and $\delta$, we can take this convergence to be uniform in $\gamma$. Therefore, there is a constant $C>0$ so that $\bw_i/\v_i>1-\epsilon$ for all $i$ whenever $\delta<C$. By Proposition \ref{prop:special itineraries}, we can force
$\delta<C$ by assuming that $(\gamma,\delta)$ has a $k'$-rightward itinerary with 
$$\frac{3}{8+6k'}<C.$$
\end{proof}

By the remarks made above the statement of the lemma, this lemma also proves the Decay Control Theorem.

\section{Dynamics on the Parameter Space}
\name{sect:param space}
In this section, we investigate the dynamical behavior of the map:
$$f:[0,\half) \to [0, \half]; \quad f(x)=\frac{x}{1-2x} \pmod{G},$$
where $G$ is the group of isometries of $\R$ preserving $\Z$. This map was first mentioned in equation \ref{eq:f} of the introduction.  We also study the product map $f \times f$. 

The map $f$ is somewhat similar an analog Gauss map which appears when studying continued fractions. In the first subsection, we develop this point of view with an emphasis on coding and detecting irrationality. 

In the second subsection, we show that $f \times f$ is recurrent with respect to Lebesgue measure.
\subsection{Coding and rationality}\name{sect:rationality f}
We define $A$ to be the infinite alphabet
$$A=\{ (n,r) \in \Z \times \{\pm 1\}~:~\text{$n \geq 0$ and $(n,r) \neq (0,-1)$}\}.$$
For each $(n,r) \in A$, we define the interval
$$I_{n,r}=\left\{x \in [0,\half)~:~r(\frac{x}{1-2x}-n) \in [0,\half].\right\}.$$
Observe that the union of these intervals covers $[0,\half)$. 

Let $\{(n_k,r_k) \in A\}$ be a sequence defined for $k \geq 0$. 
We say $\{(n_k,r_k)\}$ is a {\em coding sequence} for $x \in [0,\half)$ if $f^k(x)$ is well defined for all $k \geq 0$
and 
$$f^k(x) \in I_{n_k,r_k} \quad \text{for all $k \geq 0$}.$$
(The value $f^k(x)$ is always well defined unless there is a $k$ so that $f^k(x)=\half$.)

The main goal of this subsection is to prove the following:
\begin{theorem}[Coding]
\name{thm:coding}
For each sequence $\{(n_k,r_k) \in A\}_{k \geq 0}$, there is a unique $x \in [0,\half)$ so that $\{(n_k,r_k)\}$
is a coding sequence for $x$. This $x$ depends continuously on the choice of $\{(n_k,r_k)\}$, when the collection of all
sequences is given the shift space topology. Moreover $x$ is irrational unless there is an $K$ so that $(n_k,r_k)=(0,1)$
for all $k \geq K$.
\end{theorem}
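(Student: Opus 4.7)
The plan is to realize $x$ as the unique intersection of a nested sequence of intervals obtained from inverse branches of $f$. For each $(n,r) \in A$ the restriction $f|_{I_{n,r}}$ is the Möbius map $x \mapsto r(\frac{x}{1-2x}-n)$, which is by construction a strictly monotone homeomorphism of $I_{n,r}$ onto $[0,\half]$; call its inverse $g_{n,r}:[0,\half] \to I_{n,r}$. Given a coding sequence $\{(n_k,r_k)\}_{k \geq 0}$, I would form the nested closed intervals
$$J_N = g_{n_0,r_0} \circ g_{n_1,r_1} \circ \cdots \circ g_{n_{N-1},r_{N-1}}([0,\half]).$$
A point of $[0,\half)$ has the given coding sequence if and only if it lies in $\bigcap_{N \geq 0} J_N$, so existence and uniqueness reduce to proving that this intersection is a single point.

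For the contraction estimate, a short computation gives $|f'(x)| = (1-2x)^{-2}$. The intervals $I_{n,r}$ with $(n,r) \neq (0,1)$ all lie in $[\tfrac14,\tfrac12)$, so on each such interval $|f'| \geq 4$ and the inverse branch $g_{n,r}$ has Lipschitz constant at most $\tfrac14$. Hence whenever the coding contains infinitely many indices with $(n_k,r_k)\neq (0,1)$, the diameters $|J_N|$ decay geometrically to zero. In the remaining case the sequence is eventually $(0,1)^\infty$, and a direct iteration gives $g_{0,1}^m(y) = y/(1+2my)$, so $g_{0,1}^m([0,\half]) = [0,1/(2m+2)]$; applying the outer finite composition of contractions to these shrinking intervals still yields $|J_N| \to 0$. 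Continuity in the product topology then follows by the standard symbolic-dynamics argument: two coding sequences agreeing on their first $N$ entries code points lying in the same interval $J_N$.

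For the irrationality claim, I would show that a rational seed lands at $0$ after finitely many iterations of $f$. Writing $x = p/q \in (0,\half)$ in lowest terms with $p \geq 1$ and $q - 2p \geq 1$, one has $x/(1-2x) = p/(q-2p)$, and since $\gcd(p,q-2p) = \gcd(p,q) = 1$ this fraction is already reduced. Reduction modulo $G$ only subtracts an integer or negates, so $f(x)$ is rational with denominator exactly $q-2p < q$. The positive-integer sequence of denominators of $f^k(x)$ must therefore strictly decrease as long as the numerator is positive, so some iterate $f^K(x)$ equals $0$; since $0$ lies only in $I_{0,1}$ and $f(0)=0$, the coding is $(0,1)$ for all $k \geq K$, as required. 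The one delicate point in the whole argument is the non-uniform contraction at the parabolic fixed point $0$: the sole interval $I_{n,r}$ on which $|f'|$ is not bounded away from $1$ is $I_{0,1}$, and this is circumvented cleanly by the explicit formula for $g_{0,1}^m$.
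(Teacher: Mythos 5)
Your proposal is correct, and its skeleton matches the paper's: you work with the same Markov intervals $I_{n,r}$ and inverse branches $g_{n,r}$, realize the coded point as the intersection of the nested intervals $J_N$, and prove irrationality by the same denominator-dropping computation ($p/q \mapsto p/(q-2p)$ up to the $G$-action). The one place where you genuinely diverge is the key step that $\mathrm{diam}(J_N) \to 0$. The paper argues by \emph{forward} expansion: if the intersection $J$ were a nondegenerate interval it would contain an irrational point, every irrational orbit enters $(\tfrac14,\half)$ infinitely often (via the identity $f^k(x)=x/(1-2kx)$ for orbits staying below $\tfrac14$), and on that region $|f'|>1$, so $|f^k(J)|$ would grow without bound -- a contradiction. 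You instead argue by \emph{backward} contraction, splitting on the symbol sequence rather than on rationality: every branch $g_{n,r}$ with $(n,r)\neq(0,1)$ is a $\tfrac14$-contraction because $I_{n,r}\subset[\tfrac14,\half)$ and $|f'|=(1-2x)^{-2}\geq 4$ there, while the parabolic branch is tamed by the explicit formula $g_{0,1}^m(y)=y/(1+2my)$, which also covers the eventually-$(0,1)^\infty$ codings. The two arguments are duals of one another; yours is a bit more quantitative and self-contained (it gives diameter estimates directly from the symbols and never needs the auxiliary fact that a nondegenerate interval contains an irrational with recurrent expansion), while the paper's is shorter because it outsources the parabolic issue to that one observation about irrational orbits. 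Your irrationality step also differs in a harmless detail: the paper lets a rational orbit terminate in $\{0,\half\}$, whereas you note that a point possessing an infinite coding never hits $\half$, so its orbit must reach $0$ and be coded $(0,1)$ thereafter; both give exactly the stated dichotomy.
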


\begin{remark}[Continued Fractions] We may think of $x$ as determined by $\{(n_k,r_k)\}$ via:
$$x=\cfrac{1}{2+\cfrac{1}{n_0+\cfrac{r_0}{2+\cfrac{1}{n_1+\cfrac{r_1}{2+\ldots}}}}}.$$
\end{remark}

The proof follows from understanding the action of $f$ on the intervals $I_{n,r}$. We compute
$$I_{n,1}=\left[\frac{n}{1+2n}, \frac{2n+1}{4n+4}\right] \and
I_{n,-1}=\left[\frac{2n-1}{4n}, \frac{n}{1+2n}\right].$$
Observe that $f$ restricts to a bijection $I_{n,r} \to [0,\half]$. The inverse of this restriction is
\begin{equation}
\name{eq:g}
g_{n,r}:[0,\half] \to I_{n,r}; \quad g_{n,r}(x)=\frac{r x + n}{1 + 2 (r x + n)}.
\end{equation}

We first prove the existence, uniqueness and continuity comments of the theorem. Further below, we give the proof of the irrationality condition.

\begin{proof}[Proof of existence, uniqueness and continuity]
This follows from standard dynamics arguments involving Markov partitions for maps of the interval. The 
collection of all $I_{n,r}$ form a Markov Partition. The image of each interval under $f$ is $[0,\half]$. 

Consider a finite sequence $\{(n_k,r_k)\}$ defined for $0 \leq k  \leq K$. The set of points $x$ for which 
$f^k(x) \in I_{n_k,r_k}$ is given by 
\begin{equation}
\name{eq:g seq}
g_{n_0,r_0} \circ g_{n_1,r_1} \circ \ldots \circ g_{n_K,r_K}([0,\thalf]).
\end{equation}
Each such set is a closed interval. This applies continuity of the dependence of $x$ on the sequence (assuming $x$ is uniquely determined).

Now let $\{(n_k, r_k)\}$ be an infinite sequence. Let $J$ be the set of points $x$ so that $\{(n_k, r_k)\}$ is a coding sequence for $x$. 
The set $J$ is a nested intersection of sets of the form given in equation \ref{eq:g seq}. Therefore, $J$ is a non-empty closed interval. 
We must prove that $J$ has no more than one point. Suppose $J$ is not just a single point. Then, it contains an irrational $x$. Observe that under iteration, an irrational must visit the set $(\frac{1}{4}, \half)$ infinitely often. This is because if $f^i(x)<\frac{1}{4}$ for $i=1, \ldots k-1$, then 
$$f^k(x)=\frac{x}{1-2kx}.$$ 
So, eventually $f^k(x)>\frac{1}{4}$. If $x>\frac{1}{4}$ and is irrational, then $f$ is locally strictly expanding by a factor larger than one. Therefore, the length of $f^k(J)$ would have to tend to infinity as $k \to \infty$. This contradicts the assumption that $J$ was not just a single point.
\end{proof}

\begin{proof}[Proof of the Irrationality Condition]
Observe that there is a unique coding sequence for zero, consisting of the infinite sequence with $(n_k,r_k)=(0,1)$
for all $k$. So, it suffices to show that for all rational $p/q \in [0,\half)$ there is a $k$ so that 
$f^k({\textstyle \frac{p}{q}}) \in \{0,\thalf\}.$

Observe that the action of $f$ on reduced fractions in $\Q \cap [0,\half)$ is given by the formula
$$f({\textstyle\frac{p}{q}})=\frac{p}{q-2p} \pmod{G}.$$ 
We define the ``complexity function'' 
$$\chi:\Q \to \N; \quad \frac{p}{q} \mapsto q,$$
where $\frac{p}{q}$ is assumed to be a reduced fraction with $q>0$. For all such $\frac{p}{q} \in (0,\half)$,
we have 
$$\chi \circ f(\textstyle{\frac{p}{q}})=q-2p<q=\chi(\textstyle{\frac{p}{q}})$$ 
so the complexity drops by at least two when applying $f$. So, eventually the denominator must drop to a value of one or two.
\end{proof}

\subsection{Measurable dynamics}\name{sect:measurable f}
In this section we treat $f$ as a map on the set $I$ of irrationals in the interval $(0,\half)$. This set has full Lebesgue measure,
and is invariant under $f$. Our main result is the following:

\begin{theorem}[Recurrence] 
\name{thm:recurrence} Let $\lambda$ denote Lebesgue measure on $I$. The action of $f \times f$ on $I^2$ is 
recurrent in the sense that for any Borel subset $A \subset I^2$, for $\lambda^2$-a.e. $(x,y) \in A$ there is an $n \geq 1$ so that $(f \times f)^n(x,y) \in A$.
\end{theorem}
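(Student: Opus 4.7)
The plan is to prove recurrence of $f \times f$ on $I^2$ with respect to Lebesgue by producing an $f$-invariant Borel measure $\mu$ on $I$ which is equivalent to Lebesgue (though possibly only $\sigma$-finite), and then applying the Poincar\'e Recurrence Theorem to the product measure $\mu \times \mu$. Since $f'(x) = (1-2x)^{-2}$ on each branch, $f$ has a parabolic fixed point at $0$ (where $f'(0) = 1$), so the appropriate $\mu$ is $\sigma$-finite but infinite, with density blowing up at the origin. This is still enough for Poincar\'e recurrence: any Borel set $A \subset I^2$ can be written as a countable disjoint union of sets of finite $\mu \times \mu$-measure, and recurrence holds on each piece.

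To construct $\mu$, I would induce on the set $J = I \cap [1/4, 1/2)$, where $f$ is uniformly expanding. The proof of the Coding Theorem already noted that every $x \in I$ visits $(1/4, 1/2)$ infinitely often under $f$, because on $[0, 1/4]$ the map $f$ coincides with $y \mapsto y/(1-2y)$ whose $k$-fold iterate $y/(1-2ky)$ eventually exceeds $1/4$. The first-return map $\hat f : J \to J$, defined by $\hat f(x) = f^{n(x)}(x)$ with $n(x) = \min\{n \geq 1 : f^n(x) \in J\}$, is therefore everywhere defined. Each branch is indexed by a coding sequence $((m_0, s_0), (m_1, s_1), \ldots, (m_k, s_k))$ in $A^{k+1}$ with $(m_0, s_0), (m_k, s_k) \neq (0, 1)$, $(m_i, s_i) = (0, 1)$ for $1 \leq i \leq k-1$, and $k \geq 1$; its inverse branch is the M\"obius composition $g_{m_0, s_0} \circ g_{0, 1}^{k-1}$ mapping $I_{m_k, s_k}$ onto the branch. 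Using the explicit formula $|g'_{n, r}(y)| = (1 + 2(ry + n))^{-2}$ from Section \ref{sect:rationality f}, one checks that $|g'_{m, s}(y)| \leq 1/4$ on $[0, 1/2]$ for all $(m, s) \in A$ with $(m, s) \neq (0, 1)$, while $|(g_{0, 1}^{k-1})'(y)| \leq 1$. Thus every inverse branch of $\hat f$ has derivative bounded by $1/4$, giving $\inf_J |\hat f'| \geq 4$, and bounded distortion holds because the inverse branches are M\"obius compositions. The standard Folklore Theorem for countable Markov expanding maps then yields an $\hat f$-invariant probability measure $\hat\mu$ on $J$ absolutely continuous with respect to Lebesgue, with density bounded above and below by positive constants. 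Lifting via
$$\mu(B) = \sum_{k = 0}^{\infty} \hat\mu\big(\{x \in J : f^i(x) \notin J \text{ for } 1 \leq i \leq k,\ f^k(x) \in B\}\big)$$
produces the desired $\sigma$-finite $f$-invariant Borel measure $\mu$ on $I$, equivalent to Lebesgue.

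Then $\mu \times \mu$ is $(f \times f)$-invariant, $\sigma$-finite, and equivalent to $\lambda^2$ on $I^2$. For an arbitrary Borel $A \subset I^2$, I would partition $A$ into countably many subsets of finite $\mu \times \mu$-measure and apply the Poincar\'e Recurrence Theorem on each, concluding that $(\mu \times \mu)$-a.e., and hence $\lambda^2$-a.e., point of $A$ returns to $A$ under $(f \times f)^n$ for some $n \geq 1$. The hardest step is verifying bounded distortion for $\hat f$ uniformly across its countably many branches: although each inverse branch is a finite M\"obius composition, arbitrarily long runs $g_{0,1}^{k-1}$ through the parabolic neighborhood near $0$ must be controlled uniformly in $k$, which requires a careful estimate analogous to those carried out for the Gauss map and other uniformly expanding Markov maps with neutral fixed points.
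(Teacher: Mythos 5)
There is a genuine gap, and it sits exactly at the step the whole theorem turns on. Your measure $\mu$ (like the paper's explicit measure with density $\frac{1}{x}+\frac{1}{1-x}$, which the paper simply writes down and verifies directly, with no need for inducing or a Folklore Theorem) is an \emph{infinite} $\sigma$-finite invariant measure, and the Poincar\'e Recurrence Theorem is simply false in that setting: the hypothesis is that the \emph{total} measure of the space (or of an invariant set containing $A$) is finite, not that $A$ itself has finite measure. Partitioning $A$ into countably many pieces of finite $\mu\times\mu$-measure buys nothing --- consider the translation $T(x)=x+1$ on $\R$ with Lebesgue measure: it preserves a $\sigma$-finite measure, the set $A=[0,1)$ has finite measure, and no point of $A$ ever returns. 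So the sentence ``recurrence holds on each piece'' is precisely the assertion (conservativity of $f\times f$ with respect to $\mu\times\mu$) that has to be proved, and your proposal contains no argument for it. Note also that conservativity is automatic for each factor separately (every irrational visits $(\frac14,\frac12)$ infinitely often under $f$), but in infinite ergodic theory the product of two conservative transformations need not be conservative, so nothing about your induced system on $J$, or on $J\times J$, gives the return statement for free: the first return of $f\times f$ to $J\times J$ is not the product of the one-dimensional return maps, and showing that almost every point of $J\times J$ returns is equivalent to the recurrence you are trying to establish.

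This is exactly why the paper, after observing that ``if the measure were finite we would be done by Poincar\'e recurrence,'' proves the Plug Lemma --- that the $m\times m$-measure of $P_\epsilon=(f\times f)(N_\epsilon)\sm N_\epsilon$ tends to $0$ as $\epsilon\to 0$ --- and then runs a wandering-set (Hopf decomposition) argument: a wandering set $W$ of positive measure could be taken disjoint from $N_\epsilon$ with $\mu(W)>\mu(P_\epsilon)$, and since any orbit leaving the cusp region $N_\epsilon$ must pass through the ``plug'' $P_\epsilon$, one derives $\mu(W)=\sum_i\mu(P_i)$ with the $P_i$ disjoint subsets of $P_\epsilon$, a contradiction. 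Some quantitative control of the escape of mass toward the set $\{x=0\}\cup\{y=0\}$ of this kind is indispensable; your construction of $\mu$ by inducing (whose distortion estimates are themselves the part you flag as hardest, and which the paper's closed-form density makes unnecessary) does not address it.
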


We actually prove the above statement by replacing $\lambda$ with an equivalent measure $m$. (Two measures are {\em equivalent} if they have the same null sets.) 

\begin{lemma}[Invariant measure] The $\lambda$ equivalent measure $m$ on $I$ defined by 
$$\meas(A)=\int_A \frac{1}{x}+\frac{1}{1-x}~dx.$$
is $f$-invariant (i.e., $m \circ f^{-1} = m$).
\end{lemma}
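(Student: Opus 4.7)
The plan is to prove invariance by the change-of-variables formula using the inverse branches of $f$ supplied in equation \ref{eq:g}. Since $I$ is partitioned (mod zero) by the intervals $I_{n,r}$, each of which is mapped bijectively onto $[0,\tfrac{1}{2}]$ by $f$, the preimage of any Borel $B \subset I$ under $f$ decomposes as a disjoint union $f^{-1}(B) = \bigsqcup_{(n,r) \in A} g_{n,r}(B)$. Hence with $h(x) = \tfrac{1}{x} + \tfrac{1}{1-x}$, invariance reduces to the pointwise identity
$$
\sum_{(n,r) \in A} h\!\bigl(g_{n,r}(x)\bigr)\, |g_{n,r}'(x)| \;=\; h(x) \qquad \text{for a.e.\ } x \in [0,\tfrac{1}{2}].
$$

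Next I would carry out the algebraic reduction. Writing $y = rx+n$, a direct computation gives
$$
g_{n,r}(x) = \frac{y}{1+2y}, \qquad 1-g_{n,r}(x) = \frac{1+y}{1+2y}, \qquad |g_{n,r}'(x)| = \frac{1}{(1+2y)^2},
$$
so that $h(g_{n,r}(x)) = 4 + \tfrac{1}{y(1+y)}$. The key miracle is the identity $(1+2y)^2 = 4y(1+y) + 1$, which collapses the summand to
$$
\Bigl(4 + \tfrac{1}{y(1+y)}\Bigr)\tfrac{1}{(1+2y)^2} \;=\; \tfrac{1}{y(1+y)} \;=\; \tfrac{1}{y} - \tfrac{1}{1+y}.
$$

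With this simplification in hand, the final step is to split the sum over $r = \pm 1$. The branches with $r=1$ contribute
$\sum_{n\ge 0}\bigl(\tfrac{1}{x+n} - \tfrac{1}{x+n+1}\bigr)$, a telescoping series with sum $\tfrac{1}{x}$; and the branches with $r=-1$ (where $n\ge 1$) contribute $\sum_{n\ge 1}\bigl(\tfrac{1}{n-x} - \tfrac{1}{n+1-x}\bigr) = \tfrac{1}{1-x}$. Adding these two totals gives exactly $h(x)$, which is the desired identity.

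The main obstacle is really just recognizing the algebraic identity $(1+2y)^2 = 4y(1+y)+1$ that makes the summand telescope; once this is spotted the rest is a routine bookkeeping argument. A minor technical point is to justify the exchange of summation and integration, which is automatic because all summands are nonnegative (Tonelli).
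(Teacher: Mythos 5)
Your proposal is correct and follows essentially the same route as the paper: decompose $f^{-1}$ into the inverse branches $g_{n,r}$, verify the transfer-operator identity $\sum_{(n,r)} h(g_{n,r}(x))\,|g_{n,r}'(x)| = h(x)$ (the paper phrases this as $\sum_{y \in f^{-1}(x)} h(y)/|f'(y)| = h(x)$, which is the same thing), and evaluate the two telescoping sums over $r=\pm 1$ to get $1/x$ and $1/(1-x)$. The algebraic simplification via $y = rx+n$ is just a slightly different bookkeeping of the same computation, so nothing essential differs.
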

The measure $m$ should be thought of as analogous to the Gauss measure for continued fractions.
\begin{proof}
Let $\lambda$ denote Lebesgue measure on $I$. 
Consider the Radon-Nikodym derivative 
$$h(x)=\frac{d \meas}{d\lambda}(x)=\frac{1}{x}+\frac{1}{1-x}.$$ 
Observe that 
$$\frac{d (\meas \circ f^{-1})}{d \lambda}(x)=\sum_{y \in f^{-1}(\{x\})} \frac{h(y)}{|f'(y)|}.$$ 
So, it suffices to show that this sum yields $h(x)$. We compute that $$\frac{h(y)}{|f'(y)|}=\frac{(1-2y)^2}{y(1-y)}.$$
We can write each $y \in f^{-1}(\{x\})$ as $y=g_{n,r}(x)$ as in equation \ref{eq:g}. We evaluate the sum in two portions. In the cases $r=1$ and $r=-1$, we respectively have 
$$\sum_{n=0}^\infty  \frac{h \circ g_{n,1}(x)}{|f' \circ g_{n,1}(x)|}=\sum_{n=0}^\infty \big(\frac{1}{n+x}-\frac{1}{1+n+x}\big)=\frac{1}{x}, \and$$
$$\sum_{n=1}^\infty  \frac{h \circ g_{n,-1}(x)}{|f' \circ g_{n,-1}(x)|}=\sum_{n=1}^\infty \big(\frac{1}{n-x}-\frac{1}{1+n-x}\big)=\frac{1}{1-x}.$$
Combining these two sums, we see $\frac{d (\meas \circ f^{-1})}{d \lambda}(x)=h(x)$, as desired.
\end{proof}

Note that the measure $m$ is infinite. If this were not the case, we would have Recurrence by the Poincar\'e recurrence theorem. However, the measure of sets of the form $(\epsilon,\half) \cap I$ is finite. Our proof of 
recurrence depends on controlling the possibility of the backward iterates of a set tending toward the set of points where one coordinate is zero.
This control is given by the following.

\begin{lemma}[Plug Lemma]
For $\epsilon>0$, define the following subsets of $I^2$:
$$N_\epsilon=[0, \epsilon] \times I \cup I \times [0, \epsilon] \and P_\epsilon=(f \times f)(N_\epsilon) \sm N_\epsilon.$$
We have $\lim_{\epsilon \to 0} m \times m (P_\epsilon)=0.$
\end{lemma}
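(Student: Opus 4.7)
The plan is to bound $P_\epsilon$ by an explicit set whose $m\times m$-measure can be estimated directly. First I would observe that for $\epsilon \leq 1/4$, the map $f$ restricted to $[0,\epsilon]$ coincides with the branch $t\mapsto t/(1-2t)$, because for such $t$ the value $t/(1-2t)$ already lies in $[0,1/2]$ and so no element of $G$ is needed to bring it into the fundamental domain. In particular, $f([0,\epsilon]) = [0,\,\epsilon/(1-2\epsilon)]$.

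Next I would unpack the definition of $P_\epsilon$. A point $(x,y) \in P_\epsilon$ has the form $(f(u), f(v))$ for some $(u,v)\in N_\epsilon$, while $(x,y)\notin N_\epsilon$ forces $x>\epsilon$ and $y>\epsilon$. If $u\leq\epsilon$, then by the previous paragraph $x=f(u)\in(\epsilon,\,\epsilon/(1-2\epsilon)]$, while $y$ is unconstrained beyond $y\in(\epsilon, 1/2)\cap I$. The symmetric case $v\leq\epsilon$ yields the analogous bound on $y$. Setting
\[
S_\epsilon = \bigl(\epsilon,\tfrac{\epsilon}{1-2\epsilon}\bigr]\times(\epsilon,\tfrac{1}{2}),\qquad T_\epsilon = (\epsilon,\tfrac{1}{2})\times\bigl(\epsilon,\tfrac{\epsilon}{1-2\epsilon}\bigr],
\]
we obtain $P_\epsilon\subset S_\epsilon\cup T_\epsilon$.

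Finally, I would estimate $m\times m(S_\epsilon)$ (with $T_\epsilon$ handled by symmetry). Using the explicit antiderivative $\int h(x)\,dx = \ln\bigl(x/(1-x)\bigr)$ for $h(x)=1/x+1/(1-x)$, a direct computation gives
\[
\int_\epsilon^{\epsilon/(1-2\epsilon)} h(x)\,dx = \ln\!\left(\frac{1-\epsilon}{1-3\epsilon}\right) = O(\epsilon),\qquad \int_\epsilon^{1/2} h(y)\,dy = \ln\!\left(\frac{1-\epsilon}{\epsilon}\right) = O\bigl(\log(1/\epsilon)\bigr).
\]
Hence $m\times m(S_\epsilon)=O\bigl(\epsilon\log(1/\epsilon)\bigr)$ and similarly for $T_\epsilon$, so $m\times m(P_\epsilon)\to 0$.

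I do not expect a serious obstacle: the proof is essentially a one-line product estimate once the simple branch-of-$f$ observation is made. The one place to be careful is precisely that observation, since the definition of $f$ involves the $G$-action, and one must check that the image $f([0,\epsilon])$ is the naive interval rather than something reflected. Everything else reduces to the divergence of $h$ at $0$ being only logarithmic, against a linear shrinkage of the ``plug'' strip in the other coordinate.
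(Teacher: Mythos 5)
Your proposal is correct and follows essentially the same route as the paper: bound $P_\epsilon$ by the two rectangles $(\epsilon,\frac{\epsilon}{1-2\epsilon}]\times(\epsilon,\frac12)$ and $(\epsilon,\frac12)\times(\epsilon,\frac{\epsilon}{1-2\epsilon}]$ and compute that the product of their $m$-measures, $\log\bigl(\frac{1-\epsilon}{1-3\epsilon}\bigr)\log\bigl(\frac{1-\epsilon}{\epsilon}\bigr)$, tends to zero. The only difference is that you spell out the branch observation $f([0,\epsilon])=[0,\frac{\epsilon}{1-2\epsilon}]$ justifying the rectangle bound, which the paper leaves implicit.
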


We call this the plug lemma, because
any orbit starting in $N_\epsilon$ must pass through $P_\epsilon$
in order to reach the complement of $N_\epsilon$. 
The observation of the lemma is that not much can pass through the plug.
Using this Lemma, we can prove recurrence:

\begin{proof}[Proof of the Recurrence Theorem]
The proof is a general principal following from the Hopf Decomposition. (See \S 1.3 of \cite{Krengel85}, for instance.) If $f \times f$ were not recurrent, then there would be a wandering set $W \subset I \times I$ of positive $m \times m$ measure. That is, $W$ is a set so that the preimages $(f \times f)^{-k}(W)$
are pairwise disjoint for $k \geq 0$. 

To simplify notation let $\mu=m \times m$ and $\phi=f \times f$. 

We use the facts that $\cap_{\epsilon>0} N_\epsilon=\emptyset$ and $\lim_{\epsilon \to 0} \mu (P_\epsilon)=0$. By possibly making $W$ smaller, we can assume that there is an $\epsilon>0$
so that
\begin{enumerate}
\item $W \cap N_\epsilon= \emptyset$.
\item $\mu(W)>\mu(P_\epsilon)$. 
\end{enumerate}

Now consider the sequence of sets $P_k$ and $W_k$ defined inductively according to the following rules. We define $P_0=W \cap P_\epsilon$ and $W_0=W \sm P_0$. For $k \geq 0$ define
$$P_{k+1}=\phi^{-1}(W_k) \cap P_\epsilon \and 
W_{k+1}=\phi^{-1}(W_k) \sm P_{k+1}.$$
By invariance of $\mu$, the sequence $\mu(W_k)$ is decreasing. Because each $W_k$ is disjoint and lies in the complement of $N_\epsilon$ (which has finite measure with respect to $\mu$), we have 
$$\lim_{k \to \infty} \mu(W_k)=0.$$
Again by invariance of $\mu$, we have $\mu(W_k)=\mu(W_{k+1})+\mu(P_{k+1})$. It follows that for all $k \geq 0$,
$$\mu(W)=\mu(W_k)+ \sum_{i=0}^k \mu(P_k) \quad
\text{and so} \quad 
\mu(W)=\sum_{i =0}^\infty \mu(P_i).$$ 
Note that the sets $P_i$ are pairwise disjoint and lie in $P_\epsilon$. This contradicts the statement that $\mu(W)>\mu(P_\epsilon)$. 
\end{proof}

\begin{proof}[Proof of the Plug Lemma]
We will assume $\epsilon<\frac{1}{4}$. We can write $P_\epsilon$ as a union of rectangles, 
$$P_{\epsilon}=\textstyle (\epsilon,\frac{\epsilon}{1-2\epsilon}] \times (\epsilon,\frac{1}{2}) \cup (\epsilon,\frac{1}{2}) \times (\epsilon,\frac{\epsilon}{1-2\epsilon}].$$
Therefore, $m \times m(P_\epsilon)$ is less than twice the product of the measures of these two intervals with respect to $m$. We have 
$$\textstyle m\big([\epsilon,\frac{\epsilon}{1-2\epsilon}]\big)=\log(\frac{1-\epsilon}{1-3 \epsilon}) \and
m\big([\epsilon,\frac{1}{2}] = \log(\frac{1-\epsilon}{\epsilon}).$$
A calculation shows that as $\epsilon$ tends to zero, the product of these quantities tends to zero.
\end{proof}

\bibliographystyle{amsalpha}
\bibliography{/home/pat/archive/my_papers/bibliography}

\newcommand{\cn}[1]{{\bf [#1]:}} \def\scr{\mathcal} \def\cprime{$'$}
\providecommand{\bysame}{\leavevmode\hbox to3em{\hrulefill}\thinspace}
\providecommand{\MR}{\relax\ifhmode\unskip\space\fi MR }
\providecommand{\MRhref}[2]{%
  \href{http://www.ams.org/mathscinet-getitem?mr=#1}{#2}
}
\providecommand{\href}[2]{#2}
\begin{thebibliography}{Tab95b}

\bibitem[AKT01]{AKT01}
Roy Adler, Bruce Kitchens, and Charles Tresser, \emph{{Dynamics of non-ergodic
  piecewise affine maps of the torus.}}, Ergodic Theory Dyn. Syst. \textbf{21}
  (2001), no.~4, 959--999 (English).

\bibitem[Ash97]{Ashwin97}
Peter Ashwin, \emph{{Elliptic behaviour in the sawtooth standard map.}}, Phys.
  Lett., A \textbf{232} (1997), no.~6, 409--416 (English).

\bibitem[BC09]{BC09}
Nicolas Bedaride and Julien Cassaigne, \emph{Outer billiards outside regular
  polygons}, preprint \url{http://arxiv.org/abs/0912.5263}.

\bibitem[GH97]{GH97}
Eugene Gutkin and Nicolai Haydn, \emph{Topological entropy of polygon exchange
  transformations and polygonal billiards}, Ergodic Theory Dynam. Systems
  \textbf{17} (1997), no.~4, 849--867. \MR{1468104 (98d:58102)}

\bibitem[Goe00]{Goetz00}
Arek Goetz, \emph{{Dynamics of piecewise isometries.}}, Ill. J. Math.
  \textbf{44} (2000), no.~3, 465--478 (English).

\bibitem[Gow00]{G00}
W.~T. Gowers, \emph{Rough structure and classification}, Geom. Funct. Anal.
  (2000), no.~Special Volume, Part I, 79--117, GAFA 2000 (Tel Aviv, 1999).
  \MR{1826250 (2002b:01034)}

\bibitem[GP04]{GP04}
Arek Goetz and Guillaume Poggiaspalla, \emph{Rotations by {$\pi/7$}},
  Nonlinearity \textbf{17} (2004), no.~5, 1787--1802. \MR{2086151
  (2005h:37092)}

\bibitem[Hal81]{H81}
Hans Haller, \emph{{Rectangle exchange transformations.}}, Monatsh. Math.
  \textbf{91} (1981), 215--232 (English).

\bibitem[Hoo11]{Htruchet1}
W.~Patrick Hooper, \emph{Truchet tilings and renormalization}, unpublished
  \url{http://arxiv.org/abs/1103.6046}, 2011.

\bibitem[HS09]{HooSch09}
W.~Patrick Hooper and Richard~Evan Schwartz, \emph{Billiards in nearly
  isosceles triangles}, J. Mod. Dyn. \textbf{3} (2009), no.~2, 159--231.
  \MR{2504742}

\bibitem[KH95]{KH95}
Anatole Katok and Boris Hasselblatt, \emph{Introduction to the modern theory of
  dynamical systems}, Encyclopedia of Mathematics and its Applications,
  vol.~54, Cambridge University Press, Cambridge, 1995, With a supplementary
  chapter by Katok and Leonardo Mendoza. \MR{1326374 (96c:58055)}

\bibitem[Kre85]{Krengel85}
Ulrich Krengel, \emph{{Ergodic theorems. With a supplement by Antoine
  Brunel.}}, {De Gruyter Studies in Mathematics, 6. Berlin-New York: Walter de
  Gruyter. VIII, 357 p. DM 128.00; {\$} 49.95 }, 1985 (English).

\bibitem[LKV04]{LKV04}
J~H Lowenstein, K~L Kouptsov, and F~Vivaldi, \emph{Recursive tiling and
  geometry of piecewise rotations by $\pi/7$}, Nonlinearity \textbf{17} (2004),
  no.~2, 371.

\bibitem[LM95]{LindMarcus}
Douglas Lind and Brian Marcus, \emph{An introduction to symbolic dynamics and
  coding}, Cambridge University Press, Cambridge, 1995. \MR{1369092
  (97a:58050)}

\bibitem[Low07]{L07}
J.~H. Lowenstein, \emph{Aperiodic orbits of piecewise rational rotations of
  convex polygons with recursive tiling}, Dyn. Syst. \textbf{22} (2007), no.~1,
  25--63. \MR{2308209 (2008e:37014)}

\bibitem[Pet08]{Pete08}
G{\'a}bor Pete, \emph{Corner percolation on {$\mathbb Z^2$} and the square root
  of 17}, Ann. Probab. \textbf{36} (2008), no.~5, 1711--1747. \MR{2440921
  (2009f:60121)}

\bibitem[SB87]{Smith87}
Cyril~Stanley Smith and Pauline Boucher, \emph{The tiling patterns of
  {S}\'ebastien {T}ruchet and the topology of structural hierarchy}, Leonardo
  \textbf{20} (1987), no.~4, 373--385.

\bibitem[Sch07]{S07}
Richard~Evan Schwartz, \emph{{Unbounded orbits for outer billiards. I.}}, J.
  Mod. Dyn. \textbf{1} (2007), no.~3, 371--424 (English).

\bibitem[Sch09]{S09}
\bysame, \emph{Outer billiards on kites}, Annals of Mathematics Studies, vol.
  171, Princeton University Press, Princeton, NJ, 2009. \MR{2562898
  (2011a:37081)}

\bibitem[Sch10]{Schwartz10}
\bysame, \emph{Outer billiards, the arithmetic graph, and the octagon},
  preprint \url{http://arxiv.org/abs/1006.2782}.

\bibitem[Sch11]{Schwartz11}
\bysame, \emph{Outer billiards on the penrose kite: Compactification and
  renormalization}, preprint \url{http://arxiv.org/abs/1102.4635}.

\bibitem[Tab95a]{Tab95}
S.~Tabachnikov, \emph{On the dual billiard problem}, Adv. Math. \textbf{115}
  (1995), no.~2, 221--249. \MR{1354670 (96g:58154)}

\bibitem[Tab95b]{Tab}
Serge Tabachnikov, \emph{Billiards}, Panor. Synth. (1995), no.~1, vi+142.
  \MR{1328336 (96c:58134)}

\bibitem[Tru04]{Truchet}
S\'ebastien Truchet, \emph{M\'emoire sur les combinaisons}, M\'emoires de
  l'Acad\'emie Royale des Sciences (1704), 363–372, Original text available
  at \url{http://gallica.bnf.fr/ark:/12148/bpt6k3486m} and the figures are
  available from \url{http://jacques-andre.fr/faqtypo/truchet/}.

\end{thebibliography}
\end{document}